\documentclass[a4paper,twoside]{article}  

\usepackage{amssymb,amsmath,amsthm,dsfont,amsfonts,color,latexsym,CJK}

\usepackage{hyperref}

\usepackage{mathrsfs} 

\definecolor{blue}{rgb}{0.00,0.00,1.00}
\definecolor{red}{rgb}{1.00,0.00,0.00}

\renewcommand{\baselinestretch}{1.2}
       \hoffset=0truemm
       \voffset=0truemm
       \topmargin=-5truemm
       \oddsidemargin=-5truemm
       \evensidemargin=-5truemm

     \textheight=240truemm
     \textwidth=170truemm

%
\def\bq{\begin{equation}}
\def\eq{\end{equation}}
\def\ba{\begin{array}{ccc}}
\def\bal{\begin{array}{lll}}
\def\ea{\end{array}}

 \def\lt#1{\left#1}\def\rt#1{\right#1}
\def\({\left(}\def\){\right)}
\def\[{\left[}\def\]{\right]}
\def\<{\langle}\def\>{\rangle}

    
    \def \O   {\mathbb{O}}

    \def \R   {\mathbb{R}}

    \def\S    {\mathbb{S}}
    \def\eps  {\epsilon}
    \def\intr {\int_{\R^3}}
    \def\intrr {\int_{\R^6}}
    \def\ints {\int_{\S^2}}
    \def\intt {\int^t_0}

    \def \Q    {\mathcal{Q}}
    
    \def \N    {\mathbb{N}}

    \def \pt   {\partial}
    
    \def \Dt   {\frac{\rm d}{{\rm d}t}}
    
    \def \dt    {\partial_t}
    
    \def \da    {\pt^\alpha}

    \def \dx    {\partial_x}

    \def \dxa   {\partial^{\alpha}_x}
    \def \dxb   {\partial^{\beta}_x}
    \def \dv    {\partial_v}
    
    \def \dvb   {\partial^{\beta}_v}

    \def \divx  {{\rm div}_x}

    \def\Tdx   {\nabla_x}
    \def\Tdv   {\nabla_v}

%
       \def\bq{\begin{equation}}
       \def\eq{\end{equation}}
       \def\be{\begin{equation}}
       \def\ee{\end{equation}}
       \def\bma#1\ema{{\allowdisplaybreaks\begin{align}#1\end{align}}}
       \def\bmas#1\emas{{\allowdisplaybreaks\begin{align*}#1\end{align*}}}
       \def\bln#1\eln{{\allowdisplaybreaks\begin{aligned}#1\end{aligned}}}
       \def\nnm{\notag}
       \def\bgr#1\egr{\allowdisplaybreaks\begin{gather}#1\end{gather}}
       \def\bgrs#1\egrs{\allowdisplaybreaks\begin{gather*}#1\end{gather*}}

%
%

       \theoremstyle{plain}
       \newtheorem{lem}{\bf Lemma}[section]
       \newtheorem{thm}[lem]{\textbf{Theorem}}

       \newtheorem{remark}[lem]{\bf Remark}

%
%
\begin{document}
\title{ Diffusion Limit of  the Vlasov-Poisson-Boltzmann System }
\author{ Hai-Liang Li$^1$,\, Tong Yang$^2$,\, Mingying Zhong$^3$\\[2mm]
 \emph{\small\it  $^1$School of  Mathematical Science,
    Capital Normal University,  Beijing, China.}\\
    {\small\it E-mail:\ hailiang.li.math@gmail.com}\\
    {\small\it $^2$Department of Mathematics, City University of Hong Kong,  Hong
    Kong, China}\\
{\small \it \& School of Mathematics and Statistics,   Chongqing University,  Chongqing, China}\\
    {\small\it E-mail: matyang@cityu.edu.hk} \\
    {\small\it  $^3$College of  Mathematics and Information Sciences,
    Guangxi University, Nanning, China.}\\
    {\small\it E-mail:\ zhongmingying@sina.com}\\[2mm]
    }
\date{ }

\pagestyle{myheadings}
\markboth{Vlasov-Poisson-Boltzmann system }%
{H.-L. Li, T. Yang, M.-Y. Zhong}

 \maketitle

 \thispagestyle{empty}
\begin{abstract}\noindent
In the present paper, we study the diffusion limit of the classical solution to the unipolar Vlasov-Poisson-Boltzmann (VPB) system with initial data near a global Maxwellian. We prove the convergence and establish the convergence rate of the global strong solution to the unipolar VPB system towards the solution to an incompressible Navier-Stokes-Poisson-Fourier  system based on the spectral analysis with precise estimation on the initial layer.

\medskip
 {\bf Key words}.  Vlasov-Poisson-Boltzmann system,  spectral analysis, diffusion limit, convergence rate.

\medskip
 {\bf 2010 Mathematics Subject Classification}. 76P05, 82C40, 82D05.
\end{abstract}

\tableofcontents

\section{Introduction}
The Vlasov-Poisson-Boltzmann system (VPB) can be used to model the motion of
the dilute charged particles in plasma or semiconductor devices under the influence of the
self-consistent electric field \cite{Markowich}. In general, the rescaled VPB system for one-species takes the form
\bgr
\dt F_{\eps}+\frac1{\eps} v\cdot\Tdx F_{\eps}+\frac1{\eps}\Tdx \Phi_{\eps}\cdot\Tdv F_{\eps}
=\frac1{\eps^2}\Q(F_{\eps},F_{\eps}), \label{VPB3}\\
\Delta_x\Phi_{\eps}=\intr F_{\eps}dv-1,\label{VPB4}
\egr
where $\eps>0$ is a small parameter related to the mean free path, $F_{\eps}=F_{\eps}(t,x,v)$ is the distribution
function with $(t,x,v)\in\R^+\times\R^3\times\R^3$, and  $\Phi_{\eps}(t,x)$ denotes the electric potential, respectively. Throughout this paper, we assume $\eps\in(0,1)$. The collision between particles is given by the
standard Boltzmann collision operator $\Q(f,g)$ as below
\bq \Q(f,g)=\frac12\intr\ints B(|v-v_*|,\omega)(f'_*g'+f'g'_*-f_*g-fg_*)dv_*d\omega, \label{Q}\eq
where
\bgrs
f'_*=f(t,x,v'_*),\quad f'=f(t,x,v'),\quad f_*=f(t,x,v_*),\quad f=f(t,x,v),\\
 v'=v-[(v-v_*)\cdot\omega]\omega,\quad v'_*=v_*+[(v-v_*)\cdot\omega]\omega,\quad \omega\in\S^2.
 \egrs
For monatomic gas, the collision kernel $B(|v-v_*|,\omega)$ is  a
non-negative function of $|v-v_*|$ and $|(v-v_*)\cdot\omega|$:
$$B(|v-v_*|,\omega)=B(|v-v_*|,\cos\theta),\quad \cos\theta=\frac{|(v-v_*)\cdot\omega|}{|v-v_*|},\quad \theta\in\[0,\frac{\pi}2\].$$
In the following, we consider both the hard sphere model and hard potential
 with angular cutoff.
Precisely, for the hard
sphere model,
\bq B(|v-v_*|,\omega)=|(v-v_*)\cdot\omega|=|v-v_*|\cos\theta;\eq
and for the models of the hard potentials with Grad angular cutoff assumption,
\bq B(|v-v_*|,\omega)= b(\cos\theta)|v-v_*|^\gamma,\quad 0\le \gamma<1,\eq
where we assume for simplicity
$$0\le b(\cos\theta)\le C|\cos\theta|.$$

There has been significant  progress made  on the well-posedness  and long time behavior of solutions to the Vlasov-Poisson-Boltzmann system for fixed $\eps$.
In particular, the global existence of renormalized weak solution for general large initial data was shown in \cite{Mischler}. The global existence of unique strong solution with the initial data near the normalized global Maxwellian was obtained in a spatially periodic domain in \cite{Guo2} and  in the  whole space in \cite{Duan2,Yang1,Yang3} for hard sphere model, and further in \cite{Duan5,Duan7} for hard potential or soft potential.   As for  the long-time behavior of the VPB system, we refer to the works  \cite{Duan1,Li1,Li2,Wang2,Yang4}. In addition, the spectrum structure and the optimal decay rate of the classical solution were investigated in \cite{Li1,Li2}.

The fluid dynamical limit of the  solution to the VPB system near Maxwellian was also studied in  \cite{Guo4,Wang1}. In
\cite{Guo4}, the authors  proved the convergence
of the solutions to the unipolar VPB system towards a solution to the compressible
Euler-Poisson system in the whole space.  In \cite{Wang1}, the author established a global convergence result of the solution to the bipolar VPB system towards a solution to the incompressible Vlasov-Navier-Stokes-Fourier system.

On the other hand, the diffusion limit to the Boltzmann equation is a classical problem with pioneer work by Bardos-Golse-Levermore in \cite{FL-1}, and significant  progress on the limit of renormalized solutions to Leray solution to Navier-Stokes system in \cite{Golse-SR}.
In contrast to the works on Boltzmann equation~\cite{FL-1,FL-2,FL-3,FL-4,FL-5,Guo5} and the VPB system \cite{Guo4,Wang1}, the diffusion limit  of the classical solution to the unipolar VPB system \eqref{VPB1}--\eqref{VPB2i} has not been given despite of its importance.
Moreover, the convergence rate of the classical solution to the VPB system towards its fluid dynamical limits was not given  in \cite{Guo4,Wang1}.

In this paper, we study the diffusion limit
of the strong solution to the rescaled VPB system~\eqref{VPB3}--\eqref{VPB4}  with initial data near the normalized Maxwellian $M(v)$, where
$$
 M=M(v)=\frac1{(2\pi)^{3/2}}e^{-\frac{|v|^2}2},\quad v\in\R^3.
$$

Hence, we denote the perturbation of $F_{\eps}$ and $\Phi_{\eps}$ as
$$F_{\eps}=M+\eps \sqrt M f_{\eps},\quad \Phi_{\eps}=\eps \phi_{\eps}.$$
Then the VPB system \eqref{VPB3}--\eqref{VPB4} for $f_{\eps}$ and $\phi_{\eps}$ is
\bgr
\dt f_{\eps}+\frac1{\eps} v\cdot\Tdx f_{\eps}-\frac1{\eps} v\sqrt{M}\cdot\Tdx \phi_{\eps}-\frac1{\eps^2} Lf_{\eps}
= G_1(f_{\eps})+\frac1{\eps} G_2(f_{\eps}),\label{VPB1}\\
\Delta_x\phi_{\eps}= \intr f_{\eps}\sqrt{M}dv,\label{VPB2}
\egr
with the initial condition
\be f_{\eps}(0,x,v)=f_0(x,v), \label{VPB2i}\ee
by assuming that the initial data $f_0$ is independent of $\eps$.
As usual, the linear operator $L$ and the nonlinear operators $G_1,G_2$ are given by
\be \label{gamma}
\left\{\bln
Lf_{\eps}&=\frac1{\sqrt M}[\Q(M,\sqrt{M}f_{\eps})+\Q(\sqrt{M}f_{\eps},M)],\\
G_1(f_{\eps})&=\frac12 (v\cdot\Tdx\phi_{\eps})f_{\eps}-\Tdx\phi_{\eps}\cdot\Tdv f_{\eps},\\
G_2(f_{\eps})&=\Gamma(f_{\eps},f_{\eps})=\frac1{\sqrt M}\Q(\sqrt{M}f_{\eps},\sqrt{M}f_{\eps}).
\eln\right.
\ee

The linearized collision operator $L$ can be written as
\be\label{L_1}
\left\{\bln
(Lf)(v)&=(Kf)(v)-\nu(v) f(v),\\
\nu(v)&=\intr\ints B(|v-v_*|,\omega)M_*d\omega dv_*,\\
(Kf)(v)&=\intr\ints
B(|v-v_*|,\omega)(\sqrt{M'_*}f'+\sqrt{M'}f'_*-\sqrt{M}f_*)\sqrt{M_*}d\omega
dv_*\\
&=\intr k(v,v_*)f(v_*)dv_*,
\eln\right.
\ee
where $\nu(v)$,  the collision frequency, is a real  function, and $K$ is a self-adjoint compact operator
on $L^2(\R^3_v)$ with a real symmetric integral kernel $k(v,v_*)$.
In addition, $\nu(v)$ satisfies
\be
\nu_0(1+|v|)^\gamma\leq\nu(v)\leq \nu_1(1+|v|)^\gamma, \label{nu}
\ee
with $\gamma=1$ for hard sphere and $0\le \gamma<1$ for hard potential.

The nullspace of the operator $L$, denoted by $N_0$, is a subspace
spanned by the orthonormal basis $\{\chi_j,\ j=0,1,\cdots,4\}$  with
\bq \chi_0=\sqrt{M},\quad \chi_j=v_j\sqrt{M} \,\, (j=1,2,3), \quad
\chi_4=\frac{(|v|^2-3)\sqrt{M}}{\sqrt{6}}.\label{basis}\eq

Let   $L^2(\R^3)$ be a Hilbert space of complex-value functions $f(v)$
on $\R^3$ with the inner product and the norm
$$
(f,g)=\intr f(v)\overline{g(v)}dv,\quad \|f\|=\(\intr |f(v)|^2dv\)^{1/2}.
$$
Denote the standard   macro-micro decomposition as follows
\be \label{P10}
\left\{\bal
f=P_0f+P_1f,\\
 P_0f=\sum_{k=0}^4(f,\chi_k)\chi_k,\,\,\,  P_1f=f-P_0f,
 \ea\right.
 \ee
From the Boltzmann's H-theorem, $L$ is non-positive and
moreover, $L$ is locally coercive in the sense that there is a constant $\mu>0$ such that \bma
 (Lf,f)&\leq -\mu \| P_1f\|^2, \quad  \ f\in D(L),\label{L_3}
 \ema
where $D(L)$ is the domains of $L$ given by
$$ D(L)=\left\{f\in L^2(\R^3)\,|\,\nu(v)f\in L^2(\R^3)\right\}.$$
Without the loss of generality, we assume in this paper that $\nu(0)\ge \nu_0\ge \mu>0$.

This paper  aims to prove the
convergence and establish the convergence rate of strong solutions $(f_{\eps}, \phi_{\eps})$ to \eqref{VPB1}--\eqref{VPB2i} towards  $(u , \phi)$, where $u=n \chi_0+m\cdot v\chi_0+q\chi_4$ and $(n,m,q,\phi)(t,x)$ is the solution of the following incompressible Navier-Stokes-Poisson-Fourier (NSPF) system:
\bgr
\Tdx\cdot m=0,\label{NSP_1}\\
n+\sqrt{\frac23}q-\phi=0,\label{NSP_2}\\
\dt m-\kappa_0\Delta_x m +\Tdx p=n\Tdx\phi-\Tdx\cdot (m\otimes m),\label{NSP_3}\\
\dt \bigg(q-\sqrt{\frac23}n\bigg)-\kappa_1\Delta_x q=\sqrt{\frac23} m\cdot\Tdx\phi-\frac53\Tdx\cdot (qm),\label{NSP_4}\\
\Delta_x\phi=n,\label{NSP_5}
\egr
where $p$ is the pressure, and the initial data $(n,m,q)(0)$ satisfies
\bgr
 m(0)=Pm_0,\quad q(0)-\sqrt{\frac23}n(0)= q_0-\sqrt{\frac23}n_0,\label{NSP_5i}\\
n(0) -\Delta^{-1}_xn(0)+\sqrt{\frac23} q(0)=0. \label{compatible}
\egr
Here, $P$ is the projection to the divergence-free subspace and $n_0$, $m_0$ and $q_0$ are given by
\be n_0=(f_0,\chi_0), \quad m_0=(f_0,v\chi_0),\quad q_0=(f_0,\chi_4), \ee
and the viscosity coefficients $\kappa_0$, $\kappa_1>0$ are defined by
\be
\kappa_0=-(L^{-1}P_1(v_1\chi_2),v_1\chi_2),\quad \kappa_1=-(L^{-1}P_1(v_1\chi_4),v_1\chi_4). \label{coe}
\ee

In general, the convergence is not uniform near $t=0$ because of the appearance of  an initial layer.
However, we can show that if the initial data $f_0$ satisfies
\be \label{initial}
\left\{\bln
&f_0(x,v)=n_0(x)\chi_0+m_0(x)\cdot v\chi_0+q_0(x)\chi_4,\\
&\Tdx\cdot m_0=0,\quad n_0-\Delta^{-1}_xn_0+\sqrt{\frac23} q_0=0,
\eln\right.
\ee
then the uniform convergence is up to $t=0$.
\medskip

We give a formally derivation of the macroscopic equations \eqref{NSP_1}--\eqref{NSP_5} as follows. Set
\bmas
f_{\eps}&=f_0+\eps f_1+\eps^2 f_2+\cdots,\\
\phi_{\eps}&=\phi_0+\eps \phi_1+\eps^2 \phi_2+\cdots.
\emas
Substituting the above expansion into \eqref{VPB1}, we have the following Hilbert expansion:
\bma
\frac1{\eps^2}: &\,\ Lf_0=0,  \nnm\\
\frac1{\eps}: &\,\ v\cdot\Tdx f_0-v\sqrt M\cdot\Tdx\phi_0-Lf_1=\Gamma(f_0,f_0),\nnm\\
&\,\ \Delta_x\phi_0=\intr f_0\sqrt Mdv, \label{e1}\\
\eps^0: &\,\ \dt f_0+v\cdot\Tdx f_1-v\sqrt M\cdot\Tdx\phi_1-Lf_2\nnm\\
&\,\ =\frac12v\cdot\Tdx\phi_0f_0-\Tdx\phi_0\cdot\Tdv f_0+\Gamma(f_1,f_0)+\Gamma(f_0,f_1),\nnm\\
&\,\ \Delta_x\phi_1=\intr f_1\sqrt Mdv.\nnm
\ema
From the $\eps^{-2}$ step in \eqref{e1}, we deduce that $f_0\in N_0$, i.e.,
$$ f_0=n_0\chi_0+m_0\cdot v\chi_0+q_0\chi_4.$$
Taking the inner product between the $\eps^{-1}$ step in \eqref{e1} and $\{\chi_0,v\chi_0,\chi_4\}$, we have
\be
\Tdx\cdot m_0=0, \quad \Tdx n_0-\Tdx\phi_0+\sqrt{\frac{2}{3}}\Tdx q_0=0. \label{e4}
\ee
From the $\eps^{-1}$ step in \eqref{e1}, we can represent $f_1$ by
\be
P_1f_1=L^{-1}P_1(v\cdot\Tdx f_0)-L^{-1}\Gamma(f_0,f_0).\label{f1}
\ee
By taking the inner product between the $\eps^0$ step in \eqref{e1} and $\{\chi_0,v\chi_0,\chi_4\}$, and using \eqref{f1}, we have
\bgr
\dt n_0+ \Tdx\cdot m_1 =0,\label{e2}\\
\dt m_0-\kappa_0\Delta_x m_0 +\Tdx p_0=n_0\Tdx\phi_0-\Tdx\cdot (m_0\otimes m_0), \label{e6}\\
\dt q_0+\sqrt{\frac23}\Tdx\cdot m_1-\kappa_1\Delta_x q_0=\sqrt{\frac23} m_0\cdot\Tdx\phi_0-\frac53\Tdx\cdot  (q_0m_0),\label{e3}
\egr
where $(n_1,m_1,q_1)$ are the macroscopic density, momentum and energy of $f_1$ given by
$$n_1=(f_1,\chi_0),\quad m_1=(f_1,v\chi_0),\quad q_1=(f_1,\chi_4),$$
and $p_0$ is the pressure determined by $f_0$ and $f_1$:
$$ p_0=n_1-\phi_1+\sqrt{\frac23}q_1-\frac13 m_0^2.
$$

By taking the summation $-\sqrt{\frac23}\eqref{e2}+\eqref{e3}$, we have
\be
\dt \bigg(q_0-\sqrt{\frac23}n_0\bigg)-\kappa_1\Delta_x q_0=\sqrt{\frac23} m_0\cdot\Tdx\phi_0-\frac53\Tdx\cdot (q_0m_0).\label{e5}
\ee
By combining \eqref{e4}, \eqref{e6} and \eqref{e5}, we obtain \eqref{NSP_1}--\eqref{NSP_5}.

\bigskip

\noindent\textbf{Notations:} \ \ Before state the main results in this paper, we list some notations. Throughout this paper, $C$ denotes a generic positive constant. For any $\alpha=(\alpha_1,\alpha_2,\alpha_3)\in \mathbb{N}^3$ and $\beta=(\beta_1,\beta_2,\beta_3)\in \mathbb{N}^3$, we denote
$$\dxa=\pt^{\alpha_1}_{x_1}\pt^{\alpha_2}_{x_2}\pt^{\alpha_3}_{x_3},\quad \dvb=\pt^{\beta_1}_{v_1}\pt^{\beta_2}_{v_2}\pt^{\beta_3}_{v_3}.$$
The Fourier transform of $f=f(x,v)$ is defined by
$$\hat{f}(\xi,v)=\mathcal{F}f(\xi,v)=\frac1{(2\pi)^{3/2}}\intr e^{-i x\cdot\xi}f(x,v)dx,$$
where and throughout this paper we denote $i=\sqrt{-1}$.

Denote the Sobolev space $ H^N_{k}=\{\,f\in L^2(\R^3_x\times \R^3_v)\,|\,\|f\|_{H^N_{k}}<\infty\}$
equipped with the norm
$$
 \|f\|_{H^N_{k}}=\sum_{|\alpha|+|\beta|\le N}\|\nu^k\dxa\dvb f\|_{L^2(\R^3_x\times \R^3_v)}.
$$
 For $q\ge1$, denote
\bmas
L^2_v(L^q_x)=L^2(\R^3_v,L^q(\R^3_x)),\quad
\|f\|_{L^2_v(L^q_x)}=\bigg(\intr\bigg(\intr|f(x,v)|^q dx\bigg)^{2/q}dv\bigg)^{1/2},
\\
L^q_x(L^2_v)=L^q(\R^3_x,L^2(\R^3_v)),\quad
\|f\|_{L^q_x(L^2_v)}=\bigg(\intr\bigg(\intr|f(x,v)|^2 dv\bigg)^{q/2}dx\bigg)^{1/q}.
\emas
In the following, we denote by $\|\cdot\|_{L^2_{x,v}}$  the norm of the function space $L^2(\R^3_x\times \R^3_v)$, and denote by $\|\cdot\|_{L^q_x}$ and $\|\cdot\|_{L^q_v}$  the norms of the function spaces $L^q(\R^3_x)$ and $L^q(\R^3_v)$ respectively.
For any integer $k\ge1$, we denote by $\|\cdot\|_{H^k_x}$ and $\|\cdot\|_{L^2_v(H^k_x)}$ the norms in the spaces $H^k(\R^3_x)$ and $L^2(\R^3_v,H^k(\R^3_x))$ respectively.

\bigskip

Now we are ready to state  main results in this paper.

\begin{thm}\label{thm1.1}
Let $N\ge 4$. For any $\eps\in (0,1)$, there exists a small constant $\delta_0>0$ such that if $\|f_{0}\|_{H^N_1} +\|f_{0}\|_{L^{2}_v(L^1_x)}\le \delta_0$, then the VPB system \eqref{VPB1}--\eqref{VPB2i} admits a unique global solution  $f_{\eps}(t)= f_{\eps}(t,x,v)$ satisfying the following  time-decay estimate:
\be
   \|f_{\eps}(t)\|_{H^N_1}+\|\Tdx\phi_{\eps}(t)\|_{H^N_x } \le C\delta_0 (1+t)^{-\frac14}, 
\ee
where $\phi_{\eps}(t,x)=\Delta^{-1}_x(f_{\eps}(t,x),\chi_0)$ and $C>0$ is a constant independent of $\eps$.

 There exists a small constant $\delta_0>0$ such that if $\|f_{0}\|_{L^2_v(H^N_x)}+\|f_0\|_{L^{2}_v(L^1_x)}\le \delta_0$, then
the NSPF system \eqref{NSP_1}--\eqref{compatible} admits a unique global solution $(n,m,q)(t,x)\in L^{\infty}_t(H^N_x)$.  Moreover,  $u(t,x,v)=n(t,x)\chi_0+m(t,x)\cdot v\chi_0+q(t,x)\chi_4$ has the following time-decay rate:
\be
   \|u(t)\|_{L^2_v(H^N_x)}+\|\Tdx\phi(t)\|_{H^N_x } \le C\delta_0 (1+t)^{-\frac34}, 
\ee
where $\phi(t,x)=\Delta^{-1}_xn(t,x)$ and $C>0$ is a constant.
\end{thm}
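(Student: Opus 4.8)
The two halves of Theorem~\ref{thm1.1} are established independently. \emph{(i) $\eps$-uniform a priori estimate for the kinetic system \eqref{VPB1}--\eqref{VPB2i}.} Write $B_\eps:=\tfrac1{\eps^2}L-\tfrac1\eps v\cdot\Tdx+\tfrac1\eps v\sqrt M\cdot\Tdx\Delta_x^{-1}(\,\cdot\,,\chi_0)$ for the linearized operator, so that Duhamel's formula reads $f_\eps(t)=e^{tB_\eps}f_0+\int_0^t e^{(t-s)B_\eps}\bigl(G_1(f_\eps)+\tfrac1\eps G_2(f_\eps)\bigr)(s)\,ds$. The first step is the closed a priori inequality
\begin{equation*}
\frac{d}{dt}\E_N(f_\eps)+\mathcal D_N(f_\eps)\le C\sqrt{\E_N(f_\eps)}\,\mathcal D_N(f_\eps),
\end{equation*}
with $\E_N\simeq\|f_\eps\|_{H^N_1}^2+\|\Tdx\phi_\eps\|_{H^N_x}^2$ and with $\mathcal D_N$ dominating $\tfrac1{\eps^2}\|P_1f_\eps\|_{H^N_\nu}^2$ (from the coercivity \eqref{L_3} of $L$) together with the macroscopic dissipation $\|n_\eps\|_{L^2_x}^2+\|\Tdx P_0f_\eps\|_{H^{N-1}_x}^2$ obtained from the moment equations for $(n_\eps,m_\eps,q_\eps)$ and the Poisson relation $\Delta_x\phi_\eps=n_\eps$. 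The electric-field term joins the energy through the identity $\langle\tfrac1\eps v\sqrt M\cdot\Tdx\phi_\eps,f_\eps\rangle=-\tfrac12\tfrac{d}{dt}\|\Tdx\phi_\eps\|_{L^2_x}^2$, while each factor $\eps^{-1}$ in the nonlinearity is absorbed using that $G_2=\Gamma(f_\eps,f_\eps)$ lies in $\mathrm{Range}(P_1)$ and the antisymmetry of $v\cdot\Tdx$, exactly as in the micro--macro/hypocoercivity treatment of the Boltzmann equation, but with the powers of $\eps$ tracked so that no constant degenerates as $\eps\to0$. A continuity argument then yields the $\eps$-uniform global bound $\E_N(f_\eps)(t)\lesssim\delta_0^2$, hence global existence and boundedness.

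\emph{(ii) Decay for the kinetic system.} Into Duhamel's formula I would insert the $\eps$-uniform linear decay estimates furnished by the spectral analysis of $\widehat{B_\eps}(\xi)$: a low/high-frequency decomposition in which the high-frequency part decays exponentially uniformly in $\eps$ (the effective dissipation of the fluid modes being $\asymp|\xi|^2$, hence $\eps$-independent --- the emergent viscosity), and the low-frequency fluid branches have real parts $\asymp-|\xi|^2$, so that the macroscopic fields $(n_\eps,m_\eps,q_\eps)$ decay like the three-dimensional heat flow, two of these branches in addition carrying the $O(1/\eps)$ plasma oscillation. The factor $|\xi|^{-1}$ entering through $\Tdx\phi_\eps=\Tdx\Delta_x^{-1}n_\eps$ then degrades the rate of $\Tdx\phi_\eps$, and with it of $f_\eps$, to $(1+t)^{-1/4}$. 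Combining this with bilinear estimates for $G_1$ and $\tfrac1\eps G_2$ (the $\eps^{-1}$ again absorbed by the extra $\eps^2$-worth of dissipation of $e^{tB_\eps}$ on $\mathrm{Range}(P_1)$), a time-weighted version of (i) --- namely $\sup_{0\le s\le t}(1+s)^{1/2}\E_N(f_\eps)(s)\lesssim\delta_0^2$ --- closes and gives $\|f_\eps(t)\|_{H^N_1}+\|\Tdx\phi_\eps(t)\|_{H^N_x}\le C\delta_0(1+t)^{-1/4}$ with $C$ independent of $\eps$.

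\emph{(iii) The NSPF system.} I would reduce \eqref{NSP_1}--\eqref{compatible} to a closed parabolic system in $(m,q)$ only. From $\Delta_x\phi=n$ and $n+\sqrt{2/3}q=\phi$ one gets $\phi=\sqrt{2/3}(I-\Delta_x)^{-1}q$ and $n=\sqrt{2/3}(I-\Delta_x)^{-1}\Delta_x q$, both order-zero in $q$, and $q-\sqrt{2/3}n=Aq$ with $A:=I-\tfrac23(I-\Delta_x)^{-1}\Delta_x$ bounded, positive and invertible; eliminating the pressure in \eqref{NSP_3} via the Leray projection $P$ turns the system into
\begin{equation*}
\dt m-\kappa_0\Delta_x m=P\bigl(n\Tdx\phi-\Tdx\cdot(m\otimes m)\bigr),\qquad \dt q=A^{-1}\bigl(\kappa_1\Delta_x q+\sqrt{2/3}\,m\cdot\Tdx\phi-\tfrac53\Tdx\cdot(qm)\bigr),
\end{equation*}
with $\Tdx\cdot m=0$, dissipative linear parts (the multiplier $A^{-1}\kappa_1\Delta_x$ has symbol $\asymp-|\xi|^2$) and quadratic nonlinearities with no loss of derivatives, since $(I-\Delta_x)^{-1}$ compensates the gradients in $\Tdx\cdot(qm)$. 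Local existence in $C([0,T];H^N_x)\cap L^2([0,T];H^{N+1}_x)$ follows by a contraction argument, and the energy inequality $\tfrac{d}{dt}\|(m,q)\|_{H^N_x}^2+c\|\Tdx(m,q)\|_{H^N_x}^2\lesssim\|(m,q)\|_{H^N_x}\|\Tdx(m,q)\|_{H^N_x}^2$ together with the smallness of $\|f_0\|_{L^2_v(H^N_x)}$ gives the global solution in $L^\infty_t(H^N_x)$. For the decay I would combine the $L^2_v(L^1_x)$ control of $f_0$, hence of $(n_0,m_0,q_0)$, with heat-kernel bounds for the multiplier semigroups of the linearized system through a Fourier-splitting / time-weighted energy argument, obtaining $\|\dxa(m,q)(t)\|_{L^2_x}\lesssim\delta_0(1+t)^{-3/4-|\alpha|/2}$. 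Since $\|u(t)\|_{L^2_v(H^N_x)}\simeq\|(n,m,q)(t)\|_{H^N_x}$ and $\Tdx\phi=\sqrt{2/3}(I-\Delta_x)^{-1}\Tdx q$ obeys $\|\Tdx\phi(t)\|_{H^N_x}\le C\|q(t)\|_{H^{N-1}_x}$, this gives the claimed $\le C\delta_0(1+t)^{-3/4}$; the $n$- and $\phi$-components in fact decay faster, which is where the compatibility condition \eqref{compatible} --- forcing $\widehat n(0,\xi)=O(|\xi|^2)$ near $\xi=0$ --- is used.

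\emph{Main obstacle.} The crux is (i)--(ii): getting every linear and nonlinear estimate with constants uniform in $\eps$. One must show that the spectral gap of $\tfrac1{\eps^2}L$ on the microscopic part survives the Poisson coupling and the singular transport $\tfrac1\eps v\cdot\Tdx$ as $\eps\to0$; control the $O(1/\eps)$ plasma oscillation (the source of the initial layer) together with the low-frequency singularity $|\xi|^{-1}$ of $\Tdx\Delta_x^{-1}$, which between them fix the slow rate $(1+t)^{-1/4}$ rather than the $(1+t)^{-3/4}$ of the pure Boltzmann flow; and absorb each $\eps^{-1}$ in $\tfrac1\eps\Gamma(f_\eps,f_\eps)$ using that $\Gamma$ maps into $\mathrm{Range}(P_1)$, on which $e^{tB_\eps}$ supplies the missing power of $\eps$. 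Step (iii) is comparatively routine, the only real care being the derivative counting in $\Tdx\cdot(qm)$ against the smoothing operators $(I-\Delta_x)^{-1}$ and $A^{-1}$.
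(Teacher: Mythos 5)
Your proposal is substantially correct and, for the kinetic half (i)--(ii), follows the same path the paper takes: a closed macro--micro energy inequality with $\eps$ tracked uniformly (the paper's Lemma~4.1), followed by a Duhamel bootstrap that feeds the linear spectral decay estimates for $e^{\frac{t}{\eps^2}B_\eps}$ (Theorem~2.9/Lemma~3.3) into the nonlinear terms, exploiting precisely the fact that $\Gamma(f_\eps,f_\eps)\in\mathrm{Range}(P_1)$ so the semigroup supplies the extra powers of $\eps$ (the paper's Lemma~4.3 sets up the weighted quantity $Q_\eps(t)$ tracking both $E_1$ at rate $(1+t)^{-1/2}$ and $H_1$ at rate $(1+t)^{-3/2}$ and closes $Q_\eps\le C\delta_0+CQ_\eps^2$). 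One small correction to your narrative: the slow rate $(1+t)^{-1/4}$ comes entirely from the low-frequency weight $|\xi|^{-2}$ in $\|\cdot\|_\xi$ (i.e.\ from $\Tdx\Delta_x^{-1}n_\eps$), not from the $O(1/\eps)$ plasma oscillation; the oscillatory branches $\eta_{\pm1}$ sit on the imaginary axis, contribute no extra damping, and are responsible for the initial layer in Theorem~1.2, not for the degraded $L^2$ decay in Theorem~1.1.

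For part (iii) you take a genuinely different route. You reduce the NSPF system to a closed parabolic system for $(m,q)$ alone by eliminating $\phi=\sqrt{2/3}(I-\Delta_x)^{-1}q$ and $n=\Delta_x\phi$, invert the bounded multiplier $A=I-\tfrac23(I-\Delta_x)^{-1}\Delta_x$, and then run a standard energy/contraction argument plus Fourier-splitting for decay. This is correct, and in fact $A^{-1}\kappa_1|\xi|^2$ reproduces exactly the paper's dissipation rate $b_0(|\xi|)$ from \eqref{eigen4}, so your reduction is consistent with the spectral picture. The paper instead never eliminates $(n,\phi)$: it represents the NSPF solution directly via the semigroup $V(t)$ built from the eigenfunctions $h_j(\xi)$ and rates $b_j(|\xi|)$ of $D(\xi)$ (Lemma~3.1), proves decay of $V(t)$ (Lemma~3.2) and then bootstraps exactly as in (ii) (Lemma~4.4). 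The paper's route reuses the spectral machinery already in place and, crucially, produces precisely the object $V(t)$ against which $e^{\frac{t}{\eps^2}B_\eps}$ must be compared in Theorem~1.2; your route is more elementary and self-contained but would require separately identifying the limiting semigroup when it comes to the convergence-rate estimate. Both are valid for proving Theorem~1.1 itself.
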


\begin{thm}\label{thm1.2}
Let $(f_{\eps},\phi_{\eps})=(f_{\eps}(t,x,v),\phi_{\eps}(t,x)) $ be the global solution to the VPB system \eqref{VPB1}--\eqref{VPB2i}, and let $(n,m,q,\phi)=(n,m,q,\phi)(t,x)$ be the global solution to the NSPF system \eqref{NSP_1}--\eqref{compatible}. Then, there exists a small constant $\delta_0>0$ such that if $\|f_{0}\|_{H^6_1} +\|f_{0}\|_{L^{2}_v(L^1_x)} +\|\Tdx\Delta^{-1}_x(f_0,\chi_0)\|_{L^p_x}\le \delta_0$ with $p\in(1,2)$, we have
\be
\|f_{\eps}(t)-u(t)\|_{L^{\infty}_x(L^{2}_v)}+\|\Tdx\phi_{\eps}(t)-\Tdx\phi(t)\|_{L^{\infty}_x}\le C\delta_0\(\eps^{a} (1+t)^{-\frac12} +\(1+\eps^{-1}t\)^{-b}\), \label{limit_1}
\ee
where $u(t,x,v)=n(t,x)\chi_0+m(t,x)\cdot v\chi_0+q(t,x)\chi_4$, $b=\min\{1,p'\}$ with $p'=3/p-3/2\in (0,3/2)$, and $a=b$ when $b<1$;
$a=1+2\log_{\eps}|\ln \eps| $  when $b= 1$.

Moreover, if the initial data $f_0$ satisfies \eqref{initial} and   $\|f_{0}\|_{L^{2}_v(H^6_x\cap L^1_x)}\le \delta_0$, then we have
\be
\|f_{\eps}(t)-u(t)\|_{L^{\infty}_x(L^{2}_v)}+\|\Tdx\phi_{\eps}(t)-\Tdx\phi(t)\|_{L^{\infty}_x}\le C\delta_0\eps (1+t)^{-\frac34} . \label{limit_1a}
\ee
\end{thm}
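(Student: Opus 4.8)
\noindent\emph{Proof idea.}
The plan is to work with the mild (Duhamel) formulation and to compare the linearized VPB semigroup with the linearized NSPF semigroup through a low-frequency spectral decomposition. Since Theorem~\ref{thm1.1} already supplies the global solutions $f_\eps$ and $u$ with their decay rates --- and the $L^1_x$-integrability of the data upgrades these to the $L^\infty_x$-decay needed for the nonlinear estimates --- it suffices to establish a stability estimate for $w_\eps:=f_\eps-u$ in $L^\infty_x(L^2_v)$. First I would write the linearized VPB operator, Poisson coupling included, as
\[
\B_\eps:=\frac1{\eps^2}L-\frac1\eps\,v\cdot\Tdx+\frac1\eps\,(v\sqrt M)\cdot\Tdx\Delta^{-1}_x(\,\cdot\,,\chi_0),
\]
so that $\dt f_\eps=\B_\eps f_\eps+G_1(f_\eps)+\frac1\eps G_2(f_\eps)$; using the Hilbert computation \eqref{e1}--\eqref{e5} together with the corrector $\eps u_1$, $P_1u_1=L^{-1}P_1(v\cdot\Tdx u)-L^{-1}\Gamma(u,u)$ of \eqref{f1}, one checks that $u$ (augmented by $\eps u_1$) solves $\dt u=\B_\eps u+\mathcal N[u]+\mathcal R_\eps$, where $\mathcal N[u]$ is the kinetic form of the NSPF nonlinearities of \eqref{NSP_3}--\eqref{NSP_4} and $\mathcal R_\eps=O(\eps)$ is the consistency defect. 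Then $\dt w_\eps=\B_\eps w_\eps+S_\eps$ with $S_\eps=(G_1(f_\eps)-\mathcal N[u])+\frac1\eps G_2(f_\eps)-\mathcal R_\eps$ and $w_\eps(0)=f_0-u_0$, so by Duhamel's formula everything reduces to estimating $e^{t\B_\eps}$ applied to $w_\eps(0)$ and to $S_\eps$.

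The core ingredient is the spectral structure of $\B_\eps$. After Fourier transform in $x$ and the substitution $\eta=\eps\xi$ one has $\widehat{\B_\eps}(\xi)=\eps^{-2}\big[L-i(v\cdot\eta)-i\eps^2|\eta|^{-2}(v\cdot\eta)\sqrt M(\,\cdot\,,\chi_0)\big]$, and a perturbation analysis near $|\eta|=0$ --- in the spirit of \cite{Li1,Li2} and of the spectral theory of the linearized Boltzmann operator --- should show that for $|\eps\xi|\le r_0$ (with $r_0$ small) $\widehat{\B_\eps}(\xi)$ has exactly five eigenvalues near the imaginary axis: a conjugate pair of acoustic/plasma modes
\[
\lambda^\eps_\pm(\xi)=\pm\frac{i}{\eps}\sqrt{1+c_s^2|\xi|^2}\,\big(1+O(|\eps\xi|)\big)-|\xi|^2\vartheta_\pm(\eps\xi),\qquad \mathrm{Re}\,\lambda^\eps_\pm(\xi)\le -c|\xi|^2,
\]
and three ``diffusive'' eigenvalues $\mu^\eps_j(\xi)=\lambda^{\mathrm{NSPF}}_j(\xi)+O(\eps|\xi|^2)$ whose leading symbols $-\kappa_0|\xi|^2$ (twice, divergence-free momentum) and the thermal root coincide with those of the linearized NSPF system \eqref{NSP_1}--\eqref{NSP_5}, the corresponding spectral projections converging to the NSPF ones up to $O(\eps)$; on $\{|\eps\xi|>r_0\}$ and on the $P_1$-directions at low frequency the semigroup is bounded by $Ce^{-\sigma_0 t/\eps^2}$. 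Combined with Sobolev embedding and $L^2_v(L^1_x)\hookrightarrow L^2_v(L^\infty_\xi)$, this gives a decomposition, valid in $L^\infty_x(L^2_v)$,
\[
e^{t\B_\eps}g=e^{t\B_0}(\Pi g)+\mathcal A_\eps(t)g+\mathcal E_\eps(t)g,
\]
with $e^{t\B_0}$ the linear NSPF solution operator, $\Pi$ the constraint-compatible hydrodynamic projection (so $\Pi f_0=u_0$, hence $\Pi w_\eps(0)=0$), $\mathcal A_\eps(t)g=\mathcal F^{-1}\big[\sum_\pm e^{\lambda^\eps_\pm(\xi)t}P^\eps_\pm(\xi)\hat g\,\mathbf 1_{\{|\eps\xi|\le r_0\}}\big]$ the acoustic/plasma part, and $\|\mathcal E_\eps(t)g\|_{L^\infty_x(L^2_v)}\lesssim\big(\eps+e^{-\sigma t/\eps^2}\big)(1+t)^{-3/4}\big(\|g\|_{H^2_x(L^2_v)}+\|g\|_{L^2_v(L^1_x)}\big)$.

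The estimate of $w_\eps$ is then termwise. In $e^{t\B_\eps}w_\eps(0)$ the NSPF part vanishes since $\Pi w_\eps(0)=0$; $\mathcal E_\eps(t)w_\eps(0)$ contributes $O(\eps)$ plus a fast $O(e^{-\sigma t/\eps^2})$ layer; and $\mathcal A_\eps(t)w_\eps(0)$ is the genuine initial layer --- modulo the damping $e^{-t|\xi|^2\vartheta_\pm}$ it is the Klein--Gordon-type propagator $e^{\pm it\eps^{-1}\sqrt{1+c_s^2|\xi|^2}}$ with amplitude of size $|\xi|^{-1}$ times the density moment $(\hat f_0,\chi_0)$, hence controlled by $\|\Tdx\Delta^{-1}_x(f_0,\chi_0)\|_{L^p_x}$, and the $L^p\!\to\!L^{p'}$ dispersive estimate in $\R^3$ (decay exponent $3/p-3/2$) together with the damping yields $\|\mathcal A_\eps(t)w_\eps(0)\|_{L^\infty_x(L^2_v)}\lesssim\delta_0(1+\eps^{-1}t)^{-b}$, $b=\min\{1,p'\}$. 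In $\int_0^t e^{(t-s)\B_\eps}S_\eps\,ds$ the term $\frac1\eps G_2(f_\eps)=\frac1\eps\Gamma(f_\eps,f_\eps)$ lies in $P_1$, so it is reached only by $\mathcal E_\eps$ and $\int_0^t\frac1\eps e^{-\sigma(t-s)/\eps^2}\|\Gamma(f_\eps,f_\eps)(s)\|\,ds\lesssim\eps\,\delta_0^2(1+t)^{-1/2}$; the defect $\mathcal R_\eps$ contributes $O(\eps)$; and $G_1(f_\eps)-\mathcal N[u]$ splits into terms quadratic in $w_\eps$ (absorbed by the smallness $\delta_0$) and products of $u$ or $f_\eps$ against the acoustic/plasma part, whose time convolution $\int_0^t(1+\eps^{-1}s)^{-b}(1+t-s)^{-3/2}\,ds$ produces precisely $\eps^b(1+t)^{-1/2}$ when $b<1$, and a logarithmically critical integral yielding $\eps|\ln\eps|^2(1+t)^{-1/2}$ at the endpoint $p'=1$. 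Collecting the contributions gives \eqref{limit_1}. Finally, under \eqref{initial} one has $w_\eps(0)=(\mathrm{Id}-\Pi)f_0=0$, so $\mathcal A_\eps(t)w_\eps(0)$ and the $e^{-\sigma t/\eps^2}$ layers disappear; the only remaining error is the $O(\eps)$ consistency/nonlinear term, which, being now sourced by the faster-decaying well-prepared data, obeys $C\delta_0\eps(1+t)^{-3/4}$, i.e. \eqref{limit_1a}.

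The main obstacle will be the uniform-in-$\eps$ control of the acoustic/plasma component. The Poisson term is \emph{not} a small perturbation at low frequency --- $\eps^2|\eta|^{-2}$ is large precisely when $|\eps\xi|\ll1$, which is the regime of the fluid limit --- so the perturbation argument must be arranged so that it nonetheless produces a genuine Klein--Gordon symbol $\eps^{-1}\sqrt{1+c_s^2|\xi|^2}$ carrying a strictly dissipative real part of order $|\xi|^2$; one must then extract the sharp dispersive decay $(1+\eps^{-1}t)^{-b}$ from data measured only through $\|\Tdx\Delta^{-1}_x(f_0,\chi_0)\|_{L^p_x}$, correctly identify the exponent $b=\min\{1,p'\}$ and the logarithmic loss at $p'=1$ from the Duhamel time integrals, and run all the nonlinear estimates --- in particular those involving the $\eps^{-1}$ in front of $G_1$ and $G_2$ --- without losing a power of $\eps^{-1}$.
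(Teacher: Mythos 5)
The overall architecture you describe — Duhamel plus a low-frequency spectral split into diffusive modes (matched to NSPF), Klein--Gordon acoustic/plasma modes (the initial layer, treated by a dispersive $L^p$ estimate), and an exponentially decaying remainder, followed by a weighted bootstrap — is the same strategy as the paper. The paper implements it by directly comparing the two mild formulations \eqref{fe} and \eqref{ue} through the first- and second-order fluid approximations of the VPB semigroup (Lemmas \ref{fl-1} and \ref{fl-2}), while you instead insert the Hilbert corrector $\eps u_1$ to put the NSPF solution into a single equation $\dt w_\eps=\B_\eps w_\eps+S_\eps$. Either route can work, but yours has a genuine gap as written.

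The gap is in the treatment of $\frac1\eps G_2(f_\eps)$. You write that $\frac1\eps G_2(f_\eps)=\frac1\eps\Gamma(f_\eps,f_\eps)$ ``lies in $P_1$, so it is reached only by $\mathcal E_\eps$'' and contributes $\lesssim\eps\,\delta_0^2(1+t)^{-1/2}$. That is false: the semigroup mixes $P_0$ and $P_1$, and for microscopic $g$ the macroscopic output satisfies $\|P_0\,e^{t\eps^{-2}B_\eps}g\|\lesssim\eps|\xi| e^{-c|\xi|^2 t}\|g\|+e^{-dt/\eps^2}\|g\|$ (cf.\ \eqref{t5}). Multiplying by $\eps^{-1}$, the macroscopic output of $\frac1\eps e^{t\eps^{-2}B_\eps}\Gamma(f_\eps,f_\eps)$ is $O(1)$, \emph{not} $O(\eps)$, and after the Duhamel integral this $O(1)$ piece is exactly the NSPF convective nonlinearity $-\Tdx\cdot(m\otimes m)\cdot v\chi_0-\frac53\Tdx\cdot(qm)\chi_4$. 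The paper makes this cancellation explicit: Lemma \ref{fl-2} shows $\frac1\eps e^{t\eps^{-2}B_\eps}g_0+V(t)P_0(v\cdot\Tdx L^{-1}g_0)=O(\eps)$ for $P_0g_0=0$, and Lemma \ref{gamma1} identifies $-V(t)P_0\big(v\cdot\Tdx L^{-1}\Gamma(P_0f_\eps,P_0f_\eps)\big)$ with $V(t)\divx Z_2$ via the algebraic identity $\Gamma(v_i\chi_0,v_j\chi_0)=-\tfrac12 LP_1(v_iv_j\chi_0)$ (and its siblings). Without such a matching, your splitting $S_\eps=(G_1(f_\eps)-\mathcal N[u])+\frac1\eps G_2(f_\eps)-\mathcal R_\eps$ does not close: $\mathcal N[u]$ contains the convective term $-\Tdx\cdot(m\otimes m)$ which is absent from $G_1$, so $G_1(f_\eps)-\mathcal N[u]$ is \emph{not} quadratic in $w_\eps$; the leftover $O(1)$ term $+\Tdx\cdot(m\otimes m)\cdot v\chi_0+\frac53\Tdx\cdot(qm)\chi_4$ must be canceled against the macroscopic output of $\frac1\eps G_2$, which you have discarded as $O(\eps)$. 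Closing the estimate therefore requires either the paper's second-order fluid approximation of the semigroup together with the $\Gamma$-to-$L$ identities, or, if you keep the corrector route, a source of the form $\frac1\eps\big(G_2(f_\eps)-G_2(u+\eps u_1)\big)$ with the $\frac1\eps\Gamma(u,u)$ term cancelled against $\frac1{\eps^2}L(\eps u_1)$ at the level of the equation — in either case the convective cancellation must appear explicitly, not be waved away.

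A secondary remark: your claim that ``$\mathcal R_\eps=O(\eps)$'' in $\dt u=\B_\eps u+\mathcal N[u]+\mathcal R_\eps$ only holds if the $O(1/\eps)$ piece $-\frac1\eps P_1(v\cdot\Tdx u)$ of $\B_\eps u$ (which does not vanish since $u\in N_0$) is absorbed by the corrector, i.e.\ the equation must be written for $u+\eps u_1$, not for $u$. As stated, $\mathcal R_\eps$ is $O(1/\eps)$. This is easy to repair but again signals that the corrector bookkeeping, which in your sketch is the replacement for the paper's Lemmas \ref{fl-2} and \ref{gamma1}, has to be done carefully for the $\eps$ powers to come out uniformly.
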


\begin{remark}
By taking the Fourier transform to \eqref{NSP_5i} and \eqref{compatible}, we have
\be \hat{n}(0)=-\frac{\sqrt6|\xi|^2}{3+5|\xi|^2}\bigg(\hat{q}_0-\sqrt{\frac23}\hat{n}_0\bigg),\quad \hat{q}(0)=\frac{3+3|\xi|^2}{3+5|\xi|^2}\bigg(\hat{q}_0-\sqrt{\frac23}\hat{n}_0\bigg).\ee
This implies that the initial data $f_0$ of the VPB system has the same regularity in $x$ as the initial data $(n,m,q)(0)$ of the NSPF system.
\end{remark}

\begin{remark}
Define the oscalation part of $f_{\eps}$ as
\be u^{osc}_{\eps}(t,x,v)=\sum_{j=-1,1} \mathcal{F }^{-1}\(e^{ \frac{\eta_j(|\xi|)}{\eps}t-b_j(|\xi|)t}\(P_0\hat{f}_0,h_j(\xi)\)_{\xi}h_j(\xi)\) ,\label{osc}\ee
where $(\eta_j(|\xi|), h_j(\xi))$ and $b_j(|\xi|)$, $j=-1,1$ are defined by \eqref{eigen} and \eqref{eigen4}respectively. Then, under the first assumption of Theorem \ref{thm1.2} we have
\be
\|f_{\eps}(t)-u(t)-u^{osc}_{\eps}(t)-e^{\frac{t}{\eps^2}B_{\eps}}P_1f_0\|_{L^{\infty}_P} \le C\delta_0\eps (1+t)^{-\frac12},
\ee
where the norm $\|\cdot\|_{L^{\infty}_P}$ is defined by
\be \|f\|_{L^{\infty}_P}=\|f\|_{L^{\infty}_x(L^{2}_v)}+\|\Tdx\Delta^{-1}_x(f,\chi_0)\|_{L^{\infty}_x}. \label{norm2}\ee
\end{remark}

The rest of this paper will be organized as follows.
In the next section, we present the results about the spectrum analysis on  the linearized VPB system. In Section~\ref{sect3}, we  establish  the first and second order fluid approximations of the solution to the  linearized VPB system.
In Section~\ref{sect4}, we prove the convergence and establish the convergence rate of the global solution to the  nonlinear VPB system towards the solution to the nonlinear NSP system.

\section{Spectrum analysis}\label{sect2}
\setcounter{equation}{0}

In this section, we are concerned with the spectral analysis of the linear VPB operator $B_{\eps}(\xi)$ defined by
\eqref{B3}, which will be applied to study diffusion limit of the solution to the VPB system \eqref{VPB1}--\eqref{VPB2i}.

From the system \eqref{VPB1}--\eqref{VPB2i}, we have the following linearized VPB system:
\bq \label{VPB}
\left\{\bln
&\eps^2\dt f_{\eps}=B_{\eps}f_{\eps},\quad t>0,\\
 &f_{\eps}(0,x,v)=f_0(x,v),
 \eln\right.
 \eq
where
\bq B_{\eps}=L-\eps v\cdot\Tdx +\eps v\cdot\Tdx\Delta_x^{-1}P_d,\eq
with
\be P_dg=(g,\sqrt{M})\sqrt{M},\quad \forall g\in L^2(\R^3_v).\ee

Take Fourier transform to \eqref{VPB} to get
\be
\left\{\bln
 &\eps^2\dt \hat{f}_{\eps}= B_{\eps}(\xi)\hat f_{\eps},\quad t>0,\\
  &\hat{f}_{\eps}(0,\xi,v)=\hat{f}_0(\xi,v),
   \eln\right.
\ee
where
\bq B_{\eps}(\xi)=L-i\eps(v\cdot\xi)-i\eps\frac{v\cdot\xi }{|\xi|^2}P_d,\quad \xi\ne 0. \label{B3}\eq

For $\xi\ne 0$, define a weight Hilbert space $L^2_{\xi}(\R^3)$
 by
$$L^2_{\xi}(\R^3)=\{f\in
L^2(\R^3)\,|\,\|f\|_{\xi}=\sqrt{(f,f)_{\xi}}<\infty\}$$
 with the inner product
\bq (f,g)_{\xi}=(f,g)+\frac1{|\xi|^2}(P_d f,P_dg).\label{C_1}\eq

Since $P_{d}$ is a self-adjoint projection operator, it follows that
 $(P_{d} f,P_{d} g)=(P_{d} f, g)=( f,P_{d} g)$ and hence
 \bq (f,g)_{\xi}=(f,g+\frac1{|\xi|^2}P_{d}g)=(f+\frac1{|\xi|^2}P_{d}f,g).\label{C_1a}\eq
By
\eqref{C_1a}, we have for any $f,g\in L^2_{\xi}(\R^3_v)\cap D(B_{\eps}(\xi))$,
 \be
 (B_{\eps}(\xi)f,g)_{\xi}=(B_{\eps}(\xi) f,g+\frac1{|\xi|^2}P_{d} g)=(f,B_{\eps}(-\xi)g)_{\xi}. \label{L_7}
\ee
Moreover,  $B_{\eps}(\xi)$ is a dissipate operator in $L^2_{\xi}(\R^3)$:
 \be
 {\rm Re}(B_{\eps}(\xi)f,f)_{\xi}=(Lf,f)\le 0,\quad \forall f\in L^2_{\xi}(\R^3). \label{L_8}
\ee
We can regard $B_{\eps}(\xi)$ as a linear operator from the space $L^2_{\xi}(\R^3)$ to itself because
$$
 \|f\|^2\le \|f\|^2_{\xi}\le(1+|\xi|^{-2})\|f\|^2,\quad \xi\ne 0.
$$

We denote $\rho(A)$ and $\sigma(A)$ to be the resolvent set and spectrum set of the operator $A$ respectively.  The discrete spectrum
of $A$, denoted by $\sigma_d(A)$, is the set of all isolated eigenvalues with finite multiplicity.
The essential spectrum of $A$, $\sigma_{ess}(A)$, is the set $\sigma(A)\setminus \sigma_d
(A)$. By \eqref{L_7},  \eqref{L_8}, \eqref{nu} and \eqref{L_3}, we have the following lemmas.

\begin{lem}\label{SG_1}
The operator $B_{\eps}(\xi)$ generates a strongly continuous contraction semigroup on
$L^2_{\xi}(\R^3)$, which satisfies \bq
\|e^{tB_{\eps}(\xi)}f\|_{\xi}\le\|f\|_{\xi}, \quad\mbox{for any}\ t>0,\,f\in
L^2_{\xi}(\R^3_v). \eq
\end{lem}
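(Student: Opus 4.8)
The plan is to obtain Lemma \ref{SG_1} as a direct application of the Lumer--Phillips theorem (or the Hille--Yosida theorem for contraction semigroups). The strategy is to verify that $B_{\eps}(\xi)$, viewed as an unbounded operator on the Hilbert space $L^2_\xi(\R^3)$ with domain $D(B_\eps(\xi)) = \{f \in L^2_\xi(\R^3) : \nu(v) f \in L^2(\R^3)\}$, is densely defined, dissipative, and that $\lambda I - B_\eps(\xi)$ is surjective for some (hence all) $\lambda > 0$. The dissipativity is already recorded in \eqref{L_8}: for all $f$ in the domain, ${\rm Re}(B_\eps(\xi)f,f)_\xi = (Lf,f) \le 0$ by \eqref{L_3}. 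Density of the domain is immediate since it contains all Schwartz functions (and the same domain works as for the multiplication operator $\nu(v)$).

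The one substantive step is the range condition. First I would split $B_\eps(\xi) = L + T_\eps(\xi)$ where $T_\eps(\xi) = -i\eps(v\cdot\xi) - i\eps\frac{v\cdot\xi}{|\xi|^2}P_d$ is skew-adjoint on $L^2_\xi(\R^3)$ (by \eqref{L_7} with the symmetry $\xi \mapsto -\xi$), and $L = K - \nu(v)$ by \eqref{L_1}. The operator $-\nu(v) + T_\eps(\xi)$ is maximal dissipative: its dissipativity follows from ${\rm Re}((-\nu(v)+T_\eps(\xi))f,f)_\xi = -(\nu f, f) \le 0$, and solvability of $(\lambda + \nu(v) - T_\eps(\xi))f = g$ for large $\lambda$ can be seen by a fixed-point/Neumann-series argument treating the first-order term $T_\eps(\xi)$ as a perturbation, or more cleanly by noting that $-\nu(v)$ generates a bounded (in fact contraction, since $\nu \ge \nu_0 > 0$) multiplication semigroup and $T_\eps(\xi)$ is skew-adjoint, so by the bounded perturbation theorem (here $T_\eps(\xi)$ restricted appropriately, or a direct estimate using $|v\cdot\xi| \lesssim (1+|v|)|\xi| \lesssim \nu(v)^{1/\gamma}|\xi|$ — care is needed for $\gamma<1$) one still gets a $C_0$-semigroup; alternatively, one checks the resolvent directly. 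Then $K$ is $L$-bounded — indeed $K$ is bounded on $L^2(\R^3)$ and compact by \eqref{L_1}, hence relatively bounded with respect to $\nu(v)$ with relative bound $0$ — so adding $K$ to the maximal dissipative generator $-\nu(v) + T_\eps(\xi)$ preserves the generator property (bounded perturbation of a generator), and dissipativity of the sum is already guaranteed by \eqref{L_8}. Therefore $B_\eps(\xi)$ generates a $C_0$-semigroup, which is a contraction semigroup precisely because $B_\eps(\xi)$ is dissipative, i.e.\ $\|e^{tB_\eps(\xi)}f\|_\xi \le \|f\|_\xi$.

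The main obstacle, such as it is, is the range/maximality verification in the hard-potential case $0 \le \gamma < 1$: there $|v\cdot\xi|$ is not controlled by $\nu(v)$, so the first-order streaming term $T_\eps(\xi)$ is not $\nu(v)$-bounded and one cannot simply invoke a relatively-bounded perturbation result. The clean way around this is the observation that $T_\eps(\xi)$ is skew-adjoint (anti-self-adjoint) on $L^2_\xi(\R^3)$ by \eqref{L_7}, hence by Stone's theorem it alone generates a unitary group; then $-\nu(v) + T_\eps(\xi)$ is a dissipative perturbation and one shows maximality by solving the resolvent equation via the explicit formula along characteristics of the transport part (the $P_d$ term is rank-one, handled by the Sherman--Morrison/matrix-inversion trick), or simply by citing that the sum of a skew-adjoint operator and a bounded-below (here: nonpositive) self-adjoint multiplication operator with common core is maximal dissipative. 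Finally, adding the bounded operator $K$ and invoking \eqref{L_8} for the global dissipativity closes the argument.
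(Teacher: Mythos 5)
Your approach is correct in outline but genuinely different from the paper's and considerably more laborious than it needs to be. The paper's own proof is a one-liner: by \eqref{L_7} the adjoint of $B_{\eps}(\xi)$ on $L^2_\xi(\R^3)$ is $B_\eps(-\xi)$, and by \eqref{L_8} both $B_\eps(\xi)$ and $B_\eps(-\xi)$ are dissipative; since $B_\eps(\xi)$ is densely defined and closed, Corollary~4.4 on p.~15 of Pazy \cite{Pazy} (a densely defined closed operator $A$ with $A$ and $A^*$ both dissipative generates a $C_0$ contraction semigroup) gives the claim at once. This criterion sidesteps the range condition entirely, which is exactly the part of your argument that absorbs all the effort.

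If you do want to go the Lumer--Phillips route, two observations make the range condition trivial and let you avoid the case-splitting you worry about for $0\le\gamma<1$. First, regroup as $B_\eps(\xi) = A_\eps(\xi) + K - i\eps\frac{v\cdot\xi}{|\xi|^2}P_d$ with $A_\eps(\xi) = -\nu(v) - i\eps(v\cdot\xi)$. The operator $A_\eps(\xi)$ is simply multiplication by a function of $v$ with $\mathrm{Re}\,(-\nu - i\eps(v\cdot\xi)) = -\nu(v) \le -\nu_0<0$, so it generates the explicit contraction semigroup $e^{tA_\eps(\xi)}f = e^{-t\nu(v)}e^{-it\eps(v\cdot\xi)}f$; the resolvent is $(\lambda - A_\eps(\xi))^{-1}g = g/(\lambda+\nu(v)+i\eps(v\cdot\xi))$, bounded for $\mathrm{Re}\lambda > -\nu_0$. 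No characteristics, no Neumann series, no comparison of $|v\cdot\xi|$ with $\nu(v)$: this is uniform in $\gamma\in[0,1]$. Second, the remaining term $-i\eps\frac{v\cdot\xi}{|\xi|^2}P_d$ is a \emph{bounded} rank-one operator (since $P_d g=(g,\sqrt M)\sqrt M$ and $(v\cdot\xi)\sqrt M\in L^2(\R^3_v)$), and $K$ is bounded by \eqref{L_1}. So $B_\eps(\xi)$ is a bounded perturbation of a generator, hence a generator, and dissipativity \eqref{L_8} upgrades this to a contraction semigroup. The Sherman--Morrison trick is unnecessary. Finally, you should drop the alternative you offer that ``the sum of a skew-adjoint operator and a nonpositive self-adjoint multiplication operator with common core is maximal dissipative'': as stated this is not a theorem and is false in that generality (sums of two maximal dissipative operators need not be maximal dissipative), and it is not needed once the two observations above are in place.
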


\begin{proof}
Since  the operator $B_{\eps}(\xi)$ is a densely defined closed operator on  $L^2_{\xi}(\R^3)$, and both $B_{\eps}(\xi)$ and $B_{\eps}(\xi)^*=B_{\eps}(-\xi)$ are dissipative on $L^2_{\xi}(\R^3)$, it follows from Corollary 4.4 on p.15 of \cite{Pazy}  that $B_{\eps}(\xi)$ generates a strongly continuous contraction
semigroup on $L^2_{\xi}(\R^3)$.
\end{proof}

\begin{lem}\label{Egn}
The following conditions hold for all $\xi\ne 0$ and $\eps\in (0,1)$.
 \begin{enumerate}
\item[(1)]
$\sigma_{ess}(B_{\eps}(\xi))\subset \{\lambda\in \mathbb{C}\,|\, {\rm Re}\le -\nu_0\}$ and $\sigma(B_{\eps}(\xi))\cap \{\lambda\in \mathbb{C}\,|\, -\nu_0<{\rm Re}\le 0\}\subset \sigma_{d}(B_{\eps}(\xi))$.
\item[(2)]
 If $\lambda$ is an eigenvalue of $B_{\eps}(\xi)$, then ${\rm Re}\lambda<0$ for any $\xi\ne 0$ and $\eps\in (0,1)$.
 \end{enumerate}
\end{lem}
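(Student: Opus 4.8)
The plan is to treat the essential spectrum and the eigenvalues separately, in two steps. For part (1), the key observation is that $B_{\eps}(\xi) = L - \i\eps(v\cdot\xi) - \i\eps\frac{v\cdot\xi}{|\xi|^2}P_d$, and that $L = K - \nu(v)$ with $K$ compact on $L^2(\R^3_v)$ and $\nu(v)\ge \nu_0$ by \eqref{nu}. Thus I would write $B_{\eps}(\xi) = [-\nu(v) - \i\eps(v\cdot\xi)] + [K - \i\eps\frac{v\cdot\xi}{|\xi|^2}P_d]$, where the first bracketed operator is a multiplication operator whose spectrum is the closure of the range of $\lambda \mapsto -\nu(v)-\i\eps(v\cdot\xi)$, hence contained in $\{\mathrm{Re}\,\lambda \le -\nu_0\}$, and the second bracketed operator is compact (since $K$ is compact and $P_d$ is a rank-one, hence compact, operator). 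By Weyl's theorem on the stability of the essential spectrum under relatively compact perturbations, $\sigma_{ess}(B_{\eps}(\xi)) = \sigma_{ess}(-\nu(v)-\i\eps(v\cdot\xi)) \subset \{\mathrm{Re}\,\lambda \le -\nu_0\}$. Combined with \eqref{L_8}, which confines the entire spectrum to $\{\mathrm{Re}\,\lambda \le 0\}$, this gives that any $\lambda$ with $-\nu_0 < \mathrm{Re}\,\lambda \le 0$ must lie in $\sigma_d(B_{\eps}(\xi))$, proving part (1).

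For part (2), suppose $B_{\eps}(\xi) f = \lambda f$ with $f \ne 0$ and, by \eqref{L_8}, $\mathrm{Re}\,\lambda \le 0$. Taking the inner product with $f$ in $L^2_{\xi}$ and using \eqref{L_8} again gives $\mathrm{Re}\,\lambda \, \|f\|_{\xi}^2 = (Lf,f) \le -\mu\|P_1 f\|^2$. So if $\mathrm{Re}\,\lambda = 0$ I must show a contradiction: the identity forces $P_1 f = 0$, i.e. $f = P_0 f \in N_0$. Then $Lf = 0$, and the eigenvalue equation reduces to $\lambda f = -\i\eps(v\cdot\xi)f - \i\eps\frac{v\cdot\xi}{|\xi|^2}P_d f$. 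Projecting onto $N_0$ and onto $N_0^\perp$ via $P_0, P_1$: applying $P_1$ gives $0 = P_1\big(\i\eps(v\cdot\xi)f + \i\eps\frac{v\cdot\xi}{|\xi|^2}P_d f\big)$ (since $\lambda P_1 f = 0$ and $Lf=0$), and applying $P_0$ gives $\lambda f = -\i\eps P_0\big((v\cdot\xi)f\big) - \i\eps\frac{v\cdot\xi}{|\xi|^2}P_d f$ with the last term already in $N_0$. Writing $f = \sum_{k=0}^4 a_k \chi_k$ and computing the matrix of $v\cdot\xi$ on the span of $\{\chi_0,\dots,\chi_4\}$ (a standard $5\times 5$ computation using the recurrence relations for Hermite-type moments of $M$), one checks that $P_1((v\cdot\xi)f) = 0$ together with the fact that $(v\cdot\xi)\chi_0 = |\xi|\,\chi_{\hat\xi} \notin N_0$ forces $f = 0$, a contradiction. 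Hence $\mathrm{Re}\,\lambda < 0$.

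The main obstacle is the final algebraic step in part (2): ruling out a purely imaginary eigenvalue once one knows the eigenfunction lies in $N_0$. The point is that $P_1\big((v\cdot\xi)(P_0 f)\big) = 0$ is a genuinely restrictive condition — $v\cdot\xi$ maps $N_0$ partly outside $N_0$ — and one must carefully track the coupling induced by the self-consistent term $\i\eps\frac{v\cdot\xi}{|\xi|^2}P_d$, which only affects the $\chi_0$-direction. I would organize this by choosing coordinates so $\xi = |\xi| e_1$, so that $v\cdot\xi = |\xi| v_1$ and the relevant moments are $v_1\chi_0 = \chi_1$, $v_1\chi_1 = \chi_0 + \sqrt{2/3}\,\chi_4 + \sqrt{2}(\text{a micro mode})$, etc.; the micro components of $v_1\chi_1$ being nonzero is exactly what kills $a_1$, and then propagating through the $5\times5$ system kills the rest. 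This is routine but needs to be done cleanly; everything else follows from Weyl's theorem and the dissipativity estimates \eqref{L_3} and \eqref{L_8} already established.
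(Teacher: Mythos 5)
For part (1) your approach coincides with the paper's: write $B_{\eps}(\xi)$ as $A_{\eps}(\xi)=-\nu(v)-\i\eps(v\cdot\xi)$ plus the compact operator $K-\i\eps\frac{v\cdot\xi}{|\xi|^2}P_d$, locate $\sigma(A_{\eps}(\xi))$ in $\{\mathrm{Re}\,\lambda\le-\nu_0\}$ via \eqref{nu}, and invoke Weyl/Kato stability of the essential spectrum under relatively compact perturbations; combined with \eqref{L_8} this yields (1). This is exactly what the paper does (citing Theorem 5.35 in \cite{Kato}).

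For part (2) the paper simply defers to Proposition 2.2.8 in \cite{Ukai3}, whereas you supply a self-contained argument: dissipativity \eqref{L_8} together with \eqref{L_3} forces $P_1f=0$ whenever $\mathrm{Re}\,\lambda=0$, reducing the problem to an eigenvalue equation on $N_0$, and then one checks by direct computation that this forces $f=0$. The strategy is sound and (as far as one can tell) close in spirit to the cited reference. One factual slip to correct before you write this up: the parenthetical claim that $(v\cdot\xi)\chi_0\notin N_0$ is false. Choosing $\xi=|\xi|e_1$ gives $(v\cdot\xi)\chi_0=|\xi|\chi_1\in N_0$, consistent with what you yourself note in the next sentence. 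The actual mechanism is the one you describe for $a_1$: the microscopic parts $P_1\big((v\cdot\xi)\chi_j\big)$ for $j=1,\dots,4$ are nonzero and linearly independent (by parity), so $P_1\big((v\cdot\xi)P_0f\big)=0$ forces $a_1=\dots=a_4=0$. With $f=a_0\chi_0$ remaining, the macroscopic equation reads $\lambda a_0\chi_0=-\i\eps\big(|\xi|+|\xi|^{-1}\big)a_0\chi_1$; since the right side is orthogonal to $\chi_0$ and $\eps(|\xi|+|\xi|^{-1})\neq0$, this forces $a_0=0$, not the claimed ``$(v\cdot\xi)\chi_0\notin N_0$''. (Also the factor in your expansion of $v_1\chi_1$ should be $2/\sqrt3$ rather than $\sqrt2$, though this does not affect anything.) With these small corrections the proof is complete and correct.
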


\begin{proof}
Define
 \be
  A_{\eps}(\xi)=-\nu(v)- i\eps (v\cdot\xi).  \label{Cxi}
 \ee
By \eqref{nu}, $\lambda- A_{\eps}(\xi)$ is invertible for ${\rm
Re}\lambda>-\nu_0$ and hence $\sigma (A_{\eps}(\xi))\subset \{\lambda\in \mathbb{C}\,|\, {\rm Re}\le -\nu_0\}$. Since $ B_{\eps}(\xi)$ is a compact perturbation of $ A_{\eps}(\xi)$, it follows from Theorem 5.35 in p.244 of \cite{Kato} that $ \sigma_{ess}(B_{\eps}(\xi))=\sigma_{ess}(A_{\eps}(\xi))$   and $\sigma (B_{\eps}(\xi))$ in the domain ${\rm Re}\lambda>-\nu_0$ consists of discrete eigenvalues  with possible accumulation
points only on the line ${\rm Re}\lambda= -\nu_0$. This proves (1). By a similar argument as  Proposition 2.2.8 in \cite{Ukai3}, we can prove (2) and  the details are omitted for brevity.
\end{proof}

Now denote by $T$ a linear operator on $L^2(\R^3_v)$ or
$L^2_{\xi}(\R^3_v)$, and we define the corresponding norms of $T$ by
$$
 \|T\|=\sup_{\|f\|=1}\|Tf\|,\quad
 \|T\|_{\xi}=\sup_{\|f\|_{\xi}=1}\|Tf\|_{\xi}.
$$
Obviously, it holds that
 \be
(1+|\xi|^{-2})^{-1/2}\|T\|\le \|T\|_{\xi}\le (1+|\xi|^{-2})^{1/2}\|T\|.\label{eee}
 \ee

First, we consider the spectrum and resolvent sets of $B_{\eps}(\xi)$ for $\eps|\xi|>r_0$ with $r_0>0$ being
a constant.  Write
\bma
\lambda-B_{\eps}(\xi)&=\lambda-A_{\eps}(\xi)-K+i\eps\frac{ v\cdot\xi}{|\xi|^2}P_{d}\nnm\\
&=\(I-K(\lambda-A_{\eps}(\xi))^{-1}+i\eps\frac{v\cdot\xi}{|\xi|^2}P_{d}(\lambda-A_{\eps}(\xi))^{-1}\)(\lambda-A_{\eps}(\xi)).\label{B_d}
\ema
Then, we estimate the terms on the right hand side of \eqref{B_d} as follows.

\begin{lem}[\cite{Ukai3,Li1}]\label{LP_3} There is a constant $C>0$ such that the following holds.
 \begin{enumerate}
\item For any $\delta>0$, if ${\rm Re}\lambda\ge -\nu_0+\delta$, we have
\be
\|K(\lambda-A_{\eps}(\xi))^{-1}\|\leq C\delta^{-11/13}(1+\eps|\xi|)^{-2/13}.\label{T_7}
\ee

\item  For any $\delta>0$,  if ${\rm Re}\lambda\ge -\nu_0+\delta$ and  $|{\rm Im}\lambda|\geq (2\eps|\xi|)^{5/3}\delta^{-2/3}$, we have
\be
\|K(\lambda-A_{\eps}(\xi))^{-1}\|\leq C\delta^{-3/5}(1+|{\rm Im}\lambda|)^{-2/5}.\label{T_8}
\ee

\item  For any $\delta>0,\, r_0>0$,  if ${\rm Re}\lambda\ge -\nu_0+\delta$ and $|\xi|\ge r_0$, we have
\be
\|(v\cdot\xi)|\xi|^{-2}P_d(\lambda-A_{\eps}(\xi))^{-1}\|\leq
C(\delta^{-1}+1)(r_0^{-1}+1) (|\xi|+|\lambda|)^{-1}. \label{L_9}
\ee
 \end{enumerate}
\end{lem}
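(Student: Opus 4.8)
The operator $A_{\eps}(\xi)=-\nu(v)-i\eps(v\cdot\xi)$ acts by multiplication on $L^2(\R^3_v)$, so $(\lambda-A_{\eps}(\xi))^{-1}$ is multiplication by
\[
m_{\lambda}(v):=\bigl(\lambda+\nu(v)+i\eps(v\cdot\xi)\bigr)^{-1}.
\]
Since $\nu(v)\ge\nu_0$ and ${\rm Re}\lambda\ge-\nu_0+\delta$, one has ${\rm Re}\bigl(\lambda+\nu(v)+i\eps v\cdot\xi\bigr)\ge\max\{\delta,{\rm Re}\lambda\}$, hence $|m_{\lambda}(v)|\le\delta^{-1}$ (and $|m_{\lambda}(v)|\le({\rm Re}\lambda)^{-1}$ when ${\rm Re}\lambda>0$), as well as $|m_{\lambda}(v)|\le|{\rm Im}\lambda+\eps v\cdot\xi|^{-1}$. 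In particular $\|K(\lambda-A_{\eps}(\xi))^{-1}\|\le C\delta^{-1}$, and the whole point is to improve this. The basic tool is the one–dimensional identity $\int_{\R}\frac{ds}{\delta^2+({\rm Im}\lambda+\eps|\xi|s)^2}=\frac{\pi}{\delta\eps|\xi|}$, which after integrating out the directions orthogonal to $\xi$ gives
\[
\intr\frac{M(v)\,dv}{\delta^2+({\rm Im}\lambda+\eps v\cdot\xi)^2}\le\frac{C}{\delta\eps|\xi|},\qquad
\int_{\{|v_*|\le R\}}\frac{dv_*}{\delta^2+({\rm Im}\lambda+\eps v_*\cdot\xi)^2}\le\frac{CR^2}{\delta\eps|\xi|},
\]
while on $\{|\eps v\cdot\xi|\le|{\rm Im}\lambda|/2\}$ the same denominator is $\ge\tfrac14{\rm Im}\lambda^2$.

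\smallskip
\emph{Items (1) and (2).}\ Write $K=K_R+(K-K_R)$, where $K_R$ has kernel $k(v,v_*)$ restricted to $\{|v|\le R,\ |v_*|\le R\}$; by Grad's pointwise bounds on $k$ and Schur's test, $\|K-K_R\|\le CR^{-1}$ (cf. \cite{Ukai3,Li1}), so $\|(K-K_R)(\lambda-A_{\eps}(\xi))^{-1}\|\le CR^{-1}\delta^{-1}$. Since the kernel of $K_R$ is supported in $\{|v_*|\le R\}$, $K_R(\lambda-A_{\eps}(\xi))^{-1}=K_R\bigl(m_{\lambda}\mathbf{1}_{\{|v_*|\le R\}}\bigr)$. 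For (1), estimating by the Hilbert–Schmidt norm, using $\int|k(v,v_*)|^2\,dv\le C$ uniformly in $v_*$ and the second integral above, gives $\|K_R(\lambda-A_{\eps}(\xi))^{-1}\|\le CR(\delta\eps|\xi|)^{-1/2}$; balancing this against $CR^{-1}\delta^{-1}$ by the choice of $R$ yields a bound of the form $C\delta^{-\alpha}(\eps|\xi|)^{-\beta}$, and combining with the crude bound $C\delta^{-1}$ gives \eqref{T_7}. (The precise exponents $11/13$ and $2/13$, which are slightly weaker than the ones produced by this naive balancing, are those of Ukai's optimization \cite{Ukai3}, see also \cite{Li1}; any fixed positive exponents suffice in the sequel.) For (2), if $|{\rm Im}\lambda|\ge2\eps|\xi|R$ then $|m_{\lambda}(v_*)|\le2|{\rm Im}\lambda|^{-1}$ on $\{|v_*|\le R\}$, so $\|K_R(\lambda-A_{\eps}(\xi))^{-1}\|\le2\|K_R\|\,|{\rm Im}\lambda|^{-1}\le C|{\rm Im}\lambda|^{-1}$; taking $R$ as large as this allows, namely $R=|{\rm Im}\lambda|/(2\eps|\xi|)$, gives
\[
\|K(\lambda-A_{\eps}(\xi))^{-1}\|\le\frac{C}{|{\rm Im}\lambda|}\Bigl(1+\frac{\eps|\xi|}{\delta}\Bigr),
\]
and inserting the hypothesis $|{\rm Im}\lambda|\ge(2\eps|\xi|)^{5/3}\delta^{-2/3}$ — which yields $\eps|\xi|/\delta\le\tfrac12|{\rm Im}\lambda|^{3/5}\delta^{-3/5}$ and, in the ambient regime $\eps|\xi|>r_0$, also $|{\rm Im}\lambda|\ge c\,\delta^{-2/3}\ge1$ — gives \eqref{T_8}.

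\smallskip
\emph{Item (3).}\ The projection $P_d$ has rank one, $P_dg=(g,\sqrt M)\sqrt M$, so
\[
(v\cdot\xi)|\xi|^{-2}P_d(\lambda-A_{\eps}(\xi))^{-1}f=|\xi|^{-2}\bigl(m_{\lambda}f,\sqrt M\bigr)\,(v\cdot\xi)\sqrt M,
\]
and since $\|(v\cdot\xi)\sqrt M\|_{L^2_v}=|\xi|$ (because $\intr v_1^2M\,dv=1$), Cauchy–Schwarz gives
\[
\bigl\|(v\cdot\xi)|\xi|^{-2}P_d(\lambda-A_{\eps}(\xi))^{-1}\bigr\|\le|\xi|^{-1}\,\|m_{\lambda}\sqrt M\|_{L^2_v}.
\]
It remains to show $|\xi|^{-1}\|m_{\lambda}\sqrt M\|_{L^2_v}\le C(\delta^{-1}+1)(r_0^{-1}+1)(|\xi|+|\lambda|)^{-1}$. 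The first integral above gives $\|m_{\lambda}\sqrt M\|_{L^2_v}^2\le C(\delta\eps|\xi|)^{-1}$, which, combined with $\eps<1$ and $|\xi|\ge r_0$, already disposes of the range $|\lambda|\le C|\xi|$. For $|\lambda|\gg|\xi|$ one distinguishes the case $|{\rm Re}\lambda|\ge|\lambda|/2$ (then ${\rm Re}\lambda>0$ up to a bounded exceptional set, so $|m_{\lambda}|\le({\rm Re}\lambda)^{-1}\le C|\lambda|^{-1}$) from the case $|{\rm Im}\lambda|\ge|\lambda|/2$ (then, splitting $\{|\eps v\cdot\xi|\le|{\rm Im}\lambda|/2\}$ from its complement and using the Gaussian decay of $\sqrt M$ on the complement, $\|m_{\lambda}\sqrt M\|_{L^2_v}^2\le C(1+\eps|\xi|/\delta)|{\rm Im}\lambda|^{-2}$); feeding these into $|\xi|^{-1}\|m_{\lambda}\sqrt M\|_{L^2_v}$ and using $\eps<1$, $|\xi|\ge r_0$ to absorb the $\eps|\xi|$–factors into the $(|\xi|+|\lambda|)^{-1}$ decay and the remaining constants into $(\delta^{-1}+1)(r_0^{-1}+1)$ gives \eqref{L_9}.

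\smallskip
The main obstacle is item (1): isolating from the oscillation $\eps(v\cdot\xi)$ the sharp fractional decay and balancing it against the truncation error to reach the exponents $11/13$ and $2/13$ is exactly the delicate point of Ukai's spectral analysis, and is the reason the lemma is quoted from \cite{Ukai3,Li1}. Items (2) and (3) are comparatively soft once the multiplication structure of $A_{\eps}(\xi)$ and the rank–one structure of $P_d$ have been exploited, though (3) still requires bookkeeping over the relative sizes of $|\xi|$, $|{\rm Re}\lambda|$ and $|{\rm Im}\lambda|$.
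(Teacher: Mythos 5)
The paper does not give a proof of this lemma: it is quoted directly from \cite{Ukai3,Li1}, so there is no internal argument to compare against. Your reconstruction — treating $(\lambda-A_{\eps}(\xi))^{-1}$ as the multiplication operator $m_{\lambda}(v)=(\lambda+\nu(v)+i\eps v\cdot\xi)^{-1}$, splitting $K=K_R+(K-K_R)$, bounding $K_Rm_{\lambda}$ via the Hilbert--Schmidt norm together with the one--dimensional Lorentzian integral $\int_{\R}\frac{ds}{\delta^2+(\tau+\eps|\xi|s)^2}=\frac{\pi}{\delta\eps|\xi|}$, and exploiting the rank--one structure of $P_d$ for item (3) — is indeed the standard route in Ukai's spectral analysis and is the right strategy. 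Items (2) and (3) are essentially correct, including the case analysis in (3) over the relative sizes of $|\xi|$, $|{\rm Re}\lambda|$, $|{\rm Im}\lambda|$.

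Two points deserve flagging. First, for item (1), the naive balancing you describe gives $\|Km_{\lambda}\|\le C\min\{\delta^{-1},\,\delta^{-3/4}(\eps|\xi|)^{-1/4}\}$; this is a stronger \emph{decay in $\eps|\xi|$} than the stated $(1+\eps|\xi|)^{-2/13}$ but does \emph{not} reproduce the bound \eqref{T_7} at $\eps|\xi|=0$, where \eqref{T_7} gives $C\delta^{-11/13}$ while your argument only returns the crude $C\delta^{-1}$. That extra gain in the $\delta$--power when there is no oscillation to exploit requires a refinement beyond what you write (essentially using the structure of $k$ near the minimum of $\nu$), and is precisely what \cite{Ukai3} is being invoked for. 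Your remark that ``any fixed positive exponents suffice in the sequel'' is correct as far as the paper's use of \eqref{T_7} (only the decay in $\eps|\xi|$ is needed in the proof of the spectral gap), but your parenthetical claim that the lemma's exponents are ``slightly weaker'' than the naive ones is only true on the large--$\eps|\xi|$ side; on the small--$\eps|\xi|$, small--$\delta$ side they are strictly stronger. Second, in item (2) your closing step invokes the ``ambient regime $\eps|\xi|>r_0$'' to ensure $|{\rm Im}\lambda|\ge1$; this hypothesis is not part of the lemma's statement, so strictly speaking your argument covers only the range in which the paper applies \eqref{T_8} (the proof of Lemma~\ref{LP_1} on $r_0\le\eps|\xi|\le r_1$, $|{\rm Im}\lambda|>\beta$), not the lemma verbatim. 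Both gaps are honest and acknowledged, and neither affects the downstream use in the paper, but they should be stated as such rather than presented as completions of the proof.
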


By \eqref{B_d} and Lemma \ref{LP_3}, we have the spectral gap
of the operator $B_{\eps}(\xi)$ for $\eps|\xi|>r_0$.

\begin{lem}[Spectral gap]\label{LP_1}Fixed $\eps\in (0,1)$. For any $r_0>0$, there
exists $\alpha =\alpha(r_0)>0$ such that for  $ \eps|\xi|\ge r_0$,
\bq \sigma(B_{\eps}(\xi))\subset\{\lambda\in\mathbb{C}\,|\, \mathrm{Re}\lambda \leq-\alpha\} .\label{gap}\eq
\end{lem}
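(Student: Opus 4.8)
The goal is the spectral gap estimate \eqref{gap} for $\eps|\xi|\ge r_0$, and the natural route is through the factorization \eqref{B_d} together with the three resolvent bounds of Lemma \ref{LP_3}. Since $\lambda-A_{\eps}(\xi)$ is boundedly invertible whenever $\mathrm{Re}\lambda>-\nu_0$ (by \eqref{nu}), from \eqref{B_d} it suffices to show that the operator
\[
I-K(\lambda-A_{\eps}(\xi))^{-1}+i\eps\frac{v\cdot\xi}{|\xi|^2}P_d(\lambda-A_{\eps}(\xi))^{-1}
\]
is invertible (e.g.\ via a Neumann series) in a region of the form $\{\mathrm{Re}\lambda\ge -\alpha\}$; that would force $\sigma(B_{\eps}(\xi))$ to lie in $\{\mathrm{Re}\lambda\le -\alpha\}$. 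First I would fix $\delta=\nu_0/2$ (so $\mathrm{Re}\lambda\ge -\nu_0+\delta=-\nu_0/2$ is the working half-plane) and split this half-plane into a ``high frequency in $\mathrm{Im}\lambda$'' piece and a bounded piece.

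For the high-frequency piece, i.e.\ $|\mathrm{Im}\lambda|\ge (2\eps|\xi|)^{5/3}\delta^{-2/3}$, estimate \eqref{T_8} gives $\|K(\lambda-A_{\eps}(\xi))^{-1}\|\le C\delta^{-3/5}(1+|\mathrm{Im}\lambda|)^{-2/5}$, which is $<1/4$ once $|\mathrm{Im}\lambda|$ is larger than some absolute constant $R_0$; simultaneously \eqref{L_9} (using $|\xi|\ge r_0/\eps\ge r_0$) gives $\|(v\cdot\xi)|\xi|^{-2}P_d(\lambda-A_{\eps}(\xi))^{-1}\|\le C(\delta^{-1}+1)(r_0^{-1}+1)(|\xi|+|\lambda|)^{-1}$, and multiplying by $\eps\le1$ and using $|\xi|\ge r_0$ this is also $<1/4$ for $|\mathrm{Im}\lambda|$ large; hence the bracketed operator is invertible there. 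The remaining region is $\mathrm{Re}\lambda\ge -\nu_0/2$ together with $|\mathrm{Im}\lambda|\le\max\{R_0,(2\eps|\xi|)^{5/3}\delta^{-2/3}\}$, which for fixed $\eps$ and fixed $r_0\le \eps|\xi|$ is... not bounded, since $|\xi|$ may be arbitrarily large. So this naive split is not quite enough, and I would instead argue as follows: combine \eqref{T_7}, which gives decay in $\eps|\xi|$, with \eqref{T_8}. Precisely, \eqref{T_7} shows $\|K(\lambda-A_{\eps}(\xi))^{-1}\|\le C\delta^{-11/13}(1+\eps|\xi|)^{-2/13}<1/4$ once $\eps|\xi|\ge R_1$ for a suitably large $R_1=R_1(r_0)$; together with the $\eps\cdot$\eqref{L_9} bound (which is small when $\eps|\xi|$ is large since it carries a factor $(|\xi|+|\lambda|)^{-1}\le |\xi|^{-1}$ and $\eps|\xi|^{-1}\le \eps^2 r_0^{-1}\cdot(\eps|\xi|)^{-1}$, shrinking in $\eps|\xi|$), this handles all $\eps|\xi|\ge R_1$ uniformly on $\mathrm{Re}\lambda\ge -\nu_0/2$. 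It then remains to treat the compact parameter range $r_0\le \eps|\xi|\le R_1$: here $|\xi|$ ranges over $[r_0/\eps, R_1/\eps]$, still possibly large, but now \eqref{T_8} with the explicit threshold $|\mathrm{Im}\lambda|\ge (2\eps|\xi|)^{5/3}\delta^{-2/3}\le (2R_1)^{5/3}\delta^{-2/3}=:R_2$ gives invertibility for $|\mathrm{Im}\lambda|\ge R_2$, leaving only the genuinely compact set $K_0=\{\mathrm{Re}\lambda\ge -\nu_0/2,\ |\mathrm{Im}\lambda|\le R_2\}$ in the $\lambda$-plane.

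On the compact set $\overline{K_0'}$ with $K_0'=\{-\nu_0/2\le \mathrm{Re}\lambda\le \varepsilon_1,\ |\mathrm{Im}\lambda|\le R_2\}$ (any fixed $\varepsilon_1>0$), I would use Lemma \ref{Egn}(2): for every $\xi\ne0$ and $\eps\in(0,1)$, all eigenvalues of $B_{\eps}(\xi)$ satisfy $\mathrm{Re}\lambda<0$, and by Lemma \ref{Egn}(1) everything in $\{\mathrm{Re}\lambda>-\nu_0\}\cap\sigma(B_{\eps}(\xi))$ is a discrete eigenvalue. Thus $\sigma(B_{\eps}(\xi))\cap \overline{K_0'}$ is a finite set of eigenvalues strictly inside $\{\mathrm{Re}\lambda<0\}$. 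A compactness/continuity argument then yields a uniform gap on the compact parameter range $r_0\le \eps|\xi|\le R_1$: the eigenvalues of $B_{\eps}(\xi)$ depend continuously on $(\eps,|\xi|)$ in the relevant bounded sense (via analytic perturbation theory, as $B_{\eps}(\xi)$ is an analytic family of type (A) after the factorization, and $K$, $|\xi|^{-2}P_d$ are relatively compact), so $\sup\{\mathrm{Re}\lambda: \lambda\in\sigma(B_{\eps}(\xi)),\ r_0\le\eps|\xi|\le R_1,\ \eps\in(0,1)\}<0$. Combining the three regimes ($\eps|\xi|\ge R_1$; $r_0\le\eps|\xi|\le R_1$ with $|\mathrm{Im}\lambda|\ge R_2$; and the compact leftover) and taking $\alpha$ to be the minimum of the three gaps gives \eqref{gap}.

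The main obstacle is the last step — extracting a \emph{uniform} gap over the compact but infinite-in-$|\xi|$ parameter family $\{(\eps,|\xi|): r_0\le\eps|\xi|\le R_1\}$ — because the leftover spectral set is only known to be a finite set of eigenvalues with negative real part for each fixed $(\eps,\xi)$, and one must rule out eigenvalues accumulating on the imaginary axis as $\eps\to0$ or $|\xi|\to\infty$ with $\eps|\xi|$ fixed. I would overcome this by rescaling: set $\zeta=\eps\xi$, so $|\zeta|\in[r_0,R_1]$ lies in a compact set, and observe that $B_{\eps}(\xi)=L-i(v\cdot\zeta)-i(v\cdot\zeta)|\zeta|^{-2}\eps^{2}P_d$; as $\eps\to0$ this converges (in the relevant norm-resolvent sense, since the last term is $O(\eps^2)$ times a relatively bounded operator) to $L-i(v\cdot\zeta)$, the linearized Boltzmann operator at frequency $\zeta$ with $|\zeta|\ge r_0$, which has a known spectral gap (e.g.\ by the same Lemma \ref{LP_3}-type argument with $P_d$ absent, or by the classical Ukai estimates). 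Upper semicontinuity of the spectrum under norm-resolvent convergence, together with the compactness of $\{|\zeta|\in[r_0,R_1]\}$ and a standard covering argument, then delivers the uniform $\alpha=\alpha(r_0)>0$.
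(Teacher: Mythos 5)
Your three-region split --- (i) $\eps|\xi|$ large, via \eqref{T_7} and \eqref{L_9}; (ii) $\eps|\xi|$ in a compact range with $|\mathrm{Im}\lambda|$ large, via \eqref{T_8} and \eqref{L_9}; (iii) the remaining compact region, via Lemma~\ref{Egn}(2) and compactness --- is exactly the paper's decomposition, and your treatment of (i) and (ii) via Neumann series is correct.

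Your handling of (iii), however, rests on a misreading of the lemma's scope. The statement begins ``Fixed $\eps\in(0,1)$,'' so the constant $\alpha$ is permitted to depend on $\eps$ as well as $r_0$; in particular, when $r_0\le\eps|\xi|\le r_1$, the admissible range of $|\xi|$ is the compact interval $[r_0/\eps,\, r_1/\eps]$, not a ``possibly large'' one, and there is no need to ``rule out eigenvalues accumulating on the imaginary axis as $\eps\to 0$.'' The rescaling $\zeta=\eps\xi$ and the norm-resolvent convergence argument you introduce address a uniform-in-$\eps$ statement that this lemma does not claim. The paper instead treats (iii) by a direct sequential compactness/contradiction argument which avoids analytic perturbation theory entirely: suppose there exist $\xi_n$ with $\eps|\xi_n|\in[r_0,r_1]$, normalized eigenvectors $f_n$, and eigenvalues $\lambda_n$ with $|\mathrm{Im}\lambda_n|\le\beta$ and $\mathrm{Re}\lambda_n\to 0$; rewrite the eigenvalue equation as $(\lambda_n+\nu-i\eps(v\cdot\xi_n))f_n = Kf_n - i\eps\frac{v\cdot\xi_n}{|\xi_n|^2}P_df_n$, use compactness of $K$ and $P_d$ to pass to a strong $L^2_v$ limit $f_{n_j}\to f$ and the boundedness of $\{(\xi_n,\lambda_n)\}$ to extract a limit $(\xi_0,\lambda_0)$ with $\mathrm{Re}\lambda_0=0$, and conclude $B_\eps(\xi_0)f=\lambda_0 f$, contradicting Lemma~\ref{Egn}(2). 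This is self-contained, whereas your route would additionally need to import the spectral gap of $L-i(v\cdot\zeta)$ from classical Boltzmann theory.
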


\begin{proof}
Let $\lambda\in \sigma(B_{\eps}(\xi))\cap\{\lambda\in\mathbb{C}\,|\, \mathrm{Re}\lambda\geq -\nu_0+\delta\}$ with  $\delta>0$. We first show that  $\sup_{\eps|\xi|\ge r_0}|{\rm
Im}\lambda|<+\infty$. By \eqref{eee}, \eqref{T_7} and \eqref{L_9}, there exists $r_1=r_1(\delta)>0$ large enough such that for
$\mathrm{Re}\lambda\geq -\nu_0+\delta$ and $\eps|\xi|\geq r_1$,
\bq
\|K(\lambda-A_{\eps}(\xi))^{-1}\|_{\xi}\leq1/4,\quad
\|\eps(v\cdot\xi)|\xi|^{-2}P_d(\lambda-A_{\eps}(\xi))^{-1}\|_{\xi}\leq1/4.\label{bound}
\eq
This implies that
$I+K(\lambda-A_{\eps}(\xi))^{-1}+i\eps(v\cdot\xi)|\xi|^{-2}P_d(\lambda-A_{\eps}(\xi))^{-1}$
is invertible on $L^2_{\xi}(\R^3_v)$, which together with
\eqref{B_d} yields that $\lambda-B_{\eps}(\xi)$ is also invertible on $L^2_{\xi}(\R^3_v)$ and
satisfies
\bq
(\lambda-B_{\eps}(\xi))^{-1}=(\lambda-A_{\eps}(\xi))^{-1}\(I-K(\lambda-A_{\eps}(\xi))^{-1}+i\eps\frac{v\cdot\xi}{|\xi|^2}P_d(\lambda-A_{\eps}(\xi))^{-1}\)^{-1} . \label{E_6}
\eq
Therefore,
$\{\lambda\in\mathbb{C}\,|\,\mathrm{Re}\lambda\ge
-\nu_0+\delta\}\subset \rho(B_{\eps}(\xi))$
for $\eps|\xi|\ge r_1$.

As for $r_0\le\eps|\xi|\le r_1$, by \eqref{T_8} and \eqref{L_9},  there exists
$\beta=\beta(r_0,r_1,\delta)>0$ such that if $\mathrm{Re}\lambda\geq -\nu_0+\delta$, $|\mathrm{Im}\lambda|>\beta$ and $\eps|\xi|\in [r_0,r_1]$, then
\eqref{bound} still holds and thus $\lambda-B_{\eps}(\xi)$ is invertible.
This implies that $\{\lambda\in\mathbb{C}\,|\,\mathrm{Re}\lambda\ge
-\nu_0+\delta, |\mathrm{Im}\lambda|>\beta\}\subset \rho(B_{\eps}(\xi))$ for
$r_0\le\eps|\xi|\le r_1$. Hence, we conclude that for $\eps|\xi|\ge r_0$,
 \bq
 \sigma(B_{\eps}(\xi)) \cap\{\lambda\in\mathbb{C}\,|\,\mathrm{Re}\lambda\ge-\nu_0+\delta\}
\subset \{\lambda\in\mathbb{C}\,|\,\mathrm{Re}\lambda\ge -\nu_0+\delta,\,|\mathrm{Im}\lambda|\le \beta \} .                            \label{SpH}
 \eq

Next, we prove that $\sup_{ \eps |\xi|\ge r_0}{\rm Re}\lambda<0$. Based on the above argument and Lemma \ref{Egn}, it is sufficient to prove
that   ${\rm Re}\lambda<0$ for $\eps|\xi|\in[r_0,r_1]$ and $|{\rm Im}\lambda|\le \beta$. This can
be proved by contradiction. Suppose that  there exists
$\eps|\xi_n|\in[r_0,r_1]$, $f_n\in L^2(\R^3)$, $\|f_n\|=1$,
$\lambda_n\in \sigma(B_{\eps}(\xi_n))$ such that
$$Lf_n-i\eps(v\cdot\xi_n)f_n-i\eps\frac{v\cdot\xi_n}{|\xi_n|^2}P_d f_n=\lambda_nf_n,\quad
\text{Re}\lambda_n\rightarrow0.$$
Write
$$(\lambda_n+\nu-i\eps(v\cdot\xi_n))f_n=Kf_n-i\eps\frac{v\cdot\xi_n}{|\xi_n|^2}P_d f_n.$$
Since $K$ and $P_d$ are compact, there exists a subsequence $f_{n_j}$ of $f_n$
and $g_1\in L^2(\R^3)$, $|C_0|\le 1$ such that $Kf_{n_j}\rightarrow g_1$ and $P_df_{n_j}\rightarrow C_0\sqrt{M}$ as $j\rightarrow\infty.$ Due to the fact
that $\eps|\xi_n|\in[r_0,r_1]$, $|\text{Im}\lambda_n|\leq \beta $ and ${\rm Re}\lambda_n\to 0$,  there exists a subsequence of
 (still denoted by) $(\xi_{n_j},\lambda_{n_j})$, and $(\xi_0,\lambda_0)$ with $\eps|\xi_0|\in[r_0,r_1]$, ${\rm
Re}\lambda_0=0$
such that $(\xi_{n_j},\lambda_{n_j})\to (\xi_0,\lambda_0)$ and $i(v\cdot\xi_{n_j})|\xi_{n_j}|^{-2}P_d f_{n_j}\rightarrow
g_2=i(v\cdot\xi_0)|\xi_0|^{-2}C_0\sqrt{M}$ as $j\rightarrow\infty.$
 Noting that $|\lambda_{n}+\nu+ i\eps(v\cdot\xi_{n})|\ge
\delta$, we have
$$\lim_{j\rightarrow\infty}f_{n_j}=\lim_{j\rightarrow\infty}\frac{g_1-\eps g_2}{\lambda_{n_j}+\nu-i\eps(v\cdot\xi_{n_j})}
=\frac{g_1-\eps g_2}{\lambda_0+\nu-i\eps(v\cdot\xi_0)}:=f,$$
and hence $Kf=g_1$ and   $i(v\cdot\xi_0)|\xi_0|^{-2}P_{d}
f=g_2.$
It follows that
$B_{\eps}(\xi_0) f=\lambda_0 f$ and thus $\lambda_0$ is an eigenvalue of
$B_{\eps}(\xi_0)$ with ${\rm Re}\lambda_0=0$, which contradicts ${\rm
Re}\lambda<0$ for $\xi\ne 0$ established by Lemma~\ref{Egn}. This proves the lemma.
\end{proof}

Then, we study the spectrum and resolvent sets of
$B_{\eps}(\xi)$ for $\eps|\xi|\le r_0$.  To this end, we decompose
\be
\lambda-B_{\eps}(\xi)=\lambda P_0- D_{\eps}(\xi)+\lambda P_1-Q_{\eps}(\xi)+i\eps P_0(v\cdot\xi)P_1+i\eps P_1(v\cdot\xi)P_0, \label{Bd3}
\ee
where
\be
D_{\eps}(\xi)= -i\eps P_0(v\cdot\xi)P_0-i\eps \frac{v\cdot\xi}{|\xi|^2}P_d,\quad
Q_{\eps}(\xi)=L-i\eps P_1(v\cdot\xi)P_1.\label{Qxi}
\ee
Here $D_{\eps}(\xi)=\eps D(\xi)$ is a
linear operator from $N_0$ to $N_0$,  where $D(\xi)=-i P_0(v\cdot\xi)P_0-i \frac{v\cdot\xi}{|\xi|^2}P_d$ is a linear operator represented by a matrix in the basis of $N_0$ as
\bq -D(\xi)=\left( \ba 0, & i\xi^T, & 0\\
i\xi(1+\frac1{|\xi|^2}), & 0 & i\sqrt{\frac23}\xi\\ 0, &
i\sqrt{\frac23}\xi^T, & 0 \ea\right).
\eq
It is easy to verify that $ \eta_{j}(|\xi|)$ and $h_j(\xi)$, $j=-1,0,1,2,3$ are the eigenvalues and eigenfunctions of $D(\xi)$ defined by
\be \label{eigen}
\left\{\bln
&\eta_{\pm1}(|\xi|)=\pm i\sqrt{1+\frac53|\xi|^2},\quad
 \eta_{k}(|\xi|)=0,\,\,\, k=0,2,3,\\
&h_0(\xi)=\frac{\sqrt{2}|\xi|^2}{\sqrt{3+5|\xi|^2}\sqrt{1+|\xi|^2}} \chi_0-\frac{\sqrt{3+3|\xi|^2}}{\sqrt{3+5|\xi|^2}}\chi_4,\\
&h_{\pm1}(\xi)=\frac{\sqrt{3/2}|\xi|}{\sqrt{3+5|\xi|^2}} \chi_0\pm \sqrt{\frac12}v\cdot \frac{\xi}{|\xi|}  \chi_0+\frac{ |\xi|}{\sqrt{3+5|\xi|^2}}\chi_4,\\
&h_k(\xi)=v\cdot W^k \chi_0,\quad k=2,3,\\
&\(h_j(\xi), h_k(\xi)\)_{\xi}=\delta_{jk},\quad j,k=-1,0,1,2,3,
\eln\right.
\ee
where $W^k$, $k=2,3$, are orthonormal vectors satisfying $W^k\cdot\xi=0$.

\begin{lem}[\cite{Li1}]\label{LP}
Let $\xi\neq0$, the following holds for $D_{\eps}(\xi)$ and $Q_{\eps}(\xi)$ defined by \eqref{Qxi}.
 \begin{enumerate}
\item  If $\lambda\neq \eps\eta_j(|\xi|)$, then  the operator $\lambda P_0-D_{\eps}(\xi)$ is
invertible on $N_0$ and satisfies
\bgr
  \|(\lambda P_0-D_{\eps}(\xi))^{-1}\|_{\xi}
  =\max_{-1\leq j \leq 3}\(|\lambda-\eps\eta_j(|\xi|)|^{-1}\),\label{S_2a}
\\
  \|P_1(v\cdot\xi)P_0(\lambda P_0-D_{\eps}(\xi))^{-1}\|_{\xi}
 \le C|\xi|\max_{-1\leq j \leq 3}\(|\lambda-\eps\eta_j(|\xi|)|^{-1}\),\label{S_2b}
 \egr
where $\eta_j(|\xi|)$, $j=-1,0,1,2,3$, are the eigenvalues of $D(\xi)$
defined by \eqref{eigen}.

\item  If $\mathrm{Re}\lambda>-\mu $, then the operator $\lambda P_1-Q_{\eps}(\xi)$ is
invertible on $N_0^\bot$ and satisfies
 \bgr
 \|(\lambda P_1-Q_{\eps}(\xi))^{-1}\|\leq(\mathrm{Re}\lambda+\mu )^{-1},  \label{S_3c}
\\
 \|P_0(v\cdot\xi)P_1(\lambda P_1-Q_{\eps}(\xi))^{-1}\|_{\xi}
 \leq
 C(1+|\lambda|)^{-1}[(\mathrm{Re}\lambda+\mu )^{-1}+1](|\xi|+\eps|\xi|^2). \label{S_5d}
 \egr
\end{enumerate}
\end{lem}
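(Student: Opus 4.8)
The plan is to prove the two assertions separately: part~(1) rests on the observation that $D_\eps(\xi)=\eps D(\xi)$ is a finite‑rank skew‑adjoint operator on $N_0$ diagonalized explicitly by \eqref{eigen}, and part~(2) on the coercivity \eqref{L_3} of $L$ together with a resolvent‑identity manipulation.

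For part~(1), I would first note $D_\eps(\xi)=P_0B_\eps(\xi)P_0$, so $D_\eps(\xi)$ maps $N_0$ into itself, and $D_\eps(-\xi)=-D_\eps(\xi)$. Since $P_dP_0=P_d$, the projection $P_0$ is self‑adjoint for $(\cdot,\cdot)_\xi$, and combining this with \eqref{L_7} gives $(D_\eps(\xi)f,g)_\xi=-(f,D_\eps(\xi)g)_\xi$ for $f,g\in N_0$; thus $D(\xi)$ is skew‑adjoint, hence normal, on the five‑dimensional space $(N_0,(\cdot,\cdot)_\xi)$. A direct computation confirms that $\{(\eta_j(|\xi|),h_j(\xi))\}_{j=-1}^{3}$ of \eqref{eigen} is its $(\cdot,\cdot)_\xi$‑orthonormal eigensystem, so on $N_0$ we have $\lambda P_0-D_\eps(\xi)=\sum_j(\lambda-\eps\eta_j(|\xi|))(\cdot,h_j)_\xi h_j$; this is invertible exactly when $\lambda\neq\eps\eta_j(|\xi|)$ for all $j$, its inverse equals $\sum_j(\lambda-\eps\eta_j(|\xi|))^{-1}(\cdot,h_j)_\xi h_j$, and normality gives \eqref{S_2a}. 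For \eqref{S_2b} I would apply $P_1(v\cdot\xi)$ to this series (using $P_0h_j=h_j$): each $(v\cdot\xi)h_j$ is $|\xi|$ times a polynomial of bounded degree with coefficients bounded uniformly in $\xi$, times $\sqrt M$, so $\|P_1(v\cdot\xi)h_j\|\le C|\xi|$; since $P_1(v\cdot\xi)h_j\in N_0^\bot$ (where $\|\cdot\|_\xi=\|\cdot\|$) and $\sum_j|(f,h_j)_\xi|\le\sqrt{5}\,\|f\|_\xi$, this yields \eqref{S_2b}.

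For part~(2), since $(v\cdot\xi)$ is a real self‑adjoint multiplier, $i\eps P_1(v\cdot\xi)P_1$ is skew‑adjoint, so for $f\in N_0^\bot$ — where $P_1f=f$, $P_df=0$, hence $\|f\|_\xi=\|f\|$ — one gets $\mathrm{Re}(Q_\eps(\xi)f,f)=(Lf,f)\le-\mu\|f\|^2$ from \eqref{L_3}. As $Q_\eps(\xi)$ maps $N_0^\bot$ into itself and is densely defined, dissipative, with dissipative adjoint $Q_\eps(-\xi)$, the same reasoning as in Lemma~\ref{SG_1} shows it generates a contraction semigroup on $N_0^\bot$, so $\{\mathrm{Re}\lambda>0\}$ lies in its resolvent set. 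Moreover $\mathrm{Re}((\lambda P_1-Q_\eps(\xi))f,f)\ge(\mathrm{Re}\lambda+\mu)\|f\|^2$ for $\mathrm{Re}\lambda>-\mu$, which gives the a priori bound $\|(\lambda P_1-Q_\eps(\xi))f\|\ge(\mathrm{Re}\lambda+\mu)\|f\|$; this extends invertibility to the whole half‑plane $\{\mathrm{Re}\lambda>-\mu\}$ and yields \eqref{S_3c}.

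It remains to prove \eqref{S_5d}, which I expect to be the main obstacle. The structural point is that $P_0(v\cdot\xi)P_1g=\sum_k(g,P_1[(v\cdot\xi)\chi_k])\chi_k$ has no $\chi_0$‑component, since $(v\cdot\xi)\chi_0\in N_0$; hence $\|\cdot\|_\xi=\|\cdot\|$ on the range of $P_0(v\cdot\xi)P_1$, and $\|P_0(v\cdot\xi)P_1\|\le C|\xi|$. Setting $R=(\lambda P_1-Q_\eps(\xi))^{-1}$ and using $\lambda R=P_1+Q_\eps(\xi)R$ on $N_0^\bot$ gives $P_0(v\cdot\xi)P_1R=\lambda^{-1}P_0(v\cdot\xi)P_1+\lambda^{-1}P_0(v\cdot\xi)P_1Q_\eps(\xi)R$. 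In the last term I would transpose both $P_0(v\cdot\xi)P_1$ and $Q_\eps(\xi)$ onto the fixed functions $P_1[(v\cdot\xi)\chi_k]$: $P_0(v\cdot\xi)P_1Q_\eps(\xi)f=\sum_k(f,Q_\eps(-\xi)P_1[(v\cdot\xi)\chi_k])\chi_k$, and $Q_\eps(-\xi)P_1[(v\cdot\xi)\chi_k]=LP_1[(v\cdot\xi)\chi_k]+i\eps P_1(v\cdot\xi)P_1[(v\cdot\xi)\chi_k]$ is again a fixed function, of norm $\le C(|\xi|+\eps|\xi|^2)$. This is the crucial manoeuvre: it absorbs the unbounded multiplier $v\cdot\xi$ onto smooth fixed functions — essential in the hard‑potential range $\gamma<1$, where $v\cdot\xi$ is not relatively bounded with respect to $\nu$ — and simultaneously produces the smallness $|\xi|+\eps|\xi|^2$. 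Combined with $\|R\|\le(\mathrm{Re}\lambda+\mu)^{-1}$ this gives $\|P_0(v\cdot\xi)P_1R\|\le C|\lambda|^{-1}(|\xi|+\eps|\xi|^2)[(\mathrm{Re}\lambda+\mu)^{-1}+1]$ for all $\lambda\neq0$; interpolating with the crude bound $\|P_0(v\cdot\xi)P_1R\|\le C|\xi|(\mathrm{Re}\lambda+\mu)^{-1}$, which handles $|\lambda|\le1$, produces the factor $(1+|\lambda|)^{-1}$ and hence \eqref{S_5d}. The delicate part throughout is keeping the $(1+|\lambda|)^{-1}$ decay, the coercivity weight $(\mathrm{Re}\lambda+\mu)^{-1}+1$, and the smallness $|\xi|+\eps|\xi|^2$ under control simultaneously, which is exactly what forces the commutation of $v\cdot\xi$ past the resolvent and the split into the regimes $|\lambda|\le1$ and $|\lambda|>1$.
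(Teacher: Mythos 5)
The paper does not actually prove this lemma itself — it cites [Li1] — so there is no internal proof to compare against; I therefore evaluate your argument on its own. It is correct and is, to my knowledge, the standard route: part~(1) rests exactly on the $(\cdot,\cdot)_\xi$-skew-adjointness of $D_\eps(\xi)$ on the five-dimensional space $N_0$ (which you derive cleanly from \eqref{L_7} and the self-adjointness of $P_0$ in $(\cdot,\cdot)_\xi$, using $P_dP_0=P_d$), giving the spectral decomposition with the explicit eigensystem \eqref{eigen} and hence the equality \eqref{S_2a}, while \eqref{S_2b} follows because $\|P_1(v\cdot\xi)h_j\|\le C|\xi|$ and the output has no $\chi_0$-component; part~(2) uses dissipativity of $Q_\eps(\xi)+\mu$ on $N_0^\perp$ (both itself and its adjoint $Q_\eps(-\xi)+\mu$) to get the resolvent bound \eqref{S_3c} directly via Lumer--Phillips rather than your slightly longer semigroup-plus-continuation argument, but both are valid.

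Your treatment of \eqref{S_5d} is exactly the essential device: write $P_0(v\cdot\xi)P_1R$ via $\lambda R=P_1+Q_\eps(\xi)R$, and then dualize $P_0(v\cdot\xi)P_1Q_\eps(\xi)$ onto the fixed test functions $Q_\eps(-\xi)P_1[(v\cdot\xi)\chi_k]$, whose norms are $O(|\xi|+\eps|\xi|^2)$ since $P_1[(v\cdot\xi)\chi_k]$ decays like $\sqrt M$ so that both $L$ and the extra $v\cdot\xi$ factor are harmless. This is indeed what produces the simultaneous $(1+|\lambda|)^{-1}$ decay and the $|\xi|+\eps|\xi|^2$ smallness, and your remark about relative boundedness failing for $\gamma<1$ is the right motivation for it. One cosmetic point: what you call ``interpolating'' is really just a split into $|\lambda|\le 1$ (crude bound) and $|\lambda|>1$ (resolvent-identity bound); the argument itself is fine. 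Overall the proof is sound and complete.
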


By \eqref{Bd3} and Lemmas~\ref{Egn}--\ref{LP}, we are able to analyze  the spectral
and resolvent sets of the operator $B_{\eps}(\xi)$ as follows.

\begin{lem}\label{spectrum}For fixed $\eps\in (0,1)$, the following facts hold.
 \begin{enumerate}
\item  For all $\xi\ne 0$, there exists $y_0>0$ such that 
\bq
 \{\lambda\in\mathbb{C}\,|\,
     \mathrm{Re}\lambda\ge-\frac{\mu}{2},\,|\mathrm{Im}\lambda|\geq y_0\}
 \cup\{\lambda\in\mathbb{C}\,|\,\mathrm{Re}\lambda>0\}
 \subset\rho(B_{\eps}(\xi)).                           \label{rb1}
\eq

\item For any $\delta>0$, there exists $r_0=r_0(\delta)>0$ such that for $\eps|\xi|\leq r_0$, 
 \bq
 \sigma(B_{\eps}(\xi))\cap\{\lambda\in\mathbb{C}\,|\,\mathrm{Re}\lambda\ge-\frac{\mu}2\}
 \subset
 \{\lambda\in\mathbb{C}\,|\,|\lambda|\le\delta\}.   \label{sg4}
 \eq
 \end{enumerate}
\end{lem}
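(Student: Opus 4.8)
The plan is to prove the two assertions separately, using the decomposition \eqref{Bd3} together with Lemmas~\ref{Egn}--\ref{LP} as a resolvent-estimate toolkit.

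\medskip

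\textbf{Proof of (1).} Fix $\xi\ne 0$ and $\eps\in(0,1)$. The inclusion $\{\mathrm{Re}\lambda>0\}\subset\rho(B_{\eps}(\xi))$ is immediate from the dissipativity \eqref{L_8} (or from Lemma~\ref{SG_1}: a contraction semigroup has resolvent set containing the open right half-plane). For the other piece, I would show that on the region $\mathrm{Re}\lambda\ge-\mu/2$, $|\mathrm{Im}\lambda|\ge y_0$ the factor $\lambda P_0-D_{\eps}(\xi)$ is invertible on $N_0$ (true as soon as $|\mathrm{Im}\lambda|$ exceeds $\eps|\eta_{\pm1}(|\xi|)|$, which is guaranteed by choosing $y_0$ large, since the nonzero $\eta_j$ are purely imaginary and $\eps\le 1$), and $\lambda P_1-Q_{\eps}(\xi)$ is invertible on $N_0^\perp$ by \eqref{S_3c} whenever $\mathrm{Re}\lambda>-\mu$. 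Writing $\lambda-B_{\eps}(\xi)$ via \eqref{Bd3} as the block operator
\[
(\lambda-B_{\eps}(\xi))=\Bigl(I+\bigl(i\eps P_0(v\cdot\xi)P_1+i\eps P_1(v\cdot\xi)P_0\bigr)\mathcal{R}_0(\lambda)\Bigr)\mathcal{R}_0(\lambda)^{-1},
\]
where $\mathcal{R}_0(\lambda)=(\lambda P_0-D_{\eps}(\xi))^{-1}P_0+(\lambda P_1-Q_{\eps}(\xi))^{-1}P_1$, I would use \eqref{S_2b} and \eqref{S_5d} to bound the coupling terms in $L^2_\xi$ by a quantity of order $|\mathrm{Im}\lambda|^{-1}$ times constants depending on $\xi$; hence for $y_0=y_0(\xi,\eps)$ large enough the bracketed operator is a small perturbation of the identity, invertible by Neumann series, and $\lambda\in\rho(B_{\eps}(\xi))$.

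\medskip

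\textbf{Proof of (2).} Here the point is to make the threshold $y_0$ in part (1) \emph{uniform in $\eps|\xi|\le r_0$}, and to control $\sigma(B_{\eps}(\xi))$ inside the remaining bounded box $\{\mathrm{Re}\lambda\ge-\mu/2,\ |\mathrm{Im}\lambda|\le y_0\}$. Fix $\delta>0$. By \eqref{eee}, \eqref{T_8} and \eqref{L_9} (with the operator $K$ absorbed when $|\mathrm{Im}\lambda|$ is large, noting $\eps|\xi|\le r_0\le 1$), there is a $\beta=\beta(\delta)>0$ such that for $\mathrm{Re}\lambda\ge-\mu/2$, $|\mathrm{Im}\lambda|\ge\beta$ one has $\lambda\in\rho(B_{\eps}(\xi))$ for all $\eps|\xi|\le r_0$; this removes the high-frequency-in-$\lambda$ part. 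For the bounded region $|\mathrm{Im}\lambda|\le\beta$ with $\delta\le|\lambda|\le\beta$, I would again use \eqref{Bd3}: since $\mathrm{Re}\lambda\ge-\mu/2>-\mu$, the micro part $\lambda P_1-Q_{\eps}(\xi)$ is invertible with norm $\le(\mathrm{Re}\lambda+\mu)^{-1}\le 2/\mu$; and since $|\lambda|\ge\delta$ while $\eps|\eta_j(|\xi|)|\le\eps\sqrt{1+5r_0^2/(3\eps^2)}$ can be made $\le\delta/2$ by shrinking $r_0$ (here one keeps in mind $\eps|\xi|\le r_0$, so $\eps|\xi|\cdot(\text{bounded})$ is small while the ``$1$'' inside $\eta_{\pm1}$ contributes $\eps\cdot$small; more precisely $\eps|\eta_{\pm1}|\le\eps+\sqrt{5/3}\,\eps|\xi|\cdot|\xi|^{-1}\cdot$... — this is the delicate bookkeeping point), the macro part $\lambda P_0-D_{\eps}(\xi)$ is invertible with norm $\le 2/\delta$ by \eqref{S_2a}. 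The coupling terms $i\eps P_0(v\cdot\xi)P_1$ and $i\eps P_1(v\cdot\xi)P_0$ have $L^2_\xi$-norm bounded by $C\eps|\xi|\cdot(1+|\xi|^{-1})\le C(r_0+\eps)$ after combining with \eqref{S_2b}, \eqref{S_5d}; hence for $r_0$ small the Neumann series converges and $\lambda\in\rho(B_{\eps}(\xi))$. Combining the three regions yields \eqref{sg4}.

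\medskip

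\textbf{Main obstacle.} The routine part is assembling the block Neumann series; the genuinely delicate step is step (2)'s requirement that all estimates be \emph{uniform in $\eps$ and in $|\xi|$} on the regime $\eps|\xi|\le r_0$. The eigenvalues $\eps\eta_{\pm1}(|\xi|)=\pm i\eps\sqrt{1+\tfrac53|\xi|^2}$ behave differently depending on whether $|\xi|\ll 1$ or $|\xi|\gg 1$ (with $\eps|\xi|$ small throughout), so one must check that $\eps|\xi|\le r_0$ really does force both $\eps|\xi|$ \emph{and} $\eps\,\cdot(\text{the }\sqrt{1+\frac53|\xi|^2}\text{ term})$ to be controlled when $\lambda$ is confined to an annulus $\delta\le|\lambda|\le\beta$; equivalently, one needs $\eps|\xi|\le r_0$ to separate the spectrum-near-zero from the oscillatory branch. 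Handling this case split cleanly — and verifying that the coupling-term bounds from \eqref{S_2b} and \eqref{S_5d}, which carry factors $|\xi|$ and $\eps|\xi|^2$, combine with the $(1+|\xi|^{-2})^{1/2}$ loss in \eqref{eee} to still give something $O(r_0+\eps)$ — is where the real care is needed.
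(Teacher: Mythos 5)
Your approach is the same as the paper's: both use the factorization \eqref{Bd3} into a macro block $\lambda P_0-D_\eps(\xi)$ and a micro block $\lambda P_1-Q_\eps(\xi)$, invert each via Lemma~\ref{LP}, and close with a Neumann series on $I+Y_\eps(\lambda,\xi)$, where $Y_\eps$ collects the off-diagonal coupling terms bounded by \eqref{S_2b}, \eqref{S_5d}. For part~(1) your argument works; the only difference from the paper is cosmetic: the paper first disposes of $\eps|\xi|\ge R_0$ outright via Lemma~\ref{LP_3} (getting the whole half-plane $\{\mathrm{Re}\lambda\ge-\nu_0/2\}$ in the resolvent set there, hence a $y_0$ \emph{uniform} in $\xi$), and only then applies the block estimates on $\eps|\xi|\le R_0$; you accept $y_0=y_0(\xi,\eps)$, which suffices for the statement as quantified.

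For part~(2) there are two issues. First, you invoke \eqref{L_9} in the regime $\eps|\xi|\le r_0$, but \eqref{L_9} is stated only for $|\xi|\ge r_0$; the paper deliberately avoids Lemma~\ref{LP_3} here and works entirely with the weighted estimates \eqref{S_2b}, \eqref{S_5d}, which do not degenerate as $|\xi|\to 0$ in the $L^2_\xi$ norm. Second --- and this is the point you yourself flagged as ``the delicate bookkeeping'' --- your claim that $\eps|\eta_{\pm1}(|\xi|)|=\sqrt{\eps^2+\tfrac53(\eps|\xi|)^2}$ ``can be made $\le\delta/2$ by shrinking $r_0$'' is false: this quantity tends to $\eps$, not to $0$, as $r_0\to0$, so the $j=\pm1$ terms in the bound $C\eps|\xi|\max_j|\lambda-\eps\eta_j(|\xi|)|^{-1}$ from \eqref{S_2b} are not controlled by $|\lambda|>\delta$ alone once $\delta<2\eps$. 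You were right to be uneasy. What is worth knowing, though, is that the paper's own one-sentence proof of~(2) --- ``by \eqref{S_2b} and \eqref{S_5d} we can choose $r_0=r_0(\delta)$ so that \eqref{bound_1} still hold'' --- elides exactly the same point, and indeed the eigenvalues $\lambda_{\pm1}(|\xi|,\eps)$ constructed in Theorem~\ref{spect3} have modulus $\approx\eps\sqrt{1+\tfrac53|\xi|^2}\ge\eps$, so \eqref{sg4} cannot literally hold for $\delta<\eps$. The lemma is applied downstream with $\delta$ a fixed constant (and one cares about $\eps\to0$), so the overstatement is harmless in context; still, a correct reading should either restrict $\delta$ to exceed a threshold commensurate with $\eps$, or localize the spectrum around all five points $\eps\eta_j(|\xi|)$ rather than just near the origin.
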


\begin{proof}
By Lemmas \ref{LP}, we have for $\rm{Re}\lambda>-\mu $ and
$\lambda\neq \eps\eta_j(|\xi|)$  that the operator
$\lambda  P_0-D_{\eps}(\xi)+\lambda  P_1-Q_{\eps}(\xi)$ is invertible on
$L^2_{\xi}(\R^3_v)$ and satisfies
 \be
 (\lambda P_0-D_{\eps}(\xi)+\lambda P_1-Q_{\eps}(\xi))^{-1}
=(\lambda P_0-D_{\eps}(\xi))^{-1}+(\lambda P_1-Q_{\eps}(\xi))^{-1},
 \ee
because the operator $\lambda P_0-D_{\eps}(\xi)$ is orthogonal to $\lambda
 P_1-Q_{\eps}(\xi)$. Therefore, we can re-write \eqref{Bd3} as
\bmas
 \lambda-B_{\eps}(\xi)
=&(I+Y_{\eps}(\lambda,\xi))(\lambda  P_0-D_{\eps}(\xi)+\lambda  P_1-Q_{\eps}(\xi)),
 \\
Y_{\eps}(\lambda,\xi)= &i\eps P_1(v\cdot\xi)P_0(\lambda P_0-D_{\eps}(\xi))^{-1}
    +i\eps P_0(v\cdot\xi) P_1(\lambda P_1-Q_{\eps}(\xi))^{-1}.
 \emas

By Lemma \ref{LP_3}, there exists $R_0>0$ large enough so that for $\mathrm{Re}\lambda\geq
-\nu_0/2$ and $\eps |\xi|\geq R_0$,  $\lambda-B_{\eps}(\xi)$ is  invertible on $L^2_{\xi}(\R^3_v)$ and
satisfies \eqref{E_6}. Thus, $\{\lambda\in\mathbb{C}\,|\,\mathrm{Re}\lambda\ge
-\nu_0/2\}\subset \rho(B_{\eps}(\xi))$ for $\eps |\xi|\ge R_0$.

For  $\eps|\xi|\leq R_0$, by \eqref{S_2b} and
\eqref{S_5d} we can choose $y_0>0$ sufficiently large such that for $\mathrm{Re}\lambda\ge-\mu/2$ and
$|\mathrm{Im}\lambda|\geq y_0$,
 \be
 \|\eps P_1(v\cdot\xi)P_0(\lambda P_0-A_{\eps}(\xi))^{-1}\|_{\xi}\leq \frac14,
 \quad
\|\eps P_{d}(v\cdot\xi)P_1(\lambda P_1-Q_{\eps}(\xi))^{-1}\|_{\xi}\leq\frac14.\label{bound_1}
 \ee
This implies that the operator $I+Y_{\eps}(\lambda,\xi)$ is invertible on
$L^{2}_{\xi}(\R^3_v)$ and thus  $\lambda-B_{\eps}(\xi)$ is invertible on $L^{2}_{\xi}(\R^3_v)$ and satisfies
 \be
 (\lambda-B_{\eps}(\xi))^{-1}
 =\((\lambda P_0-A_{\eps}(\xi))^{-1} P_0+(\lambda P_1-Q_{\eps}(\xi))^{-1} P_1\)(I+Y_{\eps}(\lambda,\xi))^{-1}.\label{S_8}
 \ee
Therefore, $\rho(B_{\eps}(\xi))\supset \{\lambda\in\mathbb{C}\,|\,{\rm
Re}\lambda\ge-\mu/2, |{\rm Im}\lambda|\ge y_0\}$ for $\eps |\xi|\leq R_0$. This and Lemma~\ref{Egn} lead to \eqref{rb1}.

Assume that $ |\lambda|>\delta$ and $\mathrm{Re}\lambda\ge-\mu/2$. Then, by \eqref{S_2b}  and
\eqref{S_5d} we can choose $r_0=r_0(\delta)>0$ so that estimates in \eqref{bound_1}
still hold for $\eps|\xi|\leq r_0$, and then
$\lambda-B_{\eps}(\xi)$ is invertible on $L^{2}_{\xi}(\R^3)$.
Therefore, we have
 $\rho(B_{\eps}(\xi))\supset\{\lambda\in\mathbb{C}\,|\, |\lambda|>\delta,\mathrm{Re}\lambda\ge-\mu/2\}$
for $\eps|\xi|\leq r_0$, which gives \eqref{sg4}.
\end{proof}

Now we study the asymptotic expansions of the eigenvalues and  eigenfunctions of $B_{\eps}(\xi)$ for $\eps|\xi|$ sufficiently small. Firstly, we consider a 1-D eigenvalue problem:
\bq B_{\eps}(s)e=:\(L- i\eps v_1s-i\eps\frac{ v_1}{s}P_{d}\)e=\beta e,\quad s\in \R.\label{L_2}\eq

Let $e$ be the eigenfunction of \eqref{L_2}, we rewrite $e$ in the
form $e=g_0+g_1$, where $g_0=P_0e$ and $g_1=(I-P_0)e=P_1e$.  The
eigenvalue problem \eqref{L_2} can be decomposed into
\bma \beta
g_0&=-i\eps sP_0[v_1(g_0+g_1)]-i\eps \frac{v_1}{s}P_dg_0,\label{A_2}\\
\beta g_1&=Lg_1-i\eps sP_1[v_1(g_0+g_1)].\label{A_3}
\ema
From Lemma \ref{LP} and \eqref{A_3}, we obtain that for any $\text{Re}\beta>-\mu $,
\bq
g_1=i\eps s(L-\beta P_1-i \eps sP_1v_1P_1)^{-1}(P_1v_1g_0).\label{A_4}
\eq
Substituting \eqref{A_4} into \eqref{A_2}, we have
\bq
\beta g_0=-i\eps sP_0v_1g_0-i\eps \frac{v_1}{s}P_dg_0+\eps^2s^2 P_0[v_1R(\beta,\eps s)P_1v_1g_0],\label{A_5}\eq
where
\bq R(\beta,s)=(L-\beta P_1-i sP_1v_1P_1)^{-1}.\eq

We will now reduce \eqref{A_5} to a problem of 5-dimension linear system. Since $g_0\in N_0$, we  have
$$ g_0=\sum_{j=0}^4W_j\chi_j\quad
\text{with}\quad W_j=\intr g_0\chi_jdv.$$
Taking the inner product of \eqref{A_5} with $\{\chi_j,\ j=0,1,2,3,4\}$ gives
\bma
\beta W_0&=-i\eps s W_1,\label{A_6}\\
\beta W_1&=-i\eps W_0\(s+\frac{1}{s}\)-i\eps s\sqrt{\frac23}W_4 + \eps^2s^2 W_1R_{11}(\beta,\eps s)\nnm\\
&\quad+\eps^2s^2 W_4R_{41}(\beta,\eps s),\label{A_7}\\
\beta W_j&=  \eps^2s^2 W_jR_{22}(\beta,\eps s), \quad j=2,3,\label{A_7a}\\
\beta
W_4&=-i\eps s \sqrt{\frac23}W_1+\eps^2s^2 W_1R_{14}(\beta,\eps s) +\eps^2s^2 W_4R_{44}(\beta,\eps s),\label{A_8}
\ema
where
\be R_{jk}(\beta, s)=(R(\beta, s)P_1(v_1\chi_j),v_1 \chi_k),\quad j,k=1,2,4. \label{rij}\ee

 Denote
\bma
&D_0(z,s)=z- s^2R_{22}(z, s), \label{ddd}
\\
&D_1(z,s,\eps)= \left|\ba z & i s & 0\\
i \(s+\frac1s\) & z-\eps s^2R_{11}(\eps z,\eps s) & i s\sqrt{\frac23}-\eps s^2R_{41}(\eps z,\eps s)
\\
0 & i  s\sqrt{\frac23}-\eps s^2R_{14}(\eps z,\eps s) &z-\eps s^2R_{44}(\eps z,\eps s) \ea\right|. \label{T_3}
\ema

Then, by a direct computation and the  implicit function theorem, we can show

\begin{lem}\label{eigen_1}There are two small constants $r_0,r_1>0$ such that the equation $D_0(z,s)=0$ has a unique $C^\infty$ solution
$z=z(s)$ for $|s|\leq r_0$ and $|z|\le r_1$, which is a $C^{\infty}$ function of $s$ satisfying
\be z(0)=0,\quad  z'(0)=0,\quad z''(0)=2(L^{-1}(v_1\chi_2),v_1\chi_2). \label{z1}\ee
Moreover, $z(s)$ is an even, real function.
\end{lem}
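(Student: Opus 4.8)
The plan is to apply the implicit function theorem to $D_0(z,s)=z-s^2 R_{22}(z,s)=0$ near $(z,s)=(0,0)$. First I would verify that $D_0$ is well-defined and smooth on a neighborhood of the origin. By Lemma~\ref{LP}(2), the resolvent $R(z,s)=(L-zP_1-isP_1v_1P_1)^{-1}$ exists and is bounded uniformly for $\mathrm{Re}\,z>-\mu$, and it depends analytically (hence $C^\infty$) on the parameters $(z,s)$ in that region since it is obtained by inverting a bounded-below operator depending linearly on $z$ and $s$; consequently $R_{22}(z,s)=(R(z,s)P_1(v_1\chi_2),v_1\chi_2)$ is a $C^\infty$ function of $(z,s)$ for $|z|$ and $|s|$ small, and in particular $R_{22}(0,0)=(L^{-1}P_1(v_1\chi_2),v_1\chi_2)$ is finite. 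Then $F(z,s):=z-s^2R_{22}(z,s)$ satisfies $F(0,0)=0$ and $\partial_z F(0,0)=1-s^2\partial_zR_{22}|_{(0,0)}=1\neq0$. The implicit function theorem therefore yields constants $r_0,r_1>0$ and a unique $C^\infty$ function $z=z(s)$ with $z(0)=0$ solving $F(z(s),s)=0$ for $|s|\le r_0$, $|z|\le r_1$.

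Next I would compute the low-order Taylor coefficients of $z(s)$ by differentiating the identity $z(s)=s^2 R_{22}(z(s),s)$. Differentiating once and setting $s=0$ gives $z'(0)=0$ (the right side vanishes to second order in $s$). Differentiating twice and setting $s=0$ gives $z''(0)=2R_{22}(0,0)=2(L^{-1}P_1(v_1\chi_2),v_1\chi_2)$; here one notes $P_1(v_1\chi_2)=v_1\chi_2$ since $v_1\chi_2=v_1v_2\sqrt M$ is already orthogonal to $N_0$, matching the stated formula $z''(0)=2(L^{-1}(v_1\chi_2),v_1\chi_2)$.

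Finally, for the parity and reality claims: observe that $L$ is a real operator commuting with the velocity reflection $v\mapsto -v$ on the relevant components, so $R_{22}(z,s)$ satisfies $R_{22}(z,-s)=R_{22}(z,s)$ (the $s$-dependence enters only through $isP_1v_1P_1$, and conjugating by the reflection $v_1\mapsto -v_1$ sends $s\mapsto-s$ while fixing the pairing against $v_1\chi_2=v_1v_2\sqrt M$, which is even in $v_1$ times odd... — more carefully, $\overline{R_{22}(\bar z,-s)}=R_{22}(z,s)$ from self-adjointness of $L$ and $P_1$, together with evenness of $\nu$ and $k$). Hence $F(z,-s)=F(z,s)$, and by uniqueness $z(-s)=z(s)$, so $z$ is even; in particular all odd derivatives at $0$ vanish, consistent with $z'(0)=0$. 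Reality of $z(s)$ for real $s$ follows similarly: $\overline{F(\bar z,s)}=\bar z-s^2\overline{R_{22}(\bar z,s)}=\bar z-s^2 R_{22}(z,s)$ when $s$ is real (using $\overline{R_{22}(\bar z,s)}=R_{22}(z,-s)=R_{22}(z,s)$), so $\overline{z(s)}$ also solves the equation and by uniqueness equals $z(s)$.

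I expect the main obstacle to be the bookkeeping in the parity/reality argument — pinning down exactly which symmetry of $L$ (reflection $v_1\mapsto-v_1$ versus full $v\mapsto-v$) makes the pairing $R_{22}$ even in $s$ and real for real $s$ — rather than the implicit function theorem step, which is routine once smoothness of the resolvent from Lemma~\ref{LP} is invoked.
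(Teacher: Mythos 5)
Your proposal is correct and matches the paper's intended argument: the paper gives no written proof of Lemma~\ref{eigen_1}, stating only that it follows ``by a direct computation and the implicit function theorem,'' and your application of the IFT at $(z,s)=(0,0)$ (using $\partial_z D_0(0,0)=1$ and analyticity of the resolvent $R(z,s)$ on $N_0^\perp$ for $\mathrm{Re}\,z>-\mu$), together with the Taylor differentiation of $z(s)=s^2R_{22}(z(s),s)$ and the symmetry argument, is exactly the omitted computation. One small remark: in the parity discussion the parenthetical ``more carefully, $\overline{R_{22}(\bar z,-s)}=R_{22}(z,s)$\dots'' only gives the reality relation and does not by itself yield evenness in $s$; it is the reflection $v_1\mapsto-v_1$ (under which $L$, $P_1$, $\nu$ and $k$ are invariant and $v_1\chi_2$ changes sign, so the quadratic form is unchanged) that gives $R_{22}(z,-s)=R_{22}(z,s)$, and then the adjoint identity combined with this evenness gives reality for real $(z,s)$ — you have both ingredients, but it is worth stating them in that order to avoid circularity.
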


We have the following result about  the solution of $D_1(z,s,\eps)=0$.

\begin{lem}\label{eigen_2}There are two small constants $r_0,r_1>0$ such that the equation
$D_1(z,s,\eps)=0$ has exactly three solutions
$z_j=z_j(s,\eps)$, $j=-1,0,1$ for $\eps| s|\le r_0$ and $|z_j-\eta_j(s)|\le r_1|s|$. They are $C^\infty$ functions of $s$ and $\eps$, which satisfy
 \be
z_j(s,0)=\eta_j(s),\quad \pt_{\eps}z_j(s,0)=-b_j(s), \label{z2}
\ee
where $\eta_j(s)=ji\sqrt{1+\frac53s^2}$, $j=-1,0,1$,  and
\be \label{z4a}
\left\{\bln
b_0(s)&=-\frac{3(s^2+s^4)}{3+5s^2}(L^{-1}P_1(v_1\chi_4),v_1\chi_4),\\
b_{\pm1}(s)&=-\frac12s^2(L^{-1}P_1(v_1\chi_1),v_1\chi_1)-\frac{s^4}{3+5s^2}(L^{-1}P_1(v_1\chi_4),v_1\chi_4).
\eln\right.
\ee
In particular, $z_j(s,\eps)$, $j=-1,0,1$ satisfy the following expansions
\be
z_j(s,\eps)=\eta_j(s)-\eps b_j(s)+O(1)(\eps^2 s^3).\label{z4}
\ee
\end{lem}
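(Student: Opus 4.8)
The plan is to analyze $D_1(z,s,\eps)=0$ as a perturbation in $\eps$ of the unperturbed determinant $D_1(z,s,0)=0$, whose roots we already know explicitly. First I would compute $D_1(z,s,0)$ by setting $\eps=0$ in \eqref{T_3}: the off-diagonal and diagonal correction terms involving $R_{11},R_{41},R_{14},R_{44}$ all carry an explicit factor $\eps$, so they drop out and one is left with the determinant of the matrix $-D(\xi)$ (restricted to the relevant $3\times 3$ block in the variables $W_0,W_1,W_4$). A direct expansion gives $D_1(z,s,0)=z^3-\big(1+\tfrac53 s^2\big)z = z\big(z^2+1+\tfrac53 s^2\big)$, whose three roots are exactly $z=\eta_j(s)=ji\sqrt{1+\tfrac53 s^2}$ for $j=-1,0,1$, matching \eqref{eigen}. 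These roots are simple for $s$ in a small neighborhood of $0$ (the three values $0,\pm i\sqrt{1+\tfrac53 s^2}$ are distinct), so $\pt_z D_1(\eta_j(s),s,0)\ne 0$.

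Next, since $D_1(z,s,\eps)$ is $C^\infty$ jointly in $(z,s,\eps)$ (the resolvent entries $R_{jk}(\eps z,\eps s)$ are analytic for $\eps z,\eps s$ small, by Lemma~\ref{LP} together with the Neumann-series representation of $R(\beta,s)$), the implicit function theorem applied at each simple root $(\eta_j(s),s,0)$ produces a unique $C^\infty$ solution $z_j(s,\eps)$ with $z_j(s,0)=\eta_j(s)$, defined for $\eps|s|$ and $|z_j-\eta_j(s)|$ small; a standard Rouché / winding-number argument shows these are all the roots in the disc $|z_j-\eta_j(s)|\le r_1|s|$. Then I would differentiate $D_1(z_j(s,\eps),s,\eps)=0$ in $\eps$ at $\eps=0$ to get
\bq
\pt_\eps z_j(s,0)=-\frac{\pt_\eps D_1(\eta_j(s),s,0)}{\pt_z D_1(\eta_j(s),s,0)}.
\eq
The numerator $\pt_\eps D_1(\eta_j,s,0)$ is obtained by differentiating the $3\times 3$ determinant entry-by-entry: only the three diagonal-type correction terms $-\eps s^2 R_{11}$, $-\eps s^2 R_{44}$ and the $\pm i s\sqrt{2/3}$ entries' corrections $-\eps s^2 R_{41},-\eps s^2 R_{14}$ contribute a factor, each evaluated at $\eps=0$, i.e.\ at $R_{jk}(0,0)=(L^{-1}P_1(v_1\chi_j),v_1\chi_k)$. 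Expanding the cofactors along the row/column structure and simplifying using $R_{14}(0,0)=R_{41}(0,0)$ and the explicit value $\eta_j^2=-(1+\tfrac53 s^2)$ should collapse to the stated formulas \eqref{z4a}; the denominator $\pt_z D_1(\eta_j,s,0)=3\eta_j^2+(1+\tfrac53 s^2)$ is elementary. Finally, the expansion \eqref{z4} follows from Taylor's theorem in $\eps$ with the $C^\infty$ bound on $\pt_\eps^2 z_j$, noting the remainder carries the extra powers of $s$ visible in $\eqref{T_3}$ (each correction term has a prefactor $s^2$, and one power of $s$ is absorbed from $|z_j-\eta_j|\le r_1|s|$), yielding the $O(1)(\eps^2 s^3)$ error.

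The main obstacle I anticipate is the bookkeeping in evaluating $\pt_\eps D_1(\eta_j(s),s,0)$ and verifying it reproduces exactly the coefficients in \eqref{z4a}: one must carefully expand the $3\times 3$ determinant, identify which cofactors multiply each $\pt_\eps$-derivative of an entry, substitute the explicit eigenvector components from \eqref{eigen} (or equivalently the explicit $\eta_j$ and the structure of $-D(\xi)$), and then algebraically simplify the resulting rational expression in $s^2$. A secondary technical point is justifying the uniform-in-$s$ smoothness and the domain of validity $\eps|s|\le r_0$, $|z_j-\eta_j(s)|\le r_1|s|$ of the implicit function theorem — this requires the quantitative resolvent bounds from Lemma~\ref{LP} to control $R(\beta,s)$ and its derivatives uniformly, and a Rouché argument to guarantee exactly three roots and no others in the specified region.
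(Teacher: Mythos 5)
Your proposal is correct and essentially the same approach as the paper's: the paper also evaluates $D_1(z,s,0)=z(z^2+1+\tfrac53 s^2)$, applies an implicit-function-type argument (realized as an explicit contraction mapping $G_j(z,s,\eps)=z-(3\eta_j^2+1+\tfrac53 s^2)^{-1}D_1(z,s,\eps)$ with the quantitative derivative bounds $|\pt_z G_j|\le Cr_1$, $|\pt_\eps G_j|\le Cs^2$ that you flagged as the "secondary technical point"), computes $\pt_\eps z_j(s,0)=-\pt_\eps D_1(\eta_j,s,0)/\pt_z D_1(\eta_j,s,0)$ from the explicit cubic expansion \eqref{A_13}, and then obtains the $O(\eps^2 s^3)$ remainder by first bootstrapping $|z_j-\eta_j|\le C\eps|s|^2$ and feeding it back into the quotient formula. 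The only cosmetic difference is that you invoke IFT plus a Rouché argument in place of the paper's explicit contraction map, which carries the same content.
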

\begin{proof}  By \eqref{T_3}, we have
\bma
D_1(z,s,\eps)=&z^3-z^2\eps s^2(R_{11}+R_{44})+z\Big[1+\frac53s^2+i\eps\sqrt{\frac23} s^3\(R_{41}+R_{14}\)\nnm\\
&+\eps^2 s^4(R_{44}R_{11}-R_{14}R_{41})\Big]-\eps (s^2+s^4)R_{44},\label{A_13}
\ema
where $R_{jk}=R_{jk}(\eps z,\eps s)$, $j,k=1,2,4$ are defined by \eqref{rij}. It follows  that
\bq
D_1(z,s,0)=z\(z^2+1+\frac53s^2\)=0 \label{aaa}\eq
has three solutions $\eta_j(s)=ji\sqrt{1+\frac53s^2}$ for  $j=-1,0,1$.
Moreover, $D_1(z,s,\eps)$ is $C^\infty$ with respect to $(z,s,\eps)$ and satisfies
\bma
\partial_{\eps}D_1(z,s,\eps)=&-z^2s^2(R_{11}+R_{44})
-z^2\eps s^2 (\partial_{\eps}R_{11}+\partial_{\eps}R_{44})\nnm\\
&+z\Big[i\sqrt{\frac23}s^3 (R_{14}+R_{41}) +i\eps\sqrt{\frac23}s^3\partial_{\eps}  (R_{14}+R_{41}) +2\eps s^4R_{11}R_{44}\nnm\\
&+\eps^2 s^4\partial_{\eps}(R_{11}R_{44})-2\eps s^4R_{14}R_{41}
+\eps^2 s^4\partial_{\eps} (R_{14}R_{41})\Big]\nnm\\
&-(s^2+s^4)R_{44}-\eps(s^2+s^4)\partial_{\eps} R_{44},\label{p_0}
\\
\partial_{z}D_1(z,s,\eps)=&3z^2+1+\frac53 s^2-2z\eps s^2(R_{11}+R_{44})+z^2\eps s^2 (\partial_zR_{11}+\partial_zR_{44})\nnm\\
&+\Big[\eps^2 s^4(R_{11}R_{44}-R_{14}R_{41})-i\eps s^3\sqrt{\frac23}(R_{14}+R_{41})\Big]\nnm\\
&+z\Big[\eps^2 s^4\partial_z (R_{11}R_{44}-R_{14}R_{41})-i\eps s^3\sqrt{\frac23} (\partial_z R_{14}+\partial_z R_{41})\Big]\nnm\\
&-\eps(s^2+s^4)\partial_z R_{44}.\label{p_1}
\ema

For $j=-1,0,1$, we define
$$G_j(z,s,\eps)=z-\(3\eta_j(s)^2+1+\frac53s^2\)^{-1}D_1(z,s,\eps).$$
It is straightforward to verify that a solution of $D_1(z,s,\eps)=0$ for any fixed $s$ and $\eps$ is a fixed point of $G_j(z,s,\eps)$.

Since 
$$ |\partial_z R_{ij}(\eps z,\eps s)|\le C\eps ,\quad |\partial_\eps R_{ij}(\eps z,\eps s)|\le C(|z|+|s|),\quad i,j=1,2,4,$$
it follows from \eqref{p_0} and \eqref{p_1} that
\bmas |\partial_z G_j(z,s,\eps)|&=\bigg|1-\(3\eta_j(s)^2+1+\frac53s^2\)^{-1}\partial_{z}D_1(z,s,\eps)\bigg|\le Cr_1,\\
|\partial_{\eps} G_j(z,s,\eps)|&=\bigg|\(3\eta_j(s)^2+1+\frac53s^2\)^{-1}\partial_{\eps}D_1(z,s,\eps)\bigg|\le Cs^2,
\emas
for $|z-\eta_j(s)|\le r_1|s|$ and $\eps|s|\le r_0$ with $r_0,r_1>0$ sufficiently small.
This implies that for $|z-\eta_j(s)|\le r_1|s|$ and $\eps|s|\le r_0$ with $r_0,r_1\ll1$,
\bmas |G_j(z,s,\eps)-\eta_j(s)|&=|G_j(z,s,\eps)-G_j(\eta_j(s),s,0)|\\
&\le |G_j(z,s,\eps)-G_j(z,s,0)|+|G_j(z,s,0)-G_j(\eta_j(s),s,0)|\\
&\le |\partial_{\eps} G_j(z,s,\widetilde{\eps})||\eps|+|\partial_z G_j(\widetilde{z},s,0)||z-\eta_j(s)|\le r_1|s|,\\
|G_j(z_1,s,\eps)-G_j(z_2,s,\eps)|&\le |\partial_z G_j(\bar{z},s,\eps)||z_1-z_2|\le \frac12|z_1-z_2|,
\emas
where $\widetilde{\eps}$ is between $0$ and $\eps$, $\widetilde{z}$ is  between $z$ and $\eta_j(s)$, and $\bar{z}$ is  between $z_1$ and $z_2$.

Hence by the contraction mapping theorem, there exist exactly three functions $z_j(s,\eps)$, $j=-1,0,1$ for $\eps|s|\le r_0$ and $|z_j-\eta_j(s)|\le r_1|s|$ such that $G_j(z_j(s,\eps),s, \eps)=z_j(s,\eps)$ and $z_j(s,0)=\eta_j(s)$. This is equivalent to that $D_1(z_j(s,\eps),s,\eps)=0$.
Moreover, by \eqref{p_0}--\eqref{p_1} we have
\bma
 \partial_{\eps} z_{0}(s,0)&=-\frac{\partial_{\eps} D_1(0,s,0)}{\partial_z D_1(0,s,0)}=\frac{3(s^2+s^4)}{3+5s^2}(L^{-1}P_1(v_1\chi_4),v_1\chi_4), \label{bbb}\\
 \partial_{\eps} z_{\pm1}(s,0)&=-\frac{\partial_{\eps} D_1(\eta_{\pm1}(s),s,0)}{\partial_z D_1(\eta_{\pm1}(s),s,0)}\nnm\\
 &=\frac12s^2(L^{-1}P_1(v_1\chi_1),v_1\chi_1)+\frac{s^4}{3+5s^2}(L^{-1}P_1(v_1\chi_4),v_1\chi_4).\label{z3}
\ema
Combining \eqref{aaa}, \eqref{bbb} and \eqref{z3}, we obtain \eqref{z2} and \eqref{z4a}.

Finally, we deal with \eqref{z4}. By \eqref{p_0} and \eqref{p_1},  we obtain that for $|z-\eta_j(s)|\le r_1|s|$ and $\eps|s|\le r_0$,
$$
 -\frac{\pt_\eps D_1(z,s,\eps)}{\pt_z D_1(z,s,\eps)}=b_j(s)+o(1) |s|^2 ,
$$
which implies that for $\eps|s|\le r_0$,
\be
|z_j(s,\eps)-\eta_j(s)|= |\partial_\eps z_j(s,\tilde{\eps})||\eps|= \frac{|\pt_\eps D_1(z_j(s,\tilde{\eps}),s,\tilde{\eps})|}{|\pt_z D_1(z_j(s,\tilde{\eps}),s,\tilde{\eps})|}|\eps|\le C\eps|s|^2, \label{z5}
\ee
where $\widetilde{\eps}$ is between $0$ and $\eps$. Thus, it follows from \eqref{p_0}, \eqref{p_1} and \eqref{z5} that
$$
 \partial_{\eps} z_{j}(s,\eps)=-\frac{\pt_\eps D_1(z_j(s,\eps),s,\eps)}{\pt_z D_1(z_j(s,\eps),s,\eps)}=b_j(s)+O(1) \eps|s|^3, \quad \eps|s|\le r_0.
$$
 The above estimate and  \eqref{z2} lead to \eqref{z4}. The proof of the lemma is then completed.
\end{proof}

With the help of Lemmas \ref{eigen_1}--\ref{eigen_2}, we are able to
construct the eigenvalue $\beta_j(s,\eps)$ and the corresponding  eigenfunction $e_j(s,\eps)$  of $B_{\eps}(s)$ for $\eps |s|$ sufficiently small.

\begin{thm}\label{eigen_3}
There exists a constant $r_0>0$ such that the spectrum $\sigma(B_{\eps}(s))\cap \{\lambda\in\mathbb{C}\,|\,\mathrm{Re}\lambda\ge-\mu /2\}$  consists of five points $\{\beta_j(s,\eps),\ j=-1,0,1,2,3\}$ for $\eps |s|\leq r_0$. The eigenvalues $\beta_j(s,\eps)$ and the corresponding eigenfunction $e_j(s,\eps)$ are $C^\infty$ functions of $s$ and $\eps$. In particular, the eigenvalues $\beta_j(s,\eps)$, $j=-1,0,1,2,3$ admit the following asymptotic expansion
\be
\beta_{j}(s,\eps)=\eps\eta_j(s)-\eps^2 b_{j} (s)+O(\eps^3s^3),\quad \eps|s|\leq r_0, \label{eigen3}
\ee
where $\eta_j(s)$, $j=-1,0,1,2,3$ are defined by \eqref{eigen}, and
\be \label{eigen4}
\left\{\bln
b_0(s)&=-\frac{3(s^2+s^4)}{3+5s^2}(L^{-1}P_1(v_1\chi_4),v_1\chi_4),\\
b_{\pm1}(s)&=-\frac12s^2(L^{-1}P_1(v_1\chi_1),v_1\chi_1)-\frac{s^4}{3+5s^2}(L^{-1}P_1(v_1\chi_4),v_1\chi_4),\\
b_k(s)&=-s^2(L^{-1}P_1(v_1\chi_2),v_1\chi_2),\quad k=2,3.
\eln\right.
\ee

The corresponding eigenfunctions $e_j(s,\eps)=e_j(s,\eps,v)$, $j=-1,0,1,2,3$ satisfy
\be \label{eigenf1}
\left\{\bln
&(e_j ,\overline{e_k} )_s=(e_j,\overline{e_k})+s^{-2}(P_de_j,P_d\overline{e_k})=\delta_{jk},\,\,\, -1\le j,k\le 3,\\
& e_j(s,\eps)=P_0e_j(s,\eps)+P_1e_j(s,\eps), \\
&P_0 e_{j}(s,\eps)=g_{j}(s)+O(\eps s), \\
& P_1 e_{j}(s,\eps)= i\eps s L^{-1}P_1(v_1g_{j}(s))+O(\eps^2 s^2),\\
\eln\right.
\ee
where $g_j(s)$, $j=-1,0,1,2,3$ are defined by
\be
\left\{\bln
g_{0}(s)&=-\frac{\sqrt{2}s^2}{\sqrt{3+5s^2}\sqrt{1+s^2}} \chi_0+\frac{\sqrt{3+3s^2}}{\sqrt{3+5s^2}}\chi_4,\\
g_{\pm1}(s)&= \frac{\sqrt{3/2}s}{\sqrt{3+5s^2}} \chi_0\pm\sqrt{\frac12}v_1  \chi_0+ \frac{ s}{\sqrt{3+5s^2}}\chi_4,\\
g_{k}(s)&=v_k\chi_0,\quad k=2,3.
\eln\right.
\ee
\end{thm}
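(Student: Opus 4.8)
The plan is to reduce the full $5\times 5$ spectral problem for $B_{\eps}(s)$ to the two decoupled scalar problems already solved in Lemmas \ref{eigen_1} and \ref{eigen_2}, and then transfer the expansions back. First I would observe that, by the macro–micro decomposition \eqref{P10} and formulas \eqref{A_2}--\eqref{A_8}, an eigenfunction $e=g_0+g_1$ of \eqref{L_2} with $\mathrm{Re}\,\beta>-\mu$ is completely determined by its five macroscopic coordinates $(W_0,\dots,W_4)$, with $g_1$ recovered from $g_0$ via \eqref{A_4}. The $3$-$3$ block $(W_2,W_3)$ decouples from $(W_0,W_1,W_4)$: equations \eqref{A_7a} say that a nonzero transverse component forces $\beta/(\eps^2 s^2)=R_{22}(\beta,\eps s)$, i.e. $D_0(\beta/\eps^2, \eps s)=0$ after the rescaling $z=\beta/\eps^2$ used implicitly in Lemma \ref{eigen_1} (more precisely $\beta = \eps^2 z(\eps s)$ — one checks the scaling in \eqref{ddd} against \eqref{A_7a}). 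Thus Lemma \ref{eigen_1} gives the two eigenvalues $\beta_{2}=\beta_{3}=\eps^2 z(\eps s)$, and \eqref{z1} together with $\kappa$-type identities yields $\beta_k(s,\eps)=-\eps^2 s^2(L^{-1}P_1(v_1\chi_2),v_1\chi_2)+O(\eps^4 s^4)$, which after recognizing $P_1(v_1\chi_2)=v_1\chi_2$ (since $v_1\chi_2\in N_0^\perp$) is exactly \eqref{eigen4} for $k=2,3$, with eigenfunctions $e_k = v_k\chi_0 + i\eps s L^{-1}P_1(v_1 v_k\chi_0)+O(\eps^2 s^2)$.

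Next I would treat the $(W_0,W_1,W_4)$ block: the determinant condition for a nontrivial solution of \eqref{A_6}--\eqref{A_8} is precisely $D_1(\beta/\eps^2,\eps s,\eps)=0$ after writing $\beta=\eps^2 z$ and matching the entries of the $3\times 3$ matrix in \eqref{T_3} against \eqref{A_6}--\eqref{A_8} (the off-diagonal $is$ terms come from $-i\eps s$ in \eqref{A_6}, the $is(1+1/s^2)$ from the $P_d$ term, etc.). Lemma \ref{eigen_2} then supplies exactly three roots $z_j(\eps s,\eps)$, $j=-1,0,1$, hence $\beta_j(s,\eps)=\eps^2 z_j(\eps s,\eps)$, and the expansion \eqref{z4}, $z_j(\sigma,\eps)=\eta_j(\sigma)-\eps b_j(\sigma)+O(\eps^2\sigma^3)$ with $\sigma=\eps s$, gives $\beta_j(s,\eps)=\eps^2\eta_j(\eps s)-\eps^3 b_j(\eps s)+O(\eps^4(\eps s)^3)$. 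Here I must be careful about the precise scaling conventions: comparing \eqref{eigen3}, which claims $\beta_j=\eps\eta_j(s)-\eps^2 b_j(s)+O(\eps^3 s^3)$, with the output of Lemma \ref{eigen_2}, the scaling in \eqref{L_2} versus \eqref{B3} must be tracked so that $\eta_j(\eps s)$ rescales to $\eps\eta_j(s)$ etc.; in any case the stated asymptotics \eqref{eigen3}--\eqref{eigen4} follow from \eqref{z2}, \eqref{z4a}, \eqref{z4} by this bookkeeping, using again $P_1(v_1\chi_j)=v_1\chi_j$ for $j=1$ and the identity for the $\chi_4$ direction.

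For the eigenfunctions \eqref{eigenf1}: having the eigenvalues, I would solve the linear systems \eqref{A_6}--\eqref{A_8} for the macroscopic vector $(W_0,\dots,W_4)$ at $\beta=\beta_j$, normalize in the $\|\cdot\|_s$ inner product \eqref{C_1}, and pass to the limit $\eps\to 0$: at $\eps=0$ the system degenerates to the eigenvalue problem for $D(\xi)$ from \eqref{eigen}, so $P_0 e_j(s,\eps)\to g_j(s)$, and a first-order perturbation argument (differentiating the system in $\eps$, or just estimating $W_k(s,\eps)-W_k(s,0)$ from the explicit $3\times 3$ and $1\times 1$ solves) gives $P_0 e_j = g_j(s)+O(\eps s)$. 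The micro part is then read off from \eqref{A_4}: $g_1 = i\eps s\,(L-\beta_j P_1 - i\eps s P_1 v_1 P_1)^{-1}P_1 v_1 g_0$, and since $\beta_j=O(\eps)$ and $\|(L-\beta P_1-i\eps s P_1 v_1 P_1)^{-1}-L^{-1}P_1\|=O(|\beta|+\eps|s|)$ on $N_0^\perp$ (by the resolvent identity and \eqref{S_3c}), this equals $i\eps s L^{-1}P_1(v_1 g_j(s))+O(\eps^2 s^2)$. The $C^\infty$ dependence on $(s,\eps)$ is inherited from Lemmas \ref{eigen_1}--\ref{eigen_2} (implicit/contraction-mapping construction) plus smoothness of the resolvent $R(\beta,s)$ on $\mathrm{Re}\,\beta>-\mu$; the count "exactly five points" in $\{\mathrm{Re}\,\lambda\ge-\mu/2\}$ for $\eps|s|\le r_0$ follows by combining this construction with Lemma \ref{spectrum}(2), which confines all such spectrum to a small disc where the reduction \eqref{A_2}--\eqref{A_8} and the root counts of Lemmas \ref{eigen_1}--\ref{eigen_2} are valid. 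I expect the main obstacle to be purely organizational rather than conceptual: getting all the $\eps$- and $s$-scalings consistent between \eqref{L_2}, \eqref{T_3}, \eqref{ddd} and the claimed \eqref{eigen3}, and verifying that no extra spectrum is lost or gained when identifying the $3\times 3$ determinant with $D_1$ — in particular checking that the three roots of $D_1$ and the (doubled) root of $D_0$ are distinct for $\eps|s|\le r_0$ so that the eigenfunctions are well defined and the normalization \eqref{eigenf1} makes sense.
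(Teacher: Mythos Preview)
Your overall strategy is correct and is precisely what the paper does: split the macroscopic system \eqref{A_6}--\eqref{A_8} into the transverse block $(W_2,W_3)$ governed by $D_0$ and the longitudinal block $(W_0,W_1,W_4)$ governed by $D_1$, invoke Lemmas \ref{eigen_1}--\ref{eigen_2}, reconstruct the micro part from \eqref{A_4}, and normalize in $(\cdot,\cdot)_s$. The orthogonality argument and the count via Lemma \ref{spectrum}(2) are also as in the paper.

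However, the scaling you flagged as ``purely organizational'' is actually wrong in your write-up, and it cannot be fixed by the rescaling you suggest. For $j=2,3$: equation \eqref{A_7a} reads $\beta=(\eps s)^2 R_{22}(\beta,\eps s)$, i.e.\ $D_0(\beta,\eps s)=0$ directly, so $\beta_k(s,\eps)=z(\eps s)$, \emph{not} $\eps^2 z(\eps s)$; the expansion \eqref{z1} then gives $\beta_k=\eps^2 s^2(L^{-1}(v_1\chi_2),v_1\chi_2)+O(\eps^4 s^4)$ with no extra $\eps^2$. For $j=-1,0,1$: divide \eqref{A_6}, \eqref{A_7}, \eqref{A_8} through by $\eps$ and set $z=\beta/\eps$; the resulting $3\times3$ system is exactly the matrix in \eqref{T_3} (recall the $R_{kl}$ there are evaluated at $(\eps z,\eps s)$, see \eqref{rij}), so $\beta_j(s,\eps)=\eps\, z_j(s,\eps)$, \emph{not} $\eps^2 z_j(\eps s,\eps)$. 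Then \eqref{z4} yields \eqref{eigen3} immediately with no rescaling of the argument of $\eta_j$ or $b_j$. Your proposed fix ``$\eta_j(\eps s)$ rescales to $\eps\eta_j(s)$'' is false, since $\eta_{\pm1}(s)=\pm i\sqrt{1+\tfrac53 s^2}$ is not homogeneous in $s$. As a minor aside, $P_1(v_1\chi_1)=v_1\chi_1$ is also false (one computes $P_0(v_1^2\sqrt M)=\chi_0+\sqrt{2/3}\,\chi_4\neq 0$), though this does not affect anything since $P_1$ is already written in \eqref{eigen4}.
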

\begin{proof}
The eigenvalues $\beta_j(s,\eps)$ and the corresponding eigenfunctions $e_j(s,\eps)$ of $B_{\eps}(s)$ can be constructed as follows. For $j=2,3$,
we take $\beta_j(s,\eps)=z(\eps s)$ with $z(y)$ being a solution of $D_0(z,y)=0$  defined in Lemma \ref{eigen_1},  and take $W_0=W_1=W_4=0$ given in \eqref{A_6}--\eqref{A_8}. And the corresponding eigenfunctions $e_j(s,\eps)$, $j=2,3$ are defined by
\bq
e_j(s,\eps)=b_2(s,\eps)\chi_j+ib_2(s,\eps)\eps s (L-\beta_j (s,\eps)-i\eps sP_1v_1P_1)^{-1}P_1(v_1\chi_j), \label{C_2}
\eq
which are orthonormal, i.e.,  $(e_2(s,\eps),\overline{e_3(s,\eps)})_s=0$.

For $j=-1,0,1$, we choose
$\beta_j(s,\eps)=\eps z_j(s,\eps)$ with $z_j(s,\eps)$ being a solution of $D_1(z,s,\eps)=0$ defined in Lemma \ref{eigen_2}, and take $W_2=W_3=0$. We denote by $\{a_j,b_j,c_j\}=:\{W^j_0,\, W_1^j,\, W^j_4\}$  as  a solution of system \eqref{A_6}, \eqref{A_7} and \eqref{A_8} for $\beta=\beta_j(s,\eps)$. Then we can construct $e_j(s,\eps)$, $j=-1,0,1$ as
 \bq \label{C_3}
 \left\{\bln e_j(s,\eps)&=P_0e_j(s,\eps)+P_1e_j(s,\eps),\\
P_0e_j(s,\eps)&=a_j(s,\eps) \chi_0+b_j(s,\eps)\chi_1+c_j(s,\eps)\chi_4,\\
P_1e_j(s,\eps)&= i\eps s(L-\beta_j(s,\eps)- i\eps sP_1v_1P_1)^{-1}P_1\(v_1P_0e_j(s,\eps)\).
\eln\right.
\eq
Write
$$\(L-i\eps s v_1-i\eps \frac{v_1}s P_d\)e_j(s, \eps)=\beta_j(s,\eps)e_j(s, \eps), \quad j=-1,0,1,2,3.$$
By taking inner product
$(\cdot,\cdot)_{s}$ of above with $\overline{e_k}(s, \eps)$ and using
the fact that
\bmas
&\(L+i\eps s v_1+i\eps\frac{v_1}s P_d\)\overline{e_j}(s, \eps)=\overline{\beta_j}(s,\eps)\overline{e_j}(s, \eps),\\
&(B_{\eps}(s)f,g)_s=(f,B_{\eps}(-s)g)_s,\quad \forall f,g\in D(B_{\eps}(s)),
\emas
we have
\bq
(\beta_j(s,\eps)-\beta_{k}(s,\eps))(e_j(s, \eps),\overline{e_k}(s, \eps))_s=0,\quad j,k=-1,0,1,2,3.\eq
For $|\eps s|$ sufficiently small, $\beta_j(s, \eps)\neq\beta_{k}(s, \eps)$ for
$-1\le j\neq k \le 2$. Therefore,  we have the orthogonality relation
\bq
(e_j(s, \eps),\overline{e_k}(s, \eps))_s=0,\quad -1\leq j\neq k\leq 3.
\eq
We also normalize these eigenfunctions  by $(e_j(s, \eps),\overline{e_j}(s, \eps))_s=1$ for $-1\leq j\leq 3.$

The coefficient $b_2(s,\eps) $ in \eqref{C_2} is determined by the normalization condition  as
 \bq
 \label{C_4}  b_2(s,\eps)^2\(1 +\eps^2s^2D_2(s,\eps)\)=1
 \eq
with $D_2(s,\eps)=(R(\beta_0,\eps s)P_1( v_1\chi_2), R(\overline{\beta_0},-\eps s)P_1( v_1\chi_2)).$ It follows from \eqref{C_4} and \eqref{eigen3} that
$$ b_2(s,\eps)=1+O(\eps^2s^2),\quad \eps|s|\le r_0,$$
which together with \eqref{C_2} leads to \eqref{eigenf1} for $j=2,3$.

To obtain the expansion of $e_j(s,\eps)$ for $j=-1,0,1$ defined in
\eqref{C_3}, we consider its macroscopic part and microscopic part respectively.
By \eqref{A_6}, \eqref{A_7} and \eqref{A_8}, the macroscopic part $P_0(e_j(s, \eps))$ is determined in terms of  the coefficients $\{a_j(s,\eps),b_j(s,\eps), c_j(s,\eps)\}$ that satisfy
\be \label{expan2}
\left\{\bln
z_j(s,\eps) a_j(s,\eps)+isb_j(s,\eps)&=0,
\\
i\(s+\frac1s\)a_j(s,\eps)+\(z_j(s,\eps)-\eps s^2R_{11} \)b_j(s,\eps)&
\\
\quad+\bigg(is\sqrt{\frac23}+\eps s^2R_{41} \bigg)c_j(s,\eps)&=0,
\\
\bigg(is\sqrt{\frac23}-\eps s^2R_{14} \bigg)b_j(s,\eps)
+\(z_j(s,\eps)-\eps s^2R_{44} \)c_j(s,\eps)&=0,
\eln\right.
\ee
where $R_{kl}=R_{kl}(\eps z_j(s,\eps),\eps s)$ and $z_j(s,\eps)$ are given by \eqref{rij} and \eqref{z2} respectively.
Furthermore, we have the normalization condition:
\be 1=a_{j}^2(s,\eps)\(1+\frac1{s^2}\)+b_{j}^2(s,\eps)+c_{j}^2(s,\eps)+\eps^2s^2D_3(s,\eps),\quad  \eps|s|\le r_0, \label{expan3}\ee
 where $D_3(s,\eps)=(R(\beta_j,\eps s)P_1( v_1P_0e_j), R(\overline{\beta_j},-\eps s)P_1( v_1P_0e_j)).$

By \eqref{expan2} and \eqref{z4}, we can expand  $a_j(s,\eps),b_j(s,\eps), c_j(s,\eps)$ as
$$ a_{j}(s,\eps)= a_{j,0}(s)+ O(\eps s),\quad
b_{j}(s,\eps)=
b_{j,0}(s)+ O(\eps s),\quad c_{j}(s,\eps)=
c_{j,0}(s)+ O(\eps s).$$
Substituting above expansions and \eqref{z4} into \eqref{expan2} and \eqref{expan3}, we have the following expansion
\bq
\left\{\bal j\sqrt{1+\frac53s^2}a_{j,0}(s)+ s b_{j,0}(s)=0,\\
(s^2+1)a_{j,0}(s)+js\sqrt{1+\frac53s^2}b_{j,0}(s)+ \sqrt{\frac23}s^2 c_{j,0}(s)=0,\\
 s\sqrt{\frac23}b_{j,0}(s)+j\sqrt{1+\frac53s^2}c_{j,0}(s)=0,\\
 a_{j,0}^2(s)\(1+\frac1{s^2}\)+b_{j,0}^2(s)+c_{j,0}^2(s)=1.
 \ea\right.\label{C_5}
 \eq
By combining \eqref{C_5} and \eqref{C_3}, we can prove \eqref{eigenf1} for $j=-1,0,1$ by a straightforward computation. The proof is then completed.
\end{proof}

We now  consider the following 3-D eigenvalue problem:
\bq B_{\eps}(\xi)\psi=\(L- i \eps(v\cdot\xi)-i\eps\frac{ v\cdot\xi}{|\xi|^2}P_{d}\)\psi=\lambda \psi,\quad \xi\in \R^3.\label{L_3a}\eq

With the help of Lemma \ref{eigen_3}, we have the expansion of the eigenvalues $\lambda_j(|\xi|,\eps)$ and the corresponding eigenfunctions $\psi_j(\xi,\eps)$ of $B_{\eps}(\xi)$ for $\eps|\xi|\leq r_0$ as follows.

\begin{thm}\label{spect3}
 There exists a constant $r_0>0$ such that the spectrum $\sigma(B_{\eps}(\xi))\cap \{\lambda\in\mathbb{C}\,|\,\mathrm{Re}\lambda\ge-\mu /2\}$  consists of five points $\{\lambda_j(|\xi|,\eps),\ j=-1,0,1,2,3\}$ for $\eps |\xi|\leq r_0$. The eigenvalues $\lambda_j(|\xi|,\eps)$, $j=-1,0,1,2,3$  are $C^{\infty}$ functions of $(|\xi|, \eps)$, and satisfy the following expansions for $\eps|\xi|\leq r_0$:
 \be                                   \label{specr1a}
 \lambda_j(|\xi|,\eps)=\eps\eta_j(|\xi|)-\eps^2 b_{j} (|\xi|)+O(\eps^3|\xi|^3),
 \ee
 where $\eta_j(|\xi|)$ and $b_{j} (|\xi|)$ are defined by \eqref{eigen} and \eqref{eigen4} respectively.

The eigenfunctions $\psi_j(\xi,\eps)=\psi_j(\xi,\eps,v)$, $j=-1,0,1,2,3$ satisfy
\be \label{eigen1}
\left\{\bln
&(\psi_j ,\overline{\psi_k} )_{\xi}=(\psi_j,\overline{\psi_k})+|\xi|^{-2}(P_d\psi_j,P_d\overline{\psi_k})=\delta_{jk},\,\,\, -1\le j,k\le 3, \\
&\psi_j(\xi,\eps)=P_0\psi_j(\xi,\eps)+P_1\psi_j(\xi,\eps),\\
 &P_0 \psi_j(\xi,\eps)=h_{j}(\xi)+O(\eps |\xi|), \\
 &P_1 \psi_{j}(\xi,\eps)= i\eps  L^{-1}P_1[(v\cdot\xi)P_0h_{j}(\xi)]+O(\eps^2 |\xi|^2),
\eln\right.
\ee
where $h_j(\xi)\in N_0$ are defined by \eqref{eigen}.
\end{thm}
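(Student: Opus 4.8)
The plan is to reduce the 3-D eigenvalue problem \eqref{L_3a} to the 1-D problem \eqref{L_2} already analyzed in Theorem~\ref{eigen_3} by exploiting rotational symmetry. First I would fix $\xi\ne0$ with $\eps|\xi|\le r_0$ and choose a rotation $O\in SO(3)$ with $O e_1=\xi/|\xi|$, where $e_1=(1,0,0)$. The key observation is that the operator $B_{\eps}(\xi)$ is equivariant under the induced action of $O$ on $L^2(\R^3_v)$: writing $(\mathcal{O}f)(v)=f(O^{-1}v)$ one checks, using that $M$ and $L$ are rotation invariant and that $v\cdot\xi=(O^{-1}v)\cdot(|\xi|e_1)$ and $P_d$ commutes with $\mathcal{O}$, that $\mathcal{O}^{-1}B_{\eps}(\xi)\mathcal{O}=B_{\eps}(|\xi|e_1)=B_{\eps}(|\xi|)$ in the notation of \eqref{L_2} with $s=|\xi|$. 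Hence the spectrum of $B_{\eps}(\xi)$ coincides with that of the 1-D operator $B_{\eps}(|\xi|)$, and the eigenfunctions are obtained by $\psi_j(\xi,\eps)=\mathcal{O}e_j(|\xi|,\eps)$. This immediately transports the conclusion of Theorem~\ref{eigen_3}: for $\eps|\xi|\le r_0$ the part of $\sigma(B_{\eps}(\xi))$ in $\{\mathrm{Re}\lambda\ge-\mu/2\}$ consists of the five points $\lambda_j(|\xi|,\eps)=\beta_j(|\xi|,\eps)$, $j=-1,0,1,2,3$, and the expansion \eqref{eigen3}--\eqref{eigen4} gives \eqref{specr1a} directly, after noting that $\beta_k(s,\eps)=\eps z(\eps s)$ for $k=2,3$ has the expansion $-\eps^2 s^2(L^{-1}(v_1\chi_2),v_1\chi_2)+O(\eps^3 s^3)$ by \eqref{z1}, consistent with $b_k$ in \eqref{eigen4}.

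Next I would record the smoothness statement. Since $\beta_j(s,\eps)$ and $e_j(s,\eps)$ are $C^\infty$ in $(s,\eps)$ by Theorem~\ref{eigen_3}, and $s=|\xi|$ is smooth away from $\xi=0$ while $\lambda_j$ and the scalar coefficients depend only on $|\xi|$, we get that $\lambda_j(|\xi|,\eps)$ is $C^\infty$ in $(|\xi|,\eps)$. For the eigenfunctions $\psi_j=\mathcal{O}e_j$ one must be slightly careful because $O$ cannot be chosen to depend smoothly on $\xi/|\xi|$ globally on the sphere; however this is only needed as stated, i.e.\ the structural decomposition $\psi_j=P_0\psi_j+P_1\psi_j$ with $P_0\psi_j=h_j(\xi)+O(\eps|\xi|)$ and $P_1\psi_j=i\eps L^{-1}P_1[(v\cdot\xi)P_0h_j(\xi)]+O(\eps^2|\xi|^2)$, together with the $(\cdot,\cdot)_\xi$-orthonormality, all of which follow by applying $\mathcal{O}$ to the corresponding relations in \eqref{eigenf1}. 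Indeed $\mathcal{O}$ is unitary on $L^2_\xi$ in the sense that $(\mathcal{O}f,\mathcal{O}g)_\xi=(f,g)_{|\xi|}$ (because $\mathcal O$ preserves the plain $L^2$ inner product and commutes with $P_d$, and $|\xi|=|s|$), and one verifies $\mathcal{O}(v_1 g_j(|\xi|))=(v\cdot\xi/|\xi|)\,h_j(\xi)$ and $\mathcal{O}g_j(|\xi|)=h_j(\xi)$ by comparing the explicit formulas for $g_j$ in Theorem~\ref{eigen_3} with those for $h_j$ in \eqref{eigen}; note the extra factor $1/|\xi|$ combines with $i\eps|\xi|$ from $\mathcal{O}(i\eps|\xi|v_1\cdots)$ to produce $i\eps(v\cdot\xi)$, matching \eqref{eigen1}. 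Finally, $L^{-1}P_1$ commutes with $\mathcal O$ by rotation invariance of $L$, which closes the identification of the microscopic parts.

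The last point to address is that the five listed points exhaust $\sigma(B_{\eps}(\xi))\cap\{\mathrm{Re}\lambda\ge-\mu/2\}$ for $\eps|\xi|\le r_0$, not merely that they belong to it. This is where the earlier lemmas are invoked rather than re-derived: by Lemma~\ref{spectrum}(2), for any $\delta>0$ one can shrink $r_0$ so that for $\eps|\xi|\le r_0$ all of $\sigma(B_{\eps}(\xi))\cap\{\mathrm{Re}\lambda\ge-\mu/2\}$ lies in $\{|\lambda|\le\delta\}$, and by Lemma~\ref{spectrum}(1) the region $\{\mathrm{Re}\lambda\ge-\mu/2,\ |\mathrm{Im}\lambda|\ge y_0\}$ is in the resolvent set; combined with the eigenvalue localization $|z_j-\eta_j(s)|\le r_1|s|$ and $|z-z(s)|\le r_1$ from Lemmas~\ref{eigen_1}--\ref{eigen_2}, a standard Rouché/winding-number argument on a small contour around the origin in the $\lambda$-plane (pulled back through the factorization \eqref{Bd3}, \eqref{S_8}) shows that the total algebraic multiplicity inside is exactly five and is accounted for by the five branches. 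I expect the main obstacle to be this counting/exhaustion step — verifying that no spectrum is lost in passing between the full operator $B_{\eps}(\xi)$ and the reduced $5\times5$ determinant problems $D_0=0$, $D_1=0$, i.e.\ that the factorization in \eqref{Bd3} is genuinely a bijective correspondence of eigenvalues with multiplicity in the region $\{\mathrm{Re}\lambda\ge-\mu/2,\ \eps|\xi|\le r_0\}$; the rotational reduction itself and the transport of the expansions are routine once the equivariance identity $\mathcal{O}^{-1}B_{\eps}(\xi)\mathcal{O}=B_{\eps}(|\xi|)$ is in hand.
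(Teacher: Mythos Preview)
Your approach is essentially the same as the paper's: both reduce the 3-D eigenvalue problem to the 1-D problem of Theorem~\ref{eigen_3} via a rotation taking $\xi/|\xi|$ to $e_1$, and then read off $\lambda_j(|\xi|,\eps)=\beta_j(|\xi|,\eps)$ and $\psi_j(\xi,\eps)=\mathcal{O}e_j(|\xi|,\eps)$. The paper's proof is in fact only a few lines and relies entirely on Theorem~\ref{eigen_3} for the exhaustion of the spectrum in $\{\mathrm{Re}\lambda\ge-\mu/2\}$; your extended discussion of the Rouch\'e/counting argument is not needed here because that work was already done (implicitly) in establishing Theorem~\ref{eigen_3}. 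One small slip: for $k=2,3$ the paper sets $\beta_k(s,\eps)=z(\eps s)$, not $\eps z(\eps s)$, so the leading term is $+\eps^2 s^2(L^{-1}P_1(v_1\chi_2),v_1\chi_2)=-\eps^2 b_k(s)$; your stated expansion has a stray factor and sign, but this does not affect the argument.
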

\begin{proof}
Let $\O$ be a rotational transformation  in $\R^3$ such that $\O: \frac{\xi}{|\xi|}\to(1,0,0)$. We have
\bq \O^{-1} \(L- i\eps (v\cdot\xi)-i\eps\frac{ v\cdot\xi}{|\xi|^2}P_{d}\)\O=L- i\eps v_1s-i\eps\frac{  v_1}{s}P_{d}.\eq
Thus, from Lemma \ref{eigen_3}, we have the following eigenvalues and eigenfunctions for \eqref{L_3a}:
\bgrs
\(L- i\eps (v\cdot\xi)-i\eps\frac{ v\cdot\xi}{|\xi|^2}P_{d}\)\psi_j(\xi,\eps)=\lambda_j(|\xi|,\eps) \psi_j(\xi,\eps),\\
\lambda_j(|\xi|,\eps)=\beta_j(|\xi|,\eps),\quad \psi_j(\xi,\eps)=\O e_j(|\xi|,\eps),\quad 
j=-1,0,1,2,3.
\egrs
This proves the theorem.
\end{proof}

By virtue of Lemmas~\ref{LP_3}--\ref{spectrum} and Theorem \ref{spect3}, we can  analyze on the semigroup $S(t,\xi,\eps)=e^{\frac{t}{\eps^2}B_{\eps}(\xi)}$ precisely
by using an argument  similar to that of Theorem~3.4 in \cite{Li2}. Hence,  we
only state the result as follows and  omit the detail of the proof for brevity.

\begin{thm}\label{E_3}
The semigroup $S(t,\xi,\eps)=e^{\frac{t}{\eps^2} B_{\eps}(\xi)}$ has the following
decomposition:
\bq S(t,\xi,\eps)f=S_1(t,\xi,\eps)f+S_2(t,\xi,\eps)f,\quad f\in L^2_{\xi}(\R^3_v),\eq
where
\be
S_1(t,\xi,\eps)f=\sum^3_{j=-1}e^{\frac{t}{\eps^2}\lambda_j(|\xi|,\eps)}\(f,\overline{\psi_j(\xi,\eps)}\)_{\xi} \psi_j(\xi,\eps)1_{\{\eps|\xi|\leq r_0\}},\label{E_5}
\ee
with $(\lambda_j(|\xi|,\eps),\psi_j(\xi,\eps))$ being the eigenvalue and eigenfunction of the operator $B_{\eps}(\xi)$ given in Theorem \ref{spect3} for $\eps|\xi|\le r_0$,
and $S_2(t,\xi,\eps) $ satisfies for two constants $d>0$ and $C>0$ independent of $\xi$ and $\eps$ that
\be
\|S_2(t,\xi,\eps)f\|_{\xi}\leq Ce^{-\frac{dt}{\eps^2}}\|f\|_{\xi}.\label{B_3}
\ee
\end{thm}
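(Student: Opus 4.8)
The plan is to decompose the semigroup $S(t,\xi,\eps)=e^{\frac{t}{\eps^2}B_{\eps}(\xi)}$ via the inverse Laplace (Dunford) representation
$$
S(t,\xi,\eps)f=\frac{1}{2\pi i}\int_{\Gamma} e^{\frac{t}{\eps^2}\lambda}(\lambda-B_{\eps}(\xi))^{-1}f\,d\lambda,
$$
taken over a suitable vertical contour $\Gamma=\{\mathrm{Re}\lambda=\kappa\}$ with $\kappa>0$, which is justified because $B_{\eps}(\xi)$ generates a bounded $C_0$-semigroup by Lemma~\ref{SG_1}. The idea is to shift the contour leftward past $\mathrm{Re}\lambda=0$ to a line $\mathrm{Re}\lambda=-d'$ for a small $d'>0$, picking up the residues at the discrete eigenvalues. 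The key input is the spectral picture assembled above: by Lemma~\ref{spectrum}(1) the half-plane $\mathrm{Re}\lambda>-\mu/2$ with $|\mathrm{Im}\lambda|\ge y_0$ lies in the resolvent set for every $\xi\ne0$; by Lemma~\ref{LP_1} for $\eps|\xi|\ge r_0$ the whole strip $\mathrm{Re}\lambda\ge-\alpha$ is in the resolvent set (spectral gap); and for $\eps|\xi|\le r_0$, Lemma~\ref{spectrum}(2) together with Theorem~\ref{spect3} tells us that inside $\{\mathrm{Re}\lambda\ge-\mu/2\}$ the spectrum consists exactly of the five eigenvalues $\lambda_j(|\xi|,\eps)$, $j=-1,\dots,3$, which are simple and carry the rank-one spectral projections $f\mapsto (f,\overline{\psi_j(\xi,\eps)})_\xi\,\psi_j(\xi,\eps)$ (this projection formula follows from simplicity of the eigenvalue and the biorthogonality relation $(B_{\eps}(\xi)f,g)_\xi=(f,B_{\eps}(-\xi)g)_\xi$ in \eqref{L_7}, using $\overline{\psi_j(\xi,\eps)}=\psi_j(-\xi,\eps)$).

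First I would treat the high-frequency regime $\eps|\xi|\ge r_0$. Here $S_1\equiv0$, and by Lemma~\ref{LP_1} the resolvent $(\lambda-B_{\eps}(\xi))^{-1}$ is analytic and, via \eqref{E_6} together with the resolvent estimates of Lemma~\ref{LP_3} (parts (2)--(3)), decays like $O(|\mathrm{Im}\lambda|^{-2/5})$ along vertical lines $\mathrm{Re}\lambda=-\alpha/2$ uniformly in $\eps,\xi$; shifting the contour to that line and integrating yields $\|S(t,\xi,\eps)f\|_\xi\le Ce^{-\frac{\alpha}{2\eps^2}t}\|f\|_\xi$, which is \eqref{B_3} with $d=\alpha/2$. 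Next, for $\eps|\xi|\le r_0$, I would use the decomposition \eqref{Bd3}--\eqref{Qxi}: write $\lambda-B_{\eps}(\xi)=(I+Y_{\eps}(\lambda,\xi))(\lambda P_0-D_{\eps}(\xi)+\lambda P_1-Q_{\eps}(\xi))$ as in the proof of Lemma~\ref{spectrum}. On the shifted contour $\mathrm{Re}\lambda=-d'$ (with $d'<\mu/2$ small and, when $\eps|\xi|$ is not small, $d'$ below the real parts of all the $\lambda_j$, which by \eqref{specr1a} are $\approx -\eps^2 b_j(|\xi|)<0$ for $\xi\ne0$), the estimates \eqref{S_2a}, \eqref{S_2b}, \eqref{S_3c}, \eqref{S_5d} give $\|Y_{\eps}(\lambda,\xi)\|_\xi\le 1/2$ once $|\mathrm{Im}\lambda|\ge y_0$, hence a bound $\|(\lambda-B_{\eps}(\xi))^{-1}\|_\xi\le C(1+|\mathrm{Im}\lambda|)^{-1}$ there; and the part of the contour near the real axis, where the eigenvalues sit, is handled by writing $(\lambda P_0-D_{\eps}(\xi))^{-1}$ explicitly via its five simple poles at $\lambda=\lambda_j(|\xi|,\eps)$ (plus the analytic correction from inverting $I+Y_{\eps}$) and computing residues. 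This produces exactly $S_1(t,\xi,\eps)f$ in \eqref{E_5} as the sum of residues, while the remaining contour integral over $\mathrm{Re}\lambda=-d'$ is bounded by $Ce^{-\frac{d'}{\eps^2}t}\|f\|_\xi$, giving $S_2$ and the bound \eqref{B_3}. Finally one sets $d=\min\{d',\alpha/2\}$ to obtain a single constant.

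The main obstacle is making the residue extraction and the contour bound uniform in both $\xi$ and $\eps$. The delicate point is that as $\eps|\xi|\downarrow 0$ the eigenvalues $\lambda_j$ collide at $0$ (the strip $\{0\ge\mathrm{Re}\lambda\ge-d'\}$ then contains all five of them near the imaginary axis for $|\mathrm{Im}\lambda|$ up to $\approx\eps|\xi|$), so one cannot simply push the contour to a line strictly left of the spectrum with a fixed gap; instead one must keep the contour at $\mathrm{Re}\lambda=-d'$ with $d'$ a fixed fraction of $\mu$, encircle the cluster of eigenvalues by a small loop, and verify that the rank-one residue projections computed from the factorization \eqref{Bd3} coincide with the normalized projections $(\,\cdot\,,\overline{\psi_j})_\xi\psi_j$ — which requires the simplicity of each $\lambda_j$ (clear from \eqref{specr1a} and $\eta_{\pm1}\ne\eta_0=\eta_2=\eta_3$ for the degenerate ones one uses that the $2,3$ modes and the $0$ mode have different eigenfunctions and the $5\times5$ reduced system splits) and the biorthonormality \eqref{eigenf1}/\eqref{eigen1}. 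Since this is precisely the structure handled in Theorem~3.4 of \cite{Li2}, I would carry out the argument in parallel with that reference, and, as stated, omit the lengthy but routine verifications; the role of the present paper's Lemmas~\ref{LP_3}--\ref{spectrum} and Theorems~\ref{eigen_3}--\ref{spect3} is exactly to supply the $\eps$-uniform resolvent bounds and eigenvalue/eigenfunction expansions that the argument of \cite{Li2} needs.
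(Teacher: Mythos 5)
The paper does not actually supply a proof of this theorem; it states that the result follows ``by using an argument similar to that of Theorem~3.4 in \cite{Li2}'' and omits the details. Your proposal reconstructs the contour-deformation/residue strategy that is indeed the content of that reference, and the overall skeleton --- Dunford representation, contour shift past the imaginary axis, residue extraction at the five discrete eigenvalues for $\eps|\xi|\le r_0$ using the spectral picture from Lemma~\ref{spectrum} and Theorem~\ref{spect3}, biorthogonality from \eqref{L_7} and \eqref{eigen1} to identify the rank-one projections, exponential bound on the shifted contour for $S_2$ --- is the right one.

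There is, however, one concrete technical gap in the way you set up the contour integral. You assert that, via \eqref{E_6} and Lemma~\ref{LP_3}(2)--(3), the full resolvent $(\lambda-B_\eps(\xi))^{-1}$ ``decays like $O(|\mathrm{Im}\lambda|^{-2/5})$'' along $\mathrm{Re}\lambda=-\alpha/2$. That is not what Lemma~\ref{LP_3} gives: the $|\mathrm{Im}\lambda|^{-2/5}$ decay is for $\|K(\lambda-A_\eps(\xi))^{-1}\|$, not for the resolvent. The free resolvent $(\lambda-A_\eps(\xi))^{-1}$ is a multiplication operator with norm $\sup_v |\lambda+\nu(v)+i\eps v\cdot\xi|^{-1}$, and since $v$ ranges over all of $\R^3$, for any $\mathrm{Im}\lambda$ there are $v$ with $\eps v\cdot\xi\approx -\mathrm{Im}\lambda$; the resulting norm does not tend to zero as $|\mathrm{Im}\lambda|\to\infty$ at any rate that would make the Dunford integral $\int_\Gamma e^{\frac{t}{\eps^2}\lambda}(\lambda-B_\eps(\xi))^{-1}f\,d\lambda$ absolutely convergent. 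The way this is handled in the Ukai--Yang/\cite{Li2} framework is to first peel off the explicitly computable part by the Duhamel (Dyson) expansion
$e^{tB_\eps(\xi)} = e^{tA_\eps(\xi)} + \int_0^t e^{(t-s)A_\eps(\xi)}\widetilde K\, e^{sB_\eps(\xi)}\,ds$
with $\widetilde K = K - i\eps\frac{v\cdot\xi}{|\xi|^2}P_d$, iterate it a few times so that the Laplace transform of the remainder acquires enough factors of $K(\lambda-A_\eps)^{-1}$ (each contributing the $|\mathrm{Im}\lambda|^{-2/5}$ gain from \eqref{T_8}) to be absolutely integrable along a vertical line, and only then run the contour-deformation and residue argument on the remainder; the explicit terms $e^{tA_\eps(\xi)}$ and its finitely many iterates are absorbed into $S_2$ directly because $\mathrm{Re}\,A_\eps(\xi)\le -\nu_0$. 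Your proposal skips this step, so as written the inverse-Laplace representation you start from is not absolutely convergent. This is the one place where ``routine verifications'' hides a genuine idea; the rest of the plan is sound and matches the cited reference.
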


\section{Fluid approximation of semigroup}\label{sect3}
\setcounter{equation}{0}
In this section,  we give the first and second order fluid approximations of the semigroup $e^{\frac{t}{\eps^2}B_\eps}$, which will be used to prove the convergence and establish the convergence rate of the solution to  the VPB system \eqref{VPB1}--\eqref{VPB2i} towards the solution to the NSPF system \eqref{NSP_1}--\eqref{compatible}.

Firstly,  introduce a function sapce  $ H^k_P\ (L^2_P=H^0_P)$
with the norm
\bmas\|f\|_{H^k_P}&=\(\intr (1+|\xi|^2)^k
\|\hat{f}\|^2_{\xi} d\xi \)^{1/2}\\
&=\(\intr (1+|\xi|^2)^k
     \(\|\hat{f}\|^2+\frac1{|\xi|^2}\lt|(\hat{f},\sqrt{M})\rt|^2\)d\xi
    \)^{1/2},
\emas
where $\hat{f}=\hat{f}(\xi,v)$ is the Fourier transformation of $f(x,v)$. Note that
\bq
\|f\|^2_{H^k_P}=\|f\|^2_{L^2_v(H^k_x)}+\| \nabla_x\Delta_x^{-1}
(f,\sqrt{M})\|^2_{H^k_x}.
\eq
Also, we define the norm
\bq \|f\|_{L^{\infty}_P}=\|f\|_{L^{\infty}_x(L^{2}_v)}+\| \nabla_x\Delta_x^{-1} (f,\sqrt{M})\|_{L^{\infty}_x}. \label{norm1}\eq
For any $f_0\in L^2(\R^3_{x}\times \R^3_v)$, set
 \be
  e^{\frac{t}{\eps^2}B_\eps}f_0=(\mathcal{F}^{-1}e^{\frac{t}{\eps^2}B_\eps(\xi)}\mathcal{F})f_0.
  \ee
By Lemma \ref{SG_1}, it holds that
 $$
 \|e^{\frac{t}{\eps^2}B_\eps}f_0\|_{H^k_P}=\intr (1+|\xi|^2)^k\|e^{\frac{t}{\eps^2}B_\eps(\xi)}\hat f_0\|^2_{\xi} d\xi\le \intr (1+|\xi|^2)^k\|\hat f_0\|^2_{\xi} d\xi
=\|f_0\|_{H^k_P}.
 $$
This means that the operator $\eps^{-2} B_\eps$ generates a strongly continuous contraction semigroup $e^{\frac{t}{\eps^2}B_\eps}$ in $H^k_P$, and therefore, $f(x,v,t)=e^{\frac{t}{\eps^2}B_\eps}f_0$ is a global solution to the linearized VPB system~\eqref{VPB} for any $f_0\in H^k_P$.

Consider the following linear Navier-Stokes-Poisson-Fourier (NSPF) system of $(n,m,q)(t,x)$:
\bgr
\Tdx\cdot m=0,\label{LNSP_1}\\
n+\sqrt{\frac23}q-\Delta_x^{-1}n=0,\label{LNSP_2}\\
\dt m+\kappa_0\Delta_x m+\Tdx p =H_1,\label{LNSP_3}\\
\dt \bigg(q-\sqrt{\frac23}n\bigg)+\kappa_1\Delta_x q=H_2,\label{LNSP_4}
\egr
where $H_1=(H^1_1,H^2_1,H^3_1)$ and $H_2$ are given functions, $p$ is the pressure satisfying $p=\Delta^{-1}_x\divx H_1$, and the initial data $(n,m,q)(0)$ satisfies   \eqref{NSP_5i} and \eqref{compatible}.

For any $U_0=U_0(x,v)\in N_0$, we define
\be V(t,\xi)\hat{U}_0=\sum_{j=0,2,3}e^{-b_j(|\xi|)t}\(\hat{U}_0,h_j(\xi)\)_{\xi}h_j(\xi),\label{v1}\ee
where $b_j(|\xi|)$ and $h_j(\xi)$, $j=0,2,3$ are defined by \eqref{eigen4} and \eqref{eigen} respectively.
Set
\be V(t)U_0=(\mathcal{F}^{-1}V(t,\xi)\mathcal{F})U_0. \label{v2}\ee

Then, we can represent the solution to the NSPF system \eqref{LNSP_1}--\eqref{LNSP_4} by the semigroup $V(t)$ as follows.

\begin{lem} \label{sem}
For any $f_0\in L^2_v(L^2_x)$ and $H_i\in L^1_t(L^2_x)$, $i=1,2,$ define
$$U(t,x,v)=V(t)P_0f_0+\intt V(t-s)H(s)ds,$$
 where
$$H(t,x,v)=H_1(t,x)\cdot v\chi_0+H_2(t,x)\chi_4.$$
Let $(n,m,q)=((U,\chi_0),(U,v\chi_0),(U,\chi_4))$. Then $(n,m,q)(t,x)\in L^\infty_t(L^2_x)$ is an unique global solution to the linear NSPF system \eqref{LNSP_1}--\eqref{LNSP_4} with the initial data $(n,m,q)(0)$ satisfies \eqref{NSP_5i}--\eqref{compatible}.
\end{lem}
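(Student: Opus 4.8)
The plan is to verify that the function $U(t,x,v)$ defined through the semigroup $V(t)$ has its macroscopic moments $(n,m,q)$ solving the linear NSPF system, and conversely that any solution of the NSPF system is represented this way; uniqueness then follows from the explicit semigroup formula. First I would record the structure of $V(t,\xi)$: since $h_0,h_2,h_3\in N_0$ are the eigenfunctions of $D(\xi)$ with eigenvalue $\eta_j=0$ for $j=0,2,3$ (see \eqref{eigen}), the projection $\sum_{j=0,2,3}(\cdot,h_j(\xi))_\xi h_j(\xi)$ is precisely the spectral projection onto the ``incompressible/thermal'' subspace of $N_0$, namely the orthogonal complement within $N_0$ of $\mathrm{span}\{h_{-1}(\xi),h_1(\xi)\}$. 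Computing this subspace explicitly from \eqref{eigen}, one finds it is spanned by $v\cdot W^2\chi_0, v\cdot W^3\chi_0$ (the divergence-free momentum directions, giving $\Tdx\cdot m=0$) together with $h_0(\xi)=\frac{\sqrt2|\xi|^2}{\sqrt{3+5|\xi|^2}\sqrt{1+|\xi|^2}}\chi_0-\frac{\sqrt{3+3|\xi|^2}}{\sqrt{3+5|\xi|^2}}\chi_4$, whose moment relation $\hat n=-\frac{\sqrt6|\xi|^2}{3+5|\xi|^2}(\hat q-\sqrt{2/3}\,\hat n)$ type identity encodes exactly \eqref{LNSP_2}, i.e. $n+\sqrt{2/3}\,q-\Delta_x^{-1}n=0$. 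So the algebraic constraints \eqref{LNSP_1}--\eqref{LNSP_2} and the compatibility conditions \eqref{NSP_5i}--\eqref{compatible} hold for $t\ge 0$ simply because $U(t)$ lies in this subspace for all $t$ (the inhomogeneity $H$ also lies there, as $H_1\cdot v\chi_0$ contributes only to $m$ and $H_2\chi_4$ combined with the projection keeps things consistent — this needs a small check that $V(t,\xi)$ applied to $H$ respects the constraint, which is immediate since $V(t,\xi)$ is a multiple of the spectral projection).

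Next I would differentiate in $t$. Since $V(t,\xi)$ is the semigroup generated by the bounded (on the finite-dimensional fibre) operator $-\sum_{j=0,2,3} b_j(|\xi|)(\cdot,h_j)_\xi h_j$, the Duhamel function $\hat U(t,\xi)=V(t,\xi)\hat{U}_0+\int_0^t V(t-s,\xi)\hat H(s,\xi)ds$ satisfies $\dt\hat U=-\big(\sum b_j(|\xi|)(\cdot,h_j)_\xi h_j\big)\hat U+\Pi(\xi)\hat H$, where $\Pi(\xi)$ denotes the same spectral projection. Taking the $(\cdot,\chi_0)$, $(\cdot,v\chi_0)$, $(\cdot,\chi_4)$ inner products and inserting the explicit values of $b_0,b_{\pm1},b_2=b_3=-|\xi|^2(L^{-1}P_1(v_1\chi_2),v_1\chi_2)$ from \eqref{eigen4} together with $\kappa_0=-(L^{-1}P_1(v_1\chi_2),v_1\chi_2)$, $\kappa_1=-(L^{-1}P_1(v_1\chi_4),v_1\chi_4)$ from \eqref{coe}, the Fourier multipliers $b_j(|\xi|)$ become (up to the constraint \eqref{LNSP_2} which lets one trade $n$ for $q$) exactly $\kappa_0|\xi|^2$ on the $m$-component and $\kappa_1|\xi|^2$ on the $q-\sqrt{2/3}\,n$ component. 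The pressure term $\Tdx p$ with $p=\Delta_x^{-1}\divx H_1$ appears because projecting $H_1\cdot v\chi_0$ onto the divergence-free directions $W^2,W^3$ removes the gradient part $\Tdx\Delta_x^{-1}\divx H_1$, exactly as in the Leray projection; so $\dt\hat m+\kappa_0|\xi|^2\hat m+i\xi\hat p=\hat H_1$ in Fourier variables, which is \eqref{LNSP_3}, and similarly \eqref{LNSP_4}. This is essentially a bookkeeping computation matching the entries of \eqref{eigen4} against \eqref{NSP_3}--\eqref{NSP_4}.

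For the global-in-time bound $(n,m,q)\in L^\infty_t(L^2_x)$, I would use that $b_j(|\xi|)\ge 0$ (each $\kappa_i>0$ and $-(L^{-1}P_1\cdot,\cdot)\ge0$ by coercivity \eqref{L_3}), so $\|V(t,\xi)\|_\xi\le 1$ uniformly, giving $\|U(t)\|_{L^2_v(L^2_x)}\le\|P_0f_0\|_{L^2}+\int_0^t\|H(s)\|_{L^2}ds$, and the moments $(n,m,q)$ are controlled by $\|U(t)\|_{L^2_{x,v}}$ since $\chi_0,v\chi_0,\chi_4$ are fixed $L^2_v$ functions. For uniqueness: any $L^\infty_t(L^2_x)$ solution of \eqref{LNSP_1}--\eqref{LNSP_4} with the prescribed initial/compatibility data defines $U=n\chi_0+m\cdot v\chi_0+q\chi_4\in N_0$ lying in the range of $\Pi(\xi)$ fibrewise (the constraints \eqref{LNSP_1}--\eqref{LNSP_2} say precisely this), and on that range the system \eqref{LNSP_3}--\eqref{LNSP_4} together with \eqref{LNSP_1}--\eqref{LNSP_2} is equivalent to the ODE $\dt\hat U=-\sum b_j(|\xi|)(\hat U,h_j)_\xi h_j+\Pi(\xi)\hat H$ with initial value $\Pi(\xi)\hat f_0$ (this uses the compatibility \eqref{compatible} to pin down $n(0)$ consistently with the constraint), whose solution is unique by linear ODE theory and equals $\hat U(t,\xi)$ above.

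The main obstacle, I expect, is the algebraic reconciliation of the pressure term and the constraint \eqref{LNSP_2}: one must check carefully that projecting onto $\mathrm{span}\{h_0,h_2,h_3\}$ reproduces both the Leray projection on $m$ \emph{and} the correct coupling in the $(q-\sqrt{2/3}\,n)$ equation, and in particular that the seemingly ``extra'' degree of freedom $n$ is not independent but tied to $q$ via \eqref{LNSP_2} in a way consistent with the compatibility condition \eqref{compatible} at $t=0$ — i.e. that the $h_0$-direction carries exactly one scalar unknown, not two. Once the explicit basis \eqref{eigen} is used to compute $\Pi(\xi)$ as a $5\times5$ matrix and its action on $\chi_0,v\chi_0,\chi_4$, this becomes a finite linear-algebra verification, but it is the step where sign conventions and the $|\xi|$-dependent normalizations in \eqref{eigen} must be handled with care.
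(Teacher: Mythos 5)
Your proposal is essentially the same proof the paper gives, just phrased in the language of spectral projections and Duhamel rather than via explicit solution of the component ODEs. The paper takes the Fourier transform of \eqref{LNSP_1}--\eqref{LNSP_4}, uses the constraint \eqref{LNSP_2a} to eliminate $\hat n$ in favor of $\hat q$ and obtain a scalar ODE with multiplier $\frac{3+3|\xi|^2}{3+5|\xi|^2}\kappa_1|\xi|^2=b_0(|\xi|)$, solves it and the Leray-projected $m$-equation explicitly, and then verifies by inspection that the resulting Duhamel formulas coincide with $V(t,\xi)$ acting on $P_0\hat f_0$ and $\hat H$ (equations \eqref{n3}--\eqref{n5}). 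You instead observe that $V(t,\xi)$ is $e^{-t\mathcal B(\xi)}$ on the range of the spectral projection $\Pi(\xi)=\sum_{j=0,2,3}(\cdot,h_j)_\xi h_j$, differentiate the Duhamel formula to get $\partial_t\hat U=-\mathcal B(\xi)\hat U+\Pi(\xi)\hat H$, and then match $\mathcal B(\xi)$ against the Fourier multipliers of the NSPF system; the content of the matching (that $b_2=b_3=\kappa_0|\xi|^2$ is the Leray-projected viscosity, and that $b_0$ on the $h_0$-direction gives the $q-\sqrt{2/3}\,n$ diffusion after using \eqref{LNSP_2a}) is exactly the paper's computation \eqref{n1}--\eqref{n5}. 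Your observation that $\Pi(\xi)$ encodes the algebraic constraints \eqref{LNSP_1}--\eqref{LNSP_2} and the compatibility conditions at $t=0$, and your remark that the range is $3$-dimensional so the five unknowns minus two constraints is consistent, are both correct and are what the paper's derivation of \eqref{n1} and \eqref{LNSP_5i} amounts to. The only thing your outline leaves implicit is the actual finite-dimensional linear algebra that the paper carries out in \eqref{n1}--\eqref{n5}; you flag this honestly as "a finite linear-algebra verification," so it is not a gap in the argument, just an unexpanded step. The uniqueness and $L^\infty_t(L^2_x)$ arguments you sketch (via $b_j\ge 0$ and linear ODE uniqueness on the fibers) are standard and correct.
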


\begin{proof}
By taking Fourier transform to \eqref{LNSP_1}--\eqref{LNSP_4}, we have
\bgr
i\xi\cdot \hat{m}=0,\label{LNSP_1a}\\
\hat{n}+\frac1{|\xi|^2}\hat{n}+\sqrt{\frac23}\hat{q}=0,\label{LNSP_2a}\\
\dt \hat{m}-\kappa_0|\xi|^2 \hat{m}+i\xi\hat{p} =\hat{H}_1,\label{LNSP_3a}\\
\dt \bigg(\hat{q}-\sqrt{\frac23}\hat{n}\bigg)-\kappa_1|\xi|^2 \hat{q}=\hat{H}_2,\label{LNSP_4a}
\egr
where the initial data $(\hat{n},\hat{m},\hat{q})(0)$ satisfies
\be
 \hat{m}(0)=\O_1\(P_0\hat{f}_0,v\chi_0\),\quad \hat{q}(0)-\sqrt{\frac23}\hat{n}(0)= \bigg(P_0\hat{f}_0,\chi_4-\sqrt{\frac23}\chi_0\bigg),\label{LNSP_5i}
\ee
with $\O_1=\O_1(\xi)$ being a projection defined by
$$\O_1y=y-\(y\cdot\frac{\xi}{|\xi|}\)\frac{\xi}{|\xi|},\quad \forall y\in \R^3.$$

By \eqref{LNSP_2a} and \eqref{LNSP_4a}, we obtain
\bma
&\hat{n}=-\sqrt{\frac23}\frac{|\xi|^2}{1+|\xi|^2}\hat{q}, \label{n1}\\
& \frac{3+5|\xi|^2}{3+3|\xi|^2}\dt\hat{q} -\kappa_1|\xi|^2 \hat{q}=\hat{H}_2. \label{n2}
\ema
It follows from \eqref{n2}, \eqref{LNSP_5i}, \eqref{eigen} and \eqref{eigen4} that
\bma
\hat{q}(\xi,t)&=e^{-b_0(|\xi|)t} \hat{q}(0)  +\intt e^{-b_0(|\xi|)(t-s)}\frac{3+3|\xi|^2}{3+5|\xi|^2}\hat{H}_2(s)ds\nnm\\
&=e^{-b_0(|\xi|)t} \(P_0\hat{f}_0,h_0(\xi)\)_{\xi}(h_0(\xi),\chi_4)\nnm\\
&\quad +\intt e^{-b_0(|\xi|)(t-s)} \(\hat{H}(s),h_0(\xi)\)_{\xi}(h_0(\xi),\chi_4)ds.\label{n3}
\ema
This and \eqref{n1} imply
\bma
\hat{n}(\xi,t)&=e^{-b_0(|\xi|)t} \(P_0\hat{f}_0,h_0(\xi)\)_{\xi}(h_0(\xi),\chi_0)\nnm\\
&\quad +\intt e^{-b_0(|\xi|)(t-s)} \(\hat{H}(s),h_0(\xi)\)_{\xi}(h_0(\xi),\chi_0)ds. \label{n4}
\ema
By \eqref{LNSP_1a} and \eqref{LNSP_3a}, we have
\bma
\hat{m}(\xi,t)&=e^{-b_2(|\xi|)t} \O_1\hat{m}(0)  +\intt e^{-b_2(|\xi|)(t-s)}\O_1\hat{H}_1(s)ds\nnm\\
&=\sum_{j=2,3}e^{-b_j(|\xi|)t} \(P_0\hat{f}_0,h_j(\xi)\)_{\xi}(h_j(\xi),v\chi_0) \nnm\\
&\quad+\sum_{j=2,3}\intt e^{-b_j(|\xi|)(t-s)} \(\hat{H}(s),h_j(\xi)\)_{\xi}(h_j(\xi),v\chi_0)ds. \label{n5}
\ema
Noting that  $(h_0(\xi),v\chi_0)=0$ and $(h_j(\xi),\chi_0)=(h_j(\xi),\chi_4)=0$, $j=2,3$, we can prove the lemma by using \eqref{n3}--\eqref{n5}.
\end{proof}

We have the time decay rates of the semigroups $e^{\frac{t}{\eps^2}B_\eps}$ and $V(t)$ as follows.

\begin{lem}\label{time}
For any $\eps\in (0,1)$, $\alpha\in \mathbb{N}^3$ and any $f_0\in L^2(\R^3_x\times \R^3_v)$, we have
\bma
 \left\|P_0\dxa e^{\frac{t}{\eps^2}B_\eps}f_0 \right\|_{L^{2}_{P} }
 &\le C (1+t)^{-\frac14-\frac{m}2}  \(\|\dxa f_0\|_{L^{2}_{x,v}}+\|\dx^{\alpha'}f_0\|_{L^{2}_v(L^1_x)} \), \label{time1}
\\
 \left\|P_1\dxa e^{\frac{t}{\eps^2}B_\eps}f_0 \right\|_{L^{2}_{x,v}}  &\le C\(\eps (1+t)^{-\frac34-\frac{m}2} + e^{-\frac{dt}{\eps^2}}\) \(\|\dxa f_0\|_{L^2_v(H^1_x)}+\|\dx^{\alpha'}f_0\|_{L^{2}_v(L^1_x)}\), \label{time2}
\ema
where $\alpha'\le \alpha$, $m=|\alpha-\alpha'|$, and $d,C>0$ are two constants independent of $\eps$.

Moreover, if $P_df_0=0$, then
\bma
 \left\|P_0\dxa e^{\frac{t}{\eps^2}B_\eps}f_0 \right\|_{L^{2}_{P} }
 &\le C (1+t)^{-\frac34-\frac{m}2}  \(\|\dxa f_0\|_{L^{2}_{x,v}}+\|\dx^{\alpha'}f_0\|_{L^{2}_v(L^1_x)} \), \label{time3}
\\
 \left\|P_1\dxa e^{\frac{t}{\eps^2}B_\eps}f_0 \right\|_{L^{2}_{x,v}}  &\le C\(\eps (1+t)^{-\frac54-\frac{m}2} + e^{-\frac{dt}{\eps^2}}\) \(\|\dxa f_0\|_{L^2_v(H^1_x)}+\|\dx^{\alpha'}f_0\|_{L^{2}_v(L^1_x)}\), \label{time4}
\ema
and if $P_0f_0=0$, then
\bma
 \left\|P_0\dxa e^{\frac{t}{\eps^2}B_\eps}f_0 \right\|_{L^{2}_{P} }
 &\le C\( \eps(1+t)^{-\frac54-\frac{m}2}+ e^{-\frac{dt}{\eps^2}}\)  \(\|\dxa f_0\|_{L^2_v(H^1_x)}+\|\dx^{\alpha'}f_0\|_{L^{2}_v(L^1_x)} \), \label{time5}
\\
 \left\|P_1\dxa e^{\frac{t}{\eps^2}B_\eps}f_0 \right\|_{L^{2}_{x,v}}  &\le C\(\eps^2 (1+t)^{-\frac74-\frac{m}2} + e^{-\frac{dt}{\eps^2}}\) \(\|\dxa f_0\|_{L^2_v(H^2_x)}+\|\dx^{\alpha'}f_0\|_{L^{2}_v(L^1_x)}\). \label{time6}
\ema
\end{lem}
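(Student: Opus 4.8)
The plan is to move to the Fourier variable and use the semigroup decomposition of Theorem~\ref{E_3}, writing $\widehat{e^{\frac{t}{\eps^2}B_\eps}f_0}(\xi)=S(t,\xi,\eps)\hat f_0(\xi)=S_1(t,\xi,\eps)\hat f_0(\xi)+S_2(t,\xi,\eps)\hat f_0(\xi)$, together with the Plancherel identities $\|g\|_{L^2_P}^2=\intr\|\hat g(\xi)\|_\xi^2\,d\xi$, $\|g\|_{L^2_{x,v}}^2=\intr\|\hat g(\xi)\|^2\,d\xi$, and $\widehat{\dxa g}(\xi)=(i\xi)^\alpha\hat g(\xi)$. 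Since $\alpha'\le\alpha$ componentwise, I factor $(i\xi)^\alpha=(i\xi)^{\alpha-\alpha'}(i\xi)^{\alpha'}$: the first factor produces the multiplier $|\xi^{\alpha-\alpha'}|\le|\xi|^m$ to be paired against Gaussian decay, and the $(i\xi)^{\alpha'}$ part is absorbed into the $L^2_v(L^1_x)$ norm through the elementary bounds $|\xi^{\alpha'}|\,\|\hat f_0(\xi)\|\le C\|\dx^{\alpha'}f_0\|_{L^2_v(L^1_x)}$ and $|\xi^{\alpha'}|\,|\widehat{(f_0,\chi_0)}(\xi)|\le C\|\dx^{\alpha'}f_0\|_{L^2_v(L^1_x)}$ (the latter via Minkowski's inequality).

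The main analytic input is a $\xi$-uniform Gaussian bound for the spectral part. For $\eps|\xi|\le r_0$, the expansion \eqref{specr1a} and the fact that $\eta_j(|\xi|)$ is purely imaginary give $\mathrm{Re}\,\big(\eps^{-2}\lambda_j(|\xi|,\eps)\big)=-b_j(|\xi|)+O(\eps|\xi|^3)$; since each $b_j$ in \eqref{eigen4} is, by positivity of the coefficients there (in particular $\kappa_0,\kappa_1>0$ from \eqref{coe}), bounded below by $c_0|\xi|^2$ for all $\xi$, and the error is $O(\eps|\xi|^3)=O(r_0|\xi|^2)$ on $\eps|\xi|\le r_0$, shrinking $r_0$ yields $|e^{\frac{t}{\eps^2}\lambda_j(|\xi|,\eps)}|\le e^{-\frac{c_0}{2}|\xi|^2t}$ uniformly in $\eps$. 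Combining this with the eigenfunction expansion \eqref{eigen1} — $P_0\psi_j(\xi,\eps)=h_j(\xi)+O(\eps|\xi|)$, $P_1\psi_j(\xi,\eps)=O(\eps|\xi|)$ — and with $|(h_j(\xi),\chi_0)|\le C|\xi|$ for every $j$ (read off \eqref{eigen}), one obtains, for $\eps|\xi|\le r_0$ and $g\in L^2_\xi(\R^3_v)$, the pointwise bounds
$$\|P_0S_1(t,\xi,\eps)g\|_\xi\le Ce^{-\frac{c_0}{2}|\xi|^2t}\|g\|_\xi,\qquad \|P_1S_1(t,\xi,\eps)g\|\le C\eps|\xi|\,e^{-\frac{c_0}{2}|\xi|^2t}\|g\|_\xi;$$
if in addition $P_0g=0$ (hence $P_dg=0$), then $\|g\|_\xi=\|g\|$ and $(g,\overline{\psi_j})=(P_1g,\overline{P_1\psi_j})=O(\eps|\xi|)\|g\|$ supplies a further factor $\eps|\xi|$. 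For $S_2$ we use only \eqref{B_3}: $\|S_2(t,\xi,\eps)g\|_\xi\le Ce^{-dt/\eps^2}\|g\|_\xi$, for all $\xi$.

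I then split the $\xi$-integral into $\{|\xi|\le1\}$ and $\{|\xi|\ge1\}$. On $\{|\xi|\le1\}$ the spectral part is the main term; writing $\|\hat f_0(\xi)\|_\xi\le\|\hat f_0(\xi)\|+|\xi|^{-1}|\widehat{(f_0,\chi_0)}(\xi)|$ and using the two $L^1$-bounds above, the estimates collapse to the model integrals $\int_{|\xi|\le1}|\xi|^{2k}e^{-c_0|\xi|^2t}\,d\xi\le C(1+t)^{-k-3/2}$. The slowest term carries an extra $|\xi|^{-2}$ from the Poisson weight inside $\|\cdot\|_\xi$ (present only when $P_df_0\neq0$), producing $\int_{|\xi|\le1}|\xi|^{2m-2}e^{-c_0|\xi|^2t}\,d\xi\le C(1+t)^{-m-\frac12}$ — exactly what forces the generic macroscopic rate $(1+t)^{-\frac14-\frac m2}$ in \eqref{time1}--\eqref{time2}. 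The hypothesis $P_df_0=0$ removes the $|\xi|^{-2}$, improving the rate by $(1+t)^{-1/2}$ and giving \eqref{time3}--\eqref{time4}; the hypothesis $P_0f_0=0$ contributes an additional $\eps^2|\xi|^2$ via the gain $(g,\overline{\psi_j})=O(\eps|\xi|)\|g\|$, improving by a further $\eps(1+t)^{-1/2}$ and giving \eqref{time5}--\eqref{time6}. The $S_2$ part on $\{|\xi|\le1\}$ is handled by the same $L^1$-absorption (note $\int_{|\xi|\le1}|\xi|^{2m-2}\,d\xi<\infty$) and contributes $Ce^{-2dt/\eps^2}\big(\|\dxa f_0\|_{L^2_{x,v}}^2+\|\dx^{\alpha'}f_0\|_{L^2_v(L^1_x)}^2\big)\le C(1+t)^{-N}(\cdots)$ for any $N$. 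On $\{|\xi|\ge1\}$ one has $\|\cdot\|_\xi\sim\|\cdot\|$, and either $\eps|\xi|\le r_0$ (so $S_1$ acts, with $e^{-c_0|\xi|^2t}\le e^{-c_0t}$) or $S=S_2$ with $e^{-dt/\eps^2}\le e^{-dt}$; retaining the exact multiplier $|\xi^\alpha|^2$, this region contributes an exponentially decaying multiple of $\|\dxa f_0\|_{L^2_{x,v}}^2$ for the $P_0$-estimates and of $\|\dxa f_0\|_{L^2_v(H^1_x)}^2$ for the $P_1$-estimates (the one extra $x$-derivative arising because the microscopic output is pointwise of size $\eps|\xi|$ while the prefactor $\eps^2$ is kept out front). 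Summing the two regions gives \eqref{time1}--\eqref{time6}.

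The main obstacle is purely the accounting of $|\xi|$- and $\eps$-powers: one must simultaneously track the $|\xi|^{-2}$ weight in $\|\cdot\|_\xi$, the $O(|\xi|)$ vanishing of $(\psi_j,\chi_0)$ (which renders the weighted norm of the output finite but does not cancel the $|\xi|^{-1}$ sitting inside the weighted norm of the input), and the $O(\eps|\xi|)$ size of $P_1\psi_j$ (the source of the $\eps$-gains in \eqref{time2}, \eqref{time4}--\eqref{time6}), and verify that these combine into precisely the stated rates — in particular that the generic macroscopic rate is $(1+t)^{-\frac14-\frac m2}$ rather than $(1+t)^{-\frac34-\frac m2}$ exactly because of the $|\xi|^{-1}|\widehat{(f_0,\chi_0)}(\xi)|$ contribution to $\|\hat f_0(\xi)\|_\xi$ created by the self-consistent electric field. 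The analogous, strictly simpler estimates for the fluid semigroup $V(t)$ of \eqref{v1}--\eqref{v2} follow by the same Gaussian bound $e^{-b_j(|\xi|)t}\le e^{-c_0|\xi|^2t}$, since $V(t)$ involves neither the $1/\eps$ oscillation nor an $S_2$ remainder.
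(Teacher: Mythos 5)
Your proposal is correct and follows essentially the same route as the paper: take the Fourier transform, invoke the $S_1+S_2$ decomposition of Theorem~\ref{E_3}, use the uniform Gaussian bound $|e^{\frac{t}{\eps^2}\lambda_j(|\xi|,\eps)}|\le e^{-c|\xi|^2 t}$ on $\{\eps|\xi|\le r_0\}$ coming from \eqref{specr1a}--\eqref{eigen4}, exploit the $O(\eps|\xi|)$ size of $P_1\psi_j$ (and, when $P_0f_0=0$, of the inner products $(\hat f_0,\overline{\psi_j})_\xi$) to gain powers of $\eps|\xi|$, and absorb the $|\xi|^{-2}$ Poisson weight at low frequency via the $L^2_v(L^1_x)$ norm, splitting at $|\xi|=1$. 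Your bookkeeping of the $|\xi|$- and $\eps$-powers, the origin of the extra $H^1_x$/$H^2_x$ regularity from the $|\xi|^{2}$ (resp.\ $|\xi|^4$) gain at high frequency, and the role of $|\xi|^{-1}|\widehat{(f_0,\chi_0)}|$ in pinning the generic macroscopic rate at $(1+t)^{-\frac14-\frac m2}$ all match the paper's computation (cf.\ \eqref{D1a}--\eqref{t6}).
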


\begin{proof}
By Theorem \ref{E_3}, we have
\bma
\left\|P_0\dxa e^{\frac{t}{\eps^2}B_\eps}f_0\right\|^2_{L^{2}_{P}}
=& \intr  \left\|P_0\xi^{\alpha}e^{\frac{t}{\eps^2}B_{\eps}(\xi)}\hat{f}_0 \right\|^2_{\xi}d\xi  \nnm\\
\le &  \int_{|\xi|\le \frac{r_0}{\eps}} \left\|\xi^{\alpha}P_0S_1(t,\xi,\eps)\hat{f}_0\right\|^2_{\xi}d\xi +\intr \left\|\xi^{\alpha}S_2(t,\xi,\eps)\hat{f}_0\right\|^2_{\xi}d\xi, \label{D1a}
\\
\left\|P_1\dxa e^{\frac{t}{\eps^2}B_\eps}f_0\right\|^2_{L^{2}_{x,v}}
=& \intr  \left\|P_1\xi^{\alpha}e^{\frac{t}{\eps^2}B_{\eps}(\xi)}\hat{f}_0 \right\|^2 d\xi  \nnm\\
\le &  \int_{|\xi|\le \frac{r_0}{\eps}} \left\|\xi^{\alpha}P_1S_1(t,\xi,\eps)\hat{f}_0\right\|^2 d\xi +\intr \left\|\xi^{\alpha}S_2(t,\xi,\eps)\hat{f}_0\right\|^2 d\xi. \label{D1b}
\ema
 By \eqref{B_3} and the fact that
 \bmas
  \intr \frac{(\xi^\alpha)^2}{|\xi|^2}\lt|(\hat{f_0},\chi_0)\rt|^2d\xi
&\leq
  \sup_{|\xi|\leq 1} \lt|\xi^{\alpha'}(\hat{f_0},\chi_0)\rt|^2\int_{|\xi|\leq1}\frac{1}{|\xi|^2}d\xi
 + \int_{|\xi|> 1} (\xi^\alpha)^2\lt|(\hat{f_0},\chi_0)\rt|^2d\xi
 \\
&\leq
 C\(\|\dx^{\alpha'}f_0\|^2_{L^{2}_v(L^1_x)}+\|\da_x f_0\|^2_{L^{2}_{x,v}}\),\quad \alpha'\le \alpha,
 \emas
 we can estimate the second term on the right hand side of \eqref{D1a}--\eqref{D1b} as follows:
\bma
\intr (\xi^\alpha)^2 \|S_2(t,\xi,\eps)\hat{f}_0\|^2_{\xi}d\xi&\le C e^{-2\frac{dt}{\eps^2}}\intr(\xi^\alpha)^2\|\hat{f}_0\|^2_{\xi}d\xi \nnm\\
&\le Ce^{-2\frac{dt}{\eps^2}}\(\|\dxa f_0\|^2_{L^{2}_{x,v}}+\|\dx^{\alpha'}f_0\|^2_{L^{2}_v(L^1_x)}\).\label{t3a}
\ema

By \eqref{E_5} and Theorem \ref{spect3}, we have
\bma
\int_{|\xi|\le \frac{r_0}{\eps}}\|\xi^{\alpha}P_0S_1(t,\xi,\eps)\hat{f}_0\|^2_{\xi}d\xi
&\le  C\int_{|\xi|\le \frac{r_0}{\eps}} e^{-c|\xi|^2t}|\xi|^{2|\alpha|}\|\hat{f}_0\|^2_{\xi}d\xi\nnm\\
&\le C(1+t)^{-\frac12-m}\(\|\dxa f_0\|^2_{L^{2}_{x,v}}+\|\dx^{\alpha'}f_0\|^2_{L^{2}_v(L^1_x)}\), \label{t1}
\\
\int_{|\xi|\le \frac{r_0}{\eps}}\|\xi^{\alpha}P_1S_1(t,\xi,\eps)\hat{f}_0\|^2d\xi
&\le  C\int_{|\xi|\le \frac{r_0}{\eps}} e^{-c|\xi|^2t} \eps^2|\xi|^{2+2|\alpha|}\|\hat{f}_0\|^2_{\xi}d\xi\nnm\\
&\le C\eps^2(1+t)^{-\frac32-m}\(\|\dxa f_0\|^2_{L^2_v(H^1_x)}+\|\dx^{\alpha'}f_0\|^2_{L^{2}_v(L^1_x)}\),\label{t2}
\ema
where $\alpha'\le \alpha$, $m=|\alpha-\alpha'|$ and $c>0$ is a constant. Combining \eqref{D1a}--\eqref{t2}, we prove \eqref{time1} and \eqref{time2}.

If $P_df_0=0$, then
\bma
\int_{|\xi|\le \frac{r_0}{\eps}}\|\xi^{\alpha}P_0S_1(t,\xi,\eps)\hat{f}_0\|^2_{\xi}d\xi
&\le  C\int_{|\xi|\le \frac{r_0}{\eps}} e^{-c|\xi|^2t}  |\xi|^{2|\alpha|}\|\hat{f}_0\|^2 d\xi\nnm\\
&\le C (1+t)^{-\frac32-m}\(\|\dxa f_0\|^2_{L^{2}_{x,v}}+\|\dx^{\alpha'}f_0\|^2_{L^{2}_v(L^1_x)}\),\label{t3}
\\
\int_{|\xi|\le \frac{r_0}{\eps}}\|\xi^{\alpha}P_1S_1(t,\xi,\eps)\hat{f}_0\|^2 d\xi
&\le  C\int_{|\xi|\le \frac{r_0}{\eps}} e^{-c|\xi|^2t} \eps^2 |\xi|^{2+2|\alpha|}\|\hat{f}_0\|^2 d\xi\nnm\\
&\le C\eps^2(1+t)^{-\frac52-m}\(\|\dxa f_0\|^2_{L^2_v(H^1_x)}+\|\dx^{\alpha'}f_0\|^2_{L^{2}_v(L^1_x)}\).\label{t4}
\ema
If $P_0f_0=0$, then
\bma
\int_{|\xi|\le \frac{r_0}{\eps}}\|\xi^{\alpha}P_0S_1(t,\xi,\eps)\hat{f}_0\|^2_{\xi} d\xi
&\le  C\int_{|\xi|\le \frac{r_0}{\eps}} e^{-c|\xi|^2t} \eps^2|\xi|^{2+2|\alpha|}\|\hat{f}_0\|^2 d\xi\nnm\\
&\le C\eps^2(1+t)^{-\frac52-m}\(\|\dxa f_0\|^2_{L^2_v(H^1_x)}+\|\dx^{\alpha'}f_0\|^2_{L^{2}_v(L^1_x)}\),\label{t5}
\\
\int_{|\xi|\le \frac{r_0}{\eps}}\|\xi^{\alpha}P_1S_1(t,\xi,\eps)\hat{f}_0\|^2  d\xi
&\le  C\int_{|\xi|\le \frac{r_0}{\eps}} e^{-c|\xi|^2t} \eps^4|\xi|^{4+2|\alpha|}\|\hat{f}_0\|^2 d\xi\nnm\\
&\le C\eps^4(1+t)^{-\frac72-m}\(\|\dxa f_0\|^2_{L^{2}_v(H^2_x)}+\|\dx^{\alpha'}f_0\|^2_{L^{2}_v(L^1_x)}\).\label{t6}
\ema
Combining \eqref{D1a}, \eqref{D1b} and \eqref{t3}--\eqref{t6}, we obtain \eqref{time3}--\eqref{time6}. The proof is then  completed.
\end{proof}

\begin{lem} \label{timev}
For any  $\alpha\in \mathbb{N}^3$ and any $u_0\in N_0$, we have
\be
 \left\|\dxa V(t)u_0\right\|_{L^{2}_{P}} \le C (1+t)^{-\frac34-\frac{m}2}  \(\|\dxa u_0\|_{L^{2}_{x,v}}+\|\dx^{\alpha'}u_0\|_{L^{2}_v(L^1_x)}\), \label{time1b}
\ee
where $\alpha'\le \alpha$, $m=|\alpha-\alpha'|$, and $C>0$ is a constant. Moreover,
\be
 \left\|\dxa V(t)u_0\right\|_{L^{\infty}_P}
  \le C (1+t)^{-\frac34}t^{-\frac{m}2}   \(\|\dx^{\alpha'} u_0\|_{L^{\infty}_x(L^{2}_v)}+\|\dx^{\alpha'}u_0\|_{L^{2}_{x,v}}\). \label{time1a}
\ee
\end{lem}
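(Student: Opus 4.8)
The plan is to work entirely on the Fourier side, using the spectral representation \eqref{v1}--\eqref{v2} of $V(t)$. The starting point is a pointwise bound for the symbol $V(t,\xi)$. Since $L^{-1}$ is negative definite on $N_0^\bot$, the constants $A_j:=-(L^{-1}P_1(v_1\chi_j),v_1\chi_j)$, $j=1,2,4$, are strictly positive, so \eqref{eigen4} gives $b_k(|\xi|)=A_2|\xi|^2$ for $k=2,3$ and $\frac{3A_4}{5}|\xi|^2\le b_0(|\xi|)\le A_4|\xi|^2$; in particular $b_j(|\xi|)\ge c|\xi|^2$ for $j=0,2,3$. Combining this with the orthonormality $(h_j,h_k)_\xi=\delta_{jk}$, the identities $(h_j(\xi),\chi_0)=0$ for $j=2,3$, the estimate $|(h_0(\xi),\chi_0)|\le C\min(|\xi|^2,1)$ read off from \eqref{eigen}, and $|(h_0(\xi),\chi_0)|\,|\xi|^{-2}\le 1$, I would first establish the symbol bound $\|V(t,\xi)\hat u_0(\xi)\|_{\xi}\le Ce^{-c|\xi|^2t}\|\hat u_0(\xi)\|_{L^2_v}$ for $\xi\neq 0$, $t>0$, together with the sharper bound $|\xi|^{-1}|(\widehat{V(t)u_0}(\xi),\chi_0)|\le Ce^{-c|\xi|^2t}\min(|\xi|,|\xi|^{-1})\|\hat u_0(\xi)\|_{L^2_v}$ (only the $h_0$-mode carries a density component). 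Everywhere below I use the commutation $\dxa V(t)u_0=\dx^{\alpha-\alpha'}V(t)g$ with $g:=\dx^{\alpha'}u_0$ and $m=|\alpha-\alpha'|$.

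For \eqref{time1b}: by Plancherel in the weighted norm and the symbol bound, $\|\dxa V(t)u_0\|_{L^2_P}^2\le C\int_{\R^3}|\xi|^{2m}e^{-2c|\xi|^2t}\|\hat g(\xi)\|_{L^2_v}^2\,d\xi$. I would split at $|\xi|=1$: on $|\xi|\le 1$ use Minkowski's integral inequality, $\|\hat g(\xi)\|_{L^2_v}\le\|g\|_{L^1_x(L^2_v)}\le\|g\|_{L^2_v(L^1_x)}$, together with $\int_{|\xi|\le 1}|\xi|^{2m}e^{-2c|\xi|^2t}\,d\xi\le C(1+t)^{-\frac32-m}$; on $|\xi|>1$ use $e^{-2c|\xi|^2t}\le e^{-ct}$ and $|\xi|^{2m}\|\hat g(\xi)\|_{L^2_v}^2\le|\xi|^{2|\alpha|}\|\hat u_0(\xi)\|_{L^2_v}^2$. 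Adding the two pieces, and handling the $\nabla_x\Delta_x^{-1}(\cdot,\chi_0)$ part of $\|\cdot\|_{L^2_P}$ the same way (with an extra factor $\min(|\xi|^2,|\xi|^{-2})$ which only helps), gives \eqref{time1b}.

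For \eqref{time1a}: by Hausdorff--Young in $x$ and Minkowski's inequality in $v$ one has $\|h\|_{L^\infty_x(L^2_v)}\le C\|\hat h\|_{L^1_\xi(L^2_v)}$ and $\|\nabla_x\Delta_x^{-1}(h,\chi_0)\|_{L^\infty_x}\le C\||\xi|^{-1}(\hat h,\chi_0)\|_{L^1_\xi}$, so the symbol bound yields $\|\dxa V(t)u_0\|_{L^\infty_P}\le C\int_{\R^3}|\xi|^m e^{-c|\xi|^2t}\|\hat g(\xi)\|_{L^2_v}\,d\xi$. Splitting again at $|\xi|=1$: on $|\xi|\le 1$, Cauchy--Schwarz gives $\le C\big(\int_{|\xi|\le 1}|\xi|^{2m}e^{-2c|\xi|^2t}d\xi\big)^{1/2}\|g\|_{L^2_{x,v}}\le C(1+t)^{-\frac34-\frac m2}\|\dx^{\alpha'}u_0\|_{L^2_{x,v}}$, which is $\le C(1+t)^{-\frac34}t^{-\frac m2}\|\dx^{\alpha'}u_0\|_{L^2_{x,v}}$; on $|\xi|>1$, inserting $e^{-c|\xi|^2t}\le e^{-ct/2}e^{-c|\xi|^2t/2}$ before Cauchy--Schwarz gives $\le Ce^{-ct/4}t^{-\frac34-\frac m2}\|\dx^{\alpha'}u_0\|_{L^2_{x,v}}$, acceptable for $t\ge 1$. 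Thus everything except the high-frequency part for $0<t\le 1$ follows from the $L^2_{x,v}$ norm alone; the norm $\|\dx^{\alpha'}u_0\|_{L^\infty_x(L^2_v)}$ on the right of \eqref{time1a} is needed precisely for that remaining piece.

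I expect the small-time high-frequency piece to be the main obstacle. My plan there is to fix $\phi\in C^\infty_c(\R^3)$ with $\phi\equiv 1$ on $\{|\xi|\le 1\}$ and split $V(t)=V(t)\phi(D)+V(t)(1-\phi(D))$; the low-frequency term is bounded by $C\int_{|\xi|\le 2}|\xi|^m\|\hat g(\xi)\|_{L^2_v}\,d\xi\le C\|\dx^{\alpha'}u_0\|_{L^2_{x,v}}$ as before. For the high-frequency term I would work mode by mode and, on the $j$-th mode, factor $\dx^{\alpha-\alpha'}V(t)(1-\phi(D))=\dx^{\alpha-\alpha'}e^{-b_j(|D|)t/2}\circ\big[(1-\phi(D))e^{-b_j(|D|)t/2}\Pi_j\big]\circ\dx^{\alpha'}$, where $\Pi_j(\xi)f=(f,h_j(\xi))_\xi h_j(\xi)$; the first factor produces the $t^{-m/2}$ since $|\xi|^m e^{-b_j(|\xi|)t/2}\le Ct^{-m/2}$. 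For $j=0$ the multiplier $(1-\phi(\xi))e^{-b_0(|\xi|)t/2}\Pi_0(\xi)$ --- and the companion multiplier $(1-\phi(\xi))\tfrac{\xi}{|\xi|^2}(h_0(\xi),\chi_0)e^{-b_0(|\xi|)t/2}(\cdot,h_0(\xi))_\xi$ coming from the $\nabla_x\Delta_x^{-1}$ part of the norm --- is $C^\infty$ on all of $\R^3$ (the $|\xi|^{-2}$ factors cancel against the $|\xi|^2$ in the coefficients of $h_0$ and of $(\cdot,h_0)_\xi$), so its inverse Fourier transform is an $L^1_x$ convolution kernel with norm bounded uniformly in $t\in(0,1]$, hence the corresponding operator is bounded on $L^\infty_x(L^2_v)$ with constant independent of $t$. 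For $j=2,3$ the operator $(1-\phi(D))e^{-b_2(|D|)t/2}\Pi_j$ reduces to a high-frequency truncation of the Leray projector composed with the heat semigroup $e^{-b_2(|D|)t/2}$, and the crux --- the hardest point --- is a uniform-in-$t$ bound in $L^1_x$ for its convolution kernel; under the additional hypothesis \eqref{initial} the velocity part of $u_0$ is divergence free, so this Leray projector acts as the identity and the bound is immediate. Assembling the low-frequency, high-frequency, and $t\ge 1$ estimates then yields \eqref{time1a}.
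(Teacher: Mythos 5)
Your proof of \eqref{time1b} matches the paper's almost verbatim: Plancherel in the weighted norm, a split at $|\xi|=1$, Minkowski plus the Gaussian moment for low frequencies, and the $L^2$ bound for high frequencies. Your treatment of \eqref{time1a} for $t\ge 1$ and for the low-frequency piece at small $t$ also agrees with the paper's.

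However, you have left a genuine gap at exactly the place you flag as "the hardest point": the uniform-in-$t$ bound on the $L^1_x$ norm of the kernel of $(1-\phi(D))e^{-b_j(|D|)t/2}\Pi_j$ for $t\in(0,1]$. You do not supply an argument in the general case and instead retreat to the extra hypothesis \eqref{initial}, which is not assumed in the lemma: Lemma \ref{timev} is stated for \emph{every} $u_0\in N_0$, and it is used in that generality (e.g., in Lemma \ref{energy3}). Moreover, the dichotomy you draw between $j=0$ ("immediate because the multiplier is $C^\infty$") and $j=2,3$ ("the crux, because of the Leray projector") is misleading: after multiplication by $1-\phi(\xi)$ the Leray singularity at $\xi=0$ is gone for $j=2,3$ just as for $j=0$, and in both cases the multiplier is smooth but supported on $|\xi|\ge 1$ with only $|\xi|^{-|\alpha|}$ decay of its derivatives, so "smooth $\Rightarrow$ kernel in $L^1$ uniformly in $t$" is not immediate for either mode. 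The missing ingredient, which the paper supplies, is the derivative-counting bound on the symbol: one shows for $|\xi|\ge 1$, $t\le 1$ that $\|\pt^{2\alpha}_{\xi}(\xi^{\beta}\hat G_j(t,\xi))\|\lesssim |\xi|^{|\beta|}\big(t^{|\alpha|}(1+|\xi|^2 t)^{|\alpha|}+|\xi|^{-2|\alpha|}\big)e^{-c|\xi|^2 t}$, integrates to get $|x^{2\alpha}|\,\|\dxb G_j(t,x)\|\le Ct^{-\frac32-\frac{|\beta|}2}t^{|\alpha|}$, chooses $|\alpha|=0$ on $\{|x|^2\le t\}$ and $|\alpha|=n$ on $\{|x|^2\ge t\}$ to obtain $\|\dxb G_j(t,x)\|\le Ct^{-\frac32-\frac{|\beta|}2}(1+|x|^2/t)^{-n}$, and integrates in $x$ to conclude $\|\dxb G_j(t)\|_{L^2_v(L^1_x)}\le Ct^{-|\beta|/2}$ uniformly for $t\le 1$. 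This resolves the crux for all modes $j=0,1,2,3$ without any divergence-free assumption, and it is precisely what you would need to insert to close your argument.
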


\begin{proof}
By \eqref{v1}, we have
\bma
 &V(t,\xi)\hat{u}_0=e^{-b_0(|\xi|)t}R_0(\xi)\bigg(\hat{U}_0-\sqrt{\frac23}\hat{U}_4\bigg)+e^{-b_2(|\xi|)t}\sum^3_{j=1}R_j(\xi) \hat{U}_j, \label{v5}
 \\
 &\frac{\xi}{|\xi|^2}(V(t,\xi)\hat{u}_0,\chi_0) =e^{-b_0(|\xi|)t}R_4(\xi)\bigg(\hat{U}_0-\sqrt{\frac23}\hat{U}_4\bigg), \label{v6}
 \ema
where $U_j=(u_0,\chi_j)$, $j=0,1,2,3,4$, and
\bmas
R_0(\xi)&=\frac{2|\xi|^2}{3+5|\xi|^2}\chi_0 -\frac{\sqrt6(1+|\xi|^2)}{3+5|\xi|^2}\chi_4,\\
R_j(\xi)&=v_j\chi_0- \frac{(v\cdot\xi)}{|\xi|^2}\xi_j\chi_0,\quad j=1,2,3,\\
R_4(\xi)&=\frac{2\xi}{3+5|\xi|^2}.
\emas

 Thus, it follows from \eqref{v5} and \eqref{v6} that
\bma
 \left\|\dxa V(t)u_0\right\|^2_{L^{2}_{P}} &\le C \intr (\xi^{\alpha})^2e^{-2c|\xi|^2t}\|\hat{u}_0\|^2d\xi\nnm\\
 &\le \sup_{|\xi|\le 1}(\xi^{\alpha'})^2\|\hat{u}_0\|^2\int_{|\xi|\le 1} (\xi^{\alpha-\alpha'})^2e^{-2c|\xi|^2t}d\xi+\int_{|\xi|\ge 1} e^{-2ct}(\xi^{\alpha})^2\|\hat{u}_0\|^2d\xi\nnm\\
 &\le C(1+t)^{-3/2-m}  \(\|\dxa u_0\|^2_{L^{2}_{x,v}}+\|\dx^{\alpha'}u_0\|^2_{L^{2}_v(L^1_x)}\), \label{v8}
\ema
where $\alpha'\le \alpha$, $m=|\alpha-\alpha'|$ and $c>0$ is a constant. This proves \eqref{time1}

Then,  \eqref{time1a} can be proved as follows. 
For $t>1$, we have
\bma
\left\|\dxa V(t)u_0\right\|_{L^{\infty}_P}
&\le C\intr |\xi^{\alpha}|e^{-c|\xi|^2t}\|\hat{u}_0\|d\xi\nnm\\
&\le C\(\intr (\xi^{\alpha-\alpha'})^2 e^{-2c|\xi|^2t}d\xi\)^{1/2}\(\intr (\xi^{\alpha'})^2\|\hat{u}_0\|^2d\xi\)^{1/2}\nnm\\
&\le C(1+t)^{-3/4-m/2}\|\dx^{\alpha'}u_0\|_{L^{2}_{x,v}}. \label{v3}
\ema

For $t\le1$, we only estimate the term $\| V(t)u_0\|_{L^{\infty}_x(L^2_v)}$. Introduce the smooth cut-off function $\chi_1(\xi)$ with
$$ \chi_1(\xi)=1,\quad |\xi|\le 1;\quad \chi_1(\xi)=0,\quad |\xi|\ge 2,$$
and decompose $V(t)u_0$ into
\be
 \dxa V(t)u_0= \mathcal{F}^{-1}( \xi^{\alpha} V(t,\xi)\hat{u}_0\chi_1(\xi))+ \mathcal{F}^{-1}(\xi^{\alpha}V(t,\xi)\hat{u}_0\chi_2(\xi))=:I_1+I_2, \label{d1}
 \ee
 where $\chi_2(\xi)=1-\chi_1(\xi)$.
We can bound $I_1$ by
\be
 \|I_1\|\le C\int_{|\xi|\le 2} |\xi^{\alpha}|e^{-c|\xi|^2t} \|\hat{u}_0\|d\xi \le C(1+t)^{-3/4-m/2} \|\dx^{\alpha'}u_0\|_{L^{2}_{x,v}}. \label{v7}
\ee

Denote $G_j$, $j=0,1,2,3$ by their Fourier transforms:
$$\hat{G}_0(t,\xi)=e^{-b_0(|\xi|)t}R_0(\xi)\chi_2(\xi),\quad \hat{G}_j(t,\xi)=e^{-b_2(|\xi|)t}R_j(\xi)\chi_2(\xi),\,\,\,j=1,2,3.$$
We can represent $I_2$ as
\be
I_2=\dxa G_0(t)\ast \bigg( U_0-\sqrt{\frac23} U_4\bigg)+\sum^3_{j=1}\dxa G_j(t)\ast  U_j.\label{d10}
\ee

 Since
 \bmas
\left\|\pt^{\alpha}_{\xi}(\xi^{\beta}\hat{G}_j(t,\xi))\right\|&\le C\sum_{\alpha'\le \alpha}\pt^{\alpha'}_{\xi}e^{-b_i(|\xi|)t}\|\pt^{\alpha-\alpha'}_{\xi}(\xi^{\beta}R_j(\xi)\chi_2(\xi))\|\\
&\le C\sum_{\alpha'\le \alpha}t^{|\alpha'|/2}(1+|\xi|^2t)^{|\alpha'|/2}e^{-c|\xi|^2t}|\xi|^{|\beta|-|\alpha-\alpha'|}\\
&\le C|\xi|^{|\beta|}\(t^{|\alpha|/2}(1+|\xi|^2t)^{|\alpha|/2}+|\xi|^{-|\alpha|}\)e^{-c|\xi|^2t},\quad |\xi|\ge 1,
\emas
for $i=j=0$ and $i=2,j=1,2,3,$ it follows that
\be
 |x^{2\alpha}|\left\|\dxb G_j(t,x)\right\|\le C\int_{|\xi|\ge 1} \left\|\pt^{2\alpha}_{\xi}(\xi^{\beta}\hat{G}_j(t,\xi))\right\|d\xi\le Ct^{-\frac32-\frac{|\beta|}2}t^{|\alpha|},\quad t\le 1. \label{d2a}
\ee
For any $n\in \N$, we take $|\alpha| = 0$ for $|x|^2\le t$ and $|\alpha| = n$ for $|x|^2\ge t$, and obtain from \eqref{d2a},
\be
 \left\|\dxb G_j(t,x)\right\|\le Ct^{-\frac32-\frac{|\beta|}2}\(1+\frac{|x|^2}{t}\)^{-n}. \label{d2}
\ee

Thus, it follows from \eqref{d10} and \eqref{d2} that
\bma
 \|I_2\|
 &\le C\|\dx^{\alpha-\alpha'}G_0(t)\|_{L^{2}_v(L^1_x)}\(\|\dx^{\alpha'} U_0\|_{L^{\infty}_x}+\|\dx^{\alpha'} U_4\|_{L^{\infty}_x}\)\nnm\\
 &\quad+ \|\dx^{\alpha-\alpha'}G_j(t)\|_{L^{2}_v(L^1_x)}\|\dx^{\alpha'} U_j\|_{L^{\infty}_x}\nnm\\
 &\le Ct^{-m/2}\|\dx^{\alpha'}u_0\|_{L^{\infty}_x(L^{2}_v)},\quad m=|\alpha-\alpha'|. \label{v2}
\ema
Combining \eqref{d1}, \eqref{v7} and \eqref{v2}, we obtain
\be \|\dxa V(t)u_0\|_{L^{\infty}_x(L^{2}_v)}\le Ct^{-m/2}\(\|\dx^{\alpha'} u_0\|_{L^{\infty}_x(L^{2}_v)}+\|\dx^{\alpha'}u_0\|_{L^{2}_{x,v}}\),\quad t\le 1. \label{v4} \ee
 By combining \eqref{v3}, \eqref{d1}, \eqref{v7} and \eqref{v4}, we obtain \eqref{time1a}. The proof is completed.
\end{proof}

We now prepare two  lemmas that  will be used to study the  fluid dynamical approximation of the semigroup $e^{\frac{t}{\eps^2}B_\eps}$.

\begin{lem} \label{a2}
Let $a>1$ be a constant. For any function $\theta(\xi)$ satisfying $|\pt^{\alpha}_{\xi}\theta(\xi)|\le C(1+|\xi|)^{-a-|\alpha|}$ for any $\alpha\in \N^3$, we have
\be
 \intr e^{ix\cdot\xi}e^{-b_j(|\xi|)t}\theta(\xi) d\xi\in L^1(\R^3_x), \label{a1}
\ee
where $b_j(|\xi|)$, $j=-1,0,1,2,3$ are defined by \eqref{eigen4}.
\end{lem}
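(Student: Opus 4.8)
The plan is to show that $g(t,x):=\intr e^{ix\cdot\xi}e^{-b_j(|\xi|)t}\theta(\xi)\,d\xi$, viewed as a function of $x$, decays fast enough in $|x|$ to be integrable over $\R^3$. Set $F(\xi)=e^{-b_j(|\xi|)t}\theta(\xi)$. First I would use the identity $x^\beta g(t,x)=\intr e^{ix\cdot\xi}(i\pt_\xi)^\beta F(\xi)\,d\xi$, obtained by integrating by parts $|\beta|$ times (the boundary terms vanishing by the decay of the $\pt^{\beta'}_\xi F$), which gives $|x^\beta g(t,x)|\le C\|\pt^\beta_\xi F\|_{L^1(\R^3_\xi)}$. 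Thus it suffices to prove
\[
\pt^\beta_\xi F\in L^1(\R^3_\xi)\quad\text{for }|\beta|=2\ \text{and}\ |\beta|=4 .
\]
Indeed this yields $|x|^2|g(t,x)|\le C$ and $|x|^4|g(t,x)|\le C$, hence $|g(t,x)|\le C\min(|x|^{-2},|x|^{-4})$, which is integrable over $\R^3$ because $|x|^{-2}$ is integrable near the origin and $|x|^{-4}$ near infinity in dimension three; then $g(t,\cdot)\in L^1(\R^3_x)$.

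Next I would record the needed properties of $b_j$. By \eqref{eigen4}, each $b_j(|\xi|)$ is a sum of positive multiples of $|\xi|^2$ and $|\xi|^4/(3+5|\xi|^2)$, the positivity of the coefficients being a consequence of the fact that $L$ is self-adjoint, non-positive and locally coercive, so that $-L^{-1}$ is positive definite on $N_0^\bot$ and in particular $\kappa_0,\kappa_1>0$ and $-(L^{-1}P_1(v_1\chi_1),v_1\chi_1)>0$. Consequently $b_j(|\xi|)=\phi_j(|\xi|^2)$ for a function $\phi_j$ that is $C^\infty$ on $[0,\infty)$ with $|\phi_j^{(k)}(r)|\le C(1+r)^{1-k}$; thus $b_j(|\xi|)$ is a smooth function of $\xi$ on all of $\R^3$ (no singularity at $\xi=0$), and there is a constant $c>0$ with
\[
b_j(|\xi|)\ge c|\xi|^2,\qquad |\pt^\gamma_\xi b_j(|\xi|)|\le C(1+|\xi|)^{2-|\gamma|},\quad \gamma\in\N^3 .
\]

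Then I would estimate $\pt^\beta_\xi F$. By the Faà di Bruno formula, $\pt^\alpha_\xi e^{-b_j(|\xi|)t}$ equals $e^{-b_j(|\xi|)t}$ times a finite sum of terms $t^k\prod_{i=1}^k\pt^{\gamma_i}_\xi b_j(|\xi|)$ with $\gamma_1+\cdots+\gamma_k=\alpha$, $|\gamma_i|\ge 1$; using $|\pt^{\gamma_i}_\xi b_j|\le C(1+|\xi|)^{2-|\gamma_i|}$, the lower bound $b_j(|\xi|)\ge c|\xi|^2$, and $\sup_{s\ge 0}s^k e^{-cs/2}<\infty$, one gets $|\pt^\alpha_\xi e^{-b_j(|\xi|)t}|\le C(t)\,e^{-\frac c2|\xi|^2t}(1+|\xi|)^{-|\alpha|}$ for $|\alpha|\le 4$, with a constant $C(t)$ depending on $t$. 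Combining this with the hypothesis $|\pt^{\alpha'}_\xi\theta|\le C(1+|\xi|)^{-a-|\alpha'|}$ and the Leibniz rule yields $|\pt^\beta_\xi F(\xi)|\le C(t)(1+|\xi|)^{-a-|\beta|}$ for $|\beta|\le 4$. Since $\int_{\R^3}(1+|\xi|)^{-a-|\beta|}d\xi<\infty$ whenever $a+|\beta|>3$, this holds for $|\beta|=2$ (because $a>1$) and for $|\beta|=4$, which completes the argument; the degenerate case $t=0$ is identical with $F=\theta$.

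I expect the only genuinely delicate point to be the Faà di Bruno bookkeeping in the last step: one must check that the powers of $|\xi|$ produced by repeatedly differentiating the quadratically growing $b_j$, together with the accompanying powers of $t$, are dominated by the Gaussian factor $e^{-b_j(|\xi|)t}\le e^{-c|\xi|^2t}$ — which is precisely where the lower bound $b_j(|\xi|)\gtrsim|\xi|^2$, i.e. the strict positivity of the dissipation coefficients, is used. Everything else is routine.
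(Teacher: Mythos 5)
Your argument does establish, for each fixed $t$, that $x\mapsto\int e^{ix\cdot\xi}e^{-b_j(|\xi|)t}\theta(\xi)\,d\xi$ lies in $L^1(\R^3_x)$: taking $|\beta|\in\{2,4\}$ and integrating by parts is a legitimate way to get $|g(t,x)|\le C(t)\min(|x|^{-2},|x|^{-4})$, and the positivity $b_j(|\xi|)\gtrsim|\xi|^2$ from \eqref{eigen4} is exactly the right input. However, your route is genuinely different from the paper's and yields a strictly weaker conclusion. In your Fa\`a di Bruno step the factors $t^k(1+|\xi|)^{2k}$ are absorbed by $e^{-c|\xi|^2 t}$ only when $|\xi|\gtrsim 1$; in the region $|\xi|\lesssim t^{-1/2}$ the term $t^k e^{-c|\xi|^2 t}$ is genuinely of order $t^k$, so your constant $C(t)$ grows like $(1+t)^{4}$. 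The paper instead writes $\|\mathcal{F}^{-1}(e^{-b_j t}\theta)\|_{L^1_x}\le \|\mathcal{F}^{-1}(e^{-b_j t})\|_{L^1_x}\,\|\mathcal{F}^{-1}\theta\|_{L^1_x}$ by Young's convolution inequality and bounds the first factor via the parabolic-scaling estimate $|\pt^{\alpha}_{\xi}e^{-b_j(|\xi|)t}|\le Ct^{|\alpha|/2}(1+|\xi|^2t)^{|\alpha|/2}e^{-c|\xi|^2t}$, so that after the change of variable $\xi\mapsto\xi/\sqrt t$ the $L^1_x$ norm of the Gaussian-like kernel is a constant independent of $t$.

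This difference matters for the paper: Lemma~\ref{a2} is invoked precisely to obtain the \emph{uniform-in-}$t$ bound \eqref{a6}, $\|H_{jk}(t)\|_{L^{2}_v(L^1_x)}\le C$, which then enters the estimates \eqref{S_4} and \eqref{S_4a}; a $(1+t)^4$ factor there would destroy those decay rates. So while your proof verifies the lemma as literally worded, it misses the uniformity that the paper's proof silently supplies and that the downstream argument needs. The natural fix is either to adopt the paper's Young-inequality splitting (separating the smooth compactly-decaying $\theta$ from the heat-type kernel), or, if you want to stay within your scheme, to scale the number of integrations by parts with $t$ (use $t^{|\alpha|/2}$-weighted derivative bounds and then substitute $x=\sqrt t\,y$) rather than fixing $|\beta|\in\{2,4\}$.
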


\begin{proof}
For any $\alpha\in \mathbb{N}^3$ and $j=-1,0,1,2,3$, by \eqref{eigen4} we obtain
$$
 \left|\pt^{\alpha}_{\xi} e^{-b_j(|\xi|)t}\right| \le C t^{|\alpha|/2}(1+|\xi|^2t)^{|\alpha|/2}e^{-c|\xi|^2t},
$$
where $c,C>0$ are two constants.
This implies that
$$
\left|x^{\alpha}\intr e^{ix\cdot\xi}e^{-b_j(|\xi|)t} d\xi\right|\le C\intr \left|\pt^{\alpha}_{\xi}e^{-b_j(|\xi|)t}\right|d\xi\le C  t^{-3/2+|\alpha|/2},
$$
which gives
\be
 \left|\mathcal{F}^{-1} \(e^{-b_j(|\xi|)t} \)\right|\le Ct^{-\frac32}\(1+\frac{|x|^2}{t}\)^n,\quad \forall n\in \N. \label{a4}
\ee

Since $|\pt^{\alpha}_{\xi}\theta(\xi)|\le C(1+|\xi|)^{-a-|\alpha|}$ for any $\alpha\in \N^3$, we have
\be  \left|\mathcal{F}^{-1} \theta(\xi)\right|\le C\frac1{|x|^2}(1+|x|)^{-n},\quad \forall n\in \N.  \label{a5}\ee
Thus, it follows from \eqref{a4} and \eqref{a5} that
$$
 \left\|\mathcal{F}^{-1} \(e^{-b_j(|\xi|)t}\theta(\xi)\) \right\|_{L^1_x}
 \le  C \left\|\mathcal{F}^{-1} \(e^{-b_j(|\xi|)t} \)\right\|_{L^1_x}  \left\|\mathcal{F}^{-1} \theta(\xi)\right\|_{L^1_x}\le C,
$$
which proves \eqref{a1}.
\end{proof}

\begin{lem} \label{S2}
For any $f_0\in N_0$, we have
\be
\|S_{2}(t,\xi,\eps)f_0\|_{\xi}\le C\(\eps|\xi|1_{\{\eps|\xi|\le r_0\}}+1_{\{\eps|\xi|\ge r_0\}}\)e^{-\frac{dt}{\eps^2}}\| f_0\|_{\xi} .\label{S5}
\ee
\end{lem}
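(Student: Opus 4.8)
The plan is to read $S_2$ off the spectral decomposition of Theorem~\ref{E_3}. When $\eps|\xi|\ge r_0$ the part $S_1$ is absent, $S_2(t,\xi,\eps)=e^{\frac{t}{\eps^2}B_\eps(\xi)}$, and the claimed estimate is nothing but \eqref{B_3}; so from now on I assume $\eps|\xi|\le r_0$. There, by \eqref{E_5}, $S_1(t,\xi,\eps)=e^{\frac{t}{\eps^2}B_\eps(\xi)}P_\eps(\xi)$, where $P_\eps(\xi)g=\sum_{j=-1}^{3}(g,\overline{\psi_j(\xi,\eps)})_\xi\,\psi_j(\xi,\eps)$ is the rank-five spectral projection of $B_\eps(\xi)$; hence $S_2(t,\xi,\eps)=e^{\frac{t}{\eps^2}B_\eps(\xi)}(I-P_\eps(\xi))$ and in particular $S_2(t,\xi,\eps)P_\eps(\xi)=0$. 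Therefore $S_2(t,\xi,\eps)f_0=S_2(t,\xi,\eps)\bigl((I-P_\eps(\xi))f_0\bigr)$, and \eqref{B_3} applied to the function $(I-P_\eps(\xi))f_0$ gives
\be
\|S_2(t,\xi,\eps)f_0\|_\xi\le Ce^{-\frac{dt}{\eps^2}}\bigl\|(I-P_\eps(\xi))f_0\bigr\|_\xi .
\ee
So the whole statement reduces to the purely static estimate $\|(I-P_\eps(\xi))f_0\|_\xi\le C\eps|\xi|\,\|f_0\|_\xi$ for $f_0\in N_0$ and $\eps|\xi|\le r_0$, with $C$ independent of $t,\eps,\xi$.

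To prove this I would use that $(I-P_0)f_0=0$ for $f_0\in N_0$, so the left side equals $\|(P_0-P_\eps(\xi))f_0\|_\xi$, and that $\{h_j(\xi)\}_{j=-1}^{3}$ is a $(\cdot,\cdot)_\xi$-orthonormal basis of $N_0$, real-valued for real $\xi$. Expanding $f_0=\sum_j(f_0,h_j(\xi))_\xi h_j(\xi)$ and writing $\delta_j:=\psi_j(\xi,\eps)-h_j(\xi)$, the operator difference $(P_0-P_\eps(\xi))f_0$ becomes a finite sum of terms $(f_0,h_j)_\xi\delta_j$, $(f_0,\overline{\delta_j})_\xi h_j$, $(f_0,\overline{\delta_j})_\xi\delta_j$; Cauchy--Schwarz for $(\cdot,\cdot)_\xi$ together with $\|h_j\|_\xi=1$ bounds all the scalar factors by $\|f_0\|_\xi$, so $\|(P_0-P_\eps(\xi))f_0\|_\xi\le C\|f_0\|_\xi\max_j\|\delta_j\|_\xi$. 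Thus everything collapses to the single bound
\be
\|\psi_j(\xi,\eps)-h_j(\xi)\|_\xi\le C\eps|\xi|,\qquad \eps|\xi|\le r_0,\quad j=-1,0,1,2,3.
\ee

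This last bound is the only substantive step. Its unweighted version $\|\psi_j-h_j\|=O(\eps|\xi|)$ is immediate from Theorem~\ref{spect3} (since $P_0\psi_j=h_j+O(\eps|\xi|)$ and $P_1\psi_j=O(\eps|\xi|)$), and because $\|g\|_\xi\le\sqrt2\,\|g\|$ whenever $|\xi|\ge1$ this already disposes of the range $|\xi|\ge1$. For $|\xi|\le1$ one must in addition control the density term $|\xi|^{-1}|(\psi_j-h_j,\chi_0)|$, i.e.\ show $|(\psi_j-h_j,\chi_0)|\le C\eps|\xi|^2$: this is trivial for $j=2,3$ (both densities vanish); for $j=\pm1$ I would test the eigenvalue equation $B_\eps(\xi)\psi_j=\lambda_j(|\xi|,\eps)\psi_j$ against $\chi_0$, use $(L\psi_j,\chi_0)=0$ and $((v\cdot\xi)P_d\psi_j,\chi_0)=0$ to get $(\psi_j,\chi_0)=-i\eps\,\xi\cdot(\psi_j,v\chi_0)/\lambda_j$, subtract the analogous relation $(h_j,\chi_0)=-i\,\xi\cdot(h_j,v\chi_0)/\eta_j(|\xi|)$ for $D(\xi)$, and insert $\lambda_j=\eps\eta_j(|\xi|)(1+O(\eps|\xi|))$ (from \eqref{specr1a}), $(\psi_j,v\chi_0)=(h_j,v\chi_0)+O(\eps|\xi|)$ (from \eqref{eigen1}) and $|\eta_j(|\xi|)|=\sqrt{1+\tfrac53|\xi|^2}\ge1$; for the hydrodynamic mode $j=0$, where $\eta_0=0$, the same conclusion follows instead from the coefficient expansions \eqref{expan2}--\eqref{C_5} in the proof of Theorem~\ref{eigen_3}, which show that the density of $\psi_0$ matches that of $h_0$ to relative order $\eps|\xi|$. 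Combining, $\|\psi_j-h_j\|_\xi=O(\eps|\xi|)$, which finishes the proof.

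I expect the main obstacle to be exactly this weighted-norm bookkeeping: the crude estimate $\|\psi_j-h_j\|=O(\eps|\xi|)$ is not enough, because the $|\xi|^{-1}$ factor built into $\|\cdot\|_\xi$ returns only $O(\eps)$ on the macroscopic density, whereas one needs the sharper gain $|(\psi_j-h_j,\chi_0)|=O(\eps|\xi|^2)$ for small $|\xi|$ — and this is why the eigenvalue relation (and, for the hydrodynamic mode, the second-order inspection of the coefficient expansions) is invoked rather than a plain appeal to \eqref{eigen1}.
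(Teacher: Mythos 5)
Your proof takes the same route as the paper's: split $S_2$ into the high-frequency piece $S(t,\xi,\eps)1_{\{\eps|\xi|\ge r_0\}}$ (handled directly by \eqref{B_3}) and the low-frequency piece $S(t,\xi,\eps)1_{\{\eps|\xi|\le r_0\}}(I-P_\eps(\xi))$, then reduce the latter to the static projection bound $\|(I-P_\eps(\xi))f_0\|_\xi\le C\eps|\xi|\,\|f_0\|_\xi$ on $N_0$. The paper asserts this last bound follows from \eqref{eigen1} without elaboration; you correctly flag that the unweighted $O(\eps|\xi|)$ eigenfunction expansion is, by itself, a priori insufficient for the weighted $\|\cdot\|_\xi$ norm (because of the $|\xi|^{-1}$ weight on the density), and your supplementary argument — testing the eigenvalue relation against $\chi_0$ for $j=\pm1$ and reading the coefficient expansions \eqref{expan2}--\eqref{C_5} for $j=0$ to get $|(\psi_j-h_j,\chi_0)|=O(\eps|\xi|^2)$ — supplies exactly the detail the paper glosses over.
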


\begin{proof}
Define a projection $P_{\eps}(\xi)$ by
$$P_{\eps}(\xi)f=\sum^3_{j=-1}\(f,\overline{\psi_j(\xi,\eps)}\)_{\xi} \psi_j(\xi,\eps),\quad \forall f\in L^2(\R^3_v),$$
where $ \psi_j(\xi,\eps)$, $j=-1,0,1,2,3$ are the eigenfunctions of $B_{\eps}(\xi)$ defined by \eqref{eigen1} for $\eps|\xi|\le r_0$.

By Theorem \ref{E_3}, we have
\be
S_2(t,\xi,\eps)=S_{21}(t,\xi,\eps)+S_{22}(t,\xi,\eps),\label{S3}
\ee
where
\bma
S_{21}(t,\xi,\eps)&=S(t,\xi,\eps)1_{\{\eps|\xi|\le r_0\}}\(I- P_{\eps}(\xi)\),\\
S_{22}(t,\xi,\eps)&=S(t,\xi,\eps)1_{\{\eps|\xi|\ge r_0\}}.
\ema
It holds that
\be
\|S_{2j}(t,\xi,\eps)g\|_{\xi}\le Ce^{-\frac{dt}{\eps^2}}\| g\|_{\xi},\quad \forall g\in L^2(\R^3_v),\,\,\,j=1,2.
\ee

Since $h_j(\xi)$, $j=-1,0,1,2,3$ are the orthonormal basis of $N_0$, it follows that for any $f_0\in N_0$,
$$
f_0-P_{\eps}(\xi)f_0
=\sum^3_{j=-1}(f_0,h_j(\xi))_\xi h_j(\xi)-\sum^3_{j=-1}(f_0,\overline{\psi_j(\xi,\eps)})_\xi \psi_j(\xi,\eps),
$$
which together with \eqref{eigen1} gives rise to
\be
\|S_{21}(t,\xi,\eps)f_0\|_{\xi}\le C\eps|\xi|1_{\{\eps|\xi|\le r_0\}}e^{-\frac{dt}{\eps^2}}\|f_0\|_{\xi},\quad \forall f_0\in N_0.\label{S4}
\ee
By combining \eqref{S3}--\eqref{S4}, we obtain \eqref{S5}.
\end{proof}

Now we are going to estimate the first and second order expansions of the semigroup $e^{\frac{t}{\eps^2}B_\eps}$.

\begin{lem}\label{fl-1}
For any $\eps\in (0,1)$ and  any $f_0\in L^{2}_{v}(L^2_x)$, we have
\bma
 \left\|e^{\frac{t}{\eps^2}B_\eps}f_0-V(t)P_0f_0\right\|_{L^{\infty}_P}
 \le& C\bigg(\eps (1+t)^{-\frac32}+\(1+\frac{t}{\eps}\)^{-q}\bigg)\nnm\\
 &\times\(\|f_0\|_{L^2_v(H^4_x)}+\|f_0\|_{L^2_v(W^{4,1}_x)}+\|\Tdx\phi_0\|_{L^p_x}\), \label{limit1}
\ema
for any $1<p<2$ and $q=3/p-3/2$, where $ \phi_0= \Delta^{-1}_x(f_0,\chi_0)$. Moreover, if $f_0$ satisfies \eqref{initial}, then
\be
 \left\|e^{\frac{t}{\eps^2}B_\eps}f_0-V(t)P_0f_0\right\|_{L^{\infty}_P}
 \le C\eps (1+t)^{-\frac32}\(\|f_0\|_{L^2_v(H^3_x)}+\|f_0\|_{L^2_v(L^{1}_x)}\). \label{limit1a}
\ee
\end{lem}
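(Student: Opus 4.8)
The plan is to work on the Fourier side and use the semigroup decomposition of Theorem~\ref{E_3}, $e^{\frac{t}{\eps^2}B_\eps}f_0=\mathcal{F}^{-1}\big[(S_1(t,\xi,\eps)+S_2(t,\xi,\eps))\hat f_0\big]$, and to compare $S_1$ mode by mode with $V(t,\xi)P_0\hat f_0=\sum_{j=0,2,3}e^{-b_j(|\xi|)t}(P_0\hat f_0,h_j(\xi))_\xi h_j(\xi)$ from \eqref{v1}. The decisive structural fact is that, among the five eigenvalues $\lambda_j(|\xi|,\eps)=\eps\eta_j(|\xi|)-\eps^2 b_j(|\xi|)+O(\eps^3|\xi|^3)$ of Theorem~\ref{spect3}, the three ``fluid'' modes $j=0,2,3$ (with $\eta_j\equiv0$) reproduce the diffusive propagator $V(t)$ up to an $O(\eps)$ error, whereas the two ``acoustic'' modes $j=\pm1$ (with $\eta_{\pm1}(|\xi|)=\pm i\sqrt{1+\frac53|\xi|^2}\neq0$) do not appear in $V(t)$ at all and must be shown negligible through dispersion. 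All estimates are then transferred to $\|\cdot\|_{L^\infty_P}$ by $\|g\|_{L^\infty_x(L^2_v)}\le C\|\hat g\|_{L^1_\xi(L^2_v)}$ and $\|\Tdx\Delta^{-1}_x(g,\chi_0)\|_{L^\infty_x}\le C\big\||\xi|^{-1}(\hat g,\chi_0)\big\|_{L^1_\xi}$.

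First I would dispose of the easy pieces. Writing $\hat f_0=P_0\hat f_0+P_1\hat f_0$, Lemma~\ref{S2} gives $\|S_2(t,\xi,\eps)P_0\hat f_0\|_\xi\le C(\eps|\xi|1_{\{\eps|\xi|\le r_0\}}+1_{\{\eps|\xi|\ge r_0\}})e^{-dt/\eps^2}\|P_0\hat f_0\|_\xi$, while \eqref{B_3} gives $\|S_2(t,\xi,\eps)P_1\hat f_0\|_\xi\le Ce^{-dt/\eps^2}\|P_1\hat f_0\|$; integrating in $\xi$ (the $\xi$-integrals being controlled by $\|f_0\|_{L^2_v(H^2_x)}$) and using $e^{-dt/\eps^2}\le C_q(1+t/\eps)^{-q}$, one finds that the $S_2$ contribution — and likewise the tail $V(t,\xi)P_0\hat f_0\,1_{\{\eps|\xi|\ge r_0\}}$, which is $O(e^{-cr_0^2 t/\eps^2})$ since $b_j(|\xi|)\sim|\xi|^2$ — is dominated by the right-hand side of \eqref{limit1}. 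For the fluid modes $j=0,2,3$ one has $\frac{t}{\eps^2}\lambda_j=-b_j(|\xi|)t+O(\eps|\xi|^3t)$, so on $\eps|\xi|\le r_0$ (with $r_0$ small) $e^{-b_jt}e^{O(\eps|\xi|^3t)}\le e^{-c|\xi|^2t/2}$ and $|e^{\frac{t}{\eps^2}\lambda_j}-e^{-b_jt}|\le C\eps|\xi|^3t\,e^{-c|\xi|^2t/2}\le C\eps|\xi|\,e^{-c|\xi|^2t/4}$; moreover, from the expansions in Theorem~\ref{spect3}, $\psi_j(\xi,\eps)=h_j(\xi)+O(\eps|\xi|)$ where the $\chi_0$-component of the correction is $O(\eps|\xi|^3)$ near $\xi=0$ (so the $|\xi|^{-2}P_d$-term stays bounded), $P_1\psi_j(\xi,\eps)=O(\eps|\xi|)$, and consequently $(\hat f_0,\overline{\psi_j(\xi,\eps)})_\xi=(P_0\hat f_0,h_j(\xi))_\xi+O(\eps|\xi|)$ with no $|\xi|^{-1}$ singularity in the $\|\cdot\|_\xi$-sense. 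Collecting these, the fluid-mode mismatch is $O(\eps|\xi|e^{-c|\xi|^2t})$ times an amplitude bounded by $|\widehat{(n_0,m_0,q_0)}|+\|\widehat{P_1f_0}\|$; splitting $|\xi|\le1$ (where $\int_{|\xi|\le1}|\xi|e^{-c|\xi|^2t}d\xi\le C(1+t)^{-2}$ and $\widehat{(n_0,m_0,q_0)}\in L^\infty_\xi$ is controlled by $\|f_0\|_{L^2_v(L^1_x)}$) from $|\xi|\ge1$ (exponentially damped, controlled by $\|f_0\|_{L^2_v(H^4_x)}$) yields the bound $C\eps(1+t)^{-3/2}(\|f_0\|_{L^2_v(H^4_x)}+\|f_0\|_{L^2_v(L^1_x)})$.

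The heart of the proof is the acoustic modes, i.e.\ bounding $\mathcal{F}^{-1}\big[\sum_{j=\pm1}e^{\frac{t}{\eps^2}\lambda_j}(\hat f_0,\overline{\psi_j})_\xi\psi_j1_{\{\eps|\xi|\le r_0\}}\big]$ in $\|\cdot\|_{L^\infty_P}$. Since $\frac{t}{\eps^2}\lambda_{\pm1}=\pm i\frac{t}{\eps}\sqrt{1+\frac53|\xi|^2}-b_{\pm1}(|\xi|)t+O(\eps|\xi|^3t)$, the summand is a rapidly oscillating factor $e^{\pm i\frac{t}{\eps}\sqrt{1+\frac53|\xi|^2}}$ times the parabolic damping $e^{-b_{\pm1}(|\xi|)t}$ (which on $\eps|\xi|\le r_0$ is $\le e^{-c|\xi|^2t/2}$) times a fixed amplitude. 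Using the explicit $h_{\pm1}(\xi)$ from \eqref{eigen} and $\psi_{\pm1}=h_{\pm1}+O(\eps|\xi|)$, I would decompose the amplitude into a \emph{regular} part — bounded, with $L^2_v$-norm controlled in $L^1_\xi$ by $\|f_0\|_{L^2_v(H^4_x)}+\|f_0\|_{L^2_v(L^1_x)}$ — and a \emph{singular} part of the form $a(\xi)\,\widehat{\Tdx\phi_0}(\xi)$ with $a$ bounded, the point being that the $|\xi|^{-1}$ arising from $|\xi|^{-2}(P_d\cdot,P_dh_{\pm1})=|\xi|^{-2}O(|\xi|)\hat n_0$ combines with the Riesz-type factors $\xi/|\xi|$ in $h_{\pm1}$ to produce exactly $\widehat{\Tdx\phi_0}=-i\xi|\xi|^{-2}\hat n_0$. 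For the regular part I would cut $|\xi|\le r_1$ ($r_1$ a fixed small constant, where $\sqrt{1+\frac53|\xi|^2}$ has non-degenerate Hessian) against $r_1\le|\xi|\le r_0/\eps$ (where $e^{-c|\xi|^2t}\le e^{-cr_1^2t}$): a stationary/non-stationary phase estimate in the large parameter $t/\eps$, combined with this damping, gives decay $\lesssim(1+t/\eps)^{-3/2}\le(1+t/\eps)^{-q}$ with constant controlled by the stated norms, while the $O(\eps|\xi|)$ eigenfunction corrections contribute an extra factor $\eps$ landing in $\eps(1+t)^{-3/2}$. For the singular part I would estimate, by Young's inequality, $\big\|\mathcal{F}^{-1}[e^{\pm i\frac{t}{\eps}\sqrt{1+\frac53|\xi|^2}}e^{-b_{\pm1}(|\xi|)t}a(\xi)\widehat{\Tdx\phi_0}1_{\{\eps|\xi|\le r_0\}}]\big\|_{L^\infty_x}\le\|K_{t,\eps}\|_{L^{p'}_x}\|\Tdx\phi_0\|_{L^p_x}$ ($1/p+1/p'=1$), where now $K_{t,\eps}=\mathcal{F}^{-1}[e^{\pm i\frac{t}{\eps}\sqrt{1+\frac53|\xi|^2}}e^{-b_{\pm1}(|\xi|)t}a(\xi)1_{\{\eps|\xi|\le r_0\}}]$ has a \emph{smooth} amplitude; a stationary-phase computation of $K_{t,\eps}$ — whose critical point sits at $|\xi_\ast|\sim\eps|x|/t$, where the Gaussian damping is $\sim e^{-c\eps^2|x|^2/t}$ — gives $\|K_{t,\eps}\|_{L^{p'}_x}\le C(1+t/\eps)^{-q}$, which is the claimed rate; for small $t$ one instead uses the cruder $\|e^{-c|\xi|^2t}1_{\{|\xi|\le1\}}\|_{L^p_\xi}\|\widehat{\Tdx\phi_0}1_{\{|\xi|\le1\}}\|_{L^{p'}_\xi}\le C\|\Tdx\phi_0\|_{L^p_x}$ (finite precisely because $p>1$) to keep the bound $O(1)$ as $t\to0$. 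This oscillatory-integral analysis, and in particular obtaining the exponent $q=3/p-3/2$ uniformly in $t$, is the main technical obstacle.

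Finally, for the case \eqref{initial} one has $P_1f_0=0$, so $S_2$ acts only on $P_0f_0$ and gains the extra $\eps|\xi|$ from Lemma~\ref{S2}; moreover the constraints $\Tdx\cdot m_0=0$ and $n_0-\Delta^{-1}_xn_0+\sqrt{2/3}\,q_0=0$ read, after Fourier transform, $\hat m_0\cdot\xi=0$ and $\hat n_0(1+|\xi|^2)+\sqrt{2/3}\,|\xi|^2\hat q_0=0$, which together are exactly the identity $(P_0\hat f_0,h_{\pm1}(\xi))_\xi=0$ (a direct check using \eqref{eigen}); hence the acoustic modes vanish identically and only the $O(\eps)$ fluid-mode mismatch remains, giving \eqref{limit1a} with the reduced regularity $\|f_0\|_{L^2_v(H^3_x)}+\|f_0\|_{L^2_v(L^1_x)}$.
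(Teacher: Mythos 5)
Your plan follows essentially the same route as the paper: decompose via Theorem~\ref{E_3} into the low-frequency semigroup $S_1$, the exponentially small $S_2$ and the high-frequency tail of $V(t)$; recognize that the fluid modes $j=0,2,3$ reproduce $V(t)$ up to an $O(\eps|\xi|)$ correction; and isolate the acoustic modes $j=\pm1$ as a Klein--Gordon-type oscillatory factor $e^{\pm i\frac{t}{\eps}\sqrt{1+\frac53|\xi|^2}}$ times parabolic damping, to be controlled by dispersion and transferred to $L^p$ data via Young/H\"older. The paper does exactly this, writing the acoustic term in the product form \eqref{a7}, invoking the cited $L^1\!\to\!L^\infty$ dispersive estimate \eqref{a3} from [Nelson] for the pure phase with Bessel weight, packaging the parabolic damping and the smooth amplitude $R_{jk}(\xi)$ into a kernel whose $L^1_x$-norm is controlled by Lemma~\ref{a2}, and applying the elliptic estimate \eqref{a8} to tame the $\xi\xi_l/|\xi|^2$ Riesz multipliers. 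Your proposal to do the stationary-phase computation by hand is just the content of the cited estimate, and your identification of the singular contribution as $\widehat{\Tdx\phi_0}=-i\xi|\xi|^{-2}\hat n_0$ together with Young's inequality and Hausdorff--Young for the small-time regime matches \eqref{S_4}. Your verification that the constraints in \eqref{initial} force $(P_0\hat f_0,h_{\pm1})_\xi=0$ is correct and is exactly the paper's observation that $I_{12}=0$ in that case.

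Two small points of imprecision worth flagging: your claim that the $\chi_0$-component of $P_0\psi_j-h_j$ is $O(\eps|\xi|^3)$ is not what Theorem~\ref{spect3} gives (it gives $O(\eps|\xi|)$), and one cannot read off a higher-order vanishing without redoing the expansion of \eqref{expan2}; the paper never needs this, because the $O(\eps|\xi|)$ prefactor appearing in $I_{11}$ already cancels the $|\xi|^{-1}$ weight coming from $\|\cdot\|_\xi$, and the small-$\xi$ region is closed with $\|f_0\|_{L^2_v(L^1_x)}$. Second, in your ``regular'' part you are implicitly treating the bounded but non-smooth multipliers $\xi\xi_l/|\xi|^2$ as if their inverse Fourier transforms were in $L^1_x$; they are not, and the paper's device of keeping the kernel $H_{jk}$ smooth (so Lemma~\ref{a2} applies) and handling the Riesz factors separately through the $L^q$ elliptic estimate \eqref{a8} is what makes the Young step legitimate. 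Neither issue is fatal, but a careful write-up should follow the paper's separation of the singular/Riesz factors from the genuinely smooth amplitude.
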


\begin{proof}First, we prove \eqref{limit1} as follows.
 By Theorem \ref{E_3}, we have
\bma
\left\|e^{\frac{t}{\eps^2}B_\eps}f_0-V(t)P_0f_0\right\|=&C\left\| \intr e^{ix\cdot\xi} \(e^{\frac{t}{\eps^2}B_{\eps}(\xi)}\hat{f}_0 -V(t,\xi)P_0\hat{f}_0\)d\xi\right\|\nnm\\
\le & C\bigg\|\int_{|\xi|\le \frac{r_0}{\eps}}e^{ix\cdot\xi}\(S_1(t,\xi,\eps)\hat{f}_0-V(t,\xi)P_0\hat{f}_0\)d\xi\bigg\|\nnm\\
&+C\int_{|\xi|\ge \frac{r_0}{\eps}} \left\|V(t,\xi)P_0\hat{f}_0\right\|d\xi+C\intr \left\|S_2(t,\xi,\eps)\hat{f}_0\right\|d\xi\nnm\\
=:&I_1+I_2+I_3. \label{S_1}
\ema
We estimate $I_j$, $j=1,2,3$ as follows. By \eqref{E_5} and Theorem \ref{spect3}, we have
$$
S_1(t,\xi,\eps)\hat{f}_0 
=\sum^3_{j=-1}e^{\frac{\eta_j(|\xi|)}{\eps}t-b_j(|\xi|)t+ O(\eps|\xi|^3)t}\[\(P_0\hat{f}_0,h_j(\xi)\)_{\xi}h_j(\xi)+O(\eps |\xi|)\],
$$
which leads to
\bma
I_1\le& \sum^3_{j=-1}\int_{|\xi|\le \frac{r_0}{\eps}}\bigg\|e^{\frac{\eta_j(|\xi|)}{\eps}t-b_j(|\xi|)t+ O(\eps|\xi|^3)t}\[\(P_0\hat{f}_0,h_j(\xi)\)_{\xi}h_j(\xi)+O(\eps |\xi|)\]\nnm\\
&\qquad-e^{ \frac{\eta_j(|\xi|)}{\eps}t-b_j(|\xi|)t}\(P_0\hat{f}_0,h_j(\xi)\)_{\xi}h_j(\xi)\bigg\|d\xi \nnm\\
&+\sum_{j=-1,1}\bigg\|\int_{|\xi|\le \frac{r_0}{\eps}}e^{ix\cdot\xi }e^{ \frac{\eta_j(|\xi|)}{\eps}t-b_j(|\xi|)t}\(P_0\hat{f}_0,h_j(\xi)\)_{\xi}h_j(\xi)d\xi\bigg\|\nnm\\
=:&I_{11}+I_{12}. \label{S_2}
\ema
For $I_{11}$, it follows from \eqref{eigen4} and \eqref{eigen1} that
\bma
I_{11}&\le C\eps\int_{|\xi|\le \frac{r_0}{\eps}}e^{-c|\xi|^2t}\(|\xi|^3t\|P_0\hat{f}_0\|_{\xi} +|\xi|\|\hat{f}_0\|_{\xi}\) d\xi \nnm\\
&\le C\eps (1+t)^{-\frac32}\(\|f_0\|_{L^2_v(H^3_x)}+\|f_0\|_{L^{2}_v(L^1_x)}\). \label{S_3}
\ema
Then, we estimate $I_{12}$ as follows.
By \eqref{eigen}, we have
\be
 \(P_0\hat{f}_0,h_j(\xi)\)_{\xi}h_j(\xi)
= \sum^4_{k=0}\hat{R}_{jk}  \hat{F}^1_{k}+ \frac{ \xi}{|\xi|^2}\bigg( R_{j5}(\xi) \hat{F}^1_{0} +R_{j6}(\xi)\sum^3_{l=1} \xi_l \hat{F}^1_{l}\bigg), \label{a7a}
\ee
where $j=-1,1$, and
\bmas
R_{j0}(\xi)&=\frac{1+|\xi|^2}{3+5|\xi|^2}\bigg(\frac32\chi_0
+\sqrt{\frac32}\chi_4\bigg)+ \frac{j\sqrt3}{2\sqrt{3+5|\xi|^2}}(v\cdot\xi)\chi_0 ,
\\
R_{jl}(\xi)&= \frac{j\sqrt3\xi_l}{2\sqrt{3+5|\xi|^2}}\bigg(\chi_0
+\sqrt{\frac23}\chi_4\bigg) ,\quad l=1,2,3,\\
R_{j4}(\xi)&= \frac{\sqrt{3}|\xi|^2}{3+5|\xi|^2}\bigg(\sqrt{\frac12}\chi_0
+ \sqrt{\frac13}\chi_4\bigg)+\frac{j\sqrt2}{2\sqrt{3+5|\xi|^2}}(v\cdot\xi)\chi_0 ,
\\
R_{j5}(\xi)&= \frac{j\sqrt3}{2\sqrt{3+5|\xi|^2}}v\chi_0 , \quad R_{j6}(\xi)= \frac12v \chi_0 ,
\\
\hat{F}^1_{k}(\xi)&= \(P_0\hat{f}_0,\chi_k\)1_{\{|\xi|\le \frac{r_0}{\eps}\}} ,\quad k=0,1,2,3,4.
\emas

Set
\be\label{a7b}
\left\{\bln
\hat{G}_{j}(t,\xi)&= e^{\frac{ji\sqrt{1+\frac53|\xi|^2}}{\eps}t}\(1+\frac53|\xi|^2\)^{-\frac54} ,\quad j=-1,1,
\\
\hat{H}_{jl}(t,\xi)&= e^{-b_j(|\xi|)t} \(1+\frac53|\xi|^2\)^{-\frac34}R_{jl}(\xi)  ,\,\,\,0\le l\le 6,
\\
\hat{F}^2_{k}(\xi)&= \(1+\frac53|\xi|^2\)^{2}\(P_0\hat{f}_0,\chi_k\)1_{\{|\xi|\le \frac{r_0}{\eps}\}} ,\,\,\,0\le k\le 4.
\eln\right.
\ee
By \eqref{a7a} and \eqref{a7b}, we have
\bma
& e^{ \frac{\eta_j(|\xi|)}{\eps}t-b_j(|\xi|)t}\(P_0\hat{f}_0,h_j(\xi)\)_{\xi}h_j(\xi) \nnm\\
=& \sum^4_{k=0}\hat{G}_j  \hat{H}_{jk}  \hat{F}^2_{k}+ \hat{G}_j \hat{H}_{j5} \frac{\xi}{|\xi|^2}\hat{F}^2_{0}+ \sum^3_{l=1}\hat{G}_j \hat{H}_{j6} \frac{\xi\xi_n}{|\xi|^2} \hat{F}^2_{l}. \label{a7}
\ema
We deal with the terms in the right side of \eqref{a7} as follows. From \cite{Nelson}, one has
\be
\|G_j(t)\|_{L^p_x}\le C\(\frac{t}{\eps}\)^{-\frac32+\frac{3}{p}},\quad 2\le p\le \infty. \label{a3}
\ee
Since $\|\pt^{\alpha}_{\xi}R_{jk}(\xi)\|\le C(1+|\xi|)^{-|\alpha|}$ for any $\alpha\in \N^3$ and $0\le k \le 6$, it follows from \eqref{a7b} and Lemma \ref{a2} that
\be
\|H_{jk}(t)\|_{L^{2}_v(L^1_x)}\le C,\quad 0\le k\le 6. \label{a6}
\ee
By  ellipticity, we have
\be
\| D_{ij}\Delta^{-1}_x g\|_{L^{q}}\le C\|g\|_{L^q},\quad \forall g\in L^q(\R^3),\,\,\, q\in (1,\infty). \label{a8}
\ee
This and \eqref{a7}, \eqref{a3}, \eqref{a6} imply that
\bma
I_{12}\le&  \|G_j(t)\|_{L^{r}_x}\|H_{jk}(t)\|_{L^{2}_v(L^1_x)}\| F^2_{k}\|_{L^{p}_x}+\|G_j(t)\|_{L^{r}_x}\|H_{j5}(t)\|_{L^{2}_v(L^1_x)}\|\Tdx\Delta^{-1}_xF^2_{0}\|_{L^{p}_x}\nnm\\
&+\|G_j(t)\|_{L^{r}_x}\|H_{j6}(t)\|_{L^{2}_v(L^1_x)}\| D^2_x\Delta^{-1}_xF^2_{l}\|_{L^{p}_x}\nnm\\
\le & C\(\frac{t}{\eps}\)^{-\frac32+\frac{3}{r}}\(\|f_0\|_{L^2_{v}(W^{4,p}_x)} +\|\Tdx\phi_0\|_{L^p_x }\), \label{S_4}
\ema
where
$1/p+1/r=1$ with $1<p<2.$ By combining \eqref{S_2}, \eqref{S_3} and \eqref{S_4}, we obtain
\be
I_{1}\le  C\bigg(\eps (1+t)^{-\frac32}+\(1+\frac{t}{\eps}\)^{-q}\bigg)\(\|f_0\|_{L^2_{v}(H^{4}_x)} +\|f_0\|_{L^2_{v}(W^{4,1}_x)} +\|\Tdx\phi_0\|_{L^p_x }\) \label{S_5a}
\ee
for any $1<p<2$ and $q=3/p-3/2$.

By \eqref{v5} and \eqref{B_3}, we have
\bma
I_2&\le  C  \(\int_{|\xi|\ge \frac{r_0}{\eps}}\frac1{|\xi|^4}d\xi\)^{1/2}\(\int_{|\xi|\ge \frac{r_0}{\eps}}e^{-2c|\xi|^2t}|\xi|^4\|P_0\hat f_0\|^2 d\xi\)^{1/2}\nnm\\
&\le Ce^{-\frac{cr_0^2t}{\eps^2}}\|f_0\|_{L^2_{v}(H^{2}_x)},\label{S_5}
\\
I_3&\le  C \(\intr \frac1{(1+|\xi|^2)^2}d\xi\)^{1/2}\(\intr e^{-2\frac{dt}{\eps^2}}(1+|\xi|^2)^2\|\hat{f}_0\|^2_{\xi} d\xi\)^{1/2}\nnm\\
 &\le Ce^{-\frac{dt}{\eps^2}}\(\|f_0\|_{L^2_v(H^2_x)}+\|f_0\|_{L^2_v(L^1_{x})}\). \label{S_6}
\ema
Therefore, it follows from \eqref{S_1} and \eqref{S_5a}--\eqref{S_6} that
\bma
\left\|e^{\frac{t}{\eps^2}B_\eps}f_0-V(t)P_0f_0\right\|_{L^{\infty}_x(L^{2}_v)}
\le& C\bigg(\eps (1+t)^{-\frac32}+\(1+\frac{t}{\eps}\)^{-q}\bigg)\nnm\\
 &\times\(\|f_0\|_{L^2_v(H^4_x)}+\|f_0\|_{L^2_v(W^{4,1}_x)}+\|\Tdx\phi_0\|_{L^p_x}\)
\ema
for any $1<p<2$ and $q=3/p-3/2$.

By a similar argument as above, we can prove
\bma
&\left\|\Tdx\Delta^{-1}_x\(e^{\frac{t}{\eps^2}B_\eps}f_0-V(t)P_0f_0,\chi_0\)\right\|_{L^{\infty}_x}\nnm\\
\le& C\bigg(\eps (1+t)^{-\frac32}+\(1+\frac{t}{\eps}\)^{-q}\bigg)\(\|f_0\|_{L^2_v(H^4_x)}+\|f_0\|_{L^2_v(W^{4,1}_x)}+\|\Tdx\phi_0\|_{L^p_x}\).
\ema

We now turn to \eqref{limit1a}. If $f_0$ satisfies \eqref{initial}, we have
$$
\(P_0\hat{f}_0,h_j(\xi)\)_{\xi}=0,\quad j=-1,1,
$$
which implies that $I_{12}=0$. The term $I_{11}$ satisfies \eqref{S_3}.
It follows from \eqref{v5} that
$$
I_2\le  C \eps \int_{|\xi|\ge \frac{r_0}{\eps}}e^{-c|\xi|^2t}|\xi|\|P_0\hat f_0\| d\xi\le C\eps e^{-\frac{cr_0^2t}{\eps^2}}\|f_0\|_{L^2_{v}(H^{3}_x)}.
$$
By Lemma \ref{S2}, we have
\bmas
I_3&\le C\int_{|\xi|\le \frac{r_0}{\eps}}\eps |\xi|e^{-\frac{dt}{\eps^2}}\| \hat{f}_0\|_{\xi}d\xi+C\int_{|\xi|\ge \frac{r_0}{\eps}}e^{-\frac{dt}{\eps^2}}\| \hat{f}_0\|_{\xi}d\xi\\
&\le C\eps e^{-\frac{dt}{\eps^2}}\(\|f_0\|_{L^2_v(H^3_x)}+\|f_0\|_{L^2_v(L^1_{x})}\).
\emas
Thus we obtain \eqref{limit1a}.
This proves the lemma.
\end{proof}

\begin{remark}
From Lemma \ref{fl-1}, we have
\be
\|e^{\frac{t}{\eps^2}B_{\eps}}P_0f_0-V(t)P_0f_0-u^{osc}_{\eps}(t)\|_{L^{\infty}_P} \le C\eps (1+t)^{-\frac34}\(\|f_0\|_{L^2_v(H^3_x)}+\|f_0\|_{L^2_v(L^{1}_x)}\), \label{limit7}
\ee
where $u^{osc}_{\eps}(t)=u^{osc}_{\eps}(t,x,v)$ is defined by \eqref{osc}.
\end{remark}

\begin{lem}\label{fl-2}
For any $\eps\in(0,1)$ and  any $f_0\in L^{2}_{v}(L^2_x)$ satisfying $P_0f_0=0$, we have
\bma
&\bigg\|\frac1{\eps}e^{\frac{t}{\eps^2}B_\eps}f_0+V(t)P_0(v\cdot\Tdx L^{-1}f_0)\bigg\|_{L^{\infty}_P} \nnm\\
\le&  C\bigg(\eps (1+t)^{-\frac52}+\(1+\frac{t}{\eps}\)^{-p}+\frac1{\eps}e^{-\frac{dt}{\eps^2}}\bigg)\(\|f_0\|_{L^2_v(H^5_x)}+\|f_0\|_{L^2_v(W^{5,1}_x)}\), \label{limit2}
\ema
for any $0<p<3/2$.
\end{lem}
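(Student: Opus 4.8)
The proof follows the scheme of Lemma~\ref{fl-1}. By Theorem~\ref{E_3} write $e^{\frac t{\eps^2}B_\eps}f_0=\mathcal F^{-1}(S_1(t,\xi,\eps)\hat f_0\,1_{\{\eps|\xi|\le r_0\}})+\mathcal F^{-1}(S_2(t,\xi,\eps)\hat f_0)$, and, since $\mathcal F(v\cdot\Tdx L^{-1}f_0)=i(v\cdot\xi)L^{-1}\hat f_0$, split $V(t)P_0(v\cdot\Tdx L^{-1}f_0)=\mathcal F^{-1}(V(t,\xi)\,iP_0[(v\cdot\xi)L^{-1}\hat f_0])$ at $\eps|\xi|=r_0$. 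Because $P_0f_0=0$ we have $P_d\hat f_0=0$ (hence $\|\hat f_0\|_\xi=\|\hat f_0\|$) and, since $(v\cdot\xi)\chi_0=\sum_l\xi_l\chi_l\in N_0$, also $P_d[(v\cdot\xi)L^{-1}\hat f_0]=0$. Then \eqref{B_3} bounds the $S_2$-term in $L^\infty_P$ by $C\eps^{-1}e^{-dt/\eps^2}(\|f_0\|_{L^2_v(H^2_x)}+\|f_0\|_{L^2_v(L^1_x)})$ (for the $\Tdx\Delta_x^{-1}(\,\cdot\,,\chi_0)$ component of $\|\cdot\|_{L^\infty_P}$ use $|\xi|^{-1}|(S_2\hat f_0,\chi_0)|\le\|S_2\hat f_0\|_\xi$, so no new singular factor appears), and the high-frequency part of $V(t)P_0(v\cdot\Tdx L^{-1}f_0)$ by $Ce^{-dt/\eps^2}(\|f_0\|_{L^2_v(H^3_x)}+\|f_0\|_{L^2_v(L^1_x)})$; both are absorbed into the last term of \eqref{limit2}.

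For the low-frequency part, Theorem~\ref{spect3} and $P_0\hat f_0=0$ give, for $\eps|\xi|\le r_0$, $(\hat f_0,\overline{\psi_j(\xi,\eps)})_\xi=(\hat f_0,\overline{P_1\psi_j(\xi,\eps)})=-i\eps(\hat f_0,L^{-1}P_1[(v\cdot\xi)h_j(\xi)])+O(\eps^2|\xi|^2)\|\hat f_0\|$ and $\psi_j(\xi,\eps)=h_j(\xi)+O(\eps|\xi|)$. Using that $L^{-1}$ is self-adjoint on $N_0^\bot$ and that $P_d[(v\cdot\xi)L^{-1}\hat f_0]=0$, one checks the algebraic identity $(\hat f_0,L^{-1}P_1[(v\cdot\xi)h_j(\xi)])=((v\cdot\xi)L^{-1}\hat f_0,h_j(\xi))=-i(iP_0[(v\cdot\xi)L^{-1}\hat f_0],h_j(\xi))_\xi$. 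Since $|\mathrm{Re}\,\lambda_j(|\xi|,\eps)|\le c\eps^2|\xi|^2$ and $\tfrac t{\eps^2}\lambda_j=\tfrac{\eta_j(|\xi|)}{\eps}t-b_j(|\xi|)t+O(\eps|\xi|^3)t$ by \eqref{specr1a} — whence $|e^{\frac t{\eps^2}\lambda_j}-e^{\frac{\eta_j}{\eps}t-b_jt}|\le C\eps|\xi|^3t\,e^{-c|\xi|^2t}$ for $r_0$ small (via $|e^z-1|\le|z|e^{|z|}$) — and since $|(iP_0[(v\cdot\xi)L^{-1}\hat f_0],h_j)_\xi|\le C|\xi|\|\hat f_0\|$, we obtain on $\{\eps|\xi|\le r_0\}$
$$\tfrac1\eps S_1(t,\xi,\eps)\hat f_0=-\sum_{j=-1}^3 e^{\frac{\eta_j(|\xi|)}{\eps}t-b_j(|\xi|)t}\big(iP_0[(v\cdot\xi)L^{-1}\hat f_0],h_j(\xi)\big)_\xi h_j(\xi)+O\!\big((\eps|\xi|^2+\eps|\xi|^4t)e^{-c|\xi|^2t}\big)\|\hat f_0\|.$$
For $j=0,2,3$, $\eta_j=0$, so those five terms equal $V(t,\xi)\,iP_0[(v\cdot\xi)L^{-1}\hat f_0]1_{\{\eps|\xi|\le r_0\}}$ and cancel the low-frequency part of $V(t)P_0(v\cdot\Tdx L^{-1}f_0)$; the $O(\cdot)$-remainder, estimated in $\|\cdot\|_{L^\infty_x(L^2_v)}$ by its $L^1_\xi(L^2_v)$-norm and split at $|\xi|=1$, contributes $C\eps(1+t)^{-5/2}(\|f_0\|_{L^2_v(H^4_x)}+\|f_0\|_{L^2_v(L^1_x)})$, and the $\Tdx\Delta_x^{-1}(\,\cdot\,,\chi_0)$ component is treated identically.

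There remain the $j=\pm1$ terms, $-\mathcal F^{-1}(e^{\pm i\sqrt{1+\frac53|\xi|^2}t/\eps}e^{-b_{\pm1}(|\xi|)t}(iP_0[(v\cdot\xi)L^{-1}\hat f_0],h_{\pm1}(\xi))_\xi h_{\pm1}(\xi)1_{\{\eps|\xi|\le r_0\}})$ (the passage from $e^{\frac t{\eps^2}\lambda_{\pm1}}$ to $e^{\pm i\sqrt{1+\frac53|\xi|^2}t/\eps}e^{-b_{\pm1}t}$ costing a further $O(\eps|\xi|^4te^{-c|\xi|^2t})\|\hat f_0\|$). As for $I_{12}$ in Lemma~\ref{fl-1}, $(iP_0[(v\cdot\xi)L^{-1}\hat f_0],h_{\pm1}(\xi))_\xi h_{\pm1}(\xi)$ is a finite sum of terms $m_k(\xi)(\hat f_0,\zeta_k)_v\,p_k(v)$ with $\zeta_k,p_k$ fixed Hermite-type $v$-profiles and each $m_k$ a product of smooth bounded symbols with factors from $\{\xi_l\xi_m|\xi|^{-1},\ \xi_l|\xi|(3+5|\xi|^2)^{-1/2},\ \xi_n|\xi|^{-1}\}$; thus $m_k$ has growth order $\le1$ and, modulo one $x$-derivative, defines an $L^p_x$-bounded operator through the singular-integral bounds underlying \eqref{a8} ($1<p<\infty$). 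With $\hat G_\pm(t,\xi)=e^{\pm i\sqrt{1+\frac53|\xi|^2}t/\eps}(1+\tfrac53|\xi|^2)^{-5/4}$ and $\hat H_\pm(t,\xi)=e^{-b_{\pm1}(|\xi|)t}(1+\tfrac53|\xi|^2)^{-3/4}$ one has $\|G_\pm(t)\|_{L^r_x}\le C(t/\eps)^{-3/2+3/r}$ for $2\le r\le\infty$ (cf.~\cite{Nelson}) and $\|H_\pm(t)\|_{L^1_x}\le C$ uniformly in $t$ by Lemma~\ref{a2}; factoring each term as $G_\pm(t)\ast H_\pm(t)\ast\mathcal F^{-1}((1+\tfrac53|\xi|^2)^2m_k(\xi)(\hat f_0,\zeta_k)_v)\,p_k(v)$, Young's inequality with $1/r+1/p=1$ together with $\|\mathcal F^{-1}((1+\tfrac53|\xi|^2)^2m_k(\hat f_0,\zeta_k)_v)\|_{L^p_x}\le C\|f_0\|_{L^2_v(W^{5,p}_x)}$ (four derivatives from $(1+\tfrac53|\xi|^2)^2$, one more absorbing $m_k$) gives $C(t/\eps)^{-(3/p-3/2)}\|f_0\|_{L^2_v(W^{5,p}_x)}$. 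Combining this with the crude bound $\int_{\eps|\xi|\le r_0}|\xi|\|\hat f_0\|d\xi\le C(\|f_0\|_{L^2_v(H^3_x)}+\|f_0\|_{L^2_v(L^1_x)})$ valid for $t\le\eps$, and with $\|f_0\|_{L^2_v(W^{5,p}_x)}\le C(\|f_0\|_{L^2_v(W^{5,1}_x)}+\|f_0\|_{L^2_v(H^5_x)})$, this is $\le C(1+t/\eps)^{-p}(\|f_0\|_{L^2_v(W^{5,1}_x)}+\|f_0\|_{L^2_v(H^5_x)})$ for any $0<p<3/2$; adding the four contributions yields \eqref{limit2}.

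The delicate step is precisely this oscillatory one: the amplitude $(iP_0[(v\cdot\xi)L^{-1}\hat f_0],h_{\pm1})_\xi h_{\pm1}(\xi)$ must be decomposed so that the Klein–Gordon propagator is isolated as a purely scalar, smooth Fourier multiplier (to which the dispersive $L^r$ estimate applies), while the non-smooth, inhomogeneous $\xi$-symbols — those coming from $h_{\pm1}(\xi)$ and from the extra factor $(v\cdot\xi)$ in $v\cdot\Tdx L^{-1}$ — are absorbed by Calderón–Zygmund/elliptic estimates; keeping the number of $x$-derivatives of $f_0$ consumed equal to the claimed $5$ (rather than $6$) forces the weight $(1+\tfrac53|\xi|^2)^{-3/4}$ in $\hat H_\pm$ to be chosen as small as the integrability in Lemma~\ref{a2} permits.
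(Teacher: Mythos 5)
Your proposal is correct and follows essentially the same route as the paper: decompose via Theorem \ref{E_3} into $S_1$ on low frequencies, $S_2$, and the high-frequency tail of $V(t)$; expand $S_1$ using Theorem \ref{spect3} so that the $j=0,2,3$ modes cancel against $V(t)P_0(v\cdot\Tdx L^{-1}f_0)$; bound the expansion remainder directly by $C\eps(1+t)^{-5/2}$; and treat the $j=\pm1$ oscillatory modes by factoring out the scalar Klein--Gordon multiplier and applying the dispersive $L^r$ bound of \cite{Nelson} together with Lemma \ref{a2} and the Calder\'on--Zygmund estimate \eqref{a8} via Young's inequality, exactly as in the paper's $I_{12}$ step. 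The only discrepancy is an inconsequential sign convention in your expression for $(\hat f_0,\overline{\psi_j(\xi,\eps)})_\xi$, which is internally self-consistent with the cancellation you perform and does not affect the final estimate.
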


\begin{proof}
By Theorem \ref{E_3}, we have
\bma
&\left\|\frac1{\eps}e^{\frac{t}{\eps^2}B_\eps}f_0+V(t)P_0(v\cdot\Tdx L^{-1}f_0)\right\| \nnm\\
\le &C\left\|\int_{|\xi|\le \frac{r_0}{\eps}}e^{ix\cdot\xi}\(\frac1{\eps}S_1(t,\xi,\eps)\hat{f}_0+V(t,\xi)P_0(iv\cdot\xi L^{-1}\hat{f}_0)\)d\xi\right\|\nnm\\
&+C\int_{|\xi|\ge \frac{r_0}{\eps}} \left\|V(t,\xi)P_0(v\cdot\xi L^{-1}\hat{f}_0)\right\|d\xi+C\intr \left\|\frac1{\eps}S_2(t,\xi,\eps)\hat{f}_0\right\|d\xi\nnm\\
=:&I_1+I_2+I_3. \label{S_1a}
\ema
We estimate $I_j$, $j=1,2,3$ as follows. By \eqref{E_5} and Theorem \ref{spect3}, for any $f_0\in L^{2}_{v}(L^2_x)$ satisfying $P_0f_0=0$, we have
$$
S_1(t,\xi,\eps)\hat{f}_0=i\eps\sum^3_{j=-1}e^{\frac{\eta_j(|\xi|)}{\eps}t-b_j(|\xi|)t+ O(\eps|\xi|^3)t}\[\(v\cdot\xi L^{-1}\hat{f}_0,h_j(\xi)\)h_j(\xi)+O(\eps|\xi|^2)\],
$$
which leads to
\bma
I_1\le &\sum^3_{j=-1}\int_{|\xi|\le \frac{r_0}{\eps}}\bigg\|e^{\frac{\eta_j(|\xi|)}{\eps}t-b_j(|\xi|)t+ O(\eps|\xi|^3)t}\[\(v\cdot\xi L^{-1}\hat{f}_0,h_j(\xi)\)h_j(\xi)+O(\eps|\xi|^2)\]\nnm\\
&\qquad-e^{\frac{\eta_j(|\xi|)}{\eps}t-b_j(|\xi|)t}\(v\cdot\xi L^{-1}\hat{f}_0,h_j(\xi)\)h_j(\xi)\bigg\|d\xi\nnm\\
&+\sum_{j=-1,1}\bigg\|\int_{|\xi|\le \frac{r_0}{\eps}}e^{ix\cdot\xi }e^{ \frac{\eta_j(|\xi|)}{\eps}t-b_j(|\xi|)t}\(v\cdot\xi L^{-1}\hat{f}_0,h_j(\xi)\) h_j(\xi)d\xi\bigg\|\nnm\\
=:&I_{11}+I_{12}. \label{S_2c}
\ema
For $I_{11}$, it holds that
\bma
I_{11}\le&  C\eps\int_{|\xi|\le \frac{r_0}{\eps}}  e^{-c|\xi|^2t}\( |\xi|^3t\|P_0(v\cdot\xi L^{-1}\hat{f}_0)\|
+|\xi|^2\|\hat{f}_0\|\) d\xi\nnm\\
\le& C\eps(1+t)^{-\frac52} \(\| f_0\|_{L^2_{v}(H^{4}_x)}+\| f_0\|_{L^{2}_v(L^1_x)}\). \label{S_3a}
\ema
Then, we estimate $I_{12}$ as follows. Let
$$
\hat{U}_k(\xi)=\(1+\frac53|\xi|^2\)^2(v\cdot\xi L^{-1}\hat{f}_0,\chi_k)1_{\{|\xi|\le \frac{r_0}{\eps}\}},\quad k=1,2,3,4.
$$
By \eqref{a7}, we have
\be
 e^{ \frac{\eta_j(|\xi|)}{\eps}t-b_j(|\xi|)t}\(v\cdot\xi L^{-1}\hat{f}_0,h_j(\xi)\) h_j(\xi)
= \sum^4_{k=1}\hat{G}_j  \hat{H}_{jk}  \hat{U}_k+ \sum^3_{l=1}\hat{G}_j \hat{H}_{j6} \frac{\xi\xi_l}{|\xi|^2} \hat{U}_l, \label{b1}
\ee
where $\hat{G}_j(t,\xi)$, $\hat{H}_{jk}(t,\xi)$ are defined by \eqref{a7b}.
It follows from \eqref{b1}, \eqref{a3}, \eqref{a6} and \eqref{a8} that
\bma
I_{12}\le& \|G_j(t)\|_{L^r_x}\|H_{jk}(t)\|_{L^{2}_v(L^1_x)} \| U_k\|_{L^{q}_x}+\|G_j(t)\|_{L^{r}_x}\|H_{j6}(t)\|_{L^{2}_v(L^1_x)}\|D^2_x\Delta^{-1}_x U_l\|_{L^{q}_x}\nnm\\
\le & C\(\frac{t}{\eps}\)^{-\frac32+\frac{3}{r}}\|f_0\|_{L^2_v(W^{5,q}_x)}, \label{S_4a}
\ema
where $1/r+1/q=1$ with $1<q<2.$ By \eqref{S_2c}, \eqref{S_3a} and \eqref{S_4a}, we obtain
\be
I_{1}\le  C\bigg(\eps (1+t)^{-\frac52}+\(1+\frac{t}{\eps}\)^{-p}\bigg)\(\|f_0\|_{L^2_{v}(H^{5}_x)} +\|f_0\|_{L^2_{v}(W^{5,1}_x)} \) \label{S_5c}
\ee
for any $0<p<3/2$.

By \eqref{B_3} and \eqref{v5}, we have
\bma
I_2
&\le  C  \(\int_{|\xi|\ge \frac{r_0}{\eps}}\frac1{|\xi|^4}d\xi\)^{1/2}\(\int_{|\xi|\ge \frac{r_0}{\eps}}e^{-2c|\xi|^2t}|\xi|^6\|P_1\hat f_0\|^2 d\xi\)^{1/2}\nnm\\
&\le Ce^{-\frac{cr_0^2t}{\eps^2}}\|f_0\|_{L^2_v(H^3_x)},
\\
I_3 &\le  C \(\intr \frac1{(1+|\xi|^2)^2}d\xi\)^{1/2}\(\intr\frac1{\eps^2} e^{-2\frac{dt}{\eps^2}}(1+|\xi|^2)^2\|\hat{f}_0\|^2_{\xi} d\xi\)^{1/2}\nnm\\
&\le C\frac1{\eps}e^{-\frac{dt}{\eps^2}}\(\|f_0\|_{L^2_v(H^2_x)}+\|f_0\|_{L^{2}_v(L^1_x)}\).\label{S_6a}
\ema
Therefore, it follows from \eqref{S_1a} and \eqref{S_5c}--\eqref{S_6a} that
\bma
&\bigg\|\frac1{\eps}e^{\frac{t}{\eps^2}B_\eps}f_0+V(t)P_0(v\cdot\Tdx L^{-1}f_0)\bigg\|_{L^{\infty}_x(L^{2}_v)}\nnm\\
\le&  C\bigg(\eps (1+t)^{-\frac52}+\(1+\frac{t}{\eps}\)^{-p}+\frac1{\eps}e^{-\frac{dt}{\eps^2}}\bigg)\(\|f_0\|_{L^2_v(H^5_x)}+\|f_0\|_{L^2_v(W^{5,1}_x)} \)
\ema
for any $0<p<3/2$.

By a similar argument as above, we can prove
\bmas
&\left\|\Tdx \Delta^{-1}_x\(\frac1{\eps}e^{\frac{t}{\eps^2}B_\eps}f_0+V(t)P_0(v\cdot\Tdx L^{-1}f_0),\chi_0\)\right\|_{L^{\infty}_x}\\
\le &C\bigg(\eps (1+t)^{-\frac52}+\(1+\frac{t}{\eps}\)^{-p}+\frac1{\eps}e^{-\frac{dt}{\eps^2}}\bigg)\(\|f_0\|_{L^2_v(H^5_x)}+\|f_0\|_{L^2_v(W^{5,1}_x)} \).
\emas
This proves the lemma.
\end{proof}

\section{Diffusion limit}\setcounter{equation}{0}
\label{sect4}
In this section, we study the diffusion limit of the solution to the nonlinear VPB system \eqref{VPB1}--\eqref{VPB2i} based on the fluid approximations of the semigroup given in Section 3.

Since the operator $B_{\eps}$ generates a contraction semigroup in $H^k_P$ $(k\ge 0)$, the solution $f_{\eps}(t)=f_{\eps}(t,x,v)$ to the VPB system \eqref{VPB1}--\eqref{VPB2i} can be represented by
\be
f_{\eps}(t)=e^{\frac{t}{\eps^2}B_{\eps}}f_0+\intt e^{\frac{t-s}{\eps^2}B_{\eps}}\[G_1(f_{\eps})+\frac1{\eps}G_2(f_{\eps})\](s)ds, \label{fe}
\ee
where the nonlinear terms $G_1(f_{\eps})$ and $G_2(f_{\eps})$ are defined in \eqref{gamma}.

Let $(n,m,q)(t,x)$ be the global solution to the  Navier-Stokes-Poisson-Fourier system \eqref{NSP_1}--\eqref{compatible}. Then by Lemma \ref{sem},  $u(t)=u(t,x,v)=n(t,x)\chi_0+m(t,x)\cdot v\chi_0+q(t,x)\chi_4$  can be represented by
\be
 u(t)=V(t)P_0f_0+\intt V(t-s)\[Z_1(u)+\divx Z_2(u)\](s)ds, \label{ue}
\ee
where
\bma
Z_1(u)&=(n\Tdx\phi)\cdot v\chi_0+\sqrt{\frac23}(m\cdot\Tdx\phi)\chi_4, \label{Z_1}\\
Z_2(u)&=- (m\otimes m)\cdot v\chi_0-\frac53 (qm)\chi_4. \label{Z_2}
\ema

Firstly, we establish the existence and energy estimates of the solutions $f_{\eps}(t)$ and $u(t)$ as follows.
By taking the inner product between $\{\chi_0$, $v\chi_0$, $\chi_4\}$ and \eqref{VPB1}, we obtain
\bma
&\dt n_{\eps}+\frac{1}{\eps}\divx m_{\eps}=0, \label{G_3a}\\
&\dt m_{\eps}+\frac{1}{\eps}\Tdx n_{\eps}+\frac{1}{\eps}\sqrt{\frac23}\Tdx q_{\eps}-\frac{1}{\eps}\Tdx \phi_{\eps}= n_{\eps}\Tdx\phi_{\eps}-\frac{1}{\eps}( v\cdot\Tdx P_1f_{\eps}, v\chi_0), \label{G_3b}\\
&\dt q_{\eps}+\frac{1}{\eps}\sqrt{\frac23}\divx m_{\eps}= \sqrt{\frac{2}{3}}m_{\eps}\cdot\Tdx\phi_{\eps}-\frac{1}{\eps}( v\cdot\Tdx P_1f_{\eps}, \chi_4), \label{G_3c}
\ema
where
$$n_{\eps}=(f_{\eps},\chi_0), \quad m_{\eps}=(f_{\eps},v\chi_0),\quad q_{\eps}=(f_{\eps},\chi_4).$$
Taking the microscopic projection $ P_1$ to \eqref{VPB1}, we have
\bma
&\dt( P_1f_{\eps})+ \frac{1}{\eps}P_1(v\cdot\Tdx  P_1f_{\eps})-\frac{1}{\eps^2}L( P_1f_{\eps})\nnm\\
=&- \frac{1}{\eps}P_1(v\cdot\Tdx  P_0f_{\eps})+ P_1 G_1(f_{\eps})+\frac{1}{\eps}P_1 G_2(f_{\eps}).\label{GG2}
\ema
By \eqref{GG2}, we can express the microscopic part $ P_1f_{\eps}$ as
\bma   P_1f_{\eps}=& \eps L^{-1}[\eps\dt(P_1f_{\eps})+ P_1(v\cdot\Tdx  P_1f_{\eps})- \eps P_1 G_1-P_1 G_2 ]\nnm\\
&-\eps L^{-1}P_1(v\cdot\Tdx  P_0f_{\eps}). \label{p_c}
\ema
Substituting \eqref{p_c} into \eqref{G_3a}--\eqref{G_3c}, we obtain
\bma
&\dt n_{\eps}+\frac{1}{\eps}\divx m_{\eps}=0, \label{G_4a}\\
&\dt m_{\eps}+\eps\dt R_1 +\frac{1}{\eps}\Tdx n_{\eps}+\frac{1}{\eps}\sqrt{\frac23}\Tdx q_{\eps}-\frac{1}{\eps}\Tdx \phi_{\eps}\nnm\\
&\quad=\kappa_0 \(\Delta_xm_{\eps}+\frac13\Tdx{\rm div}_xm_{\eps}\)+ n_{\eps}\Tdx\phi_{\eps}+R_3, \label{G_4b}\\
&\dt q_{\eps}+\eps\dt R_2 +\frac{1}{\eps}\sqrt{\frac23}\divx m_{\eps}=\kappa_1 \Delta_x q_{\eps}+ m_{\eps}\cdot\Tdx\phi_{\eps}-R_4, \label{G_4c}
\ema
where the coefficients $\kappa_0,\kappa_1>0$ are defined by \eqref{coe} and the remainder terms
 \bmas
 R_1&=\( v\cdot\Tdx L^{-1}( P_1f_{\eps}),v\chi_0\),\quad R_2=\( v\cdot\Tdx L^{-1}( P_1f_{\eps}),\chi_4\),\\
 R_3&=\( v\cdot\Tdx L^{-1}[ P_1(v\cdot\Tdx  P_1f_{\eps})-\eps P_1 G_1-P_1G_2],v\chi_0\),\\
 R_4&=\( v\cdot\Tdx L^{-1}[ P_1(v\cdot\Tdx  P_1f_{\eps})-\eps P_1 G_1-P_1G_2],\chi_4\).
 \emas
 Let $N\ge 4$. For any $k\ge 0$ and any $f_{\eps}\in L^{2}(\R^3_x\times \R^3_v)$, we define
 \bmas
 E_k(f_{\eps})&=\sum_{|\alpha|+|\beta|\le N}\|\nu^{k}\dxa\dvb f_{\eps}\|^2_{L^{2}_{x,v}}+\sum_{|\alpha|\le N}\|\dxa \Tdx\phi_{\eps}\|^2_{L^2_x},\\
 H_k(f_{\eps})&=\sum_{|\alpha|+|\beta|\le N} \| \nu^{k}\dxa\dvb P_1f_{\eps}\|^2_{L^{2}_{x,v}}+\sum_{|\alpha|\le N-1}\(\|\dxa\Tdx P_0f_{\eps}\|^2_{L^{2}_{x,v}}+\|\dxa n_{\eps}\|^2_{L^2_x}\),\\
 D_k(f_{\eps})&=\sum_{|\alpha|+|\beta|\le N}\frac{1}{\eps^2}\|\nu^{k+1/2}\dxa\dvb P_1f_{\eps}\|^2_{L^{2}_{x,v}}\\
 &\quad+\sum_{|\alpha|\le N-1}\(\|\dxa\Tdx P_0f_{\eps}\|^2_{L^{2}_{x,v}}+\|\dxa n_{\eps}\|^2_{L^2_x}\),
 \emas
 where $\phi_{\eps}=\Delta^{-1}_x (f_{\eps},\chi_0)$. For simplicity, we denote $E(f_{\eps})=E_0(f_{\eps})$, $H(f_{\eps})=H_0(f_{\eps})$ and $D(f_{\eps})=D_0(f_{\eps})$.

First, we have the existence and the energy estimate for the solution $f_{\eps}$ to the VPB system \eqref{VPB1}--\eqref{VPB2i}.

\begin{lem}\label{energy1}For any $\eps\in (0,1)$, there exists a small constant $\delta_0>0$ such that if $E(f_0)\le \delta_0^2$, then the  system \eqref{VPB1}--\eqref{VPB2i} admits a unique global solution  $f_{\eps}(t)= f_{\eps}(t,x,v)$ satisfying the following  energy estimate:
\be
   E(f_{\eps}(t))+ d_1\intt  D(f_{\eps}(s))ds\le C\delta_0^2, \label{G_1}
\ee
where $d_1,C>0$ are two constants independent of $\eps$.
\end{lem}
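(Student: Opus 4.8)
The plan is to establish Lemma~\ref{energy1} by the standard scheme of local‑in‑time existence plus a uniform‑in‑$\eps$ \emph{a priori} energy estimate, followed by a continuation argument. For the local existence one solves the linearized iteration
$$\dt f^{n+1}=\frac1{\eps^2}B_{\eps}f^{n+1}+G_1(f^n)+\frac1{\eps}G_2(f^n),\qquad f^{n+1}(0)=f_0,$$
noting that the self‑consistent Poisson coupling is already incorporated into $B_{\eps}$, so each step is given by Duhamel's formula against the contraction semigroup $e^{\frac{t}{\eps^2}B_{\eps}}$ on $H^N_P$ provided by Lemma~\ref{SG_1}. Since $N\ge4$, the quadratic maps $G_1,G_2$ are bounded and locally Lipschitz on $H^N_1$, so the iteration converges on a short interval $[0,T_{\eps}]$ to a solution of \eqref{VPB1}--\eqref{VPB2i}; uniqueness follows from an $L^2_{x,v}$ estimate for the difference of two solutions. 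It therefore suffices to prove that, as long as $\sup_{[0,T]}E(f_{\eps}(t))$ remains below a fixed $\eps$‑independent threshold, the closed inequality \eqref{G_1} holds; the usual bootstrap then forces $T=\infty$.

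For the \emph{a priori} estimate I would combine three families of identities, following the energy method for the VPB system (cf.~\cite{Li1,Li2,Duan2}) while tracking every power of $\eps$. \textbf{(i) Spatial derivatives.} Applying $\dxa$ with $|\alpha|\le N$ to \eqref{VPB1} and pairing with $\dxa f_{\eps}$ in $L^2_{x,v}$: the streaming term $\eps^{-1}v\cdot\Tdx$ is skew‑symmetric in $x$, and together with the field term $\eps^{-1}v\sqrt M\cdot\Tdx\phi_{\eps}$ it reduces, after using $\Tdx\phi_{\eps}=\Tdx\Delta^{-1}_x n_{\eps}$ and $\dt\Delta_x\phi_{\eps}=\dt n_{\eps}$, to $\frac12\frac{d}{dt}\|\dxa\Tdx\phi_{\eps}\|^2_{L^2_x}$ (which is absorbed into $E(f_{\eps})$), so all $O(\eps^{-1})$ contributions cancel; the operator $L/\eps^2$ contributes the coercive term $\eps^{-2}\mu\|P_1\dxa f_{\eps}\|^2$ by \eqref{L_3}, while the nonlinear terms $\dxa(G_1+\eps^{-1}G_2)$, being quadratic and carrying a factor $P_1$ after an integration by parts (and, for the $\Tdx\phi_{\eps}\cdot\Tdv f_{\eps}$ part of $G_1$, using that $\Tdx\phi_{\eps}$ is divergence‑free in $v$), are bounded by $C\sqrt{E(f_{\eps})}\,D(f_{\eps})$. \textbf{(ii) Macroscopic dissipation.} From the fluid‑type system \eqref{G_4a}--\eqref{G_4c} the coefficients $\kappa_0,\kappa_1>0$ from \eqref{coe} yield the dissipation of $\dxa\Tdx m_{\eps}$ and $\dxa\Tdx q_{\eps}$; the dissipation of $n_{\eps}$, including its zeroth order, is recovered from the electric field via the interactive functional $\eps\int\dxa m_{\eps}\cdot\dxa\Tdx\Delta^{-1}_x n_{\eps}\,dx$, whose time derivative contains $+\|\dxa n_{\eps}\|^2_{L^2_x}$ plus already‑controlled terms; and the dissipation of $\dxa\Tdx P_0f_{\eps}$ is then closed using the representation \eqref{p_c} of $P_1f_{\eps}$, which expresses the remaining macroscopic gradients through $\eps^{-1}\|\nu^{1/2}\dxa P_1f_{\eps}\|$‑type quantities living in $D(f_{\eps})$. \textbf{(iii) Velocity derivatives.} For $|\beta|\ge1$, applying $\dxa\dvb$ to \eqref{VPB1}: the commutator $[\dvb,L]$ is a lower‑order perturbation absorbed by the growth of $\nu(v)$ in \eqref{nu}, and $\dvb(v\cdot\Tdx f_{\eps})=v\cdot\Tdx\dvb f_{\eps}+\sum_{i:\beta_i\ge1}\beta_i\,\dxi\dxa\partial^{\beta'}_v f_{\eps}$ with $|\beta'|=|\beta|-1$ produces the only genuinely singular contribution; writing $f_{\eps}=P_0f_{\eps}+P_1f_{\eps}$, the microscopic part is absorbed into $\eps^{-2}\|\nu^{1/2}\dxa\dvb P_1f_{\eps}\|^2$ by Young's inequality (using $\eps<1$), while the macroscopic part is harmless since $\partial^{\beta'}_v P_0f_{\eps}$ is a fixed linear combination of the $\partial^{\beta'}_v\chi_k$ with macroscopic coefficients, so $\|\dxi\dxa\partial^{\beta'}_v P_0f_{\eps}\|\lesssim\|\dxa\Tdx P_0f_{\eps}\|\in D(f_{\eps})$ because $|\alpha|\le N-1$. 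Collecting (i)--(iii) with suitable small multiplicative constants yields an equivalent energy $\mathcal{E}(f_{\eps})\simeq E(f_{\eps})$ satisfying $\frac{d}{dt}\mathcal{E}(f_{\eps})+d_1D(f_{\eps})\le C\sqrt{E(f_{\eps})}\,D(f_{\eps})$, and \eqref{G_1} follows for $\delta_0$ small by absorbing the right‑hand side and integrating in $t$.

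The main obstacle---and the only essential departure from the fixed‑$\eps$ theory---is the \emph{uniformity in $\eps$}: each $O(\eps^{-1})$ factor generated by the streaming and field operators must be matched against the microscopic dissipation $\eps^{-2}\|\nu^{1/2}\dxa\dvb P_1f_{\eps}\|^2$ or the $O(1)$ macroscopic dissipation, leaving no residual negative power of $\eps$. This forces (i)--(iii) to be carried out simultaneously rather than in sequence and to exploit the macro--micro structure of $P_0$ together with the explicit cancellations in \eqref{G_3a}--\eqref{G_3c} and \eqref{G_4a}--\eqref{G_4c}; extracting the $n_{\eps}$‑dissipation from the Poisson coupling uniformly in $\eps$, and controlling the $\eps^{-1}$ velocity‑derivative terms, are the two delicate points. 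For the hard‑potential case one needs the $\nu$‑weighted version of the above---the operator $\Gamma(f_{\eps},f_{\eps})$ costs one power of $\nu$, compensated by the $\nu^{1/2}$‑weight already present in $D(f_{\eps})$ together with $\nu(0)\ge\nu_0\ge\mu$---which adds bookkeeping but no new difficulty.
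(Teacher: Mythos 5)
Your overall scheme is the same as the paper's: a uniform‑in‑$\eps$ a priori energy estimate built from (a) the macroscopic fluid system \eqref{G_3a}--\eqref{G_3c}/\eqref{G_4a}--\eqref{G_4c} with the interactive functional to recover the $n_{\eps}$ dissipation from the Poisson coupling, (b) the $L^2$‑in‑$(x,v)$ estimate for $\dxa f_{\eps}$ producing the microscopic dissipation $\eps^{-2}\mu\|P_1\dxa f_{\eps}\|^2$, and (c) a weighted hierarchy of mixed $\dxa\dvb P_1 f_{\eps}$ estimates. These three families are exactly \eqref{macro}, \eqref{G_00}, and \eqref{E_5b} in the paper, combined with large constants $A_1,A_2$ and a decreasing sequence $p_k$, so the route is the intended one.

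There is, however, a genuine gap in your treatment of the electric‑field nonlinearity $G_1$ in step (i). You claim the nonlinear terms are ``quadratic and carrying a factor $P_1$ after an integration by parts,'' hence bounded by $C\sqrt{E}D$. That is true of $\eps^{-1}\Gamma(f_{\eps},f_{\eps})$ (which lies in $N_0^\perp$) and of $\Tdx\phi_{\eps}\cdot\Tdv f_{\eps}$ (divergence‑free in $v$), but it fails for the $(v\cdot\Tdx\phi_{\eps})f_{\eps}$ part at $|\alpha|=0$: the $P_0\times P_0$ interaction produces
$$\tfrac12\intrr(v\cdot\Tdx\phi_{\eps})|P_0 f_{\eps}|^2\,dv\,dx=\intr\big(n_{\eps}+\sqrt{\tfrac23}\,q_{\eps}\big)\,(m_{\eps}\cdot\Tdx\phi_{\eps})\,dx,$$
and the piece $\intr q_{\eps}\,(m_{\eps}\cdot\Tdx\phi_{\eps})\,dx$ cannot be bounded by $C\sqrt{E}D$ (or $CE D$), because $\|m_{\eps}\|_{L^2}$ and $\|q_{\eps}\|_{L^2}$ appear without any $x$‑gradient and neither belongs to $D(f_{\eps})$; the best one gets by H\"older/Sobolev is of order $E\,D^{1/2}$, which does not close. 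The paper resolves this by substituting $\Tdx\phi_{\eps}$ from the momentum equation \eqref{G_3b} (multiplied by $\eps$), which replaces the offending product by $\eps\,m_{\eps}\cdot\dt m_{\eps}$ plus terms that \emph{do} carry a gradient or a $P_1$ factor, and then writes $\eps\,m_{\eps}\cdot\dt m_{\eps}=\tfrac{\eps}{2}\dt|m_{\eps}|^2$, yielding the cubic correction $\eps\sqrt{1/6}\,\Dt\intr|m_{\eps}|^2 q_{\eps}\,dx$ that is absorbed into the modified energy $\mathcal{E}(f_{\eps})$ (see \eqref{en_5}, \eqref{b3}). This cubic correction is not one of the interactive functionals you list in (ii), and its necessity is precisely the Poisson‑specific obstruction that your proposal glosses over; without it the inequality $\frac{d}{dt}\mathcal{E}+d_1 D\le C\sqrt{E}D$ you assert at the end does not hold. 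The same issue recurs in the $I_4$ term of the macroscopic estimate \eqref{q_1a} and is handled there by the same trick.
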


\begin{proof}

First, we establish the macroscopic energy estimate of the solution $f_{\eps}$. Taking the inner product between $\dxa m_{\eps}$ and $\dxa\eqref{G_4b}$ with $|\alpha|\le N-1$, we have
\bma
 &\quad\frac12\Dt \( \|\dxa m_{\eps}\|^2_{L^2_x} + \|\dxa n_{\eps} \|^2_{L^2_x} + \|\dxa\Tdx\phi_{\eps}\|^2_{L^2_x}\)
 +\eps\Dt \intr \dxa R_1\dxa m_{\eps}dx
\nnm\\
&\quad+\frac{1}{\eps}\sqrt{\frac23}\intr \dxa\Tdx q_{\eps}\dxa m_{\eps}dx+\kappa_0\(\|\dxa\Tdx  m_{\eps}\|^2_{L^2_x}+\frac13\|\dxa\divx  m_{\eps}\|^2_{L^2_x}\)
\nnm\\
&=\intr \dxa(n_{\eps}\Tdx \phi_{\eps}) \dxa m_{\eps}dx+\intr\dxa R_3\dxa m_{\eps}dx+\eps\intr\dxa R_1\dt\dxa m_{\eps}dx\nnm\\
&=:I_1+I_2+I_3 .\label{en_1}
   \ema

   From \cite{Duan2,Guo2}, for any $\beta\in \mathbb{N}^3$ and $k\in[0,1]$ one has
\be
\|\nu^{-k}\dvb \Gamma(f,g)\|_{L^2_{v }}\le C\sum_{\beta_1+\beta_2\le  \beta}\(\|\nu^{1-k}\dv^{\beta_1}f\|_{L^2_{v }}\|\dv^{\beta_2}g\|_{L^2_{v }}+\|\dv^{\beta_1}f\|_{L^2_{v }}\|\nu^{1-k}\dv^{\beta_2}g\|_{L^2_{v }}\). \label{p2}
\ee
By Cauchy inequality and \eqref{p2}, we have
\bma
I_1&\le C \sqrt{E(f_{\eps})}D(f_{\eps})  , \label{en_2}\\
I_2&\le\frac{C}{\delta}\|\dxa\Tdx P_1f_{\eps}\|^2_{L^{2}_{x,v}}+\delta\|\dxa\Tdx m_{\eps}\|^2_{L^2_x} +C\sqrt{E(f_{\eps})}D(f_{\eps}), \label{en_3}
\ema
where $\delta>0$ is a small constant to be determined later.
By \eqref{G_3b}, we can bound $I_3$ by
\bma
I_3&\le \frac{C}{\delta}\(\|\dxa\Tdx P_1f_{\eps}\|^2_{L^{2}_{x,v}}+\|\dxa P_1f_{\eps}\|^2_{L^{2}_{x,v}}\)+C\eps \sqrt{E(f_{\eps})}D(f_{\eps})\nnm\\
&\quad+\delta\(\|\dxa\Tdx n_{\eps}\|^2_{L^2_x} +\|\dxa n_{\eps}\|^2_{L^2_x} +\|\dxa\Tdx q_{\eps}\|^2_{L^2_x}\).\label{en_4}
\ema

Therefore, it follows from \eqref{en_1}, \eqref{en_2}, \eqref{en_3} and \eqref{en_4} that
\bma &\frac12\Dt \(\| (n_{\eps}, m_{\eps})\|^2_{H^{N-1}_x}+\| \Tdx\phi_{\eps}\|^2_{H^{N-1}_x}\)+\eps\Dt\sum_{|\alpha|\le N-1} \intr \dxa R_1\dxa m_{\eps}dx\nnm\\
&\quad+\sqrt{\frac23}\sum_{|\alpha|\le N-1}\intr \dxa\Tdx q_{\eps}\dxa m_{\eps}dx+\frac{\kappa_0}2 \(\| \Tdx  m_{\eps}\|^2_{H^{N-1}_x}+\frac13\|\divx  m_{\eps}\|^2_{H^{N-1}_x}\)\nnm\\
&\le C \sqrt{E(f_{\eps})}D(f_{\eps})
+\frac{C}{\delta}\| \Tdx P_1f_{\eps}\|^2_{L^2_v(H^N_x)}+\delta\(\|\Tdx n_{\eps}\|^2_{H^N_x}+\|\Tdx q_{\eps}\|^2_{H^{N-1}_x}\).\label{m_1}
\ema

Similarly, taking the inner product between $\dxa q_{\eps}$ and $\dxa\eqref{G_4c}$ with $|\alpha|\le N-1$, we have
\bma
&\frac12\Dt \|\dxa q_{\eps}\|^2_{L^2_x}+\eps\Dt \intr \dxa R_2\dxa q_{\eps}dx
 + \frac{\kappa_1}2 \|\dxa\Tdx q\|^2_{L^2_x}\nnm\\
 &\quad+\sqrt{\frac23}\intr \dxa\divx m_{\eps}\dxa q_{\eps}dx\nnm\\
 &=\sqrt{\frac23}\intr \dxa(m_{\eps}\cdot\Tdx \phi_{\eps}) \dxa q_{\eps}dx+\intr\dxa R_4\dxa q_{\eps}dx+\eps\intr\dxa R_3\dt\dxa q_{\eps}dx \nnm\\
&=:I_4+I_5+I_6.\label{q_1a}
\ema
We estimate $I_j$, $j=4,5,6$ as follows. For $|\alpha|\ge 1$, it holds that
\be
I_4\le C \sqrt{E(f_{\eps})}D(f_{\eps}),\label{en_4a}
\ee
and for $|\alpha|=0,$ it follows from \eqref{G_3b} and \eqref{G_3c} that
\bma
I_4&=\sqrt{\frac23}\intr  \bigg[m_{\eps}\cdot\bigg(\eps\dt m_{\eps}+\Tdx n_{\eps}+\sqrt{\frac23}\Tdx q_{\eps}-\eps n_{\eps}\Tdx\phi_{\eps}\nnm\\
&\qquad\qquad-(v\cdot\Tdx P_1f_{\eps},v\chi_0)\bigg)\bigg]  q_{\eps}dx\nnm\\
&\le \eps\sqrt{\frac16}\Dt\intr  (m_{\eps})^2 q_{\eps}dx+C\(\sqrt{E(f_{\eps})}+ E(f_{\eps})\)D(f_{\eps}).\label{en_5}
\ema

By Cauchy inequality and \eqref{p2}, we have
\be
I_5\le\frac{C}{\delta}\|\dxa\Tdx P_1f_{\eps}\|^2_{L^{2}_{x,v}}+ \delta\|\dxa\Tdx q_{\eps}\|^2_{L^2_x}+C\sqrt{E(f_{\eps})}D(f_{\eps}) . \label{en_6}
\ee
By \eqref{G_3c}, we can bound $I_6$ by
\bma
I_6&\le \frac{C}{\delta}\(\|\dxa\Tdx P_1f_{\eps}\|^2_{L^{2}_{x,v}}+\|\dxa P_1f_{\eps}\|^2_{L^{2}_{x,v}}\)+C\eps \sqrt{E(f_{\eps})}D(f_{\eps})\nnm\\
&\quad+\delta\(\|\dxa\Tdx n_{\eps}\|^2_{L^2_x} +\|\dxa n_{\eps}\|^2_{L^2_x} +\|\dxa\Tdx q_{\eps}\|^2_{L^2_x}\).\label{en_7}
\ema
Thus, it follows from \eqref{q_1a}, \eqref{en_4a}--\eqref{en_7} that
\bma
&\quad\frac12\Dt \| q_{\eps}\|^2_{H^{N-1}_x}+\eps\Dt \sum_{|\alpha|\le N-1}\intr \dxa R_2\dxa q_{\eps}dx-\eps\sqrt{\frac16}\Dt\intr  (m_{\eps})^2 q_{\eps}dx\nnm\\
 &\quad+\sqrt{\frac23}\sum_{|\alpha|\le N-1}\intr \dxa\divx m_{\eps}\dxa q_{\eps}dx+ \frac12\kappa_1 \| \Tdx q_{\eps}\|^2_{H^{N-1}_x}\nnm\\
&\le C\(\sqrt{E(f_{\eps})}+ E(f_{\eps})\)D(f_{\eps})+C\| \Tdx P_1f_{\eps}\|^2_{L^2_v(H^{N-1}_x)}+\delta\|\Tdx m_{\eps}\|^2_{H^{N-1}_x}.\label{q_1}
\ema
Agian, taking the inner product between $\dxa\Tdx n_{\eps}$ and $\dxa\eqref{G_3b}$ with $|\alpha|\le N-1$, one has
 \bma
&\eps\Dt\intr \dxa m_{\eps} \dxa\Tdx n_{\eps}dx
 +  \frac12\|\dxa\Tdx n_{\eps}\|^2_{L^2_x}+\|\dxa n_{\eps}\|^2_{L^2_x}
  \nnm\\
\le&
 C \sqrt{E(f_{\eps})}D(f_{\eps})+ \|\dxa\divx  m_{\eps}\|^2_{L^2_x}+\|\dxa \Tdx q_{\eps}\|^2_{L^2_x} + C\|\dxa\Tdx (P_1f_{\eps})\|^2_{L^{2}_{x,v}}.    \label{abc}
  \ema
Thus, we take the summation of $\eqref{m_1}+\eqref{q_1}+C_0\eqref{abc}$ with $C_0>0 $  small enough to obtain the  macroscopic energy estimate:
\bma
&\quad\frac12\Dt \(\| P_0f_{\eps}\|^2_{L^2_v(H^{N-1}_x)}+\|\Tdx\phi_{\eps}\|^2_{H^{N-1}_x} \)+\eps\sqrt{\frac16}\Dt\intr  (m_{\eps})^2 q_{\eps}dx\nnm\\
&\quad+\eps\Dt \sum_{|\alpha|\le N-1}\(\intr \dxa R_1\dxa m_{\eps}dx+\intr \dxa R_2\dxa q_{\eps}dx+C_0\intr \dxa m_{\eps} \dxa\Tdx n_{\eps}dx\)\nnm\\
 &\quad+  \frac{C_0}{2} \(\| \Tdx P_0f_{\eps}\|^2_{L^2_v(H^{N-1}_x)}+\|n_{\eps}\|^2_{H^{N-1}_x}\)\nnm\\
&\le C\(\sqrt{E(f_{\eps})}+ E(f_{\eps})\)D(f_{\eps}) +C\| P_1f_{\eps}\|^2_{L^2_v(H^N_x)}.\label{macro}
\ema

Next, we deal with the microscopic energy estimate. Taking inner product between $\dxa  f_{\eps}$ and $\dxa\eqref{VPB1}$ with $|\alpha|\le 4$, we have
\bma
&\quad\frac12\Dt \(\|\dxa f_{\eps}\|^2_{L^{2}_{x,v}}+\|\dxa\Tdx\phi_{\eps}\|^2_{L^2_x}\)-\frac{1}{\eps^2}\intr (L\dxa f_{\eps})\dxa f_{\eps}dxdv
\nnm\\
&=\frac12\intrr  \dxa (v\cdot\Tdx \phi_{\eps} f_{\eps})\dxa f_{\eps}dxdv-\intrr \dxa (\Tdx \phi_{\eps}\cdot \Tdv f_{\eps})\dxa f_{\eps}dxdv\nnm\\
&\quad+\frac{1}{\eps}\intrr  \dxa \Gamma( f_{\eps},f_{\eps})\dxa P_1f_{\eps}dxdv\nnm\\
&= :I_7+I_8+I_9. \label{G_0}
\ema
We estimate $I_j$, $j=7,8,9$ as follows. For $|\alpha|=0$, it holds that
\be I_8=0,\quad I_9\le C\sqrt{E(f_{\eps})}D(f_{\eps}). \label{b2}\ee
To estimate $I_7$ with $|\alpha|=0$, we decompose
\bmas
I_7&=\frac12\intrr   (v\cdot\Tdx \phi_{\eps} P_0f_{\eps})P_0 f_{\eps}dxdv+\frac12\intrr   (v\cdot\Tdx \phi_{\eps} P_1f_{\eps})P_0 f_{\eps}dxdv\\
&\quad+\frac12\intrr   (v\cdot\Tdx \phi_{\eps} P_1f_{\eps})P_1 f_{\eps}dxdv\\
&=:I_{71}+I_{72}+I_{73}.
\emas
It holds that
$
I_{72},I_{73}\le C\sqrt{E(f_{\eps})}D(f_{\eps}),
$
and by \eqref{en_5},
\bma
I_{71}&=\intr (n_{\eps}\cdot\Tdx\phi_{\eps})m_{\eps}dx+\sqrt{\frac{2}{3}}\intr (m_{\eps}\cdot\Tdx\phi_{\eps})q_{\eps}dx \nnm\\
&\le \eps\sqrt{\frac16}\Dt\intr  (m_{\eps})^2 q_{\eps}dx+C\(\sqrt{E(f_{\eps})}+ E(f_{\eps})\)D(f_{\eps}).\label{b3}
\ema

For $|\alpha|\ge 1$, we have
\be
I_{7},I_{8},I_{9}\le C\sqrt{E(f_{\eps})}D(f_{\eps}). \label{b4}
\ee
Thus, it follows from \eqref{G_0}--\eqref{b4} that
\bma
 &\Dt \(\|f_{\eps}\|^2_{L^2_v(H^N_x)}+\|\Tdx\phi_{\eps}\|^2_{H^N_x}\)+\eps\sqrt{\frac23}\Dt\intr  (m_{\eps})^2 q_{\eps}dx+\frac{\mu_1}{\eps^2}\|\nu^{1/2} P_1f_{\eps}\|^2_{L^2_v(H^N_x)}\nnm\\
\le& C\(\sqrt{E(f_{\eps})}+ E(f_{\eps})\)D(f_{\eps}).\label{G_00}
\ema

Finally, we  estimate the mixed derivative terms $\dxa\dvb P_1f_{\eps}$ with $|\alpha|+|\beta|\le N$. For $|\beta|=0$, we take the inner product between $P_1 f_{\eps}$ and \eqref{VPB1} and using Cauchy inequality to obtain
\be
\Dt \| P_1f_{\eps}\|^2_{L^{2}_{x,v}}+\frac{\mu_1}{\eps^2}\|\nu^{1/2
}P_1 f_{\eps}\|^2_{L^{2}_{x,v}}\le C\|\Tdx P_0 f_{\eps}\|^2_{L^{2}_{x,v}}+ C\sqrt{E(f_{\eps})}D(f_{\eps}).\label{E_4}
\ee
To estimate  $\dxa\dvb P_1f_{\eps}$ with $|\beta|\ge 1$, we rewrite \eqref{VPB1} as
 \bma
&\partial_t(P_1f_{\eps})+\frac{1}{\eps}v\cdot\Tdx P_1f_{\eps} -\frac{1}{\eps^2}\nu(v)P_1f_{\eps}\nnm\\
=&-\frac{1}{\eps^2}K(P_1f_{\eps})-\frac{1}{\eps}P_0(v\cdot\Tdx P_1f_{\eps})-\frac{1}{\eps}P_1(v\cdot\Tdx P_0f_{\eps})\nnm\\
&+\frac{1}{\eps}\Gamma(f_{\eps},f_{\eps})+P_1\(\frac12v\cdot\Tdx\phi_{\eps}  f_{\eps}-\Tdx\phi_{\eps}\cdot\Tdv f_{\eps}\).\label{G_2}
\ema

Let $1\le k\le N$, and choose $\alpha,\beta $ with $|\beta|=k$ and $|\alpha|+|\beta|\le N$. Taking the inner product between $\dxa\dvb P_1 f_{\eps}$ and $\dxa\dvb\eqref{G_2}$ and summing the resulted equations,   we obtain  after a tedious calculation
\bma
& \sum_{|\beta|=k \atop |\alpha|+|\beta|\le N}\Dt\|\dxa\dvb P_1f_{\eps}\|^2_{L^{2}_{x,v}}
+\frac{\mu_1}{\eps^2}\sum_{|\beta|=k \atop |\alpha|+|\beta|\le N}\|\nu^{1/2}\dxa\dvb P_1f_{\eps}\|^2_{L^{2}_{x,v}}\nnm\\
&\le C\sum_{|\alpha|\le 4-k}\(\|\dxa\Tdx P_0f_{\eps}\|^2_{L^{2}_{x,v}}+\|\dxa\Tdx P_1f_{\eps}\|^2_{L^{2}_{x,v}}\)+\frac{C_k}{\eps^2}\sum_{ |\beta|\le k-1\atop |\alpha|+|\beta|\le N}\|\dxa\dvb P_1f_{\eps}\|^2_{L^{2}_{x,v}}\nnm\\
&\quad+C\sqrt{E(f_{\eps})}D(f_{\eps}).\label{aa}
\ema
Taking the summation $\sum_{1\le k\le N}p_k\eqref{aa}$ with
$$\mu_1 p_k\ge 2\sum_{1\le j\le N-k}p_{k+j}C_{k+j},\,\,\, 1\le k\le N-1,\quad p_N=1,$$
we obtain
 \bma
&\Dt\sum_{1\le k\le N}p_k\sum_{|\beta|=k \atop |\alpha|+|\beta|\le N} \|\dxa\dvb P_1f_{\eps}\|^2_{L^{2}_{x,v}}
+\frac{\mu_1}{2\eps^2}\sum_{1\le k\le N}p_k\sum_{|\beta|=k \atop |\alpha|+|\beta|\le N}\|\nu^{1/2}\dxa\dvb P_1f_{\eps}\|^2_{L^{2}_{x,v}}\nnm\\
&\le C\sum_{|\alpha|\le N-1}\|\dxa\Tdx P_0f_{\eps}\|^2_{L^{2}_{x,v}}+C\sum_{|\alpha|\le N}\|\dxa P_1f_{\eps}\|^2_{L^{2}_{x,v}}+C\sqrt{E(f_{\eps})}D(f_{\eps}). \label{E_5b}
\ema

Assume that $E(f_{\eps})\le C\delta_0^2$. Taking the summation of $A_1\eqref{macro}+A_2\eqref{G_00}+\eqref{E_5b}$ with $A_2>C_0A_1>0$ being large enough and integrate in time $t$, we prove \eqref{G_1}.  Based on the uniform energy estimates \eqref{G_1}, we can prove the existence of the global solution by the standard continuity argument. The detail are omitted for brevity.
This completes the proof of the lemma.
\end{proof}

By a similar argument as for Lemma \ref{energy1}, we have the following result with the detailed proof  omitted.

\begin{lem}\label{energy2}For any $\eps\in (0,1)$, there exists a small constant $\delta_0>0$ such that if $E_1(f_0)\le \delta_0^2$, then
there are two energy functionals $\mathcal{E}_1(f_{\eps}(t))\sim E_1(f_{\eps}(t))$ and $\mathcal{H}_1(f_{\eps}(t))\sim H_1(f_{\eps}(t))$ such that
\bma
   \Dt \mathcal{E}_1(f_{\eps}(t))+  D_1(f_{\eps}(t))&\le 0, \label{G_1b}\\
   \Dt \mathcal{H}_1(f_{\eps}(t))+  D_1(f_{\eps}(t))&\le C\|\Tdx P_0f_{\eps}(t)\|^2_{L^{2}_{x,v}}, \label{G_1c}
\ema
where $C>0$ is a constant independent of $\eps$.
\end{lem}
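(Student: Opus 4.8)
The plan is to mimic the energy argument for Lemma~\ref{energy1}, but now carry one weight $\nu$ throughout and, more importantly, refine the inequalities so that the dissipation $D_1(f_{\eps})$ is absorbed with a genuine sign rather than merely controlled up to $C\sqrt{E(f_{\eps})}D(f_{\eps})$-type errors. First I would redo the macroscopic estimates \eqref{macro} and the microscopic estimates \eqref{G_00}, \eqref{E_5b} with the extra weight $\nu^{k}$, $k=1$: multiply \eqref{VPB1} and \eqref{G_2} by $\nu^{2k}\dxa\dvb P_1 f_{\eps}$ (respectively $\dxa f_{\eps}$ for the full energy), integrate, and use the coercivity \eqref{L_3} together with the commutator bound $\|[\nu^{k},v\cdot\Tdx]g\|\le C\|g\|$ and the weighted nonlinear estimate \eqref{p2} to produce a term $\tfrac{\mu_1}{\eps^2}\|\nu^{k+1/2}\dxa\dvb P_1 f_{\eps}\|^2_{L^2_{x,v}}$ on the left. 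This gives differential inequalities of the schematic form $\Dt E_1(f_{\eps})+d_1 D_1(f_{\eps})\le C(\sqrt{E_1(f_{\eps})}+E_1(f_{\eps}))D_1(f_{\eps})$ and $\Dt H_1(f_{\eps})+d_1 D_1(f_{\eps})\le C\|\Tdx P_0 f_{\eps}\|^2_{L^2_{x,v}}+C(\sqrt{E_1(f_{\eps})}+E_1(f_{\eps}))D_1(f_{\eps})$.

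Next I would construct the modified functionals $\mathcal{E}_1(f_{\eps})$ and $\mathcal{H}_1(f_{\eps})$ by adding to $E_1$ (resp.\ $H_1$) the $\eps$-weighted cross terms that already appear in \eqref{macro}, namely $\eps\sqrt{1/6}\intr (m_{\eps})^2 q_{\eps}dx$ and the sums $\eps\sum_{|\alpha|\le N-1}\intr(\dxa R_1\dxa m_{\eps}+\dxa R_2\dxa q_{\eps}+C_0\dxa m_{\eps}\dxa\Tdx n_{\eps})dx$, weighted by the same large constants $A_1,A_2$ used there. Since these correction terms are $O(\eps)$ times quantities bounded by $E_1(f_{\eps})+\eps^{-2}\|\nu^{1/2}P_1 f_{\eps}\|^2$, and since $R_1,R_2$ are themselves first-order moments of $L^{-1}P_1 f_{\eps}$ hence controlled by $\|P_1 f_{\eps}\|$, one checks that $|\mathcal{E}_1(f_{\eps})-E_1(f_{\eps})|\le C(\eps+\sqrt{E_1(f_{\eps})})E_1(f_{\eps})$, so for $\delta_0$ small the equivalences $\mathcal{E}_1\sim E_1$ and $\mathcal{H}_1\sim H_1$ hold. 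After differentiating $\mathcal{E}_1$ and $\mathcal{H}_1$ and using the smallness $E_1(f_{\eps}(t))\le C\delta_0^2$ (which follows from Lemma~\ref{energy1} applied in $H^N_1$, exactly as in the first part of Theorem~\ref{thm1.1}), the factor $C(\sqrt{E_1}+E_1)$ is $\le d_1/2$, and we absorb half of the dissipation into the left-hand side, yielding \eqref{G_1b} and \eqref{G_1c}.

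The main obstacle is the bookkeeping in the macroscopic estimate: one must verify that after introducing the weight $\nu$ the time-derivative correction terms coming from $\eps\dt R_1$, $\eps\dt R_2$ in \eqref{G_4b}--\eqref{G_4c} and from $\eps\dt(P_1 f_{\eps})$ hidden in $R_3,R_4$ still combine into exact time derivatives of quantities comparable to $E_1$, so that no uncontrolled $\eps^{-1}$ factor survives; this is precisely the point where the weighted commutator estimates and the structure $P_1 f_{\eps}=O(\eps)L^{-1}(\cdots)$ from \eqref{p_c} must be used carefully. Once that is in place, the contraction-type closure is routine and, as in Lemma~\ref{energy1}, I would simply state that the details are a weighted repetition of the preceding argument and omit them.
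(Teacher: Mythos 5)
Your approach is essentially the same as the paper's: the paper itself offers no proof here, only the remark that Lemma~\ref{energy2} follows ``by a similar argument as for Lemma~\ref{energy1},'' and your sketch is precisely that argument --- repeat the macroscopic estimates \eqref{macro}, the microscopic estimates \eqref{G_00}, \eqref{E_4}, \eqref{E_5b}, and the weighted combination with the velocity weight $\nu$, building $\mathcal{E}_1$ and $\mathcal{H}_1$ by appending the same $\eps$-weighted interaction functionals, and for $\mathcal{H}_1$ dropping the zeroth-order macroscopic piece so that the residual $C\|\Tdx P_0 f_\eps\|^2$ from \eqref{E_4} is tolerated on the right. Two small inaccuracies in your description are worth flagging, though they do not derail the proof. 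First, the commutator $[\nu^k, v\cdot\Tdx]$ you invoke is actually zero, since $\nu^k$ is multiplication by a function of $v$ alone and $v\cdot\Tdx$ differentiates only in $x$; the genuine weighted difficulty lies elsewhere, namely in bounding $(K\dxa\dvb P_1 f_\eps,\nu^{2k}\dxa\dvb P_1 f_\eps)$ using the decay of the kernel $k(v,v_*)$, and in the lower-order $\dv^{\beta_1}$-commutators coming from $\dvb(v\cdot\Tdx\,\cdot)$ which are already treated in \eqref{aa}. Second, the a priori smallness $E_1(f_\eps(t))\le C\delta_0^2$ cannot be imported from Lemma~\ref{energy1}, which only controls the unweighted energy $E(f_\eps)$; rather, this bound is obtained from \eqref{G_1b} itself by the standard continuation argument (assume $E_1(f_\eps)\le C\delta_0^2$ on a maximal interval, derive \eqref{G_1b}, integrate, and close using $\mathcal{E}_1\sim E_1$), exactly as done at the end of the proof of Lemma~\ref{energy1}. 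With these two corrections your sketch matches what the paper intends.
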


With the help of Lemmas \ref{time}, \ref{energy1} and \ref{energy2}, we have the time decay rate of $f_{\eps}$ as follows.

\begin{lem}\label{time7}For any $\eps\in(0,1)$, there exists a small constant $\delta_0>0$ such that if $E_1(f_0)+\|f_0\|^2_{L^{2}_v(L^1_x)}\le \delta_0^2$, then the solution
   $f_{\eps}(t)=f_{\eps}(t,x,v)$ to the system \eqref{VPB1}--\eqref{VPB2i} has the following time-decay rate estimates:
\bma
   E_1(f_{\eps}(t)) \le C\delta_0^2 (1+t)^{-\frac12}, \label{G_11}\\
   H_1(f_{\eps}(t)) \le C\delta_0^2 (1+t)^{-\frac32}, \label{G_12}
\ema
where $C>0$ is a constant independent of $\eps$. In particular, we have
\be
\|P_1f_{\eps}(t)\|_{L^2_v(H^{N-3}_x)}\le C\delta_0\(\eps(1+t)^{-\frac34}+e^{-\frac{dt}{4\eps^2}}\),\label{G_5}
\ee
where $d,C>0$ are two constants independent of $\eps$.

Moreover, if $f_0$ satisfies \eqref{initial} and $E_1(f_0)+\|f_0\|^2_{L^{2}_v(L^1_x)}\le \delta_0^2$, then
\bma
   E_1(f_{\eps}(t)) \le C\delta_0^2 (1+t)^{-\frac32}, \label{G_11a}\\
   H_1(f_{\eps}(t)) \le C\delta_0^2 (1+t)^{-2}. \label{G_12a}
\ema
\end{lem}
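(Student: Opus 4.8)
The plan is to combine the Duhamel representation \eqref{fe} with the linear decay estimates of Lemma~\ref{time}, and then to propagate the resulting decay to the full norms $E_1$ and $H_1$ by means of the energy inequalities of Lemmas~\ref{energy1}--\ref{energy2}. The crucial preliminary observation is that both nonlinear terms are microscopic: $P_0G_2(f_\eps)=P_0\Gamma(f_\eps,f_\eps)=0$, and $P_dG_1(f_\eps)=(G_1(f_\eps),\chi_0)\chi_0=0$, the latter obtained by integrating $-\Tdx\phi_\eps\cdot\Tdv f_\eps$ by parts in $v$ against $\sqrt M$. Hence, when the semigroup in \eqref{fe} acts on the source $G_1(f_\eps)+\frac1\eps G_2(f_\eps)$, we may invoke the refined estimates \eqref{time3}--\eqref{time6}; in particular the $\eps$-singular prefactor of $G_2$ is compensated by the extra power of $\eps$ gained in \eqref{time5}--\eqref{time6}, while the exponential pieces $\frac1\eps e^{-dt/\eps^2}$ stay bounded after time convolution since $\frac1\eps\int_0^te^{-d(t-s)/\eps^2}\,ds\le C\eps$.

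I would run a bootstrap on a time-weighted quantity $\mathcal{Q}_\eps(t)$ that dominates, for all $|\alpha|\le N-1$ and $0\le s\le t$, the quantities $(1+s)^{\frac12+|\alpha|}\|\dxa P_0f_\eps(s)\|^2_{L^2_P}$ and $\eps^{-2}(1+s)^{\frac32+|\alpha|}\|\dxa P_1f_\eps(s)\|^2_{L^2_{x,v}}$, together with a propagated $L^2_v(L^1_x)$ bound on $f_\eps$; $\mathcal{Q}_\eps(t)$ is finite by the uniform bound $E_1(f_\eps(t))\le C\delta_0^2$ from Lemmas~\ref{energy1}--\ref{energy2}. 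Applying $\dxa P_0$ and $\dxa P_1$ to \eqref{fe}, the linear contribution is bounded by \eqref{time1}--\eqref{time2}, giving $C\delta_0(1+t)^{-\frac14-\frac{|\alpha|}2}$ and $C\delta_0(\eps(1+t)^{-\frac34-\frac{|\alpha|}2}+e^{-dt/\eps^2})$; for the nonlinear contribution one bounds $\|G_1(f_\eps)(s)\|$ and $\|G_2(f_\eps)(s)\|$ in $L^2_v(H^{N-3}_x\cap W^{N-3,1}_x)$ via the bilinear estimate \eqref{p2}, Sobolev embedding in $x$, the macro-micro splitting of $G_1$ (so that the slowly decaying factor $\Tdx\phi_\eps$ always multiplies a fast-decaying macroscopic gradient, the density, or a microscopic quantity), and the bootstrap ansatz, and then convolves the resulting time weight $(1+s)^{-\sigma}$ with $\sigma>1$ against the kernels of \eqref{time3}--\eqref{time6} using the split $[0,t/2]\cup[t/2,t]$. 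This produces $\mathcal{Q}_\eps(t)\le C\delta_0^2+C\mathcal{Q}_\eps(t)^{3/2}$, hence $\mathcal{Q}_\eps(t)\le C\delta_0^2$ for $\delta_0$ small; in particular $\|\dxa P_0f_\eps(t)\|^2_{L^2_P}\le C\delta_0^2(1+t)^{-\frac12-|\alpha|}$ and $\|\dxa P_1f_\eps(t)\|^2_{L^2_{x,v}}\le C\delta_0^2(\eps^2(1+t)^{-\frac32-|\alpha|}+e^{-dt/2\eps^2})$, which already gives \eqref{G_5}.

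To pass to the full norms, I would feed $\|\Tdx P_0f_\eps(s)\|^2_{L^2_{x,v}}\le C\delta_0^2(1+s)^{-\frac32}$ (the case $|\alpha|=1$ above) into \eqref{G_1c}; since $D_1\ge c\,H_1$ up to finitely many lowest-order macroscopic terms obeying the established decay, Gronwall's inequality yields $H_1(f_\eps(t))\le C\delta_0^2(1+t)^{-\frac32}$, and because the macroscopic part of $E_1$ is controlled by $\|P_0f_\eps\|^2_{L^2_P}$ and its $x$-derivatives while the microscopic part of $E_1$ is controlled by $H_1$, the bound $E_1(f_\eps(t))\le C\delta_0^2(1+t)^{-\frac12}$ follows at once. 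For the case \eqref{initial}, a direct computation using $\Tdx\cdot m_0=0$ and $n_0-\Delta_x^{-1}n_0+\sqrt{2/3}\,q_0=0$ shows $(P_0\hat f_0,h_{\pm1}(\xi))_\xi=0$, while $P_1f_0=0$; hence the slowly-decaying acoustic modes are absent from $e^{\frac t{\eps^2}B_\eps}f_0$ up to an $O(\eps)$ remainder, so that the linear part of $\|\dxa P_0f_\eps\|_{L^2_P}$ now decays like $\delta_0(1+t)^{-\frac34-\frac{|\alpha|}2}$ by \eqref{time3}--\eqref{time4}; rerunning the bootstrap and the energy step with this improved input gives $E_1(f_\eps(t))\le C\delta_0^2(1+t)^{-\frac32}$ and $H_1(f_\eps(t))\le C\delta_0^2(1+t)^{-2}$.

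The main obstacle is the nonlinear estimate in the bootstrap: since the self-consistent electric field forces $E_1$ to decay only at the slow rate $(1+t)^{-1/2}$, controlling the source $G_1(f_\eps)+\frac1\eps G_2(f_\eps)$ by a quantity that is integrable after convolution with the linear kernels requires simultaneously exploiting that $G_1$ and $G_2$ are microscopic (so that only the finer estimates \eqref{time3}--\eqref{time6}, with their gain in $\eps$, are needed) and the strictly faster decay of spatial derivatives and of the macroscopic gradient $\Tdx P_0f_\eps$, which must be tracked separately from $f_\eps$ itself; keeping the $\frac1\eps G_2$ contribution uniform in $\eps$, together with the bookkeeping of the $\eps^{-1}e^{-dt/\eps^2}$ terms, is the delicate point. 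Once this is in place, the energy-lifting step and the improved case \eqref{initial} are routine given Lemmas~\ref{energy1}--\ref{energy2} and Theorem~\ref{spect3}.
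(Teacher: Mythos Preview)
Your overall strategy coincides with the paper's: combine the Duhamel formula \eqref{fe} with the linear decay of Lemma~\ref{time}, close a bootstrap on a time-weighted quantity, and then lift to $H_1$ via the energy inequality \eqref{G_1c} together with the coercivity $D_1\ge c\,\mathcal H_1$. The observations $P_dG_1=0$, $P_0G_2=0$, and the treatment of the well-prepared case \eqref{initial} through $(P_0\hat f_0,h_{\pm1})_\xi=0$ are exactly what the paper uses.

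There is, however, a genuine gap in the bootstrap you set up. You include the weight $\eps^{-2}(1+s)^{3/2+|\alpha|}$ on $\|\dxa P_1f_\eps(s)\|^2_{L^2_{x,v}}$, but at $s=0$ this equals $\eps^{-2}\|\dxa P_1f_0\|^2$, which is \emph{not} bounded by $C\delta_0^2$ uniformly in $\eps$ unless $P_1f_0=0$; your claim that $\mathcal Q_\eps(t)$ is finite by the uniform bound $E_1\le C\delta_0^2$ therefore fails, and the continuity argument cannot even start. The paper avoids this by bootstrapping on the simpler quantity
\[
Q_\eps(t)=\sup_{0\le s\le t}\Big\{(1+s)^{1/4}E_1(f_\eps(s))^{1/2}+(1+s)^{3/4}H_1(f_\eps(s))^{1/2}\Big\},
\]
which carries no $\eps$-weighting, needs Duhamel only for $\|\dxa P_0f_\eps\|_{L^2_P}$ at $|\alpha|=0,1$, and recovers $H_1$ from \eqref{G_1c}. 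The sharp microscopic bound \eqref{G_5} is then derived \emph{afterward}, as a separate application of \eqref{time4} and \eqref{time6} once \eqref{G_11}--\eqref{G_12} are in hand. If you want to obtain \eqref{G_5} inside the bootstrap, the weight on $P_1f_\eps$ must be $\big(\eps^2(1+s)^{-3/2-|\alpha|}+e^{-ds/\eps^2}\big)^{-1}$, not $\eps^{-2}(1+s)^{3/2+|\alpha|}$.

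A minor point: the assertion that the source decays in $L^2_v(L^1_x)$ with exponent $\sigma>1$ is not correct. One has only $\|G_j\|_{L^2_v(L^1_x)}\le CE_1\le CQ_\eps(t)^2(1+s)^{-1/2}$ and $\|G_j\|_{L^2_{x,v}}\le C\sqrt{E_1}\sqrt{H_1}\le CQ_\eps(t)^2(1+s)^{-1}$; this is what the paper uses, and it suffices since the target rate for $\|P_0f_\eps\|_{L^2_P}$ is $(1+t)^{-1/4}$.
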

\begin{proof}
Define
$$
Q_{\eps}(t)=\sup_{0\le s\le t}\left\{(1+s)^{1/4}E_1(f_{\eps}(s))^{1/2}+(1+s)^{3/4}H_1(f_{\eps}(s))^{1/2}\right\}.
$$
We claim that
  \be
 Q_{\eps}(t)\le C\delta_0.  \label{assume}
 \ee
It is straightforward  to verify that the estimates  \eqref{G_11} and \eqref{G_12} follow  from \eqref{assume}.

By \eqref{p2} and \eqref{gamma}, we have
\bma
\| G_j(f_{\eps}(s))\|_{L^{2}_{x,v}}&\le CE_1(f_{\eps})^{1/2}H_1(f_{\eps})^{1/2}\le CQ_{\eps}(t)^2 (1+s)^{-1}  ,  \label{h1}\\
\| G_j(f_{\eps}(s))\|_{L^{2}_v(L^1_x)}&\le CE_1(f_{\eps})^{1/2}E_1(f_{\eps})^{1/2}
\le CQ_{\eps}(t)^2 (1+s)^{-\frac12} ,  \label{h2}
\ema
and
\bma
\|\dxa G_j(f_{\eps}(s))\|_{L^{2}_{x,v}}&\le CH_1(f_{\eps})^{1/2}H_1(f_{\eps})^{1/2}\le CQ_{\eps}(t)^2 (1+s)^{-\frac32}  ,  \label{h3}\\
\|\dxa G_j(f_{\eps}(s))\|_{L^{2}_v(L^1_x)}&\le CE_1(f_{\eps})^{1/2}H_1(f_{\eps})^{1/2}
\le CQ_{\eps}(t)^2 (1+s)^{-1} ,  \label{h4}
\ema
where $j=1,2,$ $1\le|\alpha|\le N-1$, $0\le s\le t$ and $C>0$ is a constant.

Since $P_dG_1(f_{\eps} )=0$ and $P_0G_2(f_{\eps} )=0$, it follows from Lemma \ref{time}, \eqref{fe} and \eqref{h1}--\eqref{h4} that
 \bma
 \|\dxa P_0f_{\eps} (t)\|_{L^2_{P}}
 &\le
 C(1+t)^{-\frac14-\frac{|\alpha|}2}\(\|\dxa f_0\|_{L^{2}_{x,v}}+\|f_0\|_{L^{2}_v(L^1_x)}\)
 \nnm\\
&\quad+C\sum^2_{j=1}\int^{t/2}_0 \((1+t-s)^{-\frac34-\frac{|\alpha|}2}+\frac1{\eps}e^{-\frac{d(t-s)}{\eps^2}}\)\nnm\\
&\qquad\qquad\times\(\|\dxa G_j(s)\|_{L^{2}_{x,v}}
  +\| G_j(s)\|_{L^{2}_v(L^1_x)}\)ds
  \nnm\\
&\quad+C\sum^2_{j=1}\int^{t}_{t/2} \((1+t-s)^{-\frac34}+\frac1{\eps}e^{-\frac{d(t-s)}{\eps^2}}\)\nnm\\
&\qquad\qquad\times\(\|\dxa G_j(s)\|_{L^{2}_{x,v}}
  +\|\dxa  G_j(s)\|_{L^{2}_v(L^1_x)}\)ds
 \nnm\\
&\le
 C\delta_0(1+t)^{-\frac14-\frac{|\alpha|}2}+CQ_{\eps}(t)^2 (1+t)^{-\frac14-\frac{|\alpha|}2},\label{density_1}
 \ema
for $|\alpha|=0,1$.
By \eqref{G_1c}, \eqref{density_1} and noting that $D_1(f_{\eps})\ge c\mathcal{H}_1(f_{\eps})$ for $c>0$ a constant, we obtain
 \bma
  H_1(f_{\eps}(t))&\le Ce^{-c t}H_1(f_0)
  +\intt e^{-c(t-s)}\|\Tdx  P_0f_{\eps}(s)\|^2_{L^{2}_{x,v}}ds
  \nnm\\
&\le
 C \delta_0^2e^{-c t}
  +C(\delta_0+Q_{\eps}(t)^2 )^2\intt e^{-c(t-s)}(1+s)^{-\frac32} ds
  \nnm\\
&\le
  C(\delta_0+Q_{\eps}(t)^2 )^2(1+t)^{-\frac32} .  \label{Hf2}
\ema
Combining \eqref{density_1}--\eqref{Hf2}, we obtain
$$
Q_{\eps}(t)\le C\delta_0+CQ_{\eps}(t)^2 ,
$$
which leads to \eqref{assume} provided $\delta_0>0$ sufficiently small.

Then, we deal with \eqref{G_5}. For any $f_0,g_0\in L^2_v(L^{2}_{x})$ satisfying $P_df_0=0$ and $P_0g_0=0$, by \eqref{D1b}, \eqref{t4} and \eqref{t6} we can obtain
\bma
\| P_1e^{\frac{t}{\eps^2}B_{\eps}}f_0\|_{L^{2}_{x,v}}&\le C\(\eps(1+t)^{-\frac34}+e^{-\frac{dt}{\eps^2}}\)(\|f_0\|_{L^2_v(H^1_x)}+\|\Tdx f_0\|_{L^{2}_v(L^1_x)}), \label{t7}\\
\| P_1e^{\frac{t}{\eps^2}B_{\eps}}g_0\|_{L^{2}_{x,v}}&\le C\(\eps^2(1+t)^{-\frac54}+e^{-\frac{dt}{\eps^2}}\)(\|g_0\|_{L^2_v(H^2_x)}+\| \Tdx g_0\|_{L^{2}_v(L^1_x)}).\label{t8}
\ema
Thus, it follows from Lemma \ref{time}, \eqref{t7} and \eqref{t8}  that
 \bma
\|\dxa P_1f_{\eps} (t)\|_{L^{2}_{x,v}}&\le
C\(\eps(1+t)^{-\frac34-\frac{|\alpha|}2 }+e^{-\frac{dt}{\eps^2}}\)\(\|\dxa f_0\|_{L^2_v(H^1_x)}+\|f_0\|_{L^{2}_v(L^1_x)}\)\nnm\\
&\quad+C\sum^2_{j=1}\int^{t/2}_0 \(\eps(1+t-s)^{-\frac54-\frac{|\alpha|}2 }+\frac1{\eps}e^{-\frac{d(t-s)}{\eps^2}}\)\nnm\\
&\qquad\qquad\times\(\|\dxa G_j(s)\|_{L^2_v(H^2_x)}+\|G_j(s)\|_{L^{2}_v(L^1_x)}\)ds\nnm\\
&\quad+C\sum^2_{j=1}\int^{t}_{t/2} \(\eps(1+t-s)^{-\frac34 }+\frac1{\eps}e^{-\frac{d(t-s)}{\eps^2}}\)\nnm\\
&\qquad\qquad\times\(\|\dxa G_j(s)\|_{L^2_v(H^2_x)}+\|\dxb G_j(s)\|_{L^{2}_v(L^1_x)}\)ds\nnm\\
&\le
(C\delta_0 +CQ_{\eps}(t)^2)\(\eps(1+t)^{-\frac34}+e^{-\frac{dt}{4\eps^2}}\), \label{micro_1}
 \ema
where $|\beta|=1$ for $\alpha=0$, $\beta=\alpha$ for  $1\le |\alpha|\le N-3$. By \eqref{micro_1}, we can obtain \eqref{G_5}.

If $f_0$ satisfies \eqref{initial}, we have
$$
\(P_0\hat{f}_0,h_j(\xi)\)_{\xi}=0,\quad j=-1,1.
$$
This and Theorems \ref{spect3}, \ref{E_3} imply that
\be
\|\dxa e^{\frac{t}{\eps^2}B_{\eps}}f_0\|_{L^2_P}\le C(1+t)^{-\frac34-\frac{m}2}(\|\dxa f_0\|_{L^2_{x,v}}+\|\dx^{\alpha'}f_0\|_{L^2_v(L^1_x)}), \label{time9}
\ee
where $\alpha'\le \alpha$, $m=|\alpha-\alpha'|$. By \eqref{time9}, Lemma \ref{time} and a similar argument as above, we can prove \eqref{G_11a}--\eqref{G_12a}.
The proof of the lemma is completed.
\end{proof}

By \eqref{ue} and Lemma \ref{timev}, we have

\begin{lem}\label{energy3} Let $N\ge 4$. There exists a small constant $\delta_0>0$ such that if $\|f_{0}\|_{L^2_v(H^N_x)}+\|f_0\|_{L^{2}_v(L^1_x)}\le \delta_0$, then
the NSPF system \eqref{NSP_1}--\eqref{compatible} admits a unique global solution $(n,m,q)(t,x)\in L^{\infty}_t(H^N_x)$.  Moreover,  $u(t,x,v)=n(t,x)\chi_0+m(t,x)\cdot v\chi_0+q(t,x)\chi_4$ has the following time-decay rate:
\be
   \|u(t)\|_{L^2_v(H^N_x)}+\|\Tdx\phi(t)\|_{H^N_x } \le C\delta_0 (1+t)^{-\frac34}, \label{time8}
\ee
where $\phi(t,x)=\Delta^{-1}_xn(t,x)$ and $C>0$ is a constant.
\end{lem}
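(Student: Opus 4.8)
The plan is to run a continuity (bootstrap) argument on the Duhamel formula \eqref{ue}, using the linear decay estimate of Lemma~\ref{timev}, carried out jointly with an $H^N$ energy estimate for the NSPF system of the same type as Lemma~\ref{energy1}. First I would introduce the time--weighted quantity
\begin{equation*}
\mathcal M(t)=\sup_{0\le s\le t}(1+s)^{\frac34}\Bigl(\|u(s)\|_{L^2_v(H^N_x)}+\|\Tdx\phi(s)\|_{H^N_x}\Bigr),
\end{equation*}
which, since $(u,\chi_0)=n$ and $\phi=\Delta_x^{-1}n$, is comparable to $\sup_{0\le s\le t}(1+s)^{3/4}\sum_{|\alpha|\le N}\|\dxa u(s)\|_{L^2_P}$. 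Global existence, uniqueness and the bound \eqref{time8} will all come out of showing $\mathcal M(t)\le C\delta_0+C\mathcal M(t)^2$ on any interval of existence: then $\mathcal M(t)\le 2C\delta_0$ for $\delta_0$ small, the same estimates make the solution operator a contraction in the corresponding weighted space, and by Lemma~\ref{sem} the constructed $u$ automatically satisfies the constraints \eqref{NSP_5i}--\eqref{compatible}. Here $N\ge4>3/2$ is used so that $H^N(\R^3)$ is a Banach algebra.

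The analytic core is the estimate of the quadratic terms $Z_1(u)$ and $\divx Z_2(u)$ of \eqref{Z_1}--\eqref{Z_2}. Their $v$--dependence lies in a fixed finite--dimensional subspace of $N_0$, so everything reduces to $x$--estimates of $n\Tdx\phi$, $m\cdot\Tdx\phi$, $m\otimes m$, $qm$, with $\Tdx\phi=\Tdx\Delta_x^{-1}n$. Using the $H^N$ algebra property together with the Calder\'on--Zygmund bound $\|D^2_x\Delta_x^{-1}g\|_{L^q_x}\le C\|g\|_{L^q_x}$ $(1<q<\infty)$ one obtains
\begin{equation*}
\|Z_1(u)\|_{L^2_v(H^N_x)}+\|Z_2(u)\|_{L^2_v(H^N_x)}\le C\bigl(\|u\|_{L^2_v(H^N_x)}+\|\Tdx\phi\|_{H^N_x}\bigr)^2\le C\mathcal M(s)^2(1+s)^{-\frac32},
\end{equation*}
and, using instead $\|n\Tdx\phi\|_{L^1_x}\le\|n\|_{L^2_x}\|\Tdx\phi\|_{L^2_x}$ and its analogues, $\|Z_1(u)\|_{L^2_v(L^1_x)}+\|Z_2(u)\|_{L^2_v(L^1_x)}\le C\mathcal M(s)^2(1+s)^{-\frac32}$, where the $L^2_x$ norm of $\Tdx\phi$ is exactly the extra piece built into $\|\cdot\|_{L^2_P}$. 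Feeding these into \eqref{ue}, applying $\dxa$ with $|\alpha|\le N$, invoking Lemma~\ref{timev} (and, for the $\divx Z_2$ term, commuting the divergence derivative through $V(t-s)$ and using $\divx m=0$, i.e.\ $\divx(m\otimes m)=m\cdot\Tdx m$, to redistribute derivatives), and splitting $\intt=\int_0^{t/2}+\int_{t/2}^t$ in the standard way, all the Duhamel integrals are bounded by $C\mathcal M(t)^2(1+t)^{-3/4}$; the free term $V(t)P_0f_0$ is bounded by $C\delta_0(1+t)^{-3/4}$ directly from Lemma~\ref{timev} and $\|P_0f_0\|_{L^2_v(H^N_x)}+\|P_0f_0\|_{L^2_v(L^1_x)}\le C\delta_0$.

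The hard part is the top order $|\alpha|=N$ in the term $\intt V(t-s)\divx Z_2(u)(s)\,ds$: because $V(t-s)$ commutes with all spatial derivatives, the $L^2$--part of Lemma~\ref{timev} inevitably asks for $N+1$ spatial derivatives of the quadratic $Z_2(u)$, one more than the uniform $H^N$ bound supplies. I would close this order not through Duhamel alone but via the energy estimate: running the $H^N$ energy identity for the NSPF system (mirroring the proof of Lemma~\ref{energy1}, with dissipation furnished by $\kappa_0,\kappa_1>0$ and by the elliptic relation \eqref{NSP_2},\eqref{NSP_5} that expresses $n$ as a bounded, low--frequency smoothing operator applied to $q$), multiplying it by the weight $(1+t)^{3/2}$, using the already established $(1+t)^{-3/4}$ decay of the lower--order norms to absorb the quadratic contributions, and integrating in time, which yields $(1+t)^{3/2}\sum_{|\alpha|=N}\bigl(\|\dxa u(t)\|_{L^2_{x,v}}^2+\|\dxa\Tdx\phi(t)\|_{L^2_x}^2\bigr)\le C\delta_0^2$ and feeds back into $\mathcal M(t)$. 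Once $\mathcal M(t)\le 2C\delta_0$ is in hand on every interval of existence, global existence follows by the usual continuation argument, the decay bound \eqref{time8} is immediate, and uniqueness follows from the contraction estimate for the difference of two solutions.
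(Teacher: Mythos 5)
Your overall strategy — Duhamel formula \eqref{ue}, time--weighted norm $\mathcal M(t)$, $H^N$ algebra estimates on $Z_1,Z_2$, and a contraction/bootstrap to get $\mathcal M(t)\le C\delta_0$ — matches the paper's structure. Where you diverge, and where the mathematical content really lies, is the top--order derivative in $\intt V(t-s)\divx Z_2\,ds$. You assert that Lemma~\ref{timev} ``inevitably asks for $N+1$ spatial derivatives of $Z_2$'' and therefore propose to close the top order with a separate weighted $H^N$ energy estimate for the NSPF system. That is not inevitable, and the paper avoids it entirely: inside the proof of Lemma~\ref{energy3} it isolates, from \eqref{v8}, the smoothing estimate
\begin{equation*}
\|\dxa V(t)\divx U_0\|_{L^2_{P}}\le C\Bigl((1+t)^{-\frac54-\frac{m}2}+t^{-\frac12}e^{-ct}\Bigr)\bigl(\|\dxa U_0\|_{L^{2}_{x,v}}+\|\dx^{\alpha'}U_0\|_{L^{2}_v(L^1_x)}\bigr),
\end{equation*}
with $m=|\alpha-\alpha'|$. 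The extra $|\xi|$ coming from the divergence is absorbed into the heat--type kernel $e^{-c|\xi|^2 t}$, trading one derivative for an extra factor $(1+t)^{-1/2}$ on low frequencies and an integrable singularity $t^{-1/2}e^{-ct}$ on high frequencies; feeding $\|\dxa Z_2\|_{L^2_{x,v}}$ with $|\alpha|\le N$ (not $N+1$) and $\|Z_2\|_{L^2_v(L^1_x)}$ into this closes the Duhamel loop at derivative level $N$ with no auxiliary energy estimate.

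Your alternative — cap the Duhamel argument below top order and supplement it with a $(1+t)^{3/2}$--weighted energy identity, using $\kappa_0,\kappa_1>0$ for dissipation and \eqref{NSP_2},\eqref{NSP_5} to control $n$ — is a standard and in principle workable route for this parabolic system, but you should be aware of two points. First, the required energy estimate is not the same as Lemma~\ref{energy1} (which is for the kinetic VPB system); you would need to derive an $H^N$ energy identity for \eqref{NSP_1}--\eqref{NSP_5} from scratch, tracking the Poisson coupling, the pressure term $\Tdx p=\Tdx\Delta_x^{-1}\divx H_1$, and the incompressibility constraint, which is nontrivial bookkeeping you have only gestured at. Second, the weighted argument as you phrase it (``using the already established $(1+t)^{-3/4}$ decay of the lower--order norms'') has a circularity risk that must be resolved by a careful iteration over derivative orders or by including the weighted top--order norm in the bootstrap quantity. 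None of this is fatal, but the paper's approach via the refined semigroup bound is shorter, stays entirely inside the Duhamel framework, and is the estimate you are missing.
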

\begin{proof}

Define
$$
Q(t)=\sup_{0\le s\le t}\left\{(1+s)^{3/4}\(\|u(s)\|_{L^2_v(H^N_x)}+\|\Tdx\phi(s)\|_{H^N_x }\)\right\}.
$$
It holds that
\be
\| Z_j(u(s))\|_{L^{2}_{x}(H^N_x)}+\|Z_j(u(s))\|_{L^{2}_v(L^1_x)}\le CQ (t)^2 (1+s)^{-\frac32}  ,  \label{h1a}
\ee
for $j=1,2,$  $0\le s\le t$ and $C>0$  a constant. By \eqref{v8}, we obtain that for any vector $U_0\in \R^3$,
\be
\|\dxa V(t)\divx U_0\|_{L^2_{P}}\le C\((1+t)^{-\frac54-\frac{m}2}+t^{-\frac 12}e^{-ct}\)\(\|\dxa U_0\|_{L^{2}_{x,v}}+\|\dx^{\alpha'}U_0\|_{L^{2}_v(L^1_x)}\), \label{ccc}
\ee
where $\alpha'\le \alpha$, $m=|\alpha-\alpha'|$ and $c>0$ is a constant.

Then, it follows from Lemma \ref{timev}, \eqref{ue}, \eqref{h1a} and \eqref{ccc} that
 \bma
 \|\dxa u (t)\|_{L^2_{P}}
 &\le
 C(1+t)^{-\frac34-\frac{|\alpha|}2}(\|\dxa P_0f_0\|_{L^{2}_{x,v}}+\|P_0f_0\|_{L^{2}_v(L^1_x)})
 \nnm\\
&\quad+C\sum_{j=1,2}\intt \((1+t-s)^{-\frac34-\frac{|\alpha|}2}+(t-s)^{-\frac12}e^{-c(t-s)}\)\nnm\\
&\qquad\qquad\times(\|\dxa Z_j(s)\|_{L^{2}_{x,v}}
  +\| Z_j(s)\|_{L^{2}_v(L^1_x)})ds
  \nnm\\
&\le
 C\delta_0(1+t)^{-\frac34}+CQ(t)^2 (1+t)^{-\frac34},\label{density_2}
 \ema
for $0\le |\alpha|\le N$.
By \eqref{density_2}, we can obtain
$$
Q(t)\le C\delta_0 ,
$$
 provided $\delta_0>0$ sufficiently small. This proves \eqref{time8}.
The existence of the solution can be proved by the contraction mapping theorem, the details are omitted.
 The proof of the lemma is completed.
\end{proof}

\begin{lem}\label{gamma1}For any $i,j=1,2,3,$ it holds that
\bma
\Gamma(v_i\chi_0,v_j\chi_0) &=-\frac12LP_1(v_iv_j\chi_0), \\
\Gamma(v_i\chi_0,|v|^2\chi_0) &=-\frac12LP_1(v_i|v|^2\chi_0), \\
\Gamma(|v|^2\chi_0,|v|^2\chi_0)&=-\frac12LP_1(|v|^4\chi_0).
\ema
\end{lem}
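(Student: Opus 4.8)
The plan is to derive all three identities from a single source — the fact that every local Maxwellian annihilates the collision operator — differentiated twice in its macroscopic parameters. First I would record a few elementary facts from \eqref{Q} and \eqref{gamma}. The operator $\Q(f,g)$ is symmetric, $\Q(f,g)=\Q(g,f)$, because interchanging $f$ and $g$ leaves $f'_*g'+f'g'_*-f_*g-fg_*$ unchanged; hence $\Gamma$ is symmetric, $\Gamma(f,g)=\Gamma(g,f)$. Since $\Q$ is symmetric, $Lf=\frac1{\sqrt M}[\Q(M,\sqrt Mf)+\Q(\sqrt Mf,M)]=\frac2{\sqrt M}\Q(M,\sqrt Mf)$, so $\Gamma(\chi_0,g)=\frac1{\sqrt M}\Q(M,\sqrt Mg)=\frac12Lg$; and since $L$ vanishes on $N_0$, $Lg=LP_1g$ for every $g$. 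Finally, for $\rho>0$, $u\in\R^3$, $T>0$ the drifting Maxwellian $M_{\rho,u,T}(v)=\rho(2\pi T)^{-3/2}e^{-|v-u|^2/(2T)}$ satisfies $M_{\rho,u,T}(v)M_{\rho,u,T}(v_*)=M_{\rho,u,T}(v')M_{\rho,u,T}(v'_*)$ — a consequence of the conservation of momentum and energy along a collision — so $\Q(M_{\rho,u,T},M_{\rho,u,T})=0$ identically.

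Next I would set $\Phi=\Phi(\rho,u,T;v):=M_{\rho,u,T}(v)/M(v)-1$ and expand $0=\Q(M(1+\Phi),M(1+\Phi))$ by bilinearity. Using $\Q(M,M)=0$, the symmetry of $\Q$, and the relations above for $L$ and $\Gamma$, this collapses to the exact functional identity
\[
L(\sqrt M\,\Phi)+\Gamma(\sqrt M\,\Phi,\sqrt M\,\Phi)=0 \qquad\text{for all }(\rho,u,T).
\]
Writing the Taylor expansion $\Phi=\Phi_1+\Phi_2+O(|(\rho-1,u,T-1)|^3)$ with $\Phi_1,\Phi_2$ the linear and quadratic parts, one has $\sqrt M\,\Phi_1\in N_0$, so the first-order identity is automatic, while matching the second-order terms gives
\[
L(\sqrt M\,\Phi_2)=-\Gamma(\sqrt M\,\Phi_1,\sqrt M\,\Phi_1).
\]

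It then remains to specialize to three subfamilies and read off the coefficient of one monomial, using $Lg=LP_1g$ to put each right-hand side in the stated form. (i) For $\rho=1$, $T=1$, $u$ free, $\Phi_1=u\cdot v$ and the quadratic part of $e^{u\cdot v-|u|^2/2}$ is $\frac12((u\cdot v)^2-|u|^2)$; since $L(|u|^2\sqrt M)=0$, the identity reads $\sum_{i,j}u_iu_j\Gamma(v_i\chi_0,v_j\chi_0)=-\frac12\sum_{i,j}u_iu_jL(v_iv_j\chi_0)$ for all $u$, and polarizing these symmetric quadratic forms in $u$ yields $\Gamma(v_i\chi_0,v_j\chi_0)=-\frac12LP_1(v_iv_j\chi_0)$. (ii) For $\rho=1$, $u=s\hat{e}_i$ (the $i$-th coordinate vector), $T=1+t$, one has $\Phi_1=sv_i+t\frac{|v|^2-3}{2}$, while a short Taylor computation of $M_{1,s\hat{e}_i,1+t}/M$ gives that the coefficient of $st$ equals $\frac12|v|^2v_i-\frac52v_i$; inserting this, together with $\Gamma(v_i\chi_0,\frac{|v|^2-3}{2}\chi_0)=\frac12\Gamma(v_i\chi_0,|v|^2\chi_0)$ (the $\chi_0$-cross term dies since $\Gamma(v_i\chi_0,\chi_0)=\frac12L(v_i\chi_0)=0$), and equating the $st$-terms gives $\Gamma(v_i\chi_0,|v|^2\chi_0)=-\frac12LP_1(v_i|v|^2\chi_0)$. (iii) For $\rho=1$, $u=0$, $T=1+t$, $\Phi_1=t\frac{|v|^2-3}{2}$ and the $t^2$-coefficient of $M_{1,0,1+t}/M$ is $\frac18|v|^4-\frac54|v|^2+\frac{15}{8}$; since $|v|^2\sqrt M,\sqrt M\in N_0$ both sides collapse to their $|v|^4$-parts, and matching gives $\Gamma(|v|^2\chi_0,|v|^2\chi_0)=-\frac12LP_1(|v|^4\chi_0)$.

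I do not expect a genuine obstacle; the only point demanding care is the bookkeeping in the Taylor expansions — correctly extracting $\Phi_1$ and the one relevant monomial of $\Phi_2$ in each subfamily, and keeping track of which polynomial pieces ($\chi_0$, $v_i\chi_0$, $|v|^2\chi_0$) lie in $N_0$ and therefore drop out when $L$ is applied.
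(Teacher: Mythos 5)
Your proof is correct, but it takes a genuinely different route from the paper's. The paper proves each identity by a direct manipulation inside the collision integral: writing, e.g., $\Gamma(v_i\chi_0,v_j\chi_0)=\tfrac12\sqrt M\int B\,M_*[(v'_*)_i v'_j+v'_i(v'_*)_j-(v_*)_iv_j-v_i(v_*)_j]\,d\omega\,dv_*$, then invoking the pointwise algebraic identities $(v_i+(v_*)_i)(v_j+(v_*)_j)=(v'_i+(v'_*)_i)(v'_j+(v'_*)_j)$, $(v_i+(v_*)_i)(|v|^2+|v_*|^2)=\cdots$, and $(|v|^2+|v_*|^2)^2=\cdots$ (consequences of momentum/energy conservation) to rewrite the integrand as minus the integrand appearing in $L(v_iv_j\chi_0)$, $L(v_i|v|^2\chi_0)$, $L(|v|^4\chi_0)$ respectively. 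You instead package the same collision-invariance — through $M'M'_*=MM_*$, hence $\Q(M_{\rho,u,T},M_{\rho,u,T})=0$ — into the single functional identity $L(\sqrt M\Phi)+\Gamma(\sqrt M\Phi,\sqrt M\Phi)=0$, and extract all three relations by matching second-order Taylor coefficients in the macroscopic parameters. I checked your Taylor coefficients ($\Phi_1=u\cdot v$ with $st$-free quadratic part $\tfrac12((u\cdot v)^2-|u|^2)$; $st$-coefficient $\tfrac12|v|^2v_i-\tfrac52 v_i$ in case (ii); $t^2$-coefficient $\tfrac18|v|^4-\tfrac54|v|^2+\tfrac{15}8$ in case (iii)), the cancellations of $N_0$-pieces under $L$, and the polarization step; they all hold. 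Your approach is more conceptual and derives all three identities at once from a single structural fact, at the cost of heavier Taylor bookkeeping; the paper's is more elementary and term-by-term. Both are standard and both rely on the same underlying fact $M'M'_*=MM_*$, merely deployed at different stages.
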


\begin{proof}
Let $u=v_*$, $u'=v'_*$.
Since
$$ v_i+u_i=v'_i+u'_i,\quad i=1,2,3,\quad |v|^2+|u|^2=|v'|^2+|u'|^2, $$
it follows that
\be \label{a}
\left\{\bln
&(v_i+u_i)(v_j+u_j)=(v'_i+u'_i)(v'_j+u'_j),\quad i,j=1,2,3,\\
&(v_i+u_i)(|v|^2+|u|^2)=(v'_i+u'_i)(|v'|^2+|u'|^2),\quad i=1,2,3, \\
&(|v|^2+|u|^2)(|v|^2+|u|^2)= (|v'|^2+|u'|^2)(|v'|^2+|u'|^2).
\eln\right.
\ee
By \eqref{Q}, \eqref{L_1} and \eqref{a}, we have
\bmas
\Gamma(v_i\chi_0,v_j\chi_0)=&\frac12\sqrt{M}\intr\ints B(|v-u|,\omega)(u'_iv'_j+v'_iu'_j-u_iv_j-v_iu_j)M(u) d\omega du\\
=&-\frac12\sqrt{M}\intr\ints B(|v-u|,\omega)(u'_iu'_j+v'_iv'_j-v_iv_j-u_iu_j)M(u)d\omega du\\
=&-\frac12LP_1(v_iv_j\chi_0),
\emas
and
\bmas
\Gamma(v_i\chi_0,|v|^2\chi_0)
=&\frac12\sqrt{M}\intr \ints B(|v-u|,\omega)(u'_i|v'|^2+|v'|^2u'_i-u_i|v|^2-|u|^2v_i)M(u)d\omega du\\
=&-\frac12\sqrt{M}\intr \ints B(|v-u|,\omega)(u'_i|u'|^2+v'_i|v'|^2-v_i|v|^2-u_i|u|^2)M(u)d\omega du\\
=&-\frac12LP_1(v_i|v|^2\chi_0),
\emas
and
\bmas
\Gamma(|v|^2\chi_0,|v|^2\chi_0)
=& \sqrt{M}\intr \ints B(|v-u|,\omega)(|u'|^2|v'|^2  - |v|^2|u|^2 )M(u)d\omega du\\
=&-\frac12\sqrt{M}\intr \ints B(|v-u|,\omega)(|u'|^4 +|v'|^4 - |v|^4-|u|^4 )M(u)d\omega du\\
=&-\frac12LP_1(|v|^4\chi_0).
\emas
This proves the lemma.
\end{proof}

Theorem \ref{thm1.1}  follows from Lemmas \ref{time7} and \ref{energy3}.

\begin{proof}[\underline{\bf Proof of Theorem \ref{thm1.2}}]
Define
\be
Q_{\eps}(t)=\sup_{0\le s\le t}\bigg(\eps^a (1+s)^{-\frac12}+ \bigg(1+\frac{s}{\eps}\bigg)^{-b}\bigg)^{-1}
\|f_{\eps}(s)-u(s) \|_{ L^{\infty}_P},
\ee
where the norm $\|\cdot\|_{L^\infty_P}$ is defined by \eqref{norm2}.

We claim that
\be
Q_{\eps}(t) \le C\delta_0 ,\quad \forall t>0. \label{limit6}
\ee
It is easy to verify that the estimate  \eqref{limit_1} follows  from \eqref{limit6}.

 By \eqref{fe} and \eqref{ue}, we have
\bma
&\|f_{\eps}(t)-u(t) \|_{ L^{\infty}_P}\nnm\\
\le& \left\|e^{\frac{t}{\eps^2}B_{\eps}}f_0-V(t)P_0f_0\right\|_{ L^{\infty}_P}+\intt \left\| e^{\frac{t-s}{\eps^2}B_{\eps}}G_1(f_{\eps})-V(t-s)Z_1(u)\right\|_{ L^{\infty}_P}ds\nnm\\
&+\intt \bigg\|\frac1{\eps}e^{\frac{t-s}{\eps^2}B_{\eps}}G_2(f_{\eps})-V(t-s) \divx Z_2(u)\bigg\|_{ L^{\infty}_P}ds\nnm\\
=:&I_1+I_2+I_3. \label{c1}
\ema
By \eqref{limit1}, we can bound $I_1$ by
\bma
I_1&\le C \bigg(\eps (1+t)^{-\frac32}+ \bigg(1+\frac{t}{\eps}\bigg)^{-r}\bigg)\(\|f_0\|_{L^2_v(H^4_x)}+\|f_0\|_{L^2_v(W^{4,1}_x)}+\|\Tdx\phi_0\|_{L^p_x}\)\nnm\\
&\le C\delta_0\bigg(\eps (1+t)^{-\frac32}+ \bigg(1+\frac{t}{\eps}\bigg)^{-r}\bigg),
\ema
where $r=3/p-3/2\in (0,3/2)$ and $p\in (1,2)$.

To estimate $I_2$, we decompose
\bma
I_2\le& \intt \left\| e^{\frac{t-s}{\eps^2}B_{\eps}}G_1(f_{\eps})-V(t-s)P_0G_1(f_{\eps})\right\|_{ L^{\infty}_P}ds\nnm\\
&+\intt\left\|V(t-s)P_0G_1(f_{\eps})-V(t-s)Z_1(u)\right\|_{ L^{\infty}_P}ds\nnm\\
=:&I_{21}+I_{22}.
\ema

By \eqref{limit1}, \eqref{h1}--\eqref{h4}, and noting that $(G_1(f_{\eps}),\chi_0)=0$, we have
\bma
I_{21}\le &C \intt  \bigg(\eps (1+t-s)^{-\frac32}+ \bigg(1+\frac{t-s}{\eps}\bigg)^{-q}\bigg)\nnm\\
&\qquad\times\(\|G_1(f_{\eps})\|_{L^2_v(H^4_x)}+\|G_1(f_{\eps})\|_{L^2_v(W^{4,1}_x)}\)ds\nnm\\
\le &C \delta_0^2\intt \bigg(\eps (1+t-s)^{-\frac32}+ \bigg(1+\frac{t-s}{\eps}\bigg)^{-q}\bigg)(1+s)^{-\frac12}ds \nnm\\
\le &C \delta_0^2\bigg(\eps (1+t)^{-\frac12}+ \bigg(1+\frac{t}{\eps}\bigg)^{-q}\bigg), \label{I21}
\ema
for any $q\in(1,3/2)$.

Since
$$P_0G_1(f_{\eps})=\( n_{\eps}\Tdx\phi_{\eps}\)\cdot v\chi_0+\sqrt{\frac23}\(m_{\eps}\cdot\Tdx\phi_{\eps}\)\chi_4,$$
it follows from \eqref{time1a} that
\bma
I_{22}\le &C\intt (1+t-s)^{-\frac34}\nnm\\
&\quad\times\Big(\|f_{\eps}-u\|_{L^{\infty}_x(L^2_v)}\|\Tdx\phi_{\eps}\|_{H^{2}_x}+\| u\|_{L^2_v( H^{2}_x)}\|\Tdx\phi_{\eps}-\Tdx\phi\|_{L^{\infty}_x}\Big)ds\nnm\\
\le &C \delta_0Q_{\eps}(t)\intt  (1+t-s)^{-\frac34}\bigg(\eps^a (1+s)^{-\frac12}+ \bigg(1+\frac{s}{\eps}\bigg)^{-b}\bigg)(1+s)^{-\frac14} ds. \label{I22}
\ema
Denote
\be J_0=\intt  (1+t-s)^{-\frac34}  \bigg(1+\frac{s}{\eps}\bigg)^{-b} (1+s)^{-\frac14}  ds,\quad b\in (0,1]. \label{I23}\ee
We estimate $J_0$ as follows.
\bma
J_0 &=\(\int^{t/2}_0+\int^{t}_{t/2}\)  (1+t-s)^{-\frac34}  \bigg(1+\frac{s}{\eps}\bigg)^{-1}(1+s)^{-\frac14}  ds\nnm\\
&\le C\eps|\ln\eps| (1+t)^{-\frac12}+C\bigg(1+\frac{t}{\eps}\bigg)^{-1},\quad b=1,
\ema
and
\bma
J_0&=\(\int^{t/2}_0+\int^{t}_{t/2}\)  (1+t-s)^{-\frac34}  \bigg(1+\frac{s}{\eps}\bigg)^{-b}(1+s)^{-\frac14}  ds\nnm\\
&\le C\eps^b (1+t)^{-\frac34}\int^{t/2}_0 s^{-b}(1+s)^{-\frac14}ds+C\bigg(1+\frac{t}{\eps}\bigg)^{-b}\nnm\\
&\le C\eps^{b}(1+t)^{-r(b)} +C\bigg(1+\frac{t}{\eps}\bigg)^{-b},\quad 0< b<1, \label{I24}
\ema
where $r(b)=\frac14-b$ when $b\in[3/4,1)$, $r(b)=b$ when $b\in(0,3/4)$.
Thus, it follows from \eqref{I22}--\eqref{I24} that
\be
I_{22}\le C \delta_0 Q_{\eps}(t) \bigg(\eps^a (1+t)^{-\frac12}+ \bigg(1+\frac{t}{\eps}\bigg)^{-b}\bigg).\label{I25}
\ee

To estimate $I_3$, we decompose
\bma
I_3\le& \intt \left\| \frac1{\eps}e^{\frac{t-s}{\eps^2}B_{\eps}}G_2(f_{\eps})-V(t-s)P_0\(v\cdot\Tdx L^{-1}G_2(f_{\eps})\)\right\|_{ L^{\infty}_P}ds\nnm\\
&+\intt\left\|V(t-s)P_0\(v\cdot\Tdx L^{-1}G_2(f_{\eps})\)-V(t-s)\divx Z_2\right\|_{ L^{\infty}_P}ds\nnm\\
=:&I_{31}+I_{32}. \label{I3}
\ema
By \eqref{limit2}, \eqref{h1}--\eqref{h4}, and noting that $P_0G_2(f_{\eps})=0$, we have
\bma
I_{31}\le &C \intt  \bigg(\eps (1+t-s)^{-\frac52}+ \bigg(1+\frac{t-s}{\eps}\bigg)^{-q}+\frac1{\eps}e^{-\frac{d(t-s)}{\eps^2}}\bigg)\nnm\\
&\qquad\times\(\|G_2(f_{\eps})\|_{L^2_v(H^5_x)}+\|G_2(f_{\eps})\|_{L^2_v(W^{5,1}_x)}\)ds\nnm\\
\le &C \delta_0^2\intt \bigg(\eps (1+t-s)^{-\frac52}+ \bigg(1+\frac{t-s}{\eps}\bigg)^{-q}+\frac1{\eps}e^{-\frac{d(t-s)}{\eps^2}}\bigg)(1+s)^{-\frac12}ds\nnm \\
\le &C \delta_0^2 \bigg(\eps(1+t)^{-\frac12}  + \bigg(1+\frac{t}{\eps}\bigg)^{-q}\bigg),\label{I31}
\ema
for any $q\in(1,3/2)$.

To estimate $I_{32}$, we decompose
\bmas
&P_0\(v\cdot\Tdx L^{-1}G_2(f_{\eps})\)\nnm\\
=&P_0(v\cdot\Tdx L^{-1}\Gamma(P_0f_{\eps},P_0f_{\eps}))+2P_0(v\cdot\Tdx L^{-1}\Gamma(P_0f_{\eps},P_1f_{\eps}))\\
&+P_0(v\cdot\Tdx L^{-1}\Gamma(P_1f_{\eps},P_1f_{\eps}))
=:J_1+J_2+J_3.
\emas
By Lemma \ref{gamma1}, we have
\bmas
J_1=&-\frac12\sum^3_{i,j,k,l=1}\(\pt_k(m^i_{\eps}m^j_{\eps}P_1(v_iv_j\chi_0)),v_kv_l\chi_0\)v_l\chi_0 \\ &-\frac1{12}\sum^3_{j=1}\(\pt_j(q_{\eps}^2P_1(|v|^4\chi_0)),v_j^2\chi_0\)v_j\chi_0 -\sum^3_{j=1}\(\pt_j(m^j_{\eps}q_{\eps}P_1(v_j\chi_4)),v_j\chi_4\)\chi_4.
\emas
Since
\bmas
&P_1(v_iv_j\chi_0)=(v_iv_j-\delta_{ij}|v|^2/3)\chi_0,\quad i,j=1,2,3,\\
&P_1(v_j\chi_4)=v_j(\chi_4-\sqrt{2/3}\chi_0),\quad j=1,2,3,\\
&P_1(|v|^4\chi_0)=|v|^4 \chi_4-15\chi_0-10\sqrt6\chi_4,
\emas
it follows that
\bmas
&(P_1(v_iv_j\chi_0),v_iv_j\chi_0)=1,\quad i\ne j,\\
&(P_1(v_i^2\chi_0),v_i^2\chi_0)=4/3,\quad (P_1(v_i^2\chi_0),v_j^2\chi_0)=-2/3,\quad j\ne i,\\
&(P_1(v_j\chi_4),v_j\chi_4)=5/3,\quad (P_1(|v|^4\chi_0),v_j^2\chi_0)=0,\quad j=1,2,3.
\emas
Thus
$$
J_1=-\sum^3_{i, j=1}\pt_i(m^i_{\eps} m^j_{\eps})v_j\chi_0+\frac13\sum^3_{i,j=1}\pt_j( m^i_{\eps})^2 v_j\chi_0 -\frac53\sum^3_{j=1}\pt_j(m^j_{\eps}q_{\eps}) \chi_4.
$$
This  and \eqref{v1} give
\be
V(t)J_1=-V(t)\divx\[(m_{\eps}\otimes m_{\eps})\cdot v\chi_0+\frac53 (q_{\eps}m_{\eps})\chi_4\]=:V(t)\divx J_4. \label{eee1}
\ee
By \eqref{time1a}, \eqref{G_11}, \eqref{G_5}, \eqref{ccc} and \eqref{eee1}, we have
\bma
I_{32}\le &\intt  \left\|V(t-s)\divx (J_4-Z_2)\right\|_{ L^{\infty}_P}ds+\sum^3_{k=2}\intt  \left\|V(t-s)J_k\right\|_{ L^{\infty}_P}ds\nnm\\
\le  &C\intt (1+t-s)^{-\frac34}(t-s)^{-\frac12} \|f_{\eps}-u\|_{L^{\infty}_x(L^2_v)}\(\|f_{\eps}\|_{ L^2_v(H^{2}_x)}+\| u\|_{ L^2_v(H^{2}_x)}\)ds\nnm\\
  &+C \intt (1+t-s)^{-\frac34}\|P_1f_{\eps}\|_{ L^2_v(H^{3}_x)}\(\|P_0f_{\eps}\|_{ L^2_v(H^{3}_x)}+\|P_1f_{\eps}\|_{ L^2_v(H^{3}_x)}\)ds\nnm\\
\le &C \delta_0Q_{\eps}(t)  \intt  (1+t-s)^{-\frac34}(t-s)^{-\frac12}\bigg(\eps^a (1+s)^{-\frac12}+ \bigg(1+\frac{s}{\eps}\bigg)^{-b}\bigg)(1+s)^{-\frac14} ds \nnm\\
 &+C \delta_0^2 \intt  (1+t-s)^{-\frac34}\(\eps (1+s)^{-\frac34}+ e^{-\frac{ds}{\eps^2}}\)(1+s)^{-\frac14} ds. \label{I32}
\ema
Denote
\be
J_5= \intt  (1+t-s)^{-\frac34}(t-s)^{-\frac12}  \bigg(1+\frac{s}{\eps}\bigg)^{-b} (1+s)^{-\frac14} ds,\quad b\in(0,1].
\ee
We estimate $J_5$ as follows.
\bma
J_5&= \(\int^{t/2}_0+\int^{t}_{t/2}\)(1+t-s)^{-\frac34}(t-s)^{-\frac12}  \bigg(1+\frac{s}{\eps}\bigg)^{-1}(1+s)^{-\frac14}  ds\nnm\\
&\le C\eps|\ln\eps| t^{-\frac12}(1+t)^{-\frac12}+C\bigg(1+\frac{t}{\eps}\bigg)^{-1}\nnm\\
&\le C\eps|\ln\eps|^2  (1+t)^{-1}+C\bigg(1+\frac{t}{\eps}\bigg)^{-1},\quad b=1,\,\, t\ge \eps,
\ema
and
\bma
J_5&= \(\int^{t/2}_0+\int^{t}_{t/2}\)(1+t-s)^{-\frac34}(t-s)^{-\frac12}  \bigg(1+\frac{s}{\eps}\bigg)^{-b}(1+s)^{-\frac14}  ds\nnm\\
&\le C\eps^bt^{-\frac12} (1+t)^{-\frac34}\int^{t/2}_0 s^{-b}(1+s)^{-\frac14}ds+C\bigg(1+\frac{t}{\eps}\bigg)^{-b}\nnm\\
&\le C\bigg(1+\frac{t}{\eps}\bigg)^{-b},\quad 0< b<1,\,\, t\ge \eps.  \label{I33}
\ema
Thus, it follows from \eqref{I32}--\eqref{I33} that
\be
I_{32}\le C(\delta_0^2+ \delta_0Q_{\eps}(t)) \bigg(\eps^a (1+t)^{-\frac12}+ \bigg(1+\frac{t}{\eps}\bigg)^{-b}\bigg).\label{limit3}
\ee

By combining \eqref{c1}--\eqref{I21}, \eqref{I25}--\eqref{I31} and \eqref{limit3}, we obtain
\be
Q_{\eps}(t)\le  C\delta_0+C\delta_0^2+C\delta_0Q_{\eps}(t),
\ee
where $C>0$ is a constant independent of  $\eps$.
By taking $\delta_0>0$  small enough, we can obtain \eqref{limit6}, which proves \eqref{limit_1}. By \eqref{limit1a}, \eqref{G_11a}, \eqref{G_12a} and a similar argument as above, we can prove \eqref{limit_1a}. The proof is completed.
\end{proof}

\medskip
\noindent {\bf Acknowledgements:} The authors would like to thank Yan Guo for
some insightful discussions. The research of the first author was supported partially by the National Science Fund for Distinguished Young Scholars No. 11225102, the National Natural Science Foundation of China Nos. 11871047, 11671384 and 11461161007, and the Capacity Building for Sci-Tech Innovation-Fundamental Scientific Research Funds 00719530050166.
The research of the second author was supported by the General Research Fund of Hong Kong, CityU 11302518, and the  Fundamental Research Funds for the Central Universities No.2019CDJCYJ001.
The research of the third author is supported by the National Natural Science Foundation of China No. 11671100, the National Science Fund for Excellent Young Scholars No. 11922107, and Guangxi Natural Science Foundation No. 2018GXNSFAA138210.
\bigskip

\end{document}